\begin{document}

\onehalfspacing



\newcommand{\thesisTitle}{\Large Defining Newforms in Characteristic $p$}
\newcommand{\yourName}{Daniel Johnston}
\newcommand{\yourSchool}{Mathematics and Statistics}
\newcommand{\yourMonth}{May}
\newcommand{\yourYear}{2020}

\newcommand{\HRule}[1]{\rule{\linewidth}{#1}}


\begin{titlepage}

\begin{center}

\begin{singlespacing}
\phantom{XX}
\vspace{4\baselineskip}

\textbf{\thesisTitle}\\
\vspace{3\baselineskip}
By\\
\vspace{3\baselineskip}
{\Large \yourName}\\
\vspace{3\baselineskip}
Supervised by \\
Dr. Alex Ghitza \\
\vspace{3\baselineskip}
Submitted In Partial Fulfillment\\
of the Requirements for the Degree\\
Master of Science in the\\
School of \yourSchool\\
\vspace{3\baselineskip}
{\Large University of Melbourne}\\
\vspace{\baselineskip}
\yourMonth{} \yourYear{}
\vfill

\end{singlespacing}

\end{center}

\end{titlepage}

\currentpdfbookmark{Title Page}{titlePage}  

\pagenumbering{roman}

\newpage\null\thispagestyle{empty}\newpage

\setcounter{page}{3} 

\clearpage

\begin{centering}
\textbf{ABSTRACT}\\
\vspace{\baselineskip}
\end{centering}
The theory of newforms, due to Atkin and Lehner \cite{ao1970hecke}, provides a powerful method for decomposing spaces of modular forms. However, many problems occur when trying to generalise this theory to characteristic $p$. Recently, Deo and Medvedovsky \cite{deo2019newforms} have suggested a way around these problems by using purely algebraic notions to define newforms. In this thesis, we describe the methods of Deo and Medvedovsky in detail and generalise their results where possible.

\vspace{2cm}

\begin{centering}
\textbf{ACKNOWLEDGEMENTS}\\
\vspace{\baselineskip}
\end{centering}
First and foremost, I would like to thank my supervisor Alex Ghitza. Not only did Alex recommend this thesis topic to me but he provided constant encouragement and support throughout the research project that I am very thankful for.\\[8pt]
I also thank my parents, Sally and Robert, for always supporting my decision to study mathematics, and my high school teacher, Rachel Eyles, for without her influence I may have never ended up pursuing number theory.\\[8pt]
Finally, I would like to thank all of my friends in the maths department who made this degree a lot of fun. A special mention goes out to Dan, Fenella, Kate, Lukas, Madeleine, Tom and Yao who also assisted me in proofreading this thesis.

\clearpage

\newpage\null\thispagestyle{empty}\newpage

\addtocontents{toc}{\cftpagenumbersoff{chapter}} 

\addtocontents{toc}{\cftpagenumberson{chapter}}


\renewcommand{\cftchapdotsep}{\cftdotsep}  
\renewcommand{\cftchapfont}{\bfseries}  
\renewcommand{\cftchappagefont}{}  
\renewcommand{\cftchappresnum}{Chapter }
\renewcommand{\cftchapaftersnum}{:}
\renewcommand{\cftchapnumwidth}{5em}
\renewcommand{\cftchapafterpnum}{\vskip\baselineskip} 
\renewcommand{\cftsecafterpnum}{\vskip\baselineskip}  
\renewcommand{\cftsubsecafterpnum}{\vskip\baselineskip} 
\renewcommand{\cftsubsubsecafterpnum}{\vskip\baselineskip} 

\titleformat{\chapter}[display]
{\normalfont\bfseries\filcenter}{\chaptertitlename\ \thechapter}{0pt}{\MakeUppercase{#1}}

\renewcommand\contentsname{Table of Contents}

\setcounter{tocdepth}{1}

\begin{singlespace}
\tableofcontents
\end{singlespace}

\newpage\null\thispagestyle{empty}\newpage
\addtocounter{page}{-1}

\currentpdfbookmark{Table of Contents}{TOC}

\clearpage


\clearpage
\pagenumbering{arabic}
\setcounter{page}{1} 

\titleformat{\chapter}[display]
{\normalfont\bfseries\filcenter}{\MakeUppercase\chaptertitlename\ \thechapter}{0pt}{\MakeUppercase{#1}}  
\titlespacing*{\chapter}
  {0pt}{0pt}{30pt}	
  
\titleformat{\section}{\large\bfseries}{\thesection}{1em}{#1}

\titleformat{\subsection}{\normalfont\bfseries}{\thesubsection}{1em}{#1}

\titleformat{\subsubsection}{\normalfont\itshape}{\thesubsection}{1em}{#1}


\chapter*{Introduction}
\addcontentsline{toc}{chapter}{Introduction}

Modular forms are an indispensable tool in number theory. In recent years, they have garnered widespread attention due to their role in Wiles' proof of Fermat's Last Theorem (\cite{wiles1995modular}, \cite{taylor1995ring}) and the subsequent proof of the modularity conjecture \cite{breuil2001modularity}. This thesis is concerned with defining an important class of modular forms known as \emph{newforms}.\\
\\
For our purposes, a \emph{modular form} is a function on the complex upper half plane satisfying certain holomorphy and transformation conditions. These conditions allow us to write each modular form $f$ as a Fourier series
\begin{equation*}
    f(z)=\sum_{n=0}^\infty a_nq^n,\qquad q=e^{2\pi i z}.
\end{equation*}
Of particular interest are the values of the coefficients $a_n$, which encode valuable arithmetic information. A special class of modular forms we will study are \emph{cusp forms}, which have $a_0=0$ in their Fourier expansions.\\
\\
Associated to each non-zero modular form is a weight $k$ and a level $N$. The vector space of all modular forms (resp. cusp forms) of weight $k$ and level $N$ is typically denoted $\M_k(N)$ (resp. $\mcS_k(N)$). In this thesis, we are interested in a decomposition of $\mcS_k(N)$ into "old" and "new" subspaces:
\begin{equation}\label{introdecomp}
    \mcS_k(N)=\mcS_k(N)^{\old}\oplus\mcS_k(N)^{\new}.
\end{equation}
The space of \emph{oldforms}, $\mcS_k(N)^{\old}$, consists of cusp forms that arise from lower levels strictly dividing $N$. On the other hand, the space of \emph{newforms}, $\mcS_k(N)^{\new}$, is the orthogonal complement of $\mcS_k(N)^{\old}$ with respect to a suitable inner product. The theory of oldforms and newforms was first developed by Atkin and Lehner \cite{ao1970hecke} and later extended by Li \cite{li1975newforms}.\\
\\
One can also consider modular forms with coefficients over an arbitrary commutative ring $B$. To do this, we let $\M_k(N,\Z)$ denote the space of modular forms with Fourier coefficients in $\Z$ and then define $\M_k(N,B):=\M_k(N,\Z)\otimes_\Z B$. We also define $\mcS_k(N,B)$ analogously for cusp forms. If $B$ has characteristic zero, the theory of newforms for $\mcS_k(N,B)$ naturally generalises the classical theory of Atkin and Lehner. However, when $B$ has characteristic $p$, congruences between oldforms and newforms result in many undesirable properties. For instance, we no longer have a decomposition as in \eqref{introdecomp}.\\
\\
Recently, Deo and Medvedovsky \cite{deo2019newforms} have provided suggestions on how to deal with this behaviour in characteristic $p$. In particular, they suggest redefining newforms with robust, purely algebraic definitions. In \cite{deo2019newforms}, two such algebraic definitions are explored. These definitions are related to \emph{Hecke operators} and are inspired by classical observations of Serre \cite{serre1973formes} and Atkin and Lehner \cite{ao1970hecke}.\\
\\
The goal of this thesis is to provide an extensive analysis of Deo and Medvedovsky's algebraic definitions of newforms. In particular, we fill in the details of proofs in \cite{deo2019newforms} and suggest ways to extend their theory where possible. An outline of the thesis is as follows.

\begin{itemize}
    \item In Chapter \ref{chap1} we discuss the basic theory of modular forms and establish the notation that will be used throughout the thesis.
    \item In Chapter \ref{chapclassic} we summarise the classical theory of newforms due to Atkin and Lehner.
    \item In Chapter \ref{chapsquarefree} we introduce the algebraic definitions of newforms due to Deo and Medvedovsky. Moreover, we show that they agree with Atkin and Lehner's definitions in the classical setting.
    \item In Chapter \ref{chap4} we define newforms over an arbitrary commutative ring $B$. We then describe the properties of these forms when $B$ has characteristic zero.
    \item In Chapter \ref{chap5} we explore the theory of newforms in characteristic $p$ and analyse Deo and Medvedovsky's definitions in this setting.
    \item In Chapter \ref{chap6} we introduce the notion of modular forms with character and suggest ways to generalise Deo and Medvedovsky's definitions to this setting. The results in this chapter constitute the most original contribution of this thesis.
\end{itemize}
We will assume that the reader is familiar with basic definitions and results from complex analysis and abstract algebra. The later chapters also use a large amount of commutative algebra. The relevant background material for these chapters can be found in Appendix \ref{appcommalg}.

\chapter{Modular Forms}\label{chap1}

We begin by providing some preliminary definitions and examples of modular forms. In particular, we will define modular forms in terms of their "level", allowing the notion of newforms to arise naturally. For more details one can consult chapters 1.1 and 1.2 of Diamond and Shurman's book \cite{diamond2005first}.

\section{Modular Forms of Level 1}\label{secmodform1}
Let 
\begin{equation*}
    \mathbb{H}=\{z\in\C\::\:\Im(z)>0\}
\end{equation*}
denote the complex upper half plane and 
\begin{equation*}
    \SL_2(\Z)=\{\gamma\in M_{2\times 2}(\Z)\::\:\det(\gamma)=1\}
\end{equation*}
be the set of $2\times 2$ matrices with integer entries and determinant 1.\\
\\
When studying the geometry of $\mathbb{H}$ it is common to consider the following action of $\SL_2(\Z)$ on $\H$:
\begin{equation}\label{saction}
    \gamma\cdot z=\frac{az+b}{cz+d},\quad\text{with }\gamma=\begin{pmatrix}a&b\\c&d\end{pmatrix}\in\SL_2(\Z).
\end{equation}
One can check that this is a well-defined action in the sense that $\gamma\cdot z\in\H$, $\begin{smatrix}1&0\\0&1\end{smatrix}\cdot z=z$ and $(\gamma\gamma')\cdot z=\gamma\cdot(\gamma'\cdot z)$ for all $z\in\H$ and $\gamma,\gamma'\in\SL_2(\Z)$.\\
\\
Modular forms are  holomorphic functions on $\H$ that satisfy certain symmetries with respect to the action \eqref{saction}. Such functions exhibit interesting arithmetic properties and are defined as follows.

\begin{definition}\label{defmodforms}
    Let $k$ be a nonnegative integer. A (level 1) \emph{modular form} of weight $k$ is a function $f\colon\mathbb{H}\to\C$ satisfying the following three conditions:
    \begin{enumerate}[label=\arabic*.]
        \item  $f$ is holomorphic on $\H$,
        \item $f\left(\gamma\cdot z\right)=(cz+d)^kf(z)$ for all $\gamma=\begin{smatrix}a&b\\c&d\end{smatrix}\in\SL_2(\Z)$,
        \item $f$ is holomorphic at infinity. That is, the limit $\lim_{\Im(z)\to\infty}f(z)$ exists.
    \end{enumerate}
    The set of all modular forms of weight $k$ forms a vector space of functions, denoted $\mathcal{M}_k(\SL_2(\Z))$. 
\end{definition}

From here onwards, we shall refer to the second condition as the \emph{modularity condition}. To simplify notation let 
\begin{equation*}
    j(\gamma,z):=cz+d
\end{equation*}
be the \emph{factor of automorphy} for $\gamma=\begin{smatrix}a&b\\c&d\end{smatrix}\in\SL_2(\Z)$ and $z\in\H$. For any $\gamma\in\SL_2(\Z)$ we then define the \emph{slash operator $|_k\gamma$} of weight $k$ on functions $f:\mathbb{H}\to\C$ by
\begin{equation}\label{slash}
    (f|_k\gamma)(z):=j(\gamma,z)^{-k}f(\gamma\cdot z).
\end{equation}
The modularity condition then simply states that $f|_k\gamma=f$ for all $\gamma\in\SL_2(\Z)$. The slash operator is also an action in the sense that $f|_k\begin{smatrix}1&0\\0&1\end{smatrix}=f$ and $f|_k\gamma\gamma'=f|_k\gamma|_k\gamma'$ for all functions $f:\mathbb{H}\to\C$.\\
\\
Now, substituting $\gamma=\begin{smatrix}1&1\\0&1\end{smatrix}\in\SL_2(\Z)$ into the modularity condition tells us that modular forms are $\Z$-periodic, i.e.\ $f(z+1)=f(z)$. As a result, all modular forms $f$ have a Fourier series (or "$q$-expansion")
\begin{equation}\label{fourier}
    f(q)=\sum_{n\in\Z}a_nq^n,\qquad\text{where }q=e^{2\pi i z}.
\end{equation}
The fact that $\lim_{\Im(z)\rightarrow\infty}f(z)$ exists means that $a_n=0$ for all $n<0$.
\begin{definition}
    A \emph{cusp form} is a modular form $f$ such that $\lim_{\Im(z)\to\infty}f(z)=0$. Equivalently, a cusp form has $a_0=0$ in its $q$-expansion \eqref{fourier}.
\end{definition}

We now look at some examples.

\begin{example}\label{exmodforms}\
    \begin{itemize}
        \item All constant functions on $\H$ are modular forms of weight $0$. The function $f(z)=0$ is a modular form (and cusp form) for every weight $k\geq 0$. 
        \item The \emph{Eisenstein series}
        \begin{equation*}
            G_k(z)=\sum_{(c,d)\in\Z^2\setminus\{(0,0)\}}\frac{1}{(cz+d)^k}
        \end{equation*}
        is a modular form of weight $k$ for all even $k\geq 4$. Its $q$-expansion is given by
        \begin{equation*}
            G_k(q)=2\zeta(k)\left(1-\frac{2k}{B_k}\sum_{n=1}^\infty\sigma_{k-1}(n)q^n\right)
        \end{equation*}
        where $\zeta$ is the Riemann zeta function, $B_k$ is the $k^{\text{th}}$ Bernoulli number and\\
        $\sigma_{k-1}(n)=\sum_{d\mid n}d^{k-1}$. Scaling by a factor of $1/(2\zeta(k))$ gives the simpler series
        \begin{equation}\label{eisenstein}
            E_k(q)=1-\frac{2k}{B_k}\sum_{n=1}^\infty\sigma_{k-1}(n)q^n.
        \end{equation}
        Throughout the rest of this thesis, we will refer to any scaling of $G_k(q)$ as an Eisenstein series.
        \item The \emph{modular discriminant}
        \begin{equation*}
            \Delta(z)=\frac{1}{1728}\left(E_4(z)^3-E_6(z)^2\right)
        \end{equation*}
        is a cusp form of weight $12$ and has the $q$-expansion
        \begin{equation*}
            \Delta(q)=q-24q^2+252q^3-1472q^4+4830q^5-6048q^6-16744q^7+\cdots.
        \end{equation*}
        In the next chapter we shall see how the theory of newforms reveals nontrivial relationships between these Fourier coefficients. There are still many open problems regarding $\Delta(q)$. For instance, it is unknown whether the $n^{\text{th}}$ Fourier coefficient of $\Delta(q)$ is nonzero for all $n\geq 1$.
        \item The only modular form of odd weight is $f(z)=0$. To see this, suppose that $f$ is a modular form of odd weight $k$. Substituting $\gamma=\begin{smatrix}-1&0\\0&-1\end{smatrix}\in\SL_2(\Z)$ into the modularity condition for $f$ then gives $f(z)=(-1)^kf(z)=-f(z)$, which forces $f(z)=0$.
    \end{itemize}
\end{example}
    
\section{Modular Forms of Level $N$}
Studying the level 1 modular forms defined in the previous section eventually becomes quite restrictive. In fact, one can show that all modular forms of level 1 can be expressed as polynomials in the Eisenstein series $E_4(z)$ and $E_6(z)$ \cite[Proposition 3.6]{kilford2008modular}.\\
\\
By replacing $\SL_2(\Z)$ in the modularity condition with some subgroup $\Gamma\leqslant\SL_2(\Z)$, we are left with a larger set of functions to study. The subgroups that interest us the most are those based on systems of congruences.
\begin{definition}
    Let $N$ be a positive integer. The \emph{principal congruence subgroup of level $N$} is
    \begin{equation*}
        \Gamma(N)=\left\{\begin{pmatrix}a&b\\c&d\end{pmatrix}\in\SL_2(\Z)\::\:\begin{pmatrix}a&b\\c&d\end{pmatrix}\equiv\begin{pmatrix}1&0\\0&1\end{pmatrix}\pmod{N}\right\},
    \end{equation*}
    where the matrix congruence is interpreted entrywise.
\end{definition}
\begin{definition}\label{defcong}
    A \emph{congruence subgroup} is any subgroup $\Gamma\leqslant\SL_2(\Z)$ such that $\Gamma(N)\subseteq\Gamma$ for some positive integer $N$. The smallest such $N$ is called the \emph{level} of $\Gamma$.
\end{definition}
For a given level $N\in\Z_{>0}$, the most widely studied congruence subgroups are
\begin{align}\label{gamma0}
    \Gamma_0(N)&=\left\{\begin{pmatrix}a&b\\c&d\end{pmatrix}\in\SL_2(\Z)\::\:\begin{pmatrix}a&b\\c&d\end{pmatrix}\equiv\begin{pmatrix}*&*\\0&*\end{pmatrix}\pmod{N}\right\},\ \text{and}\\[8pt]
    \Gamma_1(N)&=\left\{\begin{pmatrix}a&b\\c&d\end{pmatrix}\in\SL_2(\Z)\::\:\begin{pmatrix}a&b\\c&d\end{pmatrix}\equiv\begin{pmatrix}1&*\\0&1\end{pmatrix}\pmod{N}\right\}
\end{align}
where $*$ means unspecified. Note that $\Gamma(N)\subseteq\Gamma_1(N)\subseteq\Gamma_0(N)\subseteq\SL_2(\Z)$.\\
\\
We now alter our original definition of modular forms (Definition \ref{defmodforms}) to the following.
\begin{definition}\label{defmodformscong}
    Let $\Gamma$ be a congruence subgroup of $\SL_2(\Z)$ and $k$ be an integer. A modular form of weight $k$ with respect to $\Gamma$ is a function $f\colon\mathbb{H}\rightarrow\C$ satisfying the following three conditions:
    \begin{enumerate}[label=\arabic*.]
        \item  $f$ is holomorphic on $\H$,
        \item $f|_k\gamma=f$ for all $\gamma=\begin{smatrix}a&b\\c&d\end{smatrix}\in\Gamma$,
        \item $f|_k\alpha$ is holomorphic at infinity for all $\alpha\in\SL_2(\Z)$.
    \end{enumerate}
\end{definition}
The first two conditions in Definition \ref{defmodformscong} are natural generalisations of the first two conditions in our original definition of modular forms (Definition \ref{defmodforms}). The third condition is a less obvious generalisation but is required for the space of modular forms to be finite dimensional. In the case where $\Gamma=\SL_2(\Z)$, note that the third condition agrees with Definition \ref{defmodforms} since we always have $f|_k\alpha=f$ by the modularity condition.\\
\\
If $f$ is a modular form with respect to a congruence subgroup $\Gamma$ of level $N$, we say that $f$ is a \emph{modular form of level $N$}. Since $\Gamma(1)=\SL_2(\Z)$, this justifies why the modular forms from Section \ref{secmodform1} were called level 1 modular forms.\\
\\
Another subtlety in defining modular forms with respect to a congruence subgroup $\Gamma$ is the transfer over to Fourier series. Since $\Gamma$ may not contain $\begin{smatrix}1&1\\0&1\end{smatrix}$, it does not necessarily follow that a modular form $f$ with respect to $\Gamma$ is $\Z$-periodic. However, we still have $\begin{smatrix}1&h\\0&1\end{smatrix}\in\Gamma$ for some minimal $h\in\Z_{>0}$ giving $f(z+h)=f(z)$ and thus a Fourier series of the form
\begin{equation}\label{fourier2}
    f(z)=\sum_{n\in\Z}a_nq_h^n,\qquad\text{where }q_h=e^{2\pi i z/h}.
\end{equation}
However, this is not something to worry about in the case of $\Gamma=\Gamma_0(N)$ or $\Gamma=\Gamma_1(N)$ since both of these congruence subgroups contain $\begin{smatrix}1&1\\0&1\end{smatrix}$.\\
\\
The definition of a cusp form also carries over from the level 1 case in a similar way.
\begin{definition}
    A modular form $f$ (of weight $k$ with respect to $\Gamma$) is a cusp form if $a(0)=0$ in the $q$-expansion \eqref{fourier2} of $f|_k\alpha$ for all $\alpha\in\SL_2(\Z)$. 
\end{definition}
As in the case of level 1, the set of modular forms of weight $k$ with respect to $\Gamma$ forms a vector space $\M_k(\Gamma)$. The set of cusp forms also forms a vector subspace denoted by $\mcS_k(\Gamma)$. These spaces are finite dimensional and their dimensions can be computed explicitly \cite[Chapter 3]{diamond2005first}.
\begin{definition}
    The \emph{algebra of modular forms} is given by $\M(\Gamma)=\sum_{k=0}^\infty \M_k(\Gamma)$. Similarly, the \emph{algebra of cusp forms} is $\mcS(\Gamma)=\sum_{k=0}^\infty\mcS_k(\Gamma)$. 
\end{definition}
In particular, any $f\in \M(\Gamma)$ is a finite linear combination $f=f_1+\cdots+f_n$ where each $f_i$ occurs in a fixed weight. Note that $\M(\Gamma)$ is closed under multiplication since if $f$ is a modular form of weight $k$ and $g$ is a modular form of weight $k'$, then $fg$ is a modular form of weight $k+k'$.\\
\\
We now finish with some examples of level $N$ modular forms.
\newpage
\begin{example}\label{levelnexamples}\
    \begin{itemize}
        \item By generalising the Eisenstein Fourier series in \eqref{eisenstein} to $k=2$ we obtain
        \begin{equation*}
            E_2(q)=1-\frac{4}{B_2}\sum_{n=1}^\infty\sigma(n)q^n.
        \end{equation*}
        Note that this is \textbf{not} a (level 1) modular form. In particular, the modularity condition is not satisfied. However the function
        \begin{equation*}
            E_2^N(z):=E_2(z)-NE_2(Nz)
        \end{equation*}
        is a modular form of weight 2 for $\Gamma_0(N)$.
        \item The \emph{Jacobi theta function} is given by
        \begin{equation*}
            \theta(q)=\sum_{n\in\Z}q^{n^2}.
        \end{equation*}
        This function plays a crucial role in analytic number theory and is used to find the number of ways to express an integer as a sum of squares. In terms of modular forms, if $k$ is an even integer then  $\theta^k$ is a modular form of weight $k/2$ for $\Gamma_1(4)$.
        \item For some congruence subgroup $\Gamma$ and odd weight $k$, if $\begin{smatrix}-1&0\\0&-1\end{smatrix}\notin\Gamma$ then it is possible to have $\mathcal{M}_k(\Gamma)~\neq~\{0\}$. For instance, the function $\theta^2$ from the previous example is in $\M_1(\Gamma_1(4))$.
        \item Let $\Gamma\leqslant\SL_2(\Z)$ be a congruence subgroup. Since a modular form for $\Gamma$ must satisfy fewer symmetries than a modular form for $\SL_2(\Z)$, we have $\M_k(\SL_2(\Z))\subseteq\M_k(\Gamma)$. That is, any modular form with respect to $\SL_2(\Z)$ is also a modular form with respect to $\Gamma$.
    \end{itemize}
\end{example}
To see where the idea of newforms arises, we consider $\Gamma_0(M)$ and $\Gamma_0(N)$ with $M|N$. From the definition of $\Gamma_0$ we have $\Gamma_0(N)\subseteq\Gamma_0(M)$ and thus  $\M_k(\Gamma_0(M))\subseteq\M_k(\Gamma_0(N))$. So for fixed $N$, we can take any divisor $M|N$ and embed $\M_k(\Gamma_0(M))$ in $\M_k(\Gamma_0(N))$. Later we shall see that there are also other ways to embed $\M_k(\Gamma_0(M))$ in $\M_k(\Gamma_0(N))$.\\
\\
The space of \emph{oldforms} consists of forms in $\M_k(N)$ arising from lower levels. The corresponding space of \emph{newforms} then consists of forms that do not arise from lower levels. These ideas will be made precise in the next chapter.
\chapter{Classical Theory of Newforms}\label{chapclassic}
In this chapter we summarise the original theory of oldforms and newforms first established by Atkin and Lehner \cite{ao1970hecke}. Following Atkin and Lehner, we will primarily concern ourselves with the congruence subgroup $\Gamma_0(N)$ defined in the previous chapter. As a consequence, we simplify our notation, writing $\mathcal{M}_k(N)$ to mean $\mathcal{M}_k(\Gamma_0(N))$ (or $\mcS_k(N)$ for $\mcS_k(\Gamma_0(N))$ in the case of cusp forms).\\[8pt]
The following theory can also be extended to other classes of modular forms (see \cite{li1975newforms} and \cite[Chapter 5]{diamond2005first}). However to prevent any overcomplications, we will refrain from discussing such generalisations until Chapter \ref{chap6}.\\[8pt]
For the rest of this chapter, we assume that $k\in\Z_{\geq 0}$ and $N\in\Z_{>0}$ are fixed integers.

\section{Atkin-Lehner and Hecke Operators}\label{secHecke}
In order to study newforms, we need to define a collection of important linear operators on $\M_k(N)$. We begin by noting that the slash operator can be extended to matrices in 
\begin{equation*}
    \GL_2(\Q)^{+}=\{\gamma\in M_{2\times 2}(\Q)\::\:\det(\gamma)>0\}.
\end{equation*} 
Namely, for a function $f\colon\H\to\C$ and $\gamma=\begin{smatrix}a&b\\c&d\end{smatrix}\in\GL_2(\Q)^{+}$ we let
\begin{equation*}
    (f|_k\gamma)(z)=(\det\gamma)^{k/2}j(\gamma,z)^{-k}f(\gamma\cdot z),
\end{equation*}
where $j(\gamma,z)=cz+d$ and $\gamma\cdot z=\frac{az+b}{cz+d}$ as in the $\SL_2(\Z)$ case. The factor of $\det(\gamma)^{k/2}$ is a convention chosen so that scalar matrices act as the identity (provided $k$ is even). The Atkin-Lehner operator is then defined as follows.
\begin{definition}\label{atkinlehnerdef}
    Let $\ell$ be a prime that divides $N$ exactly once and  $\gamma_\ell\in\GL_2(\Q)^{+}$ be any matrix of the form $\begin{smatrix}\ell&a\\N&\ell b\end{smatrix}$ where $a$ and $b$ are integers chosen so that $\ell b-a(N/\ell)=1$. The \emph{Atkin-Lehner operator} $w_\ell$ is the map that sends $f\in \M_k(N)$ to $f|_k\gamma_\ell$.
\end{definition}
\newpage
\begin{remarks}\
    \begin{enumerate}[label=(\roman*)]
        \item The conditions on $a$ and $b$ imply that $\det(\gamma_\ell)=\ell$.
        \item $w_\ell$ does not depend on the choice of $\gamma_\ell$. That is, if we let $\gamma_\ell=\begin{smatrix}\ell&a\\N&\ell b\end{smatrix}$ and $\gamma_\ell'=\begin{smatrix}\ell&a'\\N&\ell b'\end{smatrix}$ then
        \begin{equation*}
            \gamma_\ell(\gamma_\ell')^{-1}=\begin{pmatrix}\ell&a\\N&\ell b\end{pmatrix}\frac{1}{\ell}\begin{pmatrix}\ell b'&-a'\\-N&\ell\end{pmatrix}=\begin{pmatrix}\ell b'-a(N/\ell)&-a'+a\\N b'-N b&-a'(N/\ell)+\ell b\end{pmatrix}\in\Gamma_0(N),
        \end{equation*}
        so that for $f\in\M_k(N)$, we have $w_\ell (f)=f|_k\gamma_\ell=f|_k\gamma_\ell'$ as required.
    \end{enumerate}
\end{remarks}
Next we define the Hecke operators. These operators can be explicitly described in terms of their Fourier coefficients. In what follows, we write $a_n(f)$ for the $n^{th}$ Fourier coefficient of a modular form $f$. That is,
\begin{equation*}
    f(q)=\sum_{n=0}^\infty a_n(f)q^n.
\end{equation*}
\begin{definition}\label{hecke}
    Let $p$ be a prime number. For a modular form $f$, the \emph{Hecke operator} $T_p$ acting on $f$ is given by the $q$-expansion with $n^{\text{th}}$ coefficient
    \begin{alignat*}{3}
        a_n(T_p f)&=a_{np}(f),\qquad&&\text{if $p\mid N$,}\\
        a_n(T_p f)&=a_{pn}(f)+p^{k-1}a_{n/p}(f),\qquad&&\text{if $p\nmid N$,}
    \end{alignat*}
    where $a_{n/p}(f)=0$ if $p$ does not divide $n$. For a prime power $p^s$ with $s\geq 2$ we let $T_1$ act as the identity and then inductively define 
    \begin{equation*}
        T_{p^s}=T_{p}T_{p^{s-1}}-p^{k-1}T_{p^{s-2}}.
    \end{equation*}
    Finally, to define $T_m$ for any positive integer $m$ we let $T_{mm'}~=~T_{m}T_{m'}$ whenever $\gcd(m,m')=~1$.
\end{definition}
For $m\geq 0$, we will also write $U_m$ for the formal $\C$-linear operator $U_m\colon\C[[q]]\to\C[[q]]$ given by $a_n(U_mf)=a_{mn}(f)$. If $m\mid N$ then $U_m$ coincides with $T_m$ and we will generally write $U_m$ in place of $T_m$ in such situations.\\
\\
The following proposition lists some algebraic properties of these operators. Proofs of the following results can be found in sections 2 and 3 of \cite{ao1970hecke}.
\begin{proposition}\label{heckeproperties}
    Let $p$ and $p'$ be distinct primes not dividing $N$ and let $\ell$ be a prime dividing $N$ exactly $\alpha$ times. Let $f$ be a modular form of weight $k$ for $\Gamma_0(N)$. Then
    \begin{enumerate}[label=(\alph*)]
        \item $T_p(f)$ and $U_\ell(f)$ are in $\M_k(N)$.
        \item If $\alpha=1$ then $w_\ell(f)\in\M_k(N)$. Moreover, $w_\ell$ is an involution, i.e. $w_\ell^2(f)=f$.
        \item $T_pT_{p'}(f)=T_{p'}T_p(f)$,\\
        $T_pU_\ell(f)=U_\ell T_p(f)$ and if $\alpha=1$,\\
        $T_pw_\ell(f)=w_\ell T_p(f)$.
        \item If $\alpha=1$ then $U_\ell(f)+\ell^{\frac{k}{2}-1}w_\ell(f)\in\M_k(N/\ell)$.
        \item If $\alpha>1$, then $U_\ell(f)\in\M_k(N/\ell)$.
    \end{enumerate}
\end{proposition}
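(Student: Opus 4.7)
The plan is to derive all five parts from the standard expression of each operator as a finite sum of slash actions by matrices in $\GL_2(\Q)^+$, and then read off the required invariance from a coset-combinatorial argument. Concretely, a short computation on $q$-expansions shows
\begin{equation*}
    T_p f \;=\; p^{k/2-1}\!\left(\sum_{j=0}^{p-1} f\bigl|_k \begin{pmatrix}1 & j\\ 0 & p\end{pmatrix} + f\bigl|_k \begin{pmatrix}p & 0\\ 0 & 1\end{pmatrix}\right)\quad (p\nmid N),
\end{equation*}
\begin{equation*}
    U_\ell f \;=\; \ell^{k/2-1}\sum_{j=0}^{\ell-1} f\bigl|_k \begin{pmatrix}1 & j\\ 0 & \ell\end{pmatrix} \quad (\ell\mid N),
\end{equation*}
and $w_\ell f = f|_k \gamma_\ell$ by definition; these match the formulas in Definition \ref{hecke}.

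For part (a), given $\gamma\in\Gamma_0(N)$ I would show that right-multiplication by $\gamma$ permutes the above sets of matrices modulo left-multiplication by $\Gamma_0(N)$. Since $|_k$ is a right action, this immediately yields $T_p f|_k\gamma = T_p f$ and $U_\ell f|_k\gamma = U_\ell f$. For part (b), one verifies that $\gamma_\ell$ normalises $\Gamma_0(N)$ up to $\Q^\times$, i.e.\ $\gamma_\ell\gamma\gamma_\ell^{-1}\in\Q^\times\cdot\Gamma_0(N)$ for every $\gamma\in\Gamma_0(N)$, by a direct matrix computation that uses both $\ell\|N$ and $\ell b - a(N/\ell)=1$. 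For the involution, compute
\begin{equation*}
    \gamma_\ell^{\,2} = \begin{pmatrix}\ell^2 + aN & \ell a(1+b)\\ \ell N(1+b) & Na + \ell^2 b^2\end{pmatrix} = \ell\,\sigma,
\end{equation*}
and check that $\sigma\in\Gamma_0(N)$ entrywise; then $w_\ell^{\,2} f = f|_k(\ell I)|_k\sigma = f$ because the scalar matrix $\ell I$ acts trivially under $|_k$ of even weight.

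Part (c) is most efficiently handled on $q$-expansions: the formulas in Definition \ref{hecke} make $a_n(T_pT_{p'}f)$ and $a_n(U_\ell T_p f)$ manifestly symmetric in the two operators when the indices are coprime. For $T_p w_\ell = w_\ell T_p$ one instead uses the matrix identity $\gamma_\ell\cdot\alpha_j = \alpha_j'\cdot\gamma_\ell\cdot\delta_j$ for each $\alpha_j$ in the $T_p$-decomposition above, where $\delta_j\in\Gamma_0(N)$ and $\{\alpha_j'\}$ is a permutation of the original set; this is straightforward because $p\nmid N$ makes $\gamma_\ell$ and the upper-triangular $T_p$ matrices commute modulo $\Gamma_0(N)$.

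The main work is parts (d) and (e). The strategy is to choose coset representatives $\{\alpha_i\}$ for $\Gamma_0(N)\backslash\Gamma_0(N/\ell)$ and observe that any sum of the form $\sum_i f|_k\alpha_i$ lies automatically in $\M_k(N/\ell)$. When $\ell\|N$ the index is $\ell+1$, and by taking representatives built from the matrices $\begin{pmatrix}1 & j\\ 0 & \ell\end{pmatrix}$ together with one representative related to $\gamma_\ell$, the averaged sum rearranges into a linear combination of $U_\ell f$ and $w_\ell f$; normalising correctly gives $U_\ell f + \ell^{k/2-1} w_\ell f \in \M_k(N/\ell)$. When $\ell^2\mid N$ the index drops to $\ell$, the extra $\gamma_\ell$ representative is absent (and indeed $\gamma_\ell$ no longer makes sense), and the same averaging produces $U_\ell f$ alone. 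The main obstacle is in the explicit choice of coset representatives and the verification that the rearranged sum has the claimed shape; I would follow the representatives used in \cite{ao1970hecke} and keep careful track of the factors of $\ell^{k/2}$ introduced by the determinant convention in the slash action.
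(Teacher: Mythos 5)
The paper does not actually prove Proposition~\ref{heckeproperties}; it simply cites Sections~2 and~3 of Atkin--Lehner \cite{ao1970hecke}. So there is no in-paper argument to compare against, and I will evaluate your sketch on its own terms. Your overall plan --- double-coset/slash decompositions of $T_p$ and $U_\ell$, the explicit computation of $\gamma_\ell^2 = \ell\sigma$ with $\sigma\in\Gamma_0(N)$, $q$-expansion bookkeeping for the Hecke commutation relations, and the trace operator for (d) and (e) --- is the standard route, and it is essentially the one Atkin--Lehner take. The formulas for $T_p$, $U_\ell$ and the matrix $\gamma_\ell^2$ check out, and the parenthetical about even weight in (b) is actually unnecessary: with the $(\det\gamma)^{k/2}$ normalisation, a positive scalar matrix acts trivially for every $k$, even or odd.

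There is one genuine soft spot, in part (d). You claim that tracing $f$ down to level $N/\ell$ rearranges into (a normalisation of) $U_\ell f + \ell^{k/2-1}w_\ell f$. That is not what the trace gives: as the paper later records in equation~\eqref{explicittrace} (following McGraw), $\Tr_\ell(f) = f + \ell^{1-k/2}U_\ell w_\ell f$. The combination you want, $U_\ell f + \ell^{k/2-1}w_\ell f$, comes from applying the trace to $w_\ell f$ instead of $f$ and then using $w_\ell^2 = \mathrm{id}$:
\begin{equation*}
    \Tr_\ell(w_\ell f) \;=\; w_\ell f + \ell^{1-k/2}U_\ell w_\ell^2 f \;=\; w_\ell f + \ell^{1-k/2}U_\ell f \;\in\; \M_k(N/\ell),
\end{equation*}
and scaling by $\ell^{k/2-1}$ gives (d). Since the trace does not depend on the choice of coset representatives, there is no clever choice that would make $\Tr_\ell(f)$ itself coincide with your claimed expression, so the redirection is not cosmetic. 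Also be aware that the matrices $\begin{smatrix}1&j\\0&\ell\end{smatrix}$ have determinant $\ell$ and so are not themselves coset representatives for $\Gamma_0(N)\backslash\Gamma_0(N/\ell)$; the passage from the determinant-one representatives to the $U_\ell$/$w_\ell$ slash matrices is exactly where the normalising powers of $\ell$ must be tracked. For (e) a more economical option than the trace is the same right-coset permutation argument used for (a): for $\gamma\in\Gamma_0(N/\ell)$ one can check directly that $\begin{smatrix}1&j\\0&\ell\end{smatrix}\gamma = \delta_j\begin{smatrix}1&j'\\0&\ell\end{smatrix}$ with $\delta_j\in\Gamma_0(N)$ and $j\mapsto j'$ a permutation of $\Z/\ell\Z$, using that $\ell\mid c$ for $\gamma = \begin{smatrix}a&b\\c&d\end{smatrix}\in\Gamma_0(N/\ell)$ when $\ell^2\mid N$. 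Finally, in the $T_pw_\ell = w_\ell T_p$ step, the correcting factor $\delta_j\in\Gamma_0(N)$ should appear on the left of the rewritten matrix (so it can be absorbed by the modularity of $f$), not on the right of $\gamma_\ell$ as you have written.
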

\begin{remark}
    The above proposition also holds if we restrict to cusp forms.
\end{remark}

\section{Definition of Newforms}\label{sec22}
Let $M$ be a positive integer dividing $N$ and $e\mid(N/M)$. We consider the map
\begin{align}\label{defembedding}
    i_e\colon\M_k(M)&\to\M_k(N)\\
    f&\mapsto(z\mapsto f(ez))\notag.
\end{align}
\begin{proposition}
    The map $i_e$ is well defined.
\end{proposition}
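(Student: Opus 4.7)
The plan is to verify the three conditions of Definition \ref{defmodformscong} for the function $g(z) := f(ez)$, assuming $f \in \M_k(M)$. Condition 1 (holomorphy on $\H$) is immediate: $z \mapsto ez$ is a holomorphic self-map of $\H$ (since $e > 0$), and the composition of holomorphic functions is holomorphic.

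For the modularity condition, let $\gamma = \begin{smatrix}a&b\\c&d\end{smatrix} \in \Gamma_0(N)$. The key computational observation is that the auxiliary matrix $\gamma' := \begin{smatrix}a & eb \\ c/e & d\end{smatrix}$ lies in $\Gamma_0(M)$: its entries are integers because $e \mid N/M$ forces $M \mid c/e$ (writing $c = Nm$ and $N/e = Mt$ gives $c/e = Mtm$); its determinant is $ad - (eb)(c/e) = ad - bc = 1$; and its lower-left entry is divisible by $M$. A direct computation then gives $e \cdot (\gamma \cdot z) = \gamma' \cdot (ez)$, so applying the modularity of $f$ under $\gamma' \in \Gamma_0(M)$ yields
\begin{equation*}
    g(\gamma \cdot z) = f\bigl(\gamma' \cdot (ez)\bigr) = \bigl((c/e)(ez) + d\bigr)^k f(ez) = (cz+d)^k\, g(z),
\end{equation*}
which is exactly $g \mid_k \gamma = g$. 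Equivalently, if one defines $\delta_e = \begin{smatrix}e & 0 \\ 0 & 1\end{smatrix} \in \GL_2(\Q)^+$, then $g = e^{-k/2}(f \mid_k \delta_e)$ and the calculation above is the statement $\delta_e\, \Gamma_0(N)\, \delta_e^{-1} \subseteq \Gamma_0(M)$ on the relevant matrices.

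For condition 3, the holomorphy of $g \mid_k \alpha$ at infinity for every $\alpha \in \SL_2(\Z)$, I will use the slash-operator rewriting $g \mid_k \alpha = e^{-k/2}\, f \mid_k (\delta_e \alpha)$. The matrix $\delta_e \alpha$ lies in $M_2(\Z) \cap \GL_2(\Q)^+$ with determinant $e$, so by the Hermite normal form (or a direct coset computation) we can factor $\delta_e \alpha = \beta \tau$ with $\beta \in \SL_2(\Z)$ and $\tau = \begin{smatrix}p & q \\ 0 & r\end{smatrix}$ upper triangular with integer entries and $pr = e$. Then $g \mid_k \alpha = e^{-k/2} (f \mid_k \beta) \mid_k \tau$; since $f \in \M_k(M)$ the form $f \mid_k \beta$ already has a Fourier expansion in $q_h = e^{2\pi i z/h}$ with only nonnegative powers, and acting by an upper-triangular $\tau$ simply rescales $z$ by $p/r > 0$ and shifts by $q/r$, preserving this property. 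So $g \mid_k \alpha$ is holomorphic at $\infty$.

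The routine steps are the holomorphy claims; the only step requiring a genuine idea is the modularity step, where one must notice the correct conjugate matrix $\gamma'$ and use the hypothesis $e \mid N/M$ in an essential way to guarantee $c/e$ is divisible by $M$. Without this divisibility condition the map $i_e$ would fail to land in $\M_k(\Gamma_0(N))$, which explains why $e$ is restricted to divisors of $N/M$ rather than arbitrary positive integers.
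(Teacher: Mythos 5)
Your proof is correct and uses the same key calculation as the paper for the modularity step: you conjugate $\gamma = \begin{smatrix}a&b\\c&d\end{smatrix}$ to $\gamma' = \begin{smatrix}a&eb\\c/e&d\end{smatrix}$, observe that $e \mid N/M$ and $N \mid c$ force $M \mid c/e$ so that $\gamma' \in \Gamma_0(M)$, and then apply modularity of $f$ under $\gamma'$. Where you go beyond the paper is in condition~3: the paper dispenses with it via the remark ``the holomorphicity of $f$ carries over to $g$,'' which really only addresses condition~1, whereas you supply a genuine argument for holomorphy at infinity by writing $g \mid_k \alpha = e^{-k/2}\, f \mid_k (\delta_e\alpha)$, factoring $\delta_e\alpha = \beta\tau$ via Hermite normal form with $\beta \in \SL_2(\Z)$ and $\tau$ upper triangular, and noting that upper-triangular slashing preserves nonnegativity of $q$-expansion exponents. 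This is a real completion of a gap the paper leaves implicit, not merely extra exposition, so your proof is strictly more complete.
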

\begin{proof}
    We need to show that $g(z):=f(ez)$ is in $\M_k(N)$. The holomorphicity of $f$ carries over to $g$ so it suffices to check that $g$ satisfies the modularity condition $g|_k\gamma=g$ for any $\gamma\in\Gamma_0(N)$. So, let $\gamma=\begin{smatrix}a&b\\c&d\end{smatrix}$ be an arbitrary matrix in $\Gamma_0(N)$. We have
    \begin{align*}
        (g|_k\gamma)(z)&=(cz+d)^{-k}g(\gamma\cdot z)\\[8pt]
        &=(cz+d)^{-k}g\left(\frac{az+b}{cz+d}\right)\\[8pt]
        &=(cz+d)^{-k}f\left(e\frac{az+b}{cz+d}\right)\\[8pt]
        &=(cz+d)^{-k}f\left(\frac{a(ez)+be}{\frac{c}{e}(ez)+d}\right)\\[8pt]
        &=(cz+d)^{-k}\left(\frac{c}{e}(ez)+d\right)^kf(ez)\qquad(*)\\[8pt]
        &=g(z),
    \end{align*}
    as required. Note that $(*)$ follows since $\begin{smatrix}a&be\\c/e&d\end{smatrix}\in\Gamma_0(M)$. In particular, $N\mid c$ and $e\mid(N/M)$ implies that $(c/e)\mid M$.
\end{proof}
\begin{remark}
    The above arguments also hold if we restrict $i_e$ to a map over cusp forms.
\end{remark}
The map $i_e$ thus gives an embedding of $\M_k(M)$ into $\M_k(N)$. Using this embedding, we make the following definition.
\begin{definition}
    The space of \emph{oldforms} $\M_k(N)^{\old}$ is the $\C$-linear subspace of $\M_k(N)$ spanned by the maps $i_e\colon\M_k(M)\to\M_k(N)$ for all $M$ strictly dividing $N$ and $e|(N/M)$. More explicitly,
    \begin{equation*}
        \M_k(N)^{\old}=\sum_{\substack{M\mid N\\M\neq N\\e\mid (N/M)}}i_e(\M_k(M)).
    \end{equation*}
\end{definition}

Now that we have a formal notion of "old", we can describe what it means to be "new". Following Atkin and Lehner \cite{ao1970hecke} we restrict our attention to cusp forms. In this direction let
\begin{equation*}
    \mcS_k(N)^{\old}=\sum_{\substack{M\mid N\\M\neq N\\e\mid (N/M)}}i_e(\mcS_k(M)).
\end{equation*}
We now consider the \emph{Petersson inner product} $\langle\cdot,\cdot\rangle_N\colon\mcS_k(N)\times\mcS_k(N)\to\C$, which is given by
\begin{equation}\label{petersson}
    \langle f,g\rangle_N:=\int_{z\in D_N}\!f(z)\overline{g(z)}y^k\frac{\mathrm{d}x\mathrm{d}y}{y^2},
\end{equation}
where $z=x+iy$ and $D_N$ is a fundamental domain for the action of $\Gamma_0(N)$ on the upper half plane $\H$. One can check that this is a well-defined inner product on $\mcS_k(N)$ and the integral \eqref{petersson} does not depend on the fundamental domain chosen \cite[Chapter 5.4]{diamond2005first}. Moreover, for a prime $p\nmid N$ the Hecke operators $T_p$ are Hermitian with respect to the Petersson inner product \cite[Lemma 13]{ao1970hecke}. In other words,
\begin{equation}\label{hermitian}
    \langle T_pf,g\rangle_N=\langle f,T_pg\rangle_N.
\end{equation}
\begin{definition}
    The space of \emph{newforms} in $S_k(N)$ is the orthogonal complement of $S_k(N)^{\old}$ with respect to the Petersson inner product. That is,
    \begin{equation*}
        S_k(N)^{\new}=\{f\in S_k(N)\::\:\langle f,g\rangle_N=0\text{ for all }g\in S_k(N)_{\old}\}.
    \end{equation*}
\end{definition}
Since $\mcS_k(N)$ is finite dimensional, it then follows that 
\begin{equation*}
    \mcS_k(N)=\mcS_k(N)^{\old}\oplus\mcS_k(N)^{\new}.
\end{equation*}
In particular, $\mcS_k(N)^{\old}\cap\mcS_k(N)^{\new}=\{0\}$. To help identify oldforms and newforms we will make use of the following theorem (adapted from \cite{ao1970hecke}).
\begin{theorem}\label{maintheorem}
    Let $f(q)=\sum_{n=1}^\infty a_nq^n\in\mcS_k(N)$ and suppose that $a_n=0$ whenever $\gcd(n,N)=~1$. Then $f$ is an oldform.
\end{theorem}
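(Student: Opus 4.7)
The plan is to induct on the number $\omega(N)$ of distinct prime divisors of $N$, decomposing $f$ via the ``$\ell$-divisible projection'' $\Pi_\ell := i_\ell \circ U_\ell$. Writing $V_\ell$ as shorthand for $i_\ell$, one checks directly from the Fourier-coefficient formulas that $(\Pi_\ell h)(q) = \sum_{\ell \mid n} a_n(h)\, q^n$, so $\Pi_\ell f$ extracts the $\ell$-divisible part of $f$ and $f - \Pi_\ell f$ the $\ell$-indivisible part. The base case $\omega(N) = 0$ (i.e.\ $N = 1$) is vacuous: every $n \geq 1$ satisfies $\gcd(n,1) = 1$, so the hypothesis forces $f = 0$, trivially an oldform.

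The next step is the prime-power case $N = \ell^\alpha$. Here the hypothesis becomes $a_n(f) = 0$ whenever $\ell \nmid n$, so $f = V_\ell g$ where $g := U_\ell f$. If $\alpha \geq 2$, Proposition \ref{heckeproperties}(e) places $g$ in $\mcS_k(N/\ell)$, and then $f \in V_\ell(\mcS_k(N/\ell)) \subseteq \mcS_k(N)^{\old}$. The subtler sub-case is $\alpha = 1$: Proposition \ref{heckeproperties}(a) only gives $g \in \mcS_k(\ell)$, and I would need to upgrade this to $g \in \mcS_k(1)$. The plan there is to test the identity $f|_k \gamma = f$ at $\gamma = \begin{smatrix} 1 & 0 \\ \ell & 1 \end{smatrix} \in \Gamma_0(\ell)$; a short calculation using $f(z) = g(\ell z)$ translates this into $g|_k \begin{smatrix} 1 & 0 \\ 1 & 1 \end{smatrix} = g$. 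Combined with $g|_k \begin{smatrix} 1 & 1 \\ 0 & 1 \end{smatrix} = g$ (automatic from the $\Gamma_0(\ell)$-modularity of $f$) and the fact that these two matrices generate $\SL_2(\Z)$, this yields $g \in \mcS_k(1)$, so $f \in V_\ell(\mcS_k(1)) \subseteq \mcS_k(\ell)^{\old}$.

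For the inductive step, fix a prime $\ell \mid N$ and decompose $f = \Pi_\ell f + (f - \Pi_\ell f)$. I would first show $\Pi_\ell f \in \mcS_k(N)^{\old}$ by the same prime-power strategy: when $\ell^2 \mid N$, Proposition \ref{heckeproperties}(e) supplies $U_\ell f \in \mcS_k(N/\ell)$ directly, so $\Pi_\ell f = V_\ell U_\ell f \in V_\ell(\mcS_k(N/\ell)) \subseteq \mcS_k(N)^{\old}$; when $\ell \| N$, Proposition \ref{heckeproperties}(d) only gives $U_\ell f + \ell^{k/2-1} w_\ell f \in \mcS_k(N/\ell)$, so $\Pi_\ell f$ differs from a manifest oldform by the correction $\ell^{k/2-1} V_\ell w_\ell f$, which requires separate analysis. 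The remaining piece $f - \Pi_\ell f$ has nonzero coefficients only at indices $n$ with $\ell \nmid n$ and $\gcd(n, N) > 1$, hence indices divisible by some prime of $N$ distinct from $\ell$; this is exactly the recursive structure needed to invoke the inductive hypothesis, provided one can descend $f - \Pi_\ell f$ (or a suitable modification of it) to a level with one fewer prime factor.

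I expect the main obstacle to be the $\ell \| N$ branch, where the Atkin-Lehner correction $w_\ell f$ obstructs a direct descent via $U_\ell$ alone: one must verify that $V_\ell w_\ell f$ itself lies in $\mcS_k(N)^{\old}$, which requires a careful study of how $w_\ell$ acts on forms whose Fourier coefficients vanish on arguments coprime to $N$. An alternative route is to expand the identity $\prod_i (I - \Pi_{\ell_i})\, f = 0$ (valid by hypothesis) via inclusion-exclusion into $f = \sum_{\emptyset \neq S} (-1)^{|S|+1} \Pi_{e_S} f$ with $e_S = \prod_{i \in S} \ell_i$, reducing the problem to showing each $\Pi_{e_S} f = V_{e_S} U_{e_S} f$ is an oldform; but this detour succeeds cleanly only when every $\ell \in S$ satisfies $\ell^2 \mid N$, so for squarefree $N$ (the main focus of Chapter \ref{chapsquarefree}) the involution $w_\ell$ again seems unavoidable, and its interaction with $V_\ell$ is where the true content of the theorem lies.
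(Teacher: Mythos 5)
The thesis does not prove Theorem \ref{maintheorem} itself; it cites \cite[Theorem~1]{ao1970hecke} and \cite{carlton1999result}, so there is no in-paper argument to compare yours against. Your base case and prime-power case are sound: in particular, the $N=\ell$ step — deducing $g|_k\begin{smatrix}1&0\\1&1\end{smatrix}=g$ from $f=i_\ell g\in\mcS_k(\ell)$ via the matrix $\begin{smatrix}1&0\\\ell&1\end{smatrix}$, then using that $\begin{smatrix}1&1\\0&1\end{smatrix}$ and $\begin{smatrix}1&0\\1&1\end{smatrix}$ generate $\SL_2(\Z)$ to conclude $g\in\mcS_k(1)$ — is a clean and correct elementary argument.

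The inductive step, however, has two genuine gaps, which you acknowledge but do not close, and they are exactly where the content of the theorem lives. First, when $\ell$ divides $N$ exactly once, the series $\Pi_\ell f=i_\ell(U_\ell f)$ need not even belong to $\mcS_k(N)$: Proposition \ref{heckeproperties}(a) only places $U_\ell f$ in $\mcS_k(N)$, and $i_\ell$ applied to a level-$N$ form lands in level $N\ell$, so the split $f=\Pi_\ell f+(f-\Pi_\ell f)$ is not a priori a decomposition inside $\mcS_k(N)$; making it one would require $U_\ell f\in\mcS_k(N/\ell)$, which Proposition \ref{heckeproperties}(d) does \emph{not} give (it only lowers the level of the combination $U_\ell f+\ell^{k/2-1}w_\ell f$). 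Second, even where $\Pi_\ell f$ is a bona fide level-$N$ oldform, the residual $f-\Pi_\ell f$ is still a form of level $N$ with $\omega(N)$ distinct prime factors; knowing that its Fourier coefficients vanish off indices divisible by some prime of $N$ other than $\ell$ is a statement about the $q$-expansion, not a descent to a level with fewer prime divisors, so the inductive hypothesis (indexed by $\omega$) cannot be invoked without an additional level-lowering argument — and supplying that argument is essentially re-deriving the theorem at level $N/\ell$. For squarefree $N$ every prime divides $N$ exactly once, so both obstructions recur at every inductive step; the actual Atkin--Lehner proof runs a more delicate induction (on $N$ itself rather than on $\omega(N)$) with a separate analytic lemma, and does not bypass this via the $\Pi_\ell$-projection. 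As written, your proposal is a reasonable outline with the hard part still open, not a proof.
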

\begin{proof}
    See \cite[Theorem 1]{ao1970hecke} for the original proof or \cite{carlton1999result} for a more recent proof involving representation theory.
\end{proof}

\section{Properties of Newforms}\label{propsec}
We now prove some statements regarding oldforms and newforms. Namely, we are interested in how newforms interact with the operators defined in Section \ref{secHecke}. For the most part we will follow Section 4 of \cite{ao1970hecke} whilst adopting more modern notation.\\
\\
We begin by utilising the Hermitian property of $T_p$ for primes $p\nmid N$ (see \eqref{hermitian}). In particular, we have the following result.
\begin{lemma}
    The space of oldforms $\mcS_k(N)^{\old}$ and newforms $\mcS_k(N)^{\new}$ are preserved under the action of the Hecke operators $T_p$ for $p\nmid N$.
\end{lemma}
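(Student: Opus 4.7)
The plan is to handle the oldform space first via a direct computation on Fourier coefficients, and then deduce the newform case formally using the self-adjointness of $T_p$ recorded in \eqref{hermitian}.

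For the oldforms, I would show the stronger statement that $T_p$ commutes with each embedding $i_e$, i.e. for $g \in \mcS_k(M)$ with $M \mid N$ and $e \mid N/M$, one has $T_p\,i_e(g) = i_e(T_p g)$. Since $p \nmid N$ we have $\gcd(p,e) = 1$ and $p \nmid M$, so $T_p$ on $\mcS_k(M)$ is given by the same coefficient formula as $T_p$ on $\mcS_k(N)$. The Fourier coefficients of $i_e(g)$ satisfy $a_n(i_e g) = a_{n/e}(g)$ if $e \mid n$ and $0$ otherwise, so I would verify the identity $a_n(T_p i_e g) = a_n(i_e T_p g)$ by splitting into the cases $e \nmid n$, $e \mid n$ with $p \nmid n$, and $e \mid n$ with $p \mid n$. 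In each case the coprimality of $p$ and $e$ lets one move $e$ past $p$ inside the indices, which collapses both expressions to the same thing. This yields $T_p\,i_e(g) \in i_e(\mcS_k(M)) \subseteq \mcS_k(N)^{\old}$, and by linearity $T_p$ preserves $\mcS_k(N)^{\old}$.

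For the newforms, I would invoke \eqref{hermitian}. If $f \in \mcS_k(N)^{\new}$ and $h \in \mcS_k(N)^{\old}$, then by the first part $T_p h \in \mcS_k(N)^{\old}$, so
\begin{equation*}
    \langle T_p f, h \rangle_N = \langle f, T_p h \rangle_N = 0.
\end{equation*}
Thus $T_p f$ lies in the orthogonal complement of $\mcS_k(N)^{\old}$, namely $\mcS_k(N)^{\new}$.

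The only real obstacle is step one: writing out the coefficient identity $T_p\,i_e = i_e\,T_p$ carefully enough to be sure the case analysis is exhaustive. It is purely mechanical once one notes that $\gcd(p,e) = 1$, so divisibility by $ep$ is equivalent to separate divisibility by $e$ and by $p$; everything else is formal.
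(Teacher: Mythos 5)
Your proposal follows the paper's argument exactly: establish that $T_p$ commutes with each $i_e$ (so oldforms are preserved), then use the self-adjointness of $T_p$ from \eqref{hermitian} to conclude that newforms, as the orthogonal complement, are preserved as well. The only difference is that you spell out the Fourier-coefficient case analysis in more detail, while the paper states the commutation identity directly.
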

\begin{proof}
    For oldforms, the statement follows since $T_p$ commutes with the $i_e$ embedding maps. That is,  if $f=\sum_{n=1}^\infty a_n(f)q^n$ is a cusp form then
    \begin{equation*}
        T_p(i_ef)=i_e(T_pf)=\sum_{n=1}^\infty[a(pn)+p^{2k-1}a(n/p)]q^{ne}.
    \end{equation*}
    For newforms, consider an arbitrary $f\in\mcS_k(N)^{\new}$. Then for any $p\nmid N$ and $g~\in~\mcS_k(N)^{\old}$ we have
    \begin{equation*}
        \langle T_p(f),g\rangle_N=\langle f,T_p(g)\rangle_N=0,
    \end{equation*}
    since $T_p(g)\in S_k(N)^{\old}$. As a consequence, $T_p(f)\in\mcS_k(N)^{\new}$ as required.
\end{proof}
In light of this result, we now analyse how the Hecke operators behave on the space of newforms.  From here onwards, we say $f\in\mcS_k(N)$ is an \emph{eigenform} if $f$ is an eigenfunction of all the Hecke operators $T_p$ with $p\nmid N$. This means that an eigenform is also an eigenfunction for any of the Hecke operators $T_m$ with $\gcd(m,N)=1$.
\begin{lemma}\label{basislemma}
    There exists a basis of eigenforms for $\mcS_k(N)^{\new}$.
\end{lemma}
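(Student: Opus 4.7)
The plan is to apply the spectral theorem for commuting families of Hermitian operators on a finite-dimensional inner product space. Three facts combine to make this work: (i) $\mcS_k(N)^{\new}$ is finite-dimensional, being a subspace of $\mcS_k(N)$; (ii) by the previous lemma, each $T_p$ with $p \nmid N$ restricts to a well-defined linear operator on $\mcS_k(N)^{\new}$; (iii) the Petersson inner product \eqref{petersson} restricts to an inner product on $\mcS_k(N)^{\new}$, and the Hermitian property \eqref{hermitian} then gives that each restricted $T_p$ is self-adjoint on $\mcS_k(N)^{\new}$.

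First, I would note that the family $\{T_p : p \nmid N\}$ is commuting by Proposition \ref{heckeproperties}(c). Since each operator is self-adjoint with respect to the restricted Petersson pairing, each individual $T_p$ is diagonalizable on $\mcS_k(N)^{\new}$, and the eigenspaces are mutually orthogonal.

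Next, I would simultaneously diagonalize the family. The standard argument is inductive: start with the decomposition of $\mcS_k(N)^{\new}$ into $T_{p_1}$-eigenspaces for the smallest prime $p_1 \nmid N$. Because $T_{p_2}$ commutes with $T_{p_1}$, it preserves each $T_{p_1}$-eigenspace, and by self-adjointness it can be further diagonalized on each such eigenspace. Iterating this process over the (a priori infinite) list of primes $p \nmid N$ gives a decreasing chain of joint eigenspace decompositions, which must stabilize after finitely many steps since $\dim \mcS_k(N)^{\new} < \infty$. The resulting orthogonal decomposition yields a basis consisting of simultaneous eigenforms for all $T_p$ with $p \nmid N$.

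The main subtlety to be careful with is not that any single step is difficult, but that one is diagonalizing an infinite family; the dimension bound is what makes the iteration terminate, and I would make that termination explicit. A secondary point worth verifying along the way is that the restricted inner product on $\mcS_k(N)^{\new}$ is indeed an inner product (positive-definiteness is inherited), so that the spectral theorem applies verbatim.
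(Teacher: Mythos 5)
Your proposal takes essentially the same approach as the paper: both invoke the spectral theorem for a commuting family of Hermitian operators, using the preceding lemma that $\mcS_k(N)^{\new}$ is preserved by the $T_p$ (for $p\nmid N$), the commutativity from Proposition \ref{heckeproperties}, and the Hermitian property with respect to the Petersson inner product. You have fleshed out the iterative simultaneous-diagonalization argument and the termination step that the paper leaves to the cited reference, but the method is the same.
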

\begin{proof}
    Since the $T_p$ operators are Hermitian (for $p\nmid N$) and commute with one another (Proposition \ref{heckeproperties}), the result follows by a version of the spectral theorem (see, for example \cite[Theorem 11 of Chapter IX in Volume 1]{gantmakher1959theory}).
\end{proof}
\begin{remark} 
    Lemma \ref{basislemma} also holds for $\mcS_k(N)$ or $\mcS_k(N)^{\old}$ by the same reasoning.
\end{remark}
\begin{lemma}
    For any eigenform $f(q)=\sum_{n=1}^\infty a_n(f)q^n\in\mcS_k(N)^{\new}$, we have $a_1(f)\neq 0$.
\end{lemma}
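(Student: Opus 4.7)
The plan is to derive a contradiction from the assumption $a_1(f)=0$, using Theorem \ref{maintheorem} together with the fact that eigenforms (being nonzero by convention) cannot lie in both $\mcS_k(N)^{\old}$ and $\mcS_k(N)^{\new}$.

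First, I would unpack what it means for $f$ to be an eigenform. For every prime $p\nmid N$ there is a scalar $\lambda_p\in\C$ with $T_pf=\lambda_pf$, and more generally $T_mf=\lambda_mf$ for every $m$ coprime to $N$. Combining these two observations, I would then take the first Fourier coefficient of both sides: using Definition \ref{hecke}, for $\gcd(m,N)=1$ one obtains $a_1(T_mf)=a_m(f)$, since the "lower" term $p^{k-1}a_{n/p}(f)$ vanishes when $n=1$ (and this extends multiplicatively to composite $m$ coprime to $N$). Thus
\begin{equation*}
    a_m(f)=a_1(T_mf)=\lambda_m\,a_1(f)\qquad\text{for all }m\geq 1\text{ with }\gcd(m,N)=1.
\end{equation*}

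Next, I would suppose for contradiction that $a_1(f)=0$. The displayed identity immediately forces $a_m(f)=0$ for every $m$ coprime to $N$. Theorem \ref{maintheorem} then tells us that $f$ is an oldform, i.e. $f\in\mcS_k(N)^{\old}$. But by hypothesis $f\in\mcS_k(N)^{\new}$, and the decomposition $\mcS_k(N)=\mcS_k(N)^{\old}\oplus\mcS_k(N)^{\new}$ established in Section \ref{sec22} gives $\mcS_k(N)^{\old}\cap\mcS_k(N)^{\new}=\{0\}$, so $f=0$. This contradicts the implicit assumption that an eigenform is nonzero, completing the proof.

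No step here is genuinely difficult; the only point that requires mild care is the identity $a_1(T_mf)=a_m(f)$ for $\gcd(m,N)=1$. This is immediate for prime $m=p\nmid N$ directly from Definition \ref{hecke}, and for prime powers $p^s$ one proves it by induction using $T_{p^s}=T_pT_{p^{s-1}}-p^{k-1}T_{p^{s-2}}$, noting that the first coefficient of $T_{p^{s-2}}f$ multiplies a factor whose contribution at $n=1$ vanishes; the multiplicative case $T_{mm'}=T_mT_{m'}$ for coprime $m,m'$ then patches these together. Once that coefficient identity is in hand, the rest of the argument is a one-line invocation of Theorem \ref{maintheorem} and the old/new decomposition.
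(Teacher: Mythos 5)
Your argument is essentially identical to the paper's: assume $a_1(f)=0$, use the eigenform relation $T_mf=\lambda_m f$ and the identity $a_1(T_m f)=a_m(f)$ to conclude $a_m(f)=0$ for all $m$ coprime to $N$, invoke Theorem~\ref{maintheorem} to deduce $f$ is old, and use $\mcS_k(N)^{\old}\cap\mcS_k(N)^{\new}=\{0\}$ to force $f=0$. The extra detail you give justifying $a_1(T_mf)=a_m(f)$ via the inductive definition of $T_{p^s}$ and multiplicativity is fine but not needed beyond what the paper tacitly assumes.
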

\begin{proof}
    Suppose for a contradiction that $a_1(f)=0$. Let $m$ be a positive integer such that $\gcd(m,N)=1$. Since $f$ is an eigenform there exists $\lambda_m\in\C$ such that $T_mf=\lambda_mf$. Hence,
    \begin{equation*}
        a_m(f)=a_1(T_m(f))=\lambda_ma_1(f)=0.
    \end{equation*}
    Applying Theorem \ref{maintheorem} then gives $f\in\mcS_k(N)^{\old}$ and thus $f=0$ since $\mcS_k(N)^{\new}$ and $\mcS_k(N)^{\old}$ intersect trivially. This contradicts $f$ being an eigenform so we must have $a_1(f)\neq 0$ as required.
\end{proof}
The previous lemma implies that we can normalise the eigenforms $f\in\mcS_k(N)^{\new}$ to have $a_1(f)=1$.
\begin{definition}
    A \emph{primitive form} (of weight $k$ for $\Gamma_0(N)$) is an eigenform $f\in\mcS_k(N)^{\new}$ with $a_1(f)=1$. 
\end{definition}
Let $f$ be a primitive form. Writing $\lambda_p(f)$ for the $T_p$ eigenvalue of $f$ (so that $T_pf~=~\lambda_p(f)f$) gives
\begin{equation}\label{heckevalueeqn}
    a_{pn}(f)+p^{k-1}a_{n/p}(f)=\lambda_p(f)a_n(f).
\end{equation}
Thus, by setting $n=1$,
\begin{equation}\label{tpevalue}
    \lambda_p(f)=a_p(f).
\end{equation}
This demonstrates how elegantly newforms behave under the action of the Hecke operators. In fact, substituting \eqref{tpevalue} into \eqref{heckevalueeqn} gives
\begin{equation}\label{inductivehecke}
    a_{np}(f)-a_{n}(f)a_{p}(f)+p^{k-1}a_{n/p}(f)=0
\end{equation}
which we can use to inductively compute $a_{m}$ when $\gcd(m,N)=1$, provided that we know the Hecke eigenvalues $\lambda_p(f)=a_p(f)$ for each prime $p\nmid N$. We now define an equivalence relation to further analyse this behaviour.
\begin{definition}\label{eqdef}
    Let $f$ and $g$ be eigenforms in $\mcS_k(N)$. We write $f\sim g$ if $f$ and $g$ have the same $T_p$ eigenvalues for all $p\nmid N$.
\end{definition}
\begin{lemma}\label{primitiveproof}
    If $f_1$ and $f_2$ are primitive forms in $\mcS_k(N)^{\new}$ with $f_1\sim f_2$ then $f_1=f_2$. 
\end{lemma}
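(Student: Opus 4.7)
The plan is to consider the difference $h := f_1 - f_2$ and apply the previous lemma (eigenforms in $\mcS_k(N)^{\new}$ have $a_1 \neq 0$) to force $h = 0$.

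First I would observe that $\mcS_k(N)^{\new}$ is a $\C$-linear subspace of $\mcS_k(N)$ (being the orthogonal complement of $\mcS_k(N)^{\old}$), so $h \in \mcS_k(N)^{\new}$. Next, I would check that $h$ is an eigenform: since $f_1 \sim f_2$, for every prime $p \nmid N$ both $f_1$ and $f_2$ share a common $T_p$-eigenvalue $\lambda_p$, hence by linearity of $T_p$,
\begin{equation*}
    T_p h = T_p f_1 - T_p f_2 = \lambda_p f_1 - \lambda_p f_2 = \lambda_p h.
\end{equation*}
Thus $h$ is an eigenfunction of every $T_p$ with $p \nmid N$, so $h$ is an eigenform in $\mcS_k(N)^{\new}$.

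On the other hand, the normalisation condition $a_1(f_1) = a_1(f_2) = 1$ gives
\begin{equation*}
    a_1(h) = a_1(f_1) - a_1(f_2) = 0.
\end{equation*}
By the previous lemma, any nonzero eigenform $g \in \mcS_k(N)^{\new}$ must satisfy $a_1(g) \neq 0$. Taking the contrapositive and applying it to $h$ forces $h = 0$, i.e.\ $f_1 = f_2$, which is the desired conclusion.

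There is no real obstacle here: the content is entirely packaged into the preceding lemma. The one point to be slightly careful about is that $h$ genuinely qualifies as an eigenform in the sense of Definition \ref{eqdef} (an eigenfunction for every $T_p$ with $p\nmid N$ simultaneously), which follows immediately from the hypothesis $f_1\sim f_2$ via the linearity computation above.
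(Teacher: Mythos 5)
Your proof is correct. The argument you give—showing $h := f_1 - f_2$ lies in $\mcS_k(N)^{\new}$, satisfies $T_p h = \lambda_p h$ for all $p \nmid N$ by linearity, has $a_1(h)=0$, and hence must vanish by the contrapositive of the preceding lemma—is sound, and you handle the potential pitfall correctly by noting that the preceding lemma applies only to nonzero eigenforms. The paper takes a very slightly different packaging: rather than invoking the $a_1\neq 0$ lemma, it uses the recurrence \eqref{inductivehecke} to conclude directly that $a_m(f_1)=a_m(f_2)$ for all $\gcd(m,N)=1$, then applies Theorem \ref{maintheorem} to $f_1-f_2$ and the trivial intersection $\mcS_k(N)^{\new}\cap\mcS_k(N)^{\old}=\{0\}$. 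Both routes ultimately rest on Theorem \ref{maintheorem} and the trivial intersection; yours reuses the preceding lemma as a black box (which is arguably cleaner and avoids redundancy), while the paper's re-derives the vanishing of coefficients away from $N$ explicitly. The two are mathematically equivalent, and neither has an advantage in generality.
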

\begin{proof}
    Since $f_1$ is equivalent to $f_2$, \eqref{inductivehecke} tells us that $a_m(f_1)=a_m(f_2)$ whenever $\gcd(m,N)=~1$. Now, $f_1-f_2\in\mcS_k(N)^{\new}$ and applying Theorem \ref{maintheorem} gives $f_1-f_2\in\mcS_k(N)^{\old}$. As $\mcS_k(N)^{\new}\cap\mcS_k(N)^{\old}=\{0\}$ it follows that $f_1=f_2$ as required.
\end{proof}

\begin{lemma}\label{oldisnew}
    We have
    \begin{equation*}
        \mcS_k(N)=\sum_{M\mid N}\sum_{e\mid(N/M)}i_e(\mcS_k(M)^{\new}),
    \end{equation*}
    so that any eigenform $f\in\mcS_k(N)$ is equivalent (in the sense of Definition \ref{eqdef}) to a new eigenform $h\in S_k(M)^{\new}$ where $M$ is a divisor of $N$.
\end{lemma}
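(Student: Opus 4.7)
The plan is to prove the subspace equality by strong induction on $N$, and then deduce the eigenform statement by decomposing $\mcS_k(N)$ into simultaneous Hecke eigenspaces and locating $f$ inside one of them.

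For the induction, the base case $N=1$ is immediate since $\mcS_k(1)=\mcS_k(1)^{\new}$ and the right-hand side collapses to $i_1(\mcS_k(1)^{\new})$. For the inductive step, the decomposition $\mcS_k(N)=\mcS_k(N)^{\new}\oplus\mcS_k(N)^{\old}$ established in Section \ref{sec22} handles $\mcS_k(N)^{\new}$ via the term $M=N$, $e=1$, and by definition
\begin{equation*}
\mcS_k(N)^{\old}=\sum_{\substack{M\mid N\\ M\neq N\\ e\mid(N/M)}}i_e(\mcS_k(M)).
\end{equation*}
Applying the inductive hypothesis to each $\mcS_k(M)$ with $M<N$, together with the trivial identity $i_e\circ i_{e'}=i_{ee'}$, rewrites each $i_e(\mcS_k(M))$ as a sum of $i_{ee'}(\mcS_k(M')^{\new})$ over $M'\mid M$ and $e'\mid(M/M')$. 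Since $ee'$ then divides $(N/M)(M/M')=N/M'$ and $M'\mid N$ with $M'<N$, we get containment in the target sum; conversely, any term $i_{\tilde e}(\mcS_k(M')^{\new})$ with $M'<N$ appears by choosing $M=M'$ and $e'=1$.

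For the eigenform statement, a direct $q$-expansion computation shows that, for any prime $p\nmid N$ and any $e\mid(N/M)$ (so $\gcd(p,e)=1$), we have $T_p\circ i_e=i_e\circ T_p$. Consequently, applying $i_e$ to a primitive form $h\in\mcS_k(M)^{\new}$ produces a $T_p$-eigenvector in $\mcS_k(N)$ with the same eigenvalue $a_p(h)$ for every $p\nmid N$. By Lemma \ref{basislemma} each $\mcS_k(M)^{\new}$ admits a basis of primitive forms, and combined with the first half of the lemma, this produces a spanning set of $\mcS_k(N)$ consisting of simultaneous eigenvectors of $\{T_p:p\nmid N\}$. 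Decomposing $\mcS_k(N)$ into the joint eigenspaces $V_\mu$ of this commuting family (a direct sum, since distinct systems of eigenvalues give linearly independent eigenspaces), we see that each $V_\mu$ is spanned by those $i_e(h)$ whose eigenvalue system equals $\mu$. Taking $\mu$ to be the system of $f$, the hypothesis $f\neq 0$ forces $V_\mu\neq 0$, so some primitive $h\in\mcS_k(M)^{\new}$ with $M\mid N$ has eigenvalue system $\mu$, i.e.\ $h\sim f$.

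The main obstacle is that the sum expressing $\mcS_k(N)$ in the first part is \emph{not} direct, so one cannot read off an equivalent primitive form by expanding $f$ in a basis of $i_e(h)$'s; the proof gets around this by passing to the direct-sum decomposition by joint Hecke eigenspaces, where the $i_e$-images automatically sort themselves according to their eigenvalue systems. The remaining ingredients---commuting $i_e$ past $T_p$ for $p\nmid N$ and extending the equivalence relation of Definition \ref{eqdef} to eigenforms at lower levels (which is meaningful because $p\nmid N$ implies $p\nmid M$)---are routine but essential.
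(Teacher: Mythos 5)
Your proof is correct and follows the same inductive structure as the paper's: induction on divisors of $N$, split $f$ into old and new parts, apply the inductive hypothesis to the old part, then use commutativity of $T_p$ (for $p\nmid N$) with $i_e$ for the eigenform claim.

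Where you add genuine value is in the second half. The paper dispatches the eigenform statement in one sentence: ``noting that the $T_p$ and $i_e$ operators commute (for $p\nmid N$) we have that any eigenform in $\mcS_k(N)$ must be equivalent to an eigenform in $\mcS_k(M)^{\new}$ for some $M\mid N$.'' As you observe, this is not immediate, because the spanning set $\{i_e(h)\}$ obtained from the first half is not a basis --- the sum is not direct --- so one cannot simply expand $f$ and read off an equivalent primitive form. Your fix is the right one: decompose $\mcS_k(N)$ into joint eigenspaces $V_\mu$ of the commuting Hermitian family $\{T_p : p\nmid N\}$ (a genuine direct sum), note that every $i_e(h)$ lies in exactly one $V_\mu$ (namely the one determined by $h$'s eigenvalue system, which is well defined since $p\nmid N\Rightarrow p\nmid M$), so the $i_e(h)$ sitting in a given $V_\mu$ span it; then $V_\mu\neq 0$ for the system $\mu$ of $f$ forces the existence of such an $h$. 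This is a worthwhile clarification of an argument the paper compresses to the point of obscurity.
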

\begin{proof}
    We proceed by induction on the factors of $N$. In level 1 there are no lower levels so $\mcS_k(1)=\mcS_k(1)^{\new}$. Now assume that the result holds for all proper divisors of $N$. Let $f\in\mcS_k(N)$, and write $f=f_{\new}+f_{\old}$ where $f_{\new}\in\mcS_k(N)^{\new}$ and $f_{\old}\in\mcS_k(N)^{\old}$. By definition $f_{\old}$ is a linear combination of forms $i_e(g)$ with $g\in\mcS_k(M)$ for some $M$ strictly dividing $N$ and $e\mid(N/M)$. But then, by the induction hypothesis, any such $g$ can be written as the sum of forms $i_d(h)$ where $h\in\mcS_k(M')^{\new}$ with $M'\mid M\mid N$ and $d\mid (M/M')$ .\\[8pt]
    For the statement about equivalent forms first recall that each space $\mcS_k(M)^{\new}$ has a basis of eigenforms. So, noting that the $T_p$ and $i_e$ operators commute (for $p\nmid N$) we have that any eigenform in $\mcS_k(N)$ must be equivalent to an eigenform in $\mcS_k(M)^{\new}$ for some $M\mid N$.
\end{proof}

\begin{lemma}\label{distincteigen}\
    Let $f\in\mcS_k(N)^{\old}$ and $g\in\mcS_k(N)^{\new}$ be eigenforms. Then $f$ and $g$ belong to different equivalence classes.
\end{lemma}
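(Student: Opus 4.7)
The plan is to argue by contradiction: suppose $f$ and $g$ are both nonzero with $f \sim g$. Since $g \in \mcS_k(N)^{\new}$ is a nonzero eigenform, $a_1(g) \neq 0$ by the lemma established earlier in this section, so after rescaling I may assume $g$ is primitive with $a_1(g) = 1$. The broad strategy is then to produce a primitive form $h \in \mcS_k(M)^{\new}$ at some strictly smaller level $M \mid N$, $M < N$, with $h \sim g$, and then use $i_1(h)$ together with Theorem \ref{maintheorem} to force $g \in \mcS_k(N)^{\old}$---contradicting $g \in \mcS_k(N)^{\new} \setminus \{0\}$.

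To extract such an $h$, I would iterate the decomposition from Lemma \ref{oldisnew} to exhibit $\mcS_k(N)^{\old}$ as the span of forms $i_e(h')$, where $h'$ ranges over primitive forms in $\mcS_k(M')^{\new}$ for $M' \mid N$ with $M' < N$, and $e \mid (N/M')$. Writing $f$ in this spanning set as $f = \sum_i c_i\, i_{e_i}(h_i)$ and applying $T_p$ for a prime $p \nmid N$, the commutation $T_p \circ i_e = i_e \circ T_p$ (easily verified from the Fourier-coefficient formulas) together with linear independence of eigenvectors attached to distinct $T_p$-eigenvalue systems forces every summand contributing nontrivially to have $h_i \sim f \sim g$. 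Any such $h_i$ serves as our $h$.

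With $h$ in hand, the concluding step is a direct computation on $F := i_1(h) - g$. Note that $i_1(h) \in \mcS_k(N)^{\old}$ since $M < N$. One has $a_1(F) = a_1(h) - a_1(g) = 0$, and for $m$ coprime to $N$ (hence coprime to $M$), primitivity of both forms yields $a_m(h) = \lambda_m(h) = \lambda_m(g) = a_m(g)$, so $a_m(F) = 0$ as well. Theorem \ref{maintheorem} then places $F$ in $\mcS_k(N)^{\old}$, whence $g = i_1(h) - F \in \mcS_k(N)^{\old}$, completing the contradiction.

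The main obstacle is the production of $h$ at a strictly smaller level. A naive attempt---forming $f - a_1(f)\, g$ and applying Theorem \ref{maintheorem} directly---only succeeds when $a_1(f) \neq 0$, but eigenforms in $\mcS_k(N)^{\old}$ such as $i_e(h')$ with $e > 1$ can easily have $a_1 = 0$. The structural input from Lemma \ref{oldisnew}, combined with the fact that the embeddings $i_e$ commute with Hecke operators away from the level, is what enables extraction of an equivalent lower-level primitive form even in this degenerate case.
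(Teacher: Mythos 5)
Your proposal takes essentially the same route as the paper: both apply Lemma \ref{oldisnew} to extract an equivalent primitive form $h$ at a strictly lower level $M\mid N$, normalize so $a_1(g)=a_1(h)=1$, and then run the argument of Lemma \ref{primitiveproof} (via Theorem \ref{maintheorem}) on $i_1(h)-g$ to conclude $g\in\mcS_k(N)^{\old}$, the desired contradiction. Your write-up is somewhat more explicit than the paper's about why the level $M$ is a proper divisor of $N$ and how the equivalent $h$ is actually extracted from the spanning set, but the underlying argument is identical.
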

\begin{proof}
    Suppose for a contradiction that $f\sim g$. By Lemma \ref{oldisnew}, $f\sim h$ (and thus $g\sim h$) for some new eigenform $h\in\mcS_k(M)^{\new}$ where $M\mid N$. Since multiplication of $g$ or $h$ by a scalar does not affect their $T_p$ eigenvalues we may assume $a_1(g)=a_1(h)=1$. Applying the argument in Lemma \ref{primitiveproof} we then have $g-h\in\mcS_k(N)^{\old}$. Now $h=i_1(h)\in\mcS_k(N)^{\old}$ so it follows that $g\in\mcS_k(N)^{\old}$. But as $g$ is a newform this implies that $g=0$, a contradiction.
\end{proof}
We now move onto the main theorem (adapted from \cite[Theorem 3]{ao1970hecke}), which completely describes newforms in terms of the Atkin-Lehner and Hecke operators. To prove the theorem, we will make repeated use of the previous lemma along with the results listed in Proposition \ref{heckeproperties}. 
\begin{theorem}\label{hecketheorem}
    Let $f\in\mcS_k(N)^{\new}$ be a primitive form, $p$ be any prime with $p\nmid N$, and $\ell$ be any prime dividing $N$ exactly $\alpha$ times. If $f(q)=q+\sum_{n=2}^\infty a_nq^n$ we then have
    \begin{enumerate}[label=(\alph*)]
        \item $T_p(f)=a_pf$,
        \item $U_\ell(f)=a_\ell f$ and
        \item if $\alpha=1$ then $w_\ell(f)=\varepsilon_\ell f$, where $\varepsilon_\ell=\pm 1$.
    \end{enumerate}
    Further, if $\alpha\geq 2$, then $a_\ell=0$. While if $\alpha=1$, we have $a_\ell=-\ell^{\frac{k}{2}-1}\varepsilon_\ell$. In terms of Fourier coefficients, (a) and (b) become
    \begin{enumerate}[label=(\alph*)']
        \item $a_{np}-a_na_p+p^{k-1}a_{n/p}=0$,
        \item $a_{n\ell}-a_na_\ell=0$.
    \end{enumerate}
\end{theorem}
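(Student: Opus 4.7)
The plan is to handle the three eigenvalue statements by iterating a single observation: because the Hecke operators $T_p$ (for $p\nmid N$) commute with $U_\ell$ and $w_\ell$ (Proposition \ref{heckeproperties}(c)), any image of $f$ under one of these operators has the same $T_p$-eigenvalues as $f$ for $p\nmid N$, so its old-part must vanish by Lemma \ref{distincteigen}, and by Lemma \ref{primitiveproof} it must then be a scalar multiple of $f$. Statement (a) itself is essentially free: since $f$ is a primitive form it is a $T_p$-eigenform for $p\nmid N$, and the relation $\lambda_p(f)=a_p(f)$ derived in equation \eqref{tpevalue} gives $T_pf=a_pf$.

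For (b), first note that by Proposition \ref{heckeproperties}(a), $U_\ell(f)\in\mcS_k(N)$. Decomposing $U_\ell(f)=g^{\new}+g^{\old}$ and using $U_\ell T_p=T_pU_\ell$ for $p\nmid N$, both components are $T_p$-eigenforms (or zero) with eigenvalue $a_p$; Lemma \ref{distincteigen} (applied to eigen-components of $g^{\old}$) forces $g^{\old}=0$, so $U_\ell(f)\in\mcS_k(N)^{\new}$. Expand $U_\ell(f)$ in an eigenbasis of $\mcS_k(N)^{\new}$ (Lemma \ref{basislemma}); every primitive summand is equivalent to $f$, hence equals $f$ by Lemma \ref{primitiveproof}, so $U_\ell(f)=cf$ for some $c$. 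Comparing $q$-coefficients gives $c=a_1(U_\ell f)/a_1(f)=a_\ell$. Statement (c) is identical: Proposition \ref{heckeproperties}(b),(c) tell us $w_\ell(f)\in\mcS_k(N)$ and that $w_\ell$ commutes with the $T_p$, so the same argument yields $w_\ell(f)=\varepsilon_\ell f$ for a scalar $\varepsilon_\ell$; the involutivity $w_\ell^2=\mathrm{id}$ then forces $\varepsilon_\ell^2=1$, hence $\varepsilon_\ell=\pm 1$.

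The remaining numerical identities between $a_\ell$ and $\varepsilon_\ell$ are where the new/old dichotomy really earns its keep. If $\alpha=1$, Proposition \ref{heckeproperties}(d) says $U_\ell(f)+\ell^{k/2-1}w_\ell(f)\in\mcS_k(N/\ell)$, which embeds into $\mcS_k(N)^{\old}$ via $i_1$. By parts (b) and (c) this element equals $(a_\ell+\ell^{k/2-1}\varepsilon_\ell)f$; since $\mcS_k(N)^{\old}\cap\mcS_k(N)^{\new}=\{0\}$ and $f\neq 0$, we must have $a_\ell=-\ell^{k/2-1}\varepsilon_\ell$. For $\alpha\geq 2$, Proposition \ref{heckeproperties}(e) gives $U_\ell(f)=a_\ell f\in\mcS_k(N/\ell)\hookrightarrow\mcS_k(N)^{\old}$, and the same triviality of intersection forces $a_\ell=0$.

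Finally, the Fourier-coefficient reformulations (a)$'$ and (b)$'$ are direct: apply $a_n(\cdot)$ to the identities $T_pf=a_pf$ and $U_\ell f=a_\ell f$ using the explicit formulas of Definition \ref{hecke} for $a_n(T_pf)$ and $a_n(U_\ell f)$. I do not anticipate a real obstacle; the only delicate point is verifying that $U_\ell$ and $w_\ell$ preserve $\mcS_k(N)^{\new}$, which needs the combination of commutation with the $T_p$ and Lemma \ref{distincteigen} rather than any direct invariance argument, and this is where the classical Hermitian-orthogonality setup is used most essentially.
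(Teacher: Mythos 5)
Your argument is correct and follows essentially the same route as the paper: commute $T_p$ past $U_\ell$ and $w_\ell$, invoke Lemma~\ref{distincteigen} to kill old eigencomponents, use Lemma~\ref{primitiveproof} to identify the resulting new eigenform with a scalar multiple of $f$, and then feed the result into Proposition~\ref{heckeproperties}(d)/(e) for the numerical identities. The only difference is organizational: you establish $U_\ell(f)=a_\ell f$ directly for all $\alpha$ before invoking Proposition~\ref{heckeproperties}(d), whereas the paper in the $\alpha=1$ case derives the $U_\ell$-relation as a consequence of $w_\ell(f)=\varepsilon_\ell f$ together with part (d); this is a harmless reshuffle.
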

\begin{proof}
    First we recall that the $U_\ell$ and $T_p$ operators commute so that $U_\ell(f)\sim f$. Now, if $\alpha\geq 2$ we have $U_\ell (f)\in S_k(N/\ell)$ by Proposition \ref{heckeproperties} and thus $U_\ell(f)\in\mcS_k(N)^{\old}$. So by Lemma \ref{distincteigen}, $U_\ell f\equiv 0$. Moreover, $w_\ell$ commutes with $T_p$ so that $w_\ell(f)\sim f$. However, if $w_\ell(f)\in\mcS_k(N)^{\old}$ then $w_\ell(f)=0$ meaning that $f=w_\ell^2(f)=0$, a contradiction. So we must have $w_\ell(f)~\in~\mcS_k(N)^{\new}$ by Lemma \ref{distincteigen}. Because $w_\ell(f)\sim f$, Lemma \ref{primitiveproof} then implies that $w_\ell f$ is a scalar multiple of $f$, i.e.\ $w_\ell f=\varepsilon_\ell f$. We know that $\varepsilon_\ell=\pm 1$ since $w_\ell^2(f)=f$.\\[8pt]
    Now if instead $\alpha=1$, Proposition \ref{heckeproperties} gives $f\sim U_\ell(f)+\ell^{\frac{k}{2}-1}w_\ell(f)\in\mcS_k(N)^{\old}$. Hence by Lemma \ref{distincteigen}, $U_\ell(f)+\ell^{\frac{k}{2}-1}w_\ell(f)=0$. That is, $U_\ell f=-\ell^{\frac{k}{2}-1}\varepsilon_\ell f$ as required. The translation to Fourier series follows directly from the definition of the Hecke operators.
\end{proof}
\begin{example}
    We consider the modular discriminant 
    \begin{equation*}
        \Delta(q)=q-24q^2+252q^3-1472q^4+4830q^5-6048q^6-16744q^7+\cdots
    \end{equation*}
    from Section \ref{secmodform1}. Recall that $\Delta(q)$ is a cusp form of weight 12 and level 1. Since $\Delta(q)$ is a level 1 cusp form it cannot arise from any lower levels and is thus a newform. As a consequence, its Fourier coefficients satisfy the relations described in Theorem \ref{hecketheorem}. In particular, if $\tau(n)$ denotes the $n^{\text{th}}$ Fourier coefficient of $\Delta(q)$ then:
    \begin{align*}
        \tau(mn)&=\tau(m)\tau(n)\text{ if }\gcd(m,n)=1,\ \text{and}\\
        \tau(p^{r+1})&=\tau(p)\tau(p^r)-p^{11}\tau(p^{r-1})\text{ for $p$ prime and $r>0$}.
    \end{align*}
    These relations were first conjectured by Ramanujan \cite{ramanujan1916certain}.
\end{example}
\begin{example}
    The space $\mcS_2(45)^{\new}$ is spanned by the primitive form\footnote{This was computed using the software system \href{https://www.sagemath.org/}{SageMath}.}
    \begin{equation*}
        f=q+q^2-q^4-q^5-3q^8+O(q^{10}).
    \end{equation*}
    In this example, $N=45$, and 3 divides 45 twice whereas 5 divides $45$ once. Hence by Theorem \ref{hecketheorem}, we expect $U_3(f)=0$ and $U_5(f)=\pm 1$, which certainly agrees with the $q$-expansion above.
\end{example}
These examples demonstrate how studying newforms can reveal nontrivial data about the coefficients of modular forms. There are many other useful reasons to study newforms, such as their use in classifying cusp forms and their relation to Dirichlet $L$-functions \cite[\S 5.9]{diamond2005first}. How to generalise this theory to other types of modular forms (besides cusp forms on $\Gamma_0(N)$) is thus a very natural and widely studied question. 
\chapter{Newforms in Squarefree Level}\label{chapsquarefree}

In this chapter we look at the equivalent ways of defining newforms suggested by Deo and Medvedovsky \cite{deo2019newforms}. To do this, we will restrict our attention to the space of ``$\ell\text{-newforms}$" and focus on the case where $N$ is a squarefree integer.\\[8pt]
The definitions in this chapter favour more algebraic notions, avoiding the use of the Petersson inner product. Following \cite{deo2019newforms} we will explore two such algebraic notions. The first involves classifying newforms by their $U_\ell$-eigenvalue. The second notion relates newforms to the kernel of a "trace" operator on $\M_k(N)$, inspired by an observation of Serre \cite[\S 3.1]{serre1973formes}. By defining newforms in such a way, further generalisations become easier to define and work with.

\section{The Space of $\ell$-newforms}\label{sec31}
Let $N>1$ be a fixed integer. Recall that the space of weight $k$-oldforms for $\Gamma_0(N)$ is defined as
\begin{equation*}
    \M_k(N)^{\old}=\sum_{\substack{M\mid N\\M\neq N\\e\mid (N/M)}}i_e(\M_k(M)).
\end{equation*}
Now, each $M$ strictly dividing $N$ is also a divisor of $N/\ell$ for some prime $\ell\mid N$. Hence we only need to consider forms arising from the levels $N/\ell$ for each $\ell\mid N$. In other words,
\begin{equation}\label{ellold}
    \M_k(N)^{\old}=\sum_{\substack{\ell\text{ prime}\\\ell\mid N}}\M_k(N)^{\ell-\old},
\end{equation}
where
\begin{equation*}
    \M_k(N)^{\ell-\old}=\M_k(N/\ell)+i_\ell(\M_k(N/\ell)).
\end{equation*}
Here we slightly abuse notation, writing $\M_k(N/\ell)$ when we actually mean $i_1(\M_k(N/\ell))$. For a detailed proof of the equality in \eqref{ellold}, see Proposition \ref{ellolddecomp} in Appendix \ref{appassort}.\\
\\
Now, similarly let $\mcS_k(N)^{\ell-\old}=\mcS_k(N/\ell)+i_\ell(\mcS_k(N/\ell))$ and define $\mcS_k(N)^{\ell-\new}$ to be the orthogonal complement of $\mcS_k(N)^{\ell-\old}$. We then have $\mcS_k(N)=\mcS_k(N)^{\ell-\old}\oplus\mcS_k(N)^{\ell-\new}$ and
\begin{equation*}
    \mcS_k(N)^{\new}=\bigcap_{\substack{\ell\text{ prime}\\\ell\mid N}}\mcS_k(N)^{\ell-\new}.
\end{equation*}
It therefore suffices to study $\mcS_k(N)^{\ell-\old}$ and $\mcS_k(N)^{\ell-\new}$ for each prime $\ell\mid N$ in order to determine $\mcS_k(N)^{\old}$ and $\mcS_k(N)^{\new}$ respectively. As with $\mcS_k(N)^{\old}$ and $\mcS_k(N)^{\new}$, these $\ell$-old and $\ell$-new subspaces have bases of eigenforms. The following result of Atkin-Lehner can then be used to give us further information about $\mcS_k(N)^{\ell-\old}$ and $\mcS_k(N)^{\ell-\new}$.

\begin{theorem}\label{infinitetheorem}
    Let $f\in\mcS_k(M_1)^{\new}$ and $g\in\mcS_k(M_2)^{\new}$ be primitive forms with $a_p(f)=a_p(g)$ for infinitely many primes $p$. Then $M_1=M_2$ and $f=g$.
\end{theorem}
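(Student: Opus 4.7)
The strategy I would pursue is to move both $f$ and $g$ into a common ambient space and reduce the claim to the tools developed in Section \ref{propsec}. Set $N=\lcm(M_1,M_2)$ and view $f,g\in\mcS_k(N)$ via the embedding $i_1$, which preserves Fourier coefficients; in particular the hypothesis $a_p(f)=a_p(g)$ for infinitely many primes $p$ still holds in this larger space. The plan is then to argue, first, that the two forms are equivalent in the sense of Definition \ref{eqdef}, i.e.\ $a_p(f)=a_p(g)$ for \emph{all} primes $p\nmid N$, and second, to leverage this equivalence through Lemmas \ref{primitiveproof} and \ref{distincteigen} to force $M_1=M_2$ and $f=g$.

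For the second step, Lemma \ref{oldisnew} lets us write each eigenform in $\mcS_k(N)$ as equivalent to a primitive form at some divisor of $N$. Assuming $f\sim g$ has been established, both $f$ and $g$ are therefore equivalent to a common primitive form $h\in\mcS_k(M)^{\new}$ for some $M\mid N$. If $M_1\neq M_2$, then without loss of generality $M_1\subsetneq N$, so $i_1(f)\in\mcS_k(N)^{\old}$, while $i_1(g)$ either lies in the new or the old subspace at level $N$; in either case a direct application of Lemma \ref{distincteigen} combined with Lemma \ref{primitiveproof} yields a contradiction unless $M_1=M_2$, after which Lemma \ref{primitiveproof} gives $f=g$.

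The real obstacle is the first step: upgrading the infinitude of primes $p$ with $a_p(f)=a_p(g)$ to equality of Hecke eigensystems. I would exploit the recursion \eqref{inductivehecke} and the multiplicativity $a_{mn}=a_m a_n$ for coprime $m,n$ valid for primitive forms: if $S$ denotes the infinite set of primes at which $f$ and $g$ agree, then $a_n(f)=a_n(g)$ for every $n$ whose prime factorization involves only primes in $S$. In particular the difference $h=f-g\in\mcS_k(N)$ has $a_n(h)=0$ for every such $n$. To extract the remaining primes I would try to invoke Theorem \ref{maintheorem}: if one can show that $a_n(h)=0$ for \emph{all} $n$ coprime to $N$, then $h\in\mcS_k(N)^{\old}$, and arguing as in Lemma \ref{primitiveproof} via the decomposition $\mcS_k(N)=\mcS_k(N)^{\old}\oplus\mcS_k(N)^{\new}$ forces $h=0$.

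The hard part, then, is bridging from "vanishing on $n$ supported in $S$" to "vanishing on all $n$ coprime to $N$". In Atkin–Lehner's setting this is done through an analytic argument on the Dirichlet series $L(s,f)$ and $L(s,g)$: their Euler products and the behavior of the Rankin–Selberg convolution $L(s,f\otimes\bar g)$, which has a pole at $s=k$ precisely when $f=g$, can be used to rule out partial coincidences of Hecke eigenvalues for primitive forms. Any approach via purely algebraic manipulation of Fourier coefficients seems to fall short here; the analytic input encoded in strong multiplicity one appears essential, and I expect this to be where most of the work lies.
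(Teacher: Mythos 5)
You correctly identify the heart of the matter: passing from agreement at infinitely many primes to equality of the full Hecke eigensystem (at all primes coprime to $N$) is the genuine analytic content of the theorem, and none of the algebraic machinery developed in Section~\ref{propsec} --- Theorem~\ref{maintheorem}, Lemma~\ref{primitiveproof}, Lemma~\ref{distincteigen} --- supplies it. The paper itself does not attempt a proof; it simply cites \cite[Theorem 4]{ao1970hecke}, whose argument does in fact proceed through the Rankin--Selberg $L$-series $L(s,f\otimes\overline{g})$ and its pole at $s=k$. Your honest acknowledgment of the gap is the right call rather than a defect: any proof really does require that external analytic input (or an equivalent form of strong multiplicity one), and no amount of juggling the recursion \eqref{inductivehecke} or Theorem~\ref{maintheorem} over the difference $f-g$ will bridge from ``vanishing on integers supported on $S$'' to ``vanishing on all integers coprime to $N$''.

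There is a secondary gap in the reduction you propose even once $f\sim g$ is granted. You dispatch the case where exactly one of $M_1,M_2$ equals $N$ via Lemma~\ref{distincteigen}, but if both $M_1$ and $M_2$ are \emph{proper} divisors of $N$, then $i_1(f)$ and $i_1(g)$ both lie in $\mcS_k(N)^{\old}$, and Lemma~\ref{distincteigen} --- which only separates old eigenforms from new eigenforms --- says nothing. Forcing $M_1=M_2$ in this case requires knowing that the minimal level from which a given eigensystem arises is unique, but that uniqueness is precisely part of what Theorem~\ref{infinitetheorem} asserts; as written the argument is circular. The usual escapes are an induction on $N$ that passes through the $\ell$-old/$\ell$-new decomposition at a prime $\ell$ dividing exactly one of $M_1,M_2$, or running the $L$-function comparison directly at the two levels --- but either way this step needs to be argued, not inherited from the lemmas already at hand.
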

\begin{proof}
    See \cite[Theorem 4]{ao1970hecke}.
\end{proof}

\begin{corollary}\label{newdecomp}
    Let $\ell$ be a prime dividing $N$ exactly $\alpha$ times. Then,
    \begin{equation*}
        \mcS_k(N)^{\ell-\new}=\sum_{\substack{M\\ \ell^{\alpha}\mid M\mid N}}\sum_{e\mid(N/M)}i_e(\mcS_k(M)^{\new}).
    \end{equation*}
\end{corollary}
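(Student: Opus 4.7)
The strategy is to combine Lemma \ref{oldisnew} (the newform decomposition of $\mcS_k(N)$) with the orthogonal splitting $\mcS_k(N) = \mcS_k(N)^{\ell-\old} \oplus \mcS_k(N)^{\ell-\new}$ from Section \ref{sec31}, and to separate the terms of the former according to the $\ell$-adic valuation of the level.

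The first step is to unfold $\mcS_k(N)^{\ell-\old} = \mcS_k(N/\ell) + i_\ell(\mcS_k(N/\ell))$ by applying Lemma \ref{oldisnew} to the level $N/\ell$. A direct matching of divisors (each $e \mid N/M$ with $M \mid N/\ell$ either already divides $(N/\ell)/M$ or is $\ell$ times such a divisor, using $i_{\ell d} = i_\ell \circ i_d$) yields
\begin{equation*}
    \mcS_k(N)^{\ell-\old} = \sum_{M \mid N/\ell}\sum_{e \mid N/M} i_e(\mcS_k(M)^{\new}).
\end{equation*}
Splitting the full sum from Lemma \ref{oldisnew} according to whether $\ell^\alpha \mid M$ then gives $\mcS_k(N) = \mcS_k(N)^{\ell-\old} + V$, where $V$ denotes the right-hand side of the corollary.

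The second step is to prove $V \cap \mcS_k(N)^{\ell-\old} = \{0\}$, which combined with the previous display and the orthogonal decomposition forces $V = \mcS_k(N)^{\ell-\new}$. For this, I would decompose $\mcS_k(N)$ into isotypic components under the commuting, simultaneously diagonalizable family $\{T_p : p \nmid N\}$. By Lemma \ref{oldisnew} every eigensystem arising in $\mcS_k(N)$ is matched by some primitive form $h$ at a level $M_h \mid N$, and by Theorem \ref{infinitetheorem} the pair $(M_h, h)$ is uniquely determined by the eigensystem. Thus the isotypic components of $\mcS_k(N)$ are indexed by primitive forms $h$ at divisor levels of $N$, and each $i_e(\mcS_k(M)^{\new})$ contributes only to components whose associated primitive form lies in $\mcS_k(M)^{\new}$.

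With this bookkeeping, $V$ lies in the span of the isotypic components indexed by primitive forms at levels $M_h$ with $\ell^\alpha \mid M_h$, while $\mcS_k(N)^{\ell-\old}$ lies in the span of those indexed by $M_h \mid N/\ell$, i.e.\ $\ell^\alpha \nmid M_h$. These two index sets are disjoint, so the corresponding subspaces of $\mcS_k(N)$ meet only in zero, giving $V \cap \mcS_k(N)^{\ell-\old} = \{0\}$ and hence the claimed equality. The main obstacle is the second step: one must verify carefully that the sum defining $V$ is truly a sum of isotypic pieces for primitive forms of level divisible by $\ell^\alpha$ and that no cancellation across different primitive forms can produce a form with an eigensystem from a smaller level, and this is precisely where Theorem \ref{infinitetheorem} does the essential work.
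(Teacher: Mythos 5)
Your proposal is correct and follows essentially the same strategy as the paper's proof: both unfold $\mcS_k(N)^{\ell-\old}$ via Lemma \ref{oldisnew} applied at level $N/\ell$ to identify it with the subsum over levels $M$ with $\ell^{\alpha}\nmid M$, and both invoke Theorem \ref{infinitetheorem} (strong multiplicity one) to separate the eigensystems coming from levels divisible by $\ell^{\alpha}$ from the $\ell$-old ones. Your isotypic-component bookkeeping makes explicit what the paper compresses into a brief "comparing" step, but the key inputs and overall structure are the same.
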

\begin{proof}
    Recall from Lemma \ref{oldisnew} that
    \begin{equation}\label{cuspdecomp}
        \mcS_k(N)=\sum_{M\mid N}\sum_{e\mid(N/M)}i_e(\mcS_k(M)^{\new}).
    \end{equation}
    Then, since $\mcS_k(N)^{\ell-\old}=\mcS_k(N/\ell)+i_\ell(\mcS_k(N/\ell))$, any $\ell$-old form arises from a newform at a level $M$ with $\ell^{\alpha}\nmid M$. In other words,
    \begin{equation}\label{oldcuspdecomp}
        \mcS_k(N)^{\ell-\old}=\sum_{\substack{M\\ \ell^{\alpha}\nmid M\mid N}}\sum_{e\mid(N/M)}i_e(\mcS_k(M)^{\new}).
    \end{equation}
    Comparing this with \eqref{cuspdecomp} gives
    \begin{equation*}
        \mcS_k(N)^{\ell-\new}\subseteq\sum_{\substack{M\\ \ell^{\alpha}\mid M\mid N}}\sum_{e\mid(N/M)}i_e(\mcS_k(M)^{\new}).
    \end{equation*}
    For the reverse inclusion, suppose that $M$ is a divisor of $N$ such that $\ell\mid M$. By Theorem \ref{infinitetheorem} and \eqref{oldcuspdecomp} any eigenform in $i_e(\mcS_k(M)^{\new})$ cannot be contained in $\mcS_k(N)^{\ell-\old}$ and must therefore be in $\mcS_k(N)^{\ell-\new}$ as required.
\end{proof}

\begin{corollary}\label{lolddistincteigen}\
    Let $f\in\mcS_k(N)^{\ell-\old}$ and $g\in\mcS_k(N)^{\ell-\new}$ be eigenforms. Then $f$ and $g$ belong to different equivalence classes. That is, there exists some prime $p\nmid N$ such that $T_p(f)\neq T_p(g)$.
\end{corollary}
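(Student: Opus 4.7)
The plan is to leverage Corollary \ref{newdecomp} together with Theorem \ref{infinitetheorem} (the Atkin--Lehner uniqueness-from-infinitely-many-coincidences theorem) in the same spirit as the proof of Lemma \ref{distincteigen}, but now keeping careful track of the $\ell$-part of the level. Let $\alpha$ be the exact power of $\ell$ dividing $N$.

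First I would apply Lemma \ref{oldisnew} twice. Since $f\in \mcS_k(N)^{\ell-\old}$ is an eigenform, by the decomposition
\[
\mcS_k(N)^{\ell-\old}=\sum_{\substack{M\\ \ell^{\alpha}\nmid M\mid N}}\sum_{e\mid(N/M)}i_e(\mcS_k(M)^{\new})
\]
established in the proof of Corollary \ref{newdecomp} and the fact that each $\mcS_k(M)^{\new}$ has a basis of eigenforms commuting with the $i_e$, there is a primitive form $h_f\in \mcS_k(M_f)^{\new}$ with $\ell^\alpha\nmid M_f\mid N$ such that $f\sim h_f$ (meaning they have the same $T_p$-eigenvalues for all $p\nmid N$). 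Likewise, using Corollary \ref{newdecomp} directly, there is a primitive form $h_g\in \mcS_k(M_g)^{\new}$ with $\ell^\alpha\mid M_g\mid N$ such that $g\sim h_g$.

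Now I would argue by contradiction: suppose $f\sim g$. Then by transitivity $h_f\sim h_g$, so $h_f$ and $h_g$ agree on their $T_p$-eigenvalues for every prime $p\nmid N$. For primitive forms this eigenvalue equals $a_p$ (equation \eqref{tpevalue}), so $a_p(h_f)=a_p(h_g)$ for all $p\nmid N$. Since there are infinitely many such primes, Theorem \ref{infinitetheorem} forces $M_f=M_g$ and $h_f=h_g$. But $\ell^\alpha\nmid M_f$ while $\ell^\alpha\mid M_g$, a contradiction. Hence $f$ and $g$ cannot be equivalent, and there must exist some prime $p\nmid N$ with $T_p(f)\neq T_p(g)$ (after normalizing, or more precisely with distinct eigenvalues on $f$ and $g$).

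The only real subtlety, and therefore the main point to be careful about, is that the equivalence relation in Definition \ref{eqdef} compares $T_p$-eigenvalues for primes $p\nmid N$, whereas Theorem \ref{infinitetheorem} is phrased in terms of Fourier coefficients of primitive forms at possibly smaller levels $M_f,M_g\mid N$. The two are reconciled by the identity $a_p(h)=\lambda_p(h)$ for a primitive form $h$, and by the fact that the collection of primes $p\nmid N$ is infinite (and in particular contains infinitely many primes not dividing either $M_f$ or $M_g$). Once this translation is made, the rest is a direct application of the results already established in the chapter.
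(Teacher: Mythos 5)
Your proof is correct and follows essentially the same route as the paper: both use the level decompositions from Corollary \ref{newdecomp} (with $\ell^\alpha\nmid M$ for $\ell$-old versus $\ell^\alpha\mid M$ for $\ell$-new) together with Theorem \ref{infinitetheorem} to conclude that equivalent primitive forms would have to live at the same level, which is impossible. You have simply spelled out the contradiction argument in more detail than the paper's two-sentence proof, and your care about the translation between Hecke eigenvalues and Fourier coefficients via $a_p(h)=\lambda_p(h)$ is sound.
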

\begin{proof}
    As shown in the proof of the previous corollary, $\ell$-new and $\ell$-old forms arise from newforms at different levels. Hence the eigenforms $f$ and $g$ must belong to different equivalence classes by Theorem \ref{infinitetheorem}.
\end{proof}

\begin{corollary}\label{coruleigen}
    Let $f\in\mcS_k(N)^{\ell-\new}$ be an eigenform where $\ell$ is a prime dividing $N$ exactly $\alpha$ times. If $\alpha=1$ then $w_\ell(f)=\varepsilon_\ell f$ and $U_\ell(f)=-\ell^{\frac{k}{2}-1}\varepsilon_\ell f$ for some $\varepsilon_\ell=\pm 1$. Otherwise if $\alpha\geq 2$ then $U_\ell(f)=0$
\end{corollary}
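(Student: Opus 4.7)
The plan is to run the template of Theorem \ref{hecketheorem}'s proof verbatim, with the $\ell$-old/$\ell$-new dichotomy replacing the global old/new dichotomy and with Corollary \ref{lolddistincteigen} playing the role of Lemma \ref{distincteigen}. The three ingredients I need are that $U_\ell$ and $w_\ell$ commute with every $T_p$ for $p\nmid N$ (Proposition \ref{heckeproperties}); that we have the direct-sum decomposition $\mcS_k(N)=\mcS_k(N)^{\ell-\old}\oplus\mcS_k(N)^{\ell-\new}$ from Section \ref{sec31}; and that both summands are preserved by every $T_p$ with $p\nmid N$, since the embeddings $i_e$ commute with $T_p$ (so the $\ell$-old space is stable) while $T_p$ is Hermitian under the Petersson inner product (so its orthogonal complement is stable).

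First I would show that $U_\ell(f)$, and when $\alpha=1$ also $w_\ell(f)$, lies in $\mcS_k(N)^{\ell-\new}$. Writing $U_\ell(f)=h_{\old}+h_{\new}$ along the direct-sum decomposition and applying $T_p$ to both sides, the stability of each summand together with $T_pU_\ell=U_\ell T_p$ forces each of $h_{\old}$ and $h_{\new}$ to be either zero or a $T_p$-eigenform sharing the $T_p$-eigenvalue of $f$. If $h_{\old}\neq 0$ it would be an $\ell$-old eigenform sharing all $T_p$-eigenvalues with the $\ell$-new eigenform $f$, contradicting Corollary \ref{lolddistincteigen}; hence $h_{\old}=0$. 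The identical argument, using $T_pw_\ell=w_\ell T_p$, gives $w_\ell(f)\in\mcS_k(N)^{\ell-\new}$.

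The two cases now fall out. If $\alpha\geq 2$, Proposition \ref{heckeproperties}(e) gives $U_\ell(f)\in\mcS_k(N/\ell)\subseteq\mcS_k(N)^{\ell-\old}$, and combined with the previous step $U_\ell(f)$ lies in the trivial intersection $\mcS_k(N)^{\ell-\old}\cap\mcS_k(N)^{\ell-\new}$, hence vanishes. If $\alpha=1$, Proposition \ref{heckeproperties}(d) places $U_\ell(f)+\ell^{k/2-1}w_\ell(f)$ inside $\mcS_k(N)^{\ell-\old}$; but the previous step places this sum in $\mcS_k(N)^{\ell-\new}$ as well, so the sum is zero and $U_\ell(f)=-\ell^{k/2-1}w_\ell(f)$.

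The main obstacle is the final conclusion $w_\ell(f)=\varepsilon_\ell f$ with $\varepsilon_\ell=\pm 1$: merely knowing $w_\ell(f)$ is an $\ell$-new $T_p$-eigenform with the same eigenvalues as $f$ does not on its own force it to be a scalar multiple, since $\mcS_k(N)^{\ell-\new}$ need not enjoy multiplicity one for $T_p$-eigensystems and the primitive-form uniqueness of Lemma \ref{primitiveproof} is not directly available. The standard remedy is to exploit that $w_\ell$ commutes with every $T_p$ ($p\nmid N$) and satisfies $w_\ell^2=\mathrm{id}$: it therefore preserves the joint $T_p$-eigenspace of $f$ inside $\mcS_k(N)^{\ell-\new}$ and diagonalises there with eigenvalues in $\{\pm 1\}$, so by refining our basis of eigenforms to be simultaneously $w_\ell$-diagonal we may take $w_\ell(f)=\varepsilon_\ell f$ with $\varepsilon_\ell=\pm 1$, whereupon the displayed formula $U_\ell(f)=-\ell^{k/2-1}\varepsilon_\ell f$ is the relation derived above.
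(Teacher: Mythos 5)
Your proposal correctly reproduces the template that the paper's one-line proof intends: you replace the global old/new decomposition by the $\ell$-old/$\ell$-new one, use Corollary~\ref{lolddistincteigen} in place of Lemma~\ref{distincteigen} to kill the $\ell$-old component of $U_\ell(f)$ (and, for $\alpha=1$, of $w_\ell(f)$), and then invoke Proposition~\ref{heckeproperties}(d),(e) to read off $U_\ell(f)=0$ when $\alpha\geq 2$ and $U_\ell(f)=-\ell^{k/2-1}w_\ell(f)$ when $\alpha=1$. Your third paragraph also spots something the paper's terse ``repeat the argument'' proof glosses over: in Theorem~\ref{hecketheorem} the final step $w_\ell(f)=\varepsilon_\ell f$ rests on Lemma~\ref{primitiveproof}'s multiplicity-one property for $\mcS_k(N)^{\new}$, and that property genuinely fails in $\mcS_k(N)^{\ell-\new}$ --- by Corollary~\ref{newdecomp}, a single $T_p$-eigensystem ($p\nmid N$) occurs there with multiplicity equal to the number of divisors of $N/M$. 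So knowing $w_\ell(f)\sim f$ does not on its own give proportionality, and you are right to flag this.

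The weak point is your remedy. Simultaneously diagonalising $w_\ell$ with the $T_p$'s only produces some \emph{choice} of basis of eigenforms each with $w_\ell$-eigenvalue $\pm 1$; the corollary as stated asserts $w_\ell(f)=\varepsilon_\ell f$ for \emph{every} eigenform $f\in\mcS_k(N)^{\ell-\new}$. To close the gap fully you should show that $w_\ell$ (equivalently $U_\ell$) acts as a scalar on each $T_p$-isotypic component, not merely that it can be diagonalised there. This follows from what is already in the paper: by Corollary~\ref{newdecomp} together with strong multiplicity one (Theorem~\ref{infinitetheorem}), each isotypic component of $\mcS_k(N)^{\ell-\new}$ equals $\mathrm{span}\{\,i_e(g):e\mid(N/M)\,\}$ for a unique primitive form $g\in\mcS_k(M)^{\new}$ with $\ell^{\alpha}\mid M\mid N$; since $\gcd(e,\ell)=1$, a direct Fourier-coefficient check gives $U_\ell\circ i_e=i_e\circ U_\ell$, and Theorem~\ref{hecketheorem} applied to $g$ at level $M$ gives $U_\ell g=a_\ell(g)g$ with $a_\ell(g)=-\ell^{k/2-1}\varepsilon_\ell$ (or $a_\ell(g)=0$ if $\alpha\geq 2$). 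Hence $U_\ell$ acts as the scalar $a_\ell(g)$ on the whole isotypic component, and your relation $U_\ell(f)=-\ell^{k/2-1}w_\ell(f)$ then forces $w_\ell$ to act by $\varepsilon_\ell=\pm 1$ there as well. (Your basis-level version does suffice for the downstream uses in Theorems~\ref{dlcusptheorem} and~\ref{tracetheorem} by linearity, but it falls short of what the corollary literally claims.)
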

\begin{proof}
    Using Corollary \ref{lolddistincteigen} repeat the argument from the proof of Theorem \ref{hecketheorem}.
\end{proof}

\section{The Atkin-Lehner Operator Revisited}
For the rest of this chapter, we assume that $\ell$ is a fixed prime dividing $N$ exactly once. Note that if $N$ is squarefree then this is always true for every prime dividing $N$. Now, since $\ell$ divides $N$ exactly once we can always define the Atkin-Lehner operator $w_\ell$. Following Deo and Medvedovsky \cite[Section 3.4]{deo2019newforms} we will scale $w_\ell$ by defining $W_\ell=\ell^{k/2}w_\ell$.  This scaling of $w_\ell$ will be particularly important when we generalise to characteristic $p$ in Chapter \ref{chap5}. Note that since $w_\ell$ is an involution $W_\ell^2=\ell^k$.\\
\\
We now show how to conveniently define $\ell$-oldforms in terms of $W_\ell$.
\begin{proposition}\label{wlscaling}
    If $f\in\M_k(N/\ell)\subseteq\M_k(N)$ then $W_\ell f=\ell^ki_\ell f$.
\end{proposition}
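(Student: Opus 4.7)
The plan is to rewrite both sides of the claimed identity in terms of the slash operator and then reduce the proof to a matrix identity that exploits $f$'s modularity for $\Gamma_0(N/\ell)$.

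First I would rewrite the right-hand side as a slash operator. Since $i_\ell f(z) = f(\ell z)$, if we set $\alpha = \begin{smatrix}\ell & 0 \\ 0 & 1\end{smatrix} \in \GL_2(\Q)^+$, then the definition of $|_k$ on $\GL_2(\Q)^+$ gives
\begin{equation*}
(f|_k \alpha)(z) = \ell^{k/2} \cdot 1^{-k} \cdot f(\ell z) = \ell^{k/2}(i_\ell f)(z),
\end{equation*}
so that $\ell^k i_\ell f = \ell^{k/2}\, f|_k \alpha$. On the other hand, by definition $W_\ell f = \ell^{k/2}\, w_\ell f = \ell^{k/2}\, f|_k \gamma_\ell$, where $\gamma_\ell = \begin{smatrix}\ell & a \\ N & \ell b\end{smatrix}$ with $\ell b - a(N/\ell) = 1$. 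Hence the statement $W_\ell f = \ell^k i_\ell f$ is equivalent to showing
\begin{equation*}
f|_k \gamma_\ell = f|_k \alpha,
\end{equation*}
or equivalently (using that $|_k$ is an action) $f|_k(\gamma_\ell \alpha^{-1}) = f$.

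Next I would compute the matrix $\sigma := \gamma_\ell \alpha^{-1}$ explicitly. Since $\alpha^{-1} = \begin{smatrix}1/\ell & 0 \\ 0 & 1\end{smatrix}$, a direct multiplication yields
\begin{equation*}
\sigma = \begin{pmatrix}\ell & a \\ N & \ell b\end{pmatrix}\begin{pmatrix}1/\ell & 0 \\ 0 & 1\end{pmatrix} = \begin{pmatrix}1 & a \\ N/\ell & \ell b\end{pmatrix}.
\end{equation*}
The entries are integers, and $\det\sigma = \ell b - a(N/\ell) = 1$, so $\sigma \in \SL_2(\Z)$. Moreover the lower-left entry $N/\ell$ is divisible by $N/\ell$, so $\sigma \in \Gamma_0(N/\ell)$.

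Finally, since $f \in \M_k(N/\ell)$ satisfies the modularity condition $f|_k \sigma = f$ for all $\sigma \in \Gamma_0(N/\ell)$, we get $f|_k(\gamma_\ell \alpha^{-1}) = f$, and the proof is complete. There is no real obstacle here: the whole argument is the observation that $\gamma_\ell$ factors as (an element of $\Gamma_0(N/\ell)$) times $\alpha = \begin{smatrix}\ell & 0 \\ 0 & 1\end{smatrix}$, which is exactly what lets the level-$N/\ell$ modularity of $f$ intertwine the two operators.
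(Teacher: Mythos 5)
Your proof is correct and takes essentially the same approach as the paper: both hinge on recognizing that $\gamma_\ell=\begin{smatrix}1&a\\N/\ell&\ell b\end{smatrix}\begin{smatrix}\ell&0\\0&1\end{smatrix}$ with the left factor in $\Gamma_0(N/\ell)$. The paper does the computation directly by substitution into the slash formula, while you make the matrix factorization explicit and invoke the right-action property of $|_k$; the two presentations are interchangeable.
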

\begin{proof}
    We have
    \begin{align*}
        (W_\ell f)(z)&=\ell^{k/2}\left(f|_k\begin{pmatrix}\ell&a\\N&\ell b\end{pmatrix}\right)(z)\\[10pt]
        &=\frac{\ell^k}{(N z+\ell b)^k}f\left(\frac{\ell z+a}{N z+\ell b}\right)\\[10pt]
        &=\frac{\ell^k}{(N z+\ell b)^k}f\left(\frac{(\ell z)+a}{\frac{N}{\ell}(\ell z)+\ell b}\right)\\[10pt]
        &=\frac{\ell^k}{(N z+\ell b)^k}(N z+\ell b)^kf(\ell z)\qquad\text{(Since $\begin{smatrix}1&a\\N/\ell&\ell b\end{smatrix}\in\Gamma_0(N/\ell)$)}\\[10pt]
        &=\ell^k(i_\ell f)(z),
    \end{align*}
    as required.
\end{proof}
Hence $W_\ell$ is just a rescaling of $i_\ell$ on $\M_k(N/\ell)$. As a result
\begin{equation*}
    \M_k(N)^{\ell-\old}=\M_k(N/\ell)+W_\ell(\M_k(N/\ell)).
\end{equation*}
\begin{proposition}\label{constint}
    If $f\in \M_k(N/\ell)\cap W_\ell( \M_k(N/\ell))$, then $f$ is constant.
\end{proposition}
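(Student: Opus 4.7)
The plan is to combine Proposition \ref{wlscaling} with the finite-dimensionality of $\M_k(N/\ell)$ and a Fourier-coefficient extraction argument. By Proposition \ref{wlscaling}, $W_\ell(\M_k(N/\ell))=i_\ell(\M_k(N/\ell))$, so the hypothesis becomes: $f\in\M_k(N/\ell)$ and $f(z)=g(\ell z)$ for some $g\in\M_k(N/\ell)$. Writing $V_\ell$ for the operator $h\mapsto h(\ell\,\cdot)$, this reads $f=V_\ell g$, and translates on Fourier expansions into $a_n(f)=0$ whenever $\ell\nmid n$.

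The crucial intermediate step is to show that $V_\ell f$ itself lies in $\M_k(N/\ell)$. Because $\ell\nmid N/\ell$, the Hecke operator $T_\ell$ is defined on $\M_k(N/\ell)$ and preserves that space by Proposition \ref{heckeproperties}(a); moreover, on formal $q$-series we have the identity $T_\ell=U_\ell+\ell^{k-1}V_\ell$ together with $U_\ell V_\ell=\mathrm{id}$. These give $U_\ell f=g$ and hence
\begin{equation*}
    V_\ell f=\ell^{1-k}\bigl(T_\ell f-g\bigr)\in\M_k(N/\ell).
\end{equation*}
Since $V_\ell f$ then also lies in $\M_k(N/\ell)\cap V_\ell(\M_k(N/\ell))$, the same argument iterates, yielding $V_{\ell^n}f\in\M_k(N/\ell)$ for every $n\geq 0$.

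Because $\M_k(N/\ell)$ is finite-dimensional, the family $\{V_{\ell^n}f\}_{n\geq 0}$ satisfies a nontrivial linear relation $\sum_{i=i_0}^{M}c_iV_{\ell^i}f=0$ with $c_{i_0}\neq 0$. Reading off the coefficient of $q^{\ell^{i_0+j}m_0}$ for each $m_0\geq 1$ coprime to $\ell$ and each $j\geq 0$ yields
\begin{equation*}
    c_{i_0}\,a_{\ell^{j}m_0}(f)=-\sum_{i'\geq 1}c_{i_0+i'}\,a_{\ell^{j-i'}m_0}(f),
\end{equation*}
and an induction on $j$ (with base case $a_{m_0}(f)=0$, automatic since $\ell\nmid m_0$) forces $a_{\ell^{j}m_0}(f)=0$ for all $j\geq 0$ and all $m_0\geq 1$ coprime to $\ell$. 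Hence every positive-index Fourier coefficient of $f$ vanishes and $f=a_0(f)$ is a constant.

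The main obstacle is the intermediate claim $V_\ell f\in\M_k(N/\ell)$, which is precisely where the standing assumption that $\ell$ divides $N$ exactly once is used (so that $T_\ell$ is an honest Hecke operator at level $N/\ell$). Once this is in place, the descent to vanishing of all positive Fourier coefficients reduces to a straightforward bookkeeping exercise with $\ell$-adic valuations.
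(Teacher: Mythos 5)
Your proof is correct, but it takes a genuinely different and considerably longer route than the paper's. The paper applies $W_\ell$ to the relation $f=W_\ell(g)$ and uses the two algebraic identities $W_\ell^2=\ell^k$ and $W_\ell=\ell^ki_\ell$ on $\M_k(N/\ell)$ (Proposition~\ref{wlscaling}) to extract the exact identity $i_\ell(f)=g$, hence $f=\ell^k\,i_\ell^2(f)$; comparing the $q$-expansions of the two sides at the least positive index with a nonzero coefficient finishes in two lines. You instead avoid the involutive relation $W_\ell^2=\ell^k$ entirely and reach the weaker statement $V_\ell f\in\M_k(N/\ell)$ by routing through the Hecke operator $T_\ell$ at level $N/\ell$ (via $T_\ell=U_\ell+\ell^{k-1}V_\ell$ and $U_\ell V_\ell=\mathrm{id}$), iterate to get $V_{\ell^n}f\in\M_k(N/\ell)$ for all $n$, and then close with finite-dimensionality and an $\ell$-adic valuation induction. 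The mechanics all check out — $T_\ell$ does preserve $\M_k(N/\ell)$ since $\ell\nmid N/\ell$, the iteration is legitimate because $V_\ell f$ again lies in $\M_k(N/\ell)\cap V_\ell(\M_k(N/\ell))$, and the coefficient extraction and downward induction on the $\ell$-adic valuation are sound. What you lose relative to the paper is efficiency: the paper's single application of $W_\ell$ gives the precise identity $V_\ell f=g$ (not merely membership in $\M_k(N/\ell)$), which collapses to $f=\ell^k i_\ell^2 f$ immediately and makes the finite-dimensionality and linear-dependence machinery unnecessary. What your approach buys is a demonstration that the statement can be obtained from the Hecke algebra at the lower level alone, without explicitly invoking the Atkin--Lehner involution, which is a mildly interesting structural observation even if it is not the fastest path here.
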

\begin{proof}
    Let $f\in\M_k(N/\ell)\cap W_\ell( \M_k(N/\ell))$ so that $f=W_\ell(g)$ for some $g\in \M_k(N/\ell)$. We have
    \begin{equation*}
        \ell^kg=W_\ell^2(g)=W_\ell(f)=\ell^k i_\ell( f)
    \end{equation*}
    and thus
    \begin{equation}\label{fislkf}
        f=W_\ell(g)=\ell^ki_\ell(g)=\ell^k i_\ell^2(f).
    \end{equation}
    If $f=\sum_{n=0}^\infty a_nq^n$ then \eqref{fislkf} becomes
    \begin{equation}\label{qisql}
        \sum_{n=0}^\infty a_nq^n=\ell^k\sum_{n=0}^\infty a_nq^{n\ell^2}.
    \end{equation} 
    Now, suppose for a contradiction that $f$ is not constant. Let $n>0$ be the least integer with $a_n\neq 0$. Then the left-hand side of \eqref{qisql} has a $q^n$ term whereas the right-hand side of \eqref{qisql} does not. This is impossible so $f$ must be constant as required. 
\end{proof}

In Appendix \ref{appassort} (Proposition \ref{directoverc}) we see that the algebra $\M(N)=\sum_{k=0}^\infty\M_k(N)$ is in fact a direct sum $\M(N)=\bigoplus_{k=0}^\infty\M_k(N)$. In particular, constant-valued modular forms only occur in weight $0$. Combining this with Proposition \ref{constint} gives the following result.
\begin{corollary}\label{linindepwl}
    Let $k>0$. Then $\M_k(N/\ell)\cap W_\ell(\M_k(N/\ell))=\{0\}$ and as a result, 
    \begin{equation*}
        \M_k(N)^{\ell-\old}=\M_k(N/\ell)\oplus W_\ell(\M_k(N/\ell)).
    \end{equation*}
\end{corollary}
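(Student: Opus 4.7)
The plan is to combine Proposition \ref{constint} with the direct sum decomposition of the algebra of modular forms (Proposition \ref{directoverc} in Appendix \ref{appassort}) to deduce that the intersection is trivial in any positive weight. The algebraic part, the upgrade to a direct sum decomposition of $\M_k(N)^{\ell-\old}$, is then immediate from the expression $\M_k(N)^{\ell-\old}=\M_k(N/\ell)+W_\ell(\M_k(N/\ell))$ established right after Proposition \ref{wlscaling}.

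In more detail, I would first take an arbitrary $f\in\M_k(N/\ell)\cap W_\ell(\M_k(N/\ell))$. By Proposition \ref{constint}, $f$ must be a constant function on $\H$. Now a nonzero constant function, viewed as an element of $\M(N)=\bigoplus_{j\geq 0}\M_j(N)$, is homogeneous of weight $0$: it satisfies $f|_j\gamma=f$ for $\gamma\in\Gamma_0(N)$ only in weight $j=0$, so its only nonzero homogeneous component sits in $\M_0(N)$. Since $k>0$, the component of $f$ in $\M_k(N)$ must therefore be $0$, forcing $f=0$. This gives $\M_k(N/\ell)\cap W_\ell(\M_k(N/\ell))=\{0\}$.

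For the second claim, I would recall from the paragraph following Proposition \ref{wlscaling} that $\M_k(N)^{\ell-\old}=\M_k(N/\ell)+W_\ell(\M_k(N/\ell))$. Combining this with the trivial intersection just proved, the sum is in fact a direct sum, giving
\begin{equation*}
\M_k(N)^{\ell-\old}=\M_k(N/\ell)\oplus W_\ell(\M_k(N/\ell)),
\end{equation*}
as desired.

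The main obstacle, such as it is, lies not in this corollary but in the two results being invoked: the weight/direct-sum statement of Proposition \ref{directoverc} (which is a standard fact about modular forms of distinct weights being linearly independent, proved by a character argument on $\begin{smatrix}a&0\\0&a\end{smatrix}$-type scalar matrices or by comparing $q$-expansions under the slash action), and Proposition \ref{constint} itself, whose proof already did the real work by forcing a Fourier expansion identity $f=\ell^k i_\ell^2(f)$ and comparing lowest-order terms. Once both are in hand, Corollary \ref{linindepwl} is a one-line deduction, so I would not expect to need anything beyond the citations above.
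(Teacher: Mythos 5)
Your argument is correct and follows exactly the same route as the paper: apply Proposition \ref{constint} to conclude that any $f$ in the intersection is constant, then use the direct sum decomposition of Proposition \ref{directoverc} to observe that constants occur only in weight $0$, so $f=0$ once $k>0$; the direct sum statement for $\M_k(N)^{\ell-\old}$ is then immediate. No differences in approach worth noting.
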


\section{First Algebraic Notion: $U_\ell$ Eigenvalue}\label{ulsect}
We have shown (Corollary \ref{coruleigen}) that $\mcS_k(N)^{\ell-\new}$ has a basis of eigenforms $\{f_1,f_2,\dots,f_n\}$ such that   
\begin{equation*}
    U_\ell(f_i)=\pm\ell^{\frac{k}{2}-1}f_i.
\end{equation*}
In light of this property, we define the following operator.
\begin{definition}
    The $\D_\ell$ operator is the linear map $\D_\ell\colon\M_k(N)~\to~\M_k(N)$ defined by
    \begin{equation*}
        \D_\ell=\ell^2U_\ell^2-\ell^k,
    \end{equation*}
    so that
    \begin{equation*}
        \ker(\D_\ell)=\{f\in\M_k(N)\::\:U_\ell^2(f)=\ell^{k-2}f\}.
    \end{equation*}
\end{definition}
We claim that when $\D_\ell$ is restricted to cusp forms, this kernel is precisely the $\ell$-newforms.
\begin{theorem}\label{dlcusptheorem}
    We have
    \begin{equation*}
        \mcS_k(N)^{\ell-\new}=\ker(\D_\ell|_{\mcS_k(N)}).
    \end{equation*}
\end{theorem}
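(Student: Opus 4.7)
The plan is to prove the two containments separately. The inclusion $\mcS_k(N)^{\ell-\new} \subseteq \ker(\D_\ell|_{\mcS_k(N)})$ is immediate from Corollary \ref{coruleigen}: since $\ell$ divides $N$ exactly once, every eigenform $f$ in $\mcS_k(N)^{\ell-\new}$ satisfies $U_\ell f = \pm \ell^{k/2-1} f$, so $\ell^2 U_\ell^2 f = \ell^k f$ and hence $\D_\ell f = 0$. The $\ell$-new subspace is spanned by such eigenforms (using that each $T_p$ with $p\nmid N$ commutes with the embeddings $i_e$, so preserves the $\ell$-old subspace, and is Hermitian, so also preserves its orthogonal complement), and the inclusion follows by linearity.

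For the reverse inclusion, I would use the orthogonal decomposition $\mcS_k(N) = \mcS_k(N)^{\ell-\new} \oplus \mcS_k(N)^{\ell-\old}$. Because $U_\ell$, and hence $\D_\ell$, preserves $\mcS_k(N)^{\ell-\old} = \mcS_k(N/\ell) \oplus i_\ell(\mcS_k(N/\ell))$ (the cuspidal analogue of Corollary \ref{linindepwl}), the forward inclusion reduces the problem to showing that $\D_\ell$ is injective on $\mcS_k(N)^{\ell-\old}$. The stability follows from the explicit formulas $U_\ell(i_\ell g) = g$ (a direct Fourier calculation) and $U_\ell(g) = T_\ell^{(N/\ell)}(g) - \ell^{k-1} i_\ell(g)$ for $g \in \mcS_k(N/\ell)$, using the definition of the Hecke operator $T_\ell$ at the coprime-to-$\ell$ level $N/\ell$.

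Invoking the spectral theorem for the commuting Hermitian family $\{T_p : p \nmid N\} \cup \{T_\ell^{(N/\ell)}\}$ on $\mcS_k(N/\ell)$, choose a simultaneous eigenbasis $\{g_1,\dots,g_m\}$. For each $g_j$ with $T_\ell^{(N/\ell)}$-eigenvalue $a_j := a_\ell(g_j)$, the two-dimensional block $\langle g_j, i_\ell g_j\rangle$ is $U_\ell$-stable with $U_\ell$ represented by $\begin{pmatrix} a_j & 1 \\ -\ell^{k-1} & 0 \end{pmatrix}$, whose characteristic polynomial is $X^2 - a_j X + \ell^{k-1}$. A short computation shows that $\pm\ell^{k/2-1}$ is a root of this polynomial, equivalently that $\D_\ell$ has a nontrivial kernel on this block, if and only if $a_j = \pm\ell^{k/2-1}(\ell+1)$.

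The main obstacle, then, is ruling out this equality for every eigenform $g_j$. By Lemma \ref{oldisnew} applied at level $N/\ell$, each $g_j$ is equivalent to a primitive form $h \in \mcS_k(M)^{\new}$ for some $M \mid N/\ell$; since $T_\ell^{(N/\ell)}$ commutes with the embeddings $i_e$ for $e \mid (N/\ell)/M$ (all coprime to $\ell$), we have $a_\ell(g_j) = a_\ell(h)$. Deligne's proof of the Ramanujan--Petersson conjecture then gives $|a_\ell(h)| \leq 2\ell^{(k-1)/2}$, which is strictly smaller than $\ell^{k/2-1}(\ell+1)$ because $(\sqrt{\ell}-1)^2 > 0$ forces $(\ell+1)/\sqrt{\ell} > 2$ for every prime $\ell$. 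Therefore $\pm\ell^{k/2-1}$ is never a $U_\ell$-eigenvalue on $\mcS_k(N)^{\ell-\old}$, $\D_\ell$ acts injectively there, and the reverse inclusion follows.
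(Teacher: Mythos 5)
Your proof is correct and follows essentially the same strategy as the paper's: the first inclusion from Corollary \ref{coruleigen}, and the second by decomposing $\mcS_k(N)^{\ell-\old}$ into two-dimensional $U_\ell$-stable blocks attached to eigenforms $g\in\mcS_k(N/\ell)$ and invoking the Weil bound on $a_\ell(g)$ to exclude the eigenvalue $\pm\ell^{k/2-1}$. The only differences are cosmetic---you use the basis $\{g,i_\ell g\}$ and the characteristic polynomial of $U_\ell$ (then square the eigenvalue), whereas the paper uses $\{g,W_\ell g\}$ and works directly with the characteristic polynomial of $U_\ell^2$ (cf.\ Lemma \ref{charpolynomial}); both lead to the same contradiction $a_\ell(g)^2=(\ell+1)^2\ell^{k-2}$.
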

To prove this theorem, we will need some preliminary lemmas.
\begin{lemma}\label{weillemma}
    If $g\in\mcS_k(N/\ell)$ is an eigenform of weight $k$ then $|a_{\ell}(g)|~<~(\ell~+~1)\ell^{\frac{k-2}{2}}$, where $a_\ell(g)$ is the $T_\ell$ eigenvalue of $g$.
\end{lemma}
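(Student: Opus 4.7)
The plan is to reduce to the case of a normalized newform whose level is coprime to $\ell$, and then invoke Deligne's theorem (the Ramanujan--Petersson bound). The crucial observation is that $\ell$ divides $N$ exactly once, so $\ell$ is coprime to $N/\ell$ and will be coprime to the level of the associated newform, placing us precisely in the situation where Ramanujan--Petersson applies.

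First, apply Lemma \ref{oldisnew} with $N$ replaced by $N/\ell$: the eigenform $g\in\mcS_k(N/\ell)$ is equivalent, in the sense of Definition \ref{eqdef}, to a primitive form $h\in\mcS_k(M)^{\new}$ for some divisor $M\mid N/\ell$. This equivalence requires agreement of $T_p$-eigenvalues for all primes $p\nmid N/\ell$, and since $\ell\nmid N/\ell$ it includes $p=\ell$, yielding $a_\ell(g)=a_\ell(h)$. In particular $\ell\nmid M$, so Deligne's theorem applies to $h$ at $\ell$ and gives
\begin{equation*}
    |a_\ell(g)| \;=\; |a_\ell(h)| \;\leq\; 2\ell^{(k-1)/2} \;=\; 2\sqrt{\ell}\cdot\ell^{(k-2)/2}.
\end{equation*}
The target bound $|a_\ell(g)|<(\ell+1)\ell^{(k-2)/2}$ then reduces to the elementary inequality $2\sqrt{\ell}<\ell+1$, equivalently $(\sqrt{\ell}-1)^2>0$, which holds for every prime $\ell\geq 2$.

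There is no genuine obstacle: the only real input is Deligne's theorem, which is being invoked as a black box, while the rest of the argument consists of a reduction via Lemma \ref{oldisnew} and an elementary manipulation. Conceptually the strict inequality is exactly what is needed for Theorem \ref{dlcusptheorem}, since the values $a_\ell(g)=\pm(\ell+1)\ell^{(k-2)/2}$ are precisely those that would allow an $\ell$-oldform (built from an eigenform at level $N/\ell$) to produce a $U_\ell^2$-eigenvalue of $\ell^{k-2}$ and thus to contaminate $\ker(\D_\ell)$; ruling them out is what allows the kernel of $\D_\ell$ on cusp forms to coincide with $\mcS_k(N)^{\ell-\new}$.
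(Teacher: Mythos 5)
Your proof is correct and follows essentially the same approach as the paper: invoke Deligne's Ramanujan--Petersson bound, reduce to a primitive form via Lemma~\ref{oldisnew}, and finish with the elementary inequality $2\sqrt{\ell}<\ell+1$. The paper states the Weil bound and the elementary inequality in the body and relegates the reduction to primitive forms (via Lemma~\ref{oldisnew}) to a remark following the proof; you have simply folded that remark into the argument and made the coprimality check at $\ell$ explicit, which is the careful way to do it.
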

\begin{proof}
    This relies on the Weil bound \cite[Theorem 8.2]{deligne1974conjecture} which states that $|a_{\ell}(g)|\leq 2\ell^{\frac{k-1}{2}}$. It thus suffices to show that $2\ell^{\frac{k-1}{2}}<(\ell+1)\ell^{\frac{k-2}{2}}$. Squaring both sides of this inequality and dividing by $\ell^{k-2}$, we see that this is equivalent to $4\ell<(\ell+1)^2$, which is always true for $\ell>1$.
\end{proof}
\begin{remark}
    The Weil bound is only stated for primitive forms in \cite{deligne1974conjecture}. However, any eigenform $g\in\mcS_k(N/\ell)$ has the same $T_\ell$-eigenvalue as some primitive form in a lower level (Lemma \ref{oldisnew}).  
\end{remark}
\begin{lemma}\label{charpolynomial}
    For $k>0$, let $g\in \M_k(N/\ell)$ be an eigenform and consider the two dimensional subspace, $V_{\ell,g}$, spanned by $g$ and $W_{\ell}g$. The characteristic polynomial of $U_{\ell}^2$ on $V_{\ell,g}$ is $P_{\ell,g}(X)=X^2-(a_{\ell}^2(g)-2\ell^{k-1})X+\ell^{2k-2}$, where $a_{\ell}(g)$ is the $T_{\ell}$-eigenvalue of $g$. 
\end{lemma}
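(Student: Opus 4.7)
The plan is to compute the matrix of $U_\ell$ on $V_{\ell,g}$ in the ordered basis $\{g, W_\ell g\}$, and then read off the characteristic polynomial of $U_\ell^2$ from that of $U_\ell$ by squaring eigenvalues. I would first verify that $V_{\ell,g}$ is genuinely two-dimensional: by Proposition \ref{wlscaling}, $W_\ell g = \ell^k i_\ell g$, and Corollary \ref{linindepwl} (applicable since $k > 0$) gives $\M_k(N/\ell) \cap W_\ell(\M_k(N/\ell)) = \{0\}$, so $g$ and $W_\ell g$ are linearly independent.

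Next I would compute the action of $U_\ell$ on each basis vector. Since $\ell \nmid N/\ell$, at level $N/\ell$ the Hecke operator decomposes as $T_\ell = U_\ell + \ell^{k-1} i_\ell$, which is immediate from the defining formula for $T_\ell$ applied to $q$-expansions. Because $g$ is a $T_\ell$-eigenform with eigenvalue $a_\ell(g)$, rearranging and using $i_\ell g = \ell^{-k} W_\ell g$ yields
\begin{equation*}
    U_\ell g = a_\ell(g)\, g - \ell^{k-1} i_\ell g = a_\ell(g)\, g - \ell^{-1} W_\ell g.
\end{equation*}
For the second basis vector, $U_\ell \circ i_\ell$ acts as the identity on $q$-expansions, so $U_\ell(W_\ell g) = \ell^k U_\ell(i_\ell g) = \ell^k g$. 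In particular, $V_{\ell,g}$ is $U_\ell$-stable, and the matrix of $U_\ell|_{V_{\ell,g}}$ has trace $a_\ell(g)$ and determinant $\ell^{k-1}$.

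Finally, if $\alpha, \beta$ denote the eigenvalues of $U_\ell|_{V_{\ell,g}}$, then those of $U_\ell^2|_{V_{\ell,g}}$ are $\alpha^2, \beta^2$; a one-line application of Newton's identity gives $\alpha^2 + \beta^2 = (\alpha + \beta)^2 - 2\alpha\beta = a_\ell(g)^2 - 2\ell^{k-1}$ and $\alpha^2\beta^2 = \ell^{2k-2}$, producing $P_{\ell,g}(X) = X^2 - (a_\ell(g)^2 - 2\ell^{k-1})X + \ell^{2k-2}$ as claimed.

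The proof is entirely computational and poses no real conceptual obstacle. The only thing to be careful about is the bookkeeping with scaling factors, since three closely related operators (the embedding $i_\ell$, the Atkin-Lehner $w_\ell$, and the renormalised $W_\ell = \ell^{k/2} w_\ell$) are all in play at once, and a misplaced power of $\ell$ between them would propagate into wrong coefficients in $P_{\ell,g}(X)$.
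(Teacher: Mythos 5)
Your proof is correct and follows essentially the same route as the paper: verifying two-dimensionality via Corollary \ref{linindepwl}, deriving $U_\ell g = a_\ell(g)g - \ell^{-1}W_\ell g$ and $U_\ell(W_\ell g) = \ell^k g$ from $T_\ell = U_\ell + \ell^{-1}W_\ell$ on $\M_k(N/\ell)$, and reading off the characteristic polynomial. The only cosmetic difference is that the paper squares the $2\times 2$ matrix of $U_\ell$ explicitly, whereas you pass through the eigenvalues of $U_\ell$ and apply Newton's identity; both are immediate and give the same answer.
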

\begin{proof}
    First note that $g$ and $W_\ell g$ are linearly independent by Corollary \ref{linindepwl}. We need to find out how $U_{\ell}$ acts on $g$ and $W_\ell(g)$. By the definition of the Hecke operators of $\M_k(N/\ell)$,
    \begin{equation}\label{heckerelations}
        T_{\ell}=U_{\ell}+\ell^{-1}W_\ell,
    \end{equation}
    noting that $(W_\ell f)(q)=\ell^kf(q^\ell)$ by Proposition \ref{wlscaling}. Evaluating \eqref{heckerelations} at $g$ and rearranging then gives
    \begin{equation*}
        U_{\ell}(g)=a_{\ell}(g)g-\ell^{-1}W_\ell(g).
    \end{equation*}
    On the other hand, $U_\ell(i_\ell g)=g$ and thus $U_{\ell}(W_\ell(g))=\ell^kg$. Putting everything together, the transformation matrix of $U_{\ell}$ with respect to the basis $\{g,W_\ell(g)\}$ is
    \begin{equation*}
        U_\ell=
        \begin{pmatrix}
            a_{\ell}(g)&\ell^k\\
            -\ell^{-1}&0
        \end{pmatrix}.
    \end{equation*}
    The transformation matrix for $U_\ell^2$ is then
    \begin{equation*}
        U_\ell^2=
        \begin{pmatrix}
            a_\ell^2(g)-\ell^{k-1}&\ell^ka_\ell(g)\\
            -\ell^{-1}a_\ell(g)&-\ell^{k-1}
        \end{pmatrix}
    \end{equation*}
    which has the desired characteristic polynomial. 
\end{proof}
We are now ready to prove Theorem \ref{dlcusptheorem}.
\begin{proof}[Proof of Theorem \ref{dlcusptheorem}]
    From Corollary \ref{coruleigen} it follows that any $f\in\mcS_k(N)^{\ell-\new}$ satisfies $U_\ell^2(f)=\ell^{k-2}f$ so that $\mcS_k(N)^{\ell-\new}\subseteq\ker(\D_\ell|_{\mcS_k(N)})$. It thus suffices to show that no $\ell$-old forms have a $U_\ell^2$ eigenvalue of $\ell^{k-2}$.\\
    \\
    Note that there are no nonzero cusp forms of weight 0 \cite[Theorem 3.5.1]{diamond2005first} so we may assume $k>0$. Now, $\mcS_k(N/\ell)$ has a basis of eigenforms so that $\mcS_k(N)^{\ell-\old}$ is the direct sum of the two-dimensional subspaces $V_{\ell,g}$ described in Lemma \ref{charpolynomial}. These spaces are $U_{\ell}$-invariant so we deal with them separately. By Lemma \ref{charpolynomial}, the characteristic polynomial of $U_\ell^2$ on $V_{\ell,g}$ is 
    \begin{equation*}
        P_{\ell,g}(X)=X^2-(a_{\ell}^2(g)-2\ell^{k-1})X+\ell^{2k-2}.
    \end{equation*}
    Therefore, if $\lambda_1$ and $\lambda_2$ denote the $U_\ell^2$ eigenvalues on $V_{\ell,g}$,
    \begin{equation*}
        \lambda_1+\lambda_2=a_\ell^2(g)-2\ell^{k-1}
        \ \text{and}\ \lambda_1\lambda_2=\ell^{2k-2}.
    \end{equation*} 
    Suppose for a contradiction that one of these eigenvalues is $\ell^{k-2}$. Then the other eigenvalue must be $\ell^k$ and hence \begin{equation}\label{eqal2}
        a_{\ell}^2(g)=\ell^{k-2}+2\ell^{k-1}+\ell^k=\ell^{k-2}(\ell+1)^2,
    \end{equation}
    which contradicts the Weil bound (Lemma \ref{weillemma}).
\end{proof}
\section{Second Algebraic Notion: Kernel of Trace Operator}\label{tlsect}
For our next algebraic definition of newforms we look at how to project $\mcS_k(N)$ onto $\mcS_k(N/\ell)$. Fortunately there is a natural way to do this using the following operator.
\begin{definition}\label{tracedefinition}
    For any weight $k\geq 0$, the \emph{trace operator} from level $N$ to level $N/\ell$ is given by
    \begin{align*}
        \Tr_\ell\colon\M_k(N)&\to\M_k(N/\ell)\\[8pt]
        f&\mapsto\sum_{\gamma\in\Gamma_0(N)\backslash\Gamma_0(N/\ell)}f|_k\gamma.
    \end{align*}
See Proposition \ref{trwelldefined} in the appendix as for why this operator is well defined.
\end{definition}
By explicitly finding coset representatives for $\Gamma_0(N)\backslash\Gamma_0(N/\ell)$, one obtains the following formula for $\Tr_\ell$ \cite[Lemma 2.2]{mcgraw2003modular}:
\begin{equation*}
    \Tr_{\ell}(f)=f+\ell^{1-k/2}U_\ell\omega_{\ell}f.
\end{equation*}
In terms of the scaled $W_\ell$ operator; this is:
\begin{equation}\label{explicittrace}
    \Tr_\ell(f)=f+\ell^{1-k} U_\ell W_\ell f.
\end{equation}
If $f\in\M_k(N/\ell)$ we can simplify this expression further.
\begin{lemma}\label{lemmatraceprop}
    Let $f\in\M_k(N/\ell)$. We have
    \begin{enumerate}[label=(\alph*)]
        \item $\Tr_\ell(f)=(\ell+1)f$,
        \item $\Tr_\ell(W_\ell f)=\ell T_\ell(f)$.
    \end{enumerate}
\end{lemma}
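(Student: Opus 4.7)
The plan is to prove both parts by direct computation, starting from the explicit formula
\[
\Tr_\ell(f) = f + \ell^{1-k} U_\ell W_\ell f
\]
given in \eqref{explicittrace} and combining it with Proposition \ref{wlscaling} together with the relation $W_\ell^2 = \ell^k$ (which comes from $w_\ell$ being an involution and the scaling $W_\ell = \ell^{k/2}w_\ell$).

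For part (a), I would observe that since $f \in \M_k(N/\ell)$, Proposition \ref{wlscaling} gives $W_\ell f = \ell^k i_\ell f$. Applying $U_\ell$ and using the Fourier-coefficient description, $U_\ell \circ i_\ell$ is the identity on power series, so $U_\ell W_\ell f = \ell^k f$. Substituting back into the trace formula then yields $\Tr_\ell(f) = f + \ell^{1-k}(\ell^k f) = (\ell+1)f$.

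For part (b), I would apply the trace formula to $W_\ell f$ to get $\Tr_\ell(W_\ell f) = W_\ell f + \ell^{1-k} U_\ell W_\ell^2 f$, and then use $W_\ell^2 = \ell^k$ to simplify the second term to $\ell U_\ell f$. This leaves $\Tr_\ell(W_\ell f) = \ell U_\ell f + W_\ell f$. The final step is to recognise that the Hecke relation \eqref{heckerelations} in level $N/\ell$ — namely $T_\ell = U_\ell + \ell^{-1} W_\ell$ — rearranges to $\ell T_\ell(f) = \ell U_\ell f + W_\ell f$, completing the proof.

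I do not expect any serious obstacle here: both parts reduce to one-line algebraic manipulations once the explicit trace formula and the key identities $W_\ell|_{\M_k(N/\ell)} = \ell^k i_\ell$, $U_\ell i_\ell = \mathrm{id}$, $W_\ell^2 = \ell^k$ and $T_\ell = U_\ell + \ell^{-1}W_\ell$ are in hand. The only mild subtlety is being careful that the Hecke relation used in (b) is the one valid at level $N/\ell$ (where $\ell \nmid N/\ell$), not at level $N$; this is exactly the setting since $f$ lies in $\M_k(N/\ell)$.
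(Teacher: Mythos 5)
Your proof is correct and matches the paper's argument step for step: part (a) uses $W_\ell = \ell^k i_\ell$ on $\M_k(N/\ell)$ together with $U_\ell i_\ell = \mathrm{id}$, and part (b) uses $W_\ell^2 = \ell^k$ and the level-$(N/\ell)$ relation $\ell T_\ell = \ell U_\ell + W_\ell$. Your remark about the Hecke relation being the one at level $N/\ell$ is a helpful point that the paper leaves implicit.
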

\begin{proof}
    For part (a), we note that $U_\ell(i_\ell(f))=f$ and thus
    \begin{equation*}
        \Tr_{\ell}(f)=f+\ell^{1-k} U_{\ell}W_{\ell}f=f+\ell^{1-k}U_{\ell}\ell^ki_\ell f=f+\ell f=(\ell+1)f.
    \end{equation*}
    For part (b), we first recall that $W_\ell^2=\ell^k$. As a consequence,
    \begin{equation*}
        \Tr_\ell(W_\ell f)=W_\ell f+\ell^{1-k}U_\ell W_\ell^2 f=W_\ell f+\ell U_\ell f.
    \end{equation*}
    The result then follows since $\ell T_\ell=\ell U_\ell+\ell^ki_\ell=\ell U_\ell+W_\ell$.
\end{proof}
Using these results we can now define newforms in terms of the trace operator. This characterisation was first noted by Serre \cite[\S 3.1(c) remarque (3)]{serre1973formes} in the specific case when $N=\ell$.   

\begin{theorem}\label{tracetheorem}
    We have
    \begin{equation*}
        \mcS_k(N)^{\ell-\new}=\ker(\Tr_\ell|_{\mcS_k(N)})\cap\ker(\Tr_\ell W_\ell|_{\mcS_k(N)}).
    \end{equation*}
\end{theorem}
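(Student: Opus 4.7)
The plan is to verify both inclusions separately, exploiting the direct-sum decomposition $\mcS_k(N) = \mcS_k(N)^{\ell-\old} \oplus \mcS_k(N)^{\ell-\new}$.

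For the inclusion $\mcS_k(N)^{\ell-\new} \subseteq \ker(\Tr_\ell|_{\mcS_k(N)}) \cap \ker(\Tr_\ell W_\ell|_{\mcS_k(N)})$, I would first observe that $\Tr_\ell$ commutes with each $T_p$ for $p \nmid N$: by the explicit formula \eqref{explicittrace}, $\Tr_\ell$ is built from $U_\ell$ and $W_\ell$, each of which commutes with $T_p$ by Proposition \ref{heckeproperties}. The space $\mcS_k(N)^{\ell-\new}$ is a $T_p$-stable subspace (being the orthogonal complement of the $T_p$-stable subspace $\mcS_k(N)^{\ell-\old}$ under the Petersson inner product), so the commuting family of Hermitian operators $\{T_p\}_{p \nmid N}$ gives a basis of simultaneous eigenforms. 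For such an eigenform $f \in \mcS_k(N)^{\ell-\new}$ with $T_p$-eigenvalues $\lambda_p$, both $\Tr_\ell f$ and $\Tr_\ell(W_\ell f)$ lie in $\mcS_k(N/\ell) \subseteq \mcS_k(N)^{\ell-\old}$ and retain the eigenvalues $\lambda_p$. If either were nonzero, decomposing it into $\ell$-old eigenforms would produce an $\ell$-old eigenform sharing all $T_p$-eigenvalues with the $\ell$-new form $f$, contradicting Corollary \ref{lolddistincteigen}. Extending by linearity gives the inclusion.

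For the reverse inclusion, it suffices to show that no nonzero $\ell$-oldform lies in $\ker \Tr_\ell \cap \ker(\Tr_\ell W_\ell)$. By Corollary \ref{linindepwl}, any $f \in \mcS_k(N)^{\ell-\old}$ can be written uniquely as $f = g + W_\ell h$ with $g, h \in \mcS_k(N/\ell)$. Using $W_\ell^2 = \ell^k$, we have $W_\ell f = W_\ell g + \ell^k h$, so Lemma \ref{lemmatraceprop} translates the two kernel conditions into the system
\begin{align*}
    (\ell+1)g + \ell T_\ell h &= 0,\\
    \ell T_\ell g + \ell^k(\ell+1) h &= 0.
\end{align*}
Eliminating $g$ yields $T_\ell^2 h = \ell^{k-2}(\ell+1)^2 h$. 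Since $\ell \nmid N/\ell$, the operator $T_\ell$ is Hermitian on $\mcS_k(N/\ell)$ and that space admits a $T_\ell$-eigenbasis; any eigenvalue $\lambda$ appearing nontrivially in the decomposition of $h$ would then have to satisfy $|\lambda| = (\ell+1)\ell^{(k-2)/2}$, which is excluded by the Weil bound (Lemma \ref{weillemma}). Hence $h = 0$, and then $g = -\tfrac{\ell}{\ell+1} T_\ell h = 0$, so $f = 0$.

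The main obstacle is the final step, which relies on Deligne's proof of the Ramanujan–Petersson conjecture via Lemma \ref{weillemma}; the system above is algebraically satisfiable precisely by $T_\ell$-eigenforms at the forbidden boundary eigenvalue, so without the strict Weil bound a purely algebraic argument would leave nonzero $\ell$-oldforms (namely $f = -\tfrac{\ell\lambda}{\ell+1} h + W_\ell h$ for such an eigenform $h$) as potential counterexamples. A secondary but routine verification is that $T_p$ commutes with both $\Tr_\ell$ and $W_\ell$, which I would dispatch using the formula \eqref{explicittrace} together with the commutation relations in Proposition \ref{heckeproperties}.
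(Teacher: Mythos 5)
Your proof is correct and follows essentially the same strategy as the paper's: both show the first inclusion via the eigenform structure of $\mcS_k(N)^{\ell-\new}$, both reduce the second inclusion to the two-dimensional $\ell$-old slices $\{g, W_\ell g\}$ via Lemma~\ref{lemmatraceprop}, and both close with the Weil bound (Lemma~\ref{weillemma}) to rule out the boundary $T_\ell$-eigenvalue $\pm(\ell+1)\ell^{(k-2)/2}$. The one small difference is in the first inclusion: the paper invokes Corollary~\ref{coruleigen} directly, plugging the explicit $U_\ell$- and $W_\ell$-eigenvalues of an $\ell$-new eigenform into the formula $\Tr_\ell = 1 + \ell^{1-k}U_\ell W_\ell$ and checking the result is literally $0$, whereas you argue more abstractly that a nonzero $\Tr_\ell f$ would be an $\ell$-old eigenform equivalent to the $\ell$-new eigenform $f$, contradicting Corollary~\ref{lolddistincteigen}. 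Both are valid (Corollary~\ref{coruleigen} is itself proved from Corollary~\ref{lolddistincteigen}); the paper's version is a touch more computational and the intermediate claim that $\Tr_\ell f$ retains the $T_p$-eigenvalues of $f$ is exactly right since $T_p$ commutes with $U_\ell$ and $W_\ell$. Your elimination of $g$ to get $T_\ell^2 h = \ell^{k-2}(\ell+1)^2 h$ is just the determinant condition on the paper's $2\times 2$ matrices in disguise, and your remark about the reliance on Deligne's bound being the genuine nonformal ingredient is apt.
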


\begin{proof}
    Using \eqref{explicittrace} and Corollary \ref{coruleigen} we see that $\Tr_\ell(f)=\Tr_\ell(W_\ell f)=0$ for any $\ell$-new form $f\in\mcS_k(N)^{\ell-\new}$. Hence $\mcS_k(N)^{\ell-\new}\subseteq\ker(\Tr_\ell)\cap\ker(\Tr_\ell W_\ell)$.\\[8pt]
    We now need to show that for any eigenform $f\in\mcS_k(N)^{\old}$, $f\notin \ker(\Tr_\ell)\cap\ker(\Tr_\ell W_\ell)$. As in our proof of Theorem \ref{dlcusptheorem}, it will suffice to consider $k>0$ and $f$ contained in the space $V_{\ell,g}$ spanned by $g$ and $W_{\ell}(g)$ for some eigenform $g\in \mcS_k(N/\ell)$. By Lemma \ref{lemmatraceprop} we can represent $\Tr_{\ell}$ and $\Tr_{\ell}W_{\ell}$ as matrices acting on the ordered basis $\{g,W_{\ell}(g)\}$:
    \begin{equation*}
        \Tr_{\ell}=\begin{pmatrix}\ell+1&\ell a_{\ell}(g)\\0&0\end{pmatrix},\qquad \Tr_{\ell}W_{\ell}=\begin{pmatrix}\ell a_{\ell}(g)&(\ell+1)\ell^k\\0&0\end{pmatrix},
    \end{equation*}
    where $a_\ell(g)$ is the $T_\ell$ eigenvalue of $g$. The kernels of these matrices only intersect nontrivially when $a_{\ell}(g)^2=(\ell+1)^2\ell^{k-2}$. Thus $f\notin{\ker(\Tr_{\ell})\cap\ker(\Tr_{\ell}W_\ell)}$ otherwise we would contradict the Weil bound (Lemma \ref{weillemma}).
\end{proof}
\section{Extension to all Modular Forms}\label{secallmod}
We now have two equivalent definitions of newforms on $\mcS_k(N)$ in terms of the operators 
\begin{equation*}
    \D_\ell(f)=\ell^2U_\ell^2f-\ell^kf\ \text{and}\ \Tr_\ell(f)=f+\ell^{1-k}U_\ell W_\ell f.
\end{equation*}
These operators are well-defined on the entire space of modular forms $\M_k(N)$. Hence we can use them to define notions of newforms on spaces besides $\mcS_k(N)$.
\begin{definition}\label{ulandtrldef}
    Let $\mathcal{C}$ be a Hecke-invariant subspace of $\M_k(N)$. Define
    \begin{align*}
        \mathcal{C}^{U_\ell-\new}&:=\ker(\D_\ell|_\mathcal{C}),\ \text{and}\\[8pt] \mathcal{C}^{\Tr_\ell-\new}&:=\ker(\Tr_\ell|_C)\cap\ker(\Tr_\ell W_\ell|_C).
    \end{align*}
\end{definition}
Here, Hecke-invariant means that $\mathcal{C}$ is closed under the action of the Hecke operators. Our previous results in this chapter tell us that 
\begin{equation*}
    \mcS_k(N)^{\ell-\new}=\mcS_k(N)^{U_\ell-\new}=\mcS_k(N)^{\Tr_\ell-\new}
\end{equation*}
and we now consider whether a similar statement holds more generally for $\M_k(N)$.\\
\\
First we note that $\M_k(N)$ decomposes as
\begin{equation*}
    \M_k(N)=\mcS_k(N)\oplus\E_k(N)
\end{equation*}
where $\E_k(N)$ is a Hecke-invariant subspace of $\M_k(N)$ called the \emph{Eisenstein space} \cite[Chapter 4 and 5.11]{diamond2005first}. As our theory of newforms on $\mcS_k(N)$ is already established, we restrict our attention to $\E_k(N)$. \\
\\
Recall that in $\mcS_k(N)$, the $\ell$-new subspace consists of eigenforms that are not equivalent to any $\ell$-old eigenforms (Corollary \ref{lolddistincteigen}). Since the Hecke operators are well-defined on the Eisenstein space, the notions of eigenforms and equivalent eigenforms also make sense in $\E_k(N)$. We thus make the following definition.
\begin{definition}
    The space of \emph{$\ell$-new Eisenstein forms} $\E_k(N)^{\ell-\new}$ is the span of all eigenforms in $\E_k(N)$ that are not equivalent to any eigenforms in $\E_k(N)^{\ell-\old}=\E_k(N/\ell)~+~W_\ell(\E_k(N/\ell))$.
\end{definition}
In support of this definition, we note that $\E_k(N)$ has a basis of eigenforms and every newform in $\E_k(N)^{\new}=\bigcap_{\ell\mid N}\E_k(N)^{\ell-\new}$ is a simultaneous eigenfunction for all the Hecke operators $T_m$ with $m\geq 1$ \cite[Proposition 5.2.3]{diamond2005first}.\\
\\
We now see how $\E_k(N)^{\ell-\new}$ is related to $\E_k(N)^{U_\ell-\new}$ and $\E_k(N)^{\Tr_\ell-\new}$.
\begin{theorem}\label{eisentheorem}
    Suppose $k\neq 2$. We then have
    \begin{equation*}
        \E_k(N)^{\ell-\new}=\E_k(N)^{U_\ell-\new}=\E_k(N)^{\Tr_\ell-\new}=\{0\}.
    \end{equation*}
\end{theorem}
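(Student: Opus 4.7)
The strategy is to adapt the proofs of Theorems \ref{dlcusptheorem} and \ref{tracetheorem} to the Eisenstein setting, replacing the Weil bound (which is not available for Eisenstein eigenforms) with the explicit knowledge of their Hecke eigenvalues.

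I would begin by showing $\E_k(N)^{\ell-\new} = \{0\}$. Since we work with trivial nebentypus, every Hecke eigenform in $\E_k(N)$ has $T_p$-eigenvalue $1 + p^{k-1}$ for every prime $p \nmid N$, and is thus equivalent (in the sense of Definition \ref{eqdef}) to the form $E_k \in \E_k(1) \subseteq \E_k(N/\ell) \subseteq \E_k(N)^{\ell-\old}$. By definition there are no eigenforms left to place in $\E_k(N)^{\ell-\new}$, so this space vanishes.

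For the other two notions, the crucial observation is that for $k \neq 2$ a standard dimension count for $\dim\E_k(\Gamma_0(N))$ (see e.g.\ \cite[Ch.~3, 4]{diamond2005first}) combined with Corollary \ref{linindepwl} gives $\E_k(N) = \E_k(N)^{\ell-\old} = \bigoplus_g V_{\ell, g}$, where $g$ ranges over a basis of eigenforms of $\E_k(N/\ell)$ and $V_{\ell, g} = \langle g, W_\ell g\rangle$; the degenerate low-weight cases (odd $k$ and $k = 0$) are checked by direct inspection of $\E_k(N)$. On each $V_{\ell, g}$, Lemma \ref{charpolynomial} gives the characteristic polynomial of $U_\ell^2$ as $X^2 - (a_\ell(g)^2 - 2\ell^{k-1})X + \ell^{2k-2}$, and following the argument of Theorem \ref{dlcusptheorem}, an eigenvalue of $\ell^{k-2}$ would force $a_\ell(g)^2 = (\ell+1)^2\ell^{k-2}$. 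But $g$ is an Eisenstein eigenform for $\Gamma_0(N/\ell)$ and $\ell \nmid N/\ell$, so $a_\ell(g) = 1 + \ell^{k-1}$. A short manipulation yields the factorization
\begin{equation*}
    (1 + \ell^{k-1}) - (\ell+1)\ell^{(k-2)/2} \;=\; (1 - \ell^{(k-2)/2})(1 - \ell^{k/2}),
\end{equation*}
which vanishes only when $k \in \{0, 2\}$; hence no Eisenstein eigenform has $U_\ell^2$-eigenvalue $\ell^{k-2}$ when $k \neq 2$, giving $\E_k(N)^{U_\ell-\new} = \{0\}$. The same matrix analysis of $\Tr_\ell$ and $\Tr_\ell W_\ell$ from the proof of Theorem \ref{tracetheorem} then yields $\E_k(N)^{\Tr_\ell-\new} = \{0\}$, since the kernels of those matrices on $V_{\ell, g}$ intersect trivially under exactly the same inequality.

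The main obstacle is the unavailability of the Weil bound in the Eisenstein setting, but the factorization above takes its place and simultaneously explains the exclusion of $k = 2$: the first factor vanishes in weight two, which is precisely the regime in which genuine new Eisenstein behaviour (for instance, the form $E_2(z) - \ell E_2(\ell z)$ of Example \ref{levelnexamples}) can occur.
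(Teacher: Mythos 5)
Your strategy matches the paper's: for $k>2$ you invoke the dimension count (the paper's Proposition~\ref{eisisold}) to get $\E_k(N)=\E_k(N)^{\ell-\old}=\bigoplus_g V_{\ell,g}$, then reuse the $V_{\ell,g}$ analysis from Theorems~\ref{dlcusptheorem} and~\ref{tracetheorem} with an Eisenstein eigenvalue computation replacing the Weil bound. The factorization
\begin{equation*}
    (1+\ell^{k-1})-(\ell+1)\ell^{(k-2)/2}=(1-\ell^{(k-2)/2})(1-\ell^{k/2})
\end{equation*}
is a pleasant, self-contained substitute for the paper's blanket citation of \cite[Proposition 5.2.3]{diamond2005first}, and it makes transparent why $k=0$ and $k=2$ are the exceptional weights.

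There is, however, a gap in the assertion that every eigenform in $\E_k(N)$ (equivalently $\E_k(N/\ell)$) of trivial nebentypus has $T_p$-eigenvalue $1+p^{k-1}$, hence $a_\ell(g)=1+\ell^{k-1}$. This is true only when $N/\ell$ is squarefree. The theorem, as stated, only assumes $\ell\,\|\,N$; if $N/\ell$ has a square factor $u^2$, the forms $E_k^{\psi,\bar\psi}(tz)$ with $\psi$ primitive of conductor $u>1$ and $u^2t\mid N/\ell$ lie in $\E_k(N/\ell)$ with trivial nebentypus and have $T_\ell$-eigenvalue $\psi(\ell)+\bar\psi(\ell)\ell^{k-1}$, not $1+\ell^{k-1}$. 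The paper's citation handles these uniformly; your argument does not. The repair is short — $a_\ell^2$ is real and positive only when $\psi(\ell)=\pm1$, in which case $a_\ell^2=(1+\ell^{k-1})^2$ and your factorization applies — but the step as written is false, and the first paragraph (which rests on the same eigenvalue claim and is rendered redundant once $\E_k(N)=\E_k(N)^{\ell-\old}$ is established) inherits the problem. Finally, since Corollary~\ref{linindepwl} and the Eisenstein dimension formula fail for $k=0$, that case genuinely requires the separate computation the paper carries out ($\D_\ell(c)=(\ell^2-1)c$ and $\Tr_\ell(c)=(\ell+1)c$ on constants); "direct inspection" should be expanded.
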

\begin{proof}
     First suppose that $k=0$. Then the only modular forms are constant functions \cite[Theorem 3.5.1]{diamond2005first}. Hence $\E_0(N)=\E_0(1)$ so that there are no nonzero $\ell$-new forms in $\E_0(N)$. Now let $f(z)=c$ be some nonzero, constant-valued function on $\H$. We then have
     \begin{align*}
         \mathcal{D}_\ell(f(z))&=\ell^2U_\ell^2 c-\ell^0 c=(\ell^2-1)c\neq 0,\ \text{and}\\[8pt]
         \Tr_\ell(W_\ell(f(z)))&=\Tr_\ell(f(z))=c+\ell^{1-0}U_\ell W_\ell c=c+\ell U_\ell \ell^0 i_\ell c=(1+\ell)c\neq 0.
     \end{align*}
     Hence $\E_0(N)^{\ell-\new}=\E_0(N)^{U_\ell-\new}=\E_0(N)^{\Tr_\ell-\new}=\{0\}$.\\[8pt]
     Now assume $k\neq 0,2$. In Appendix \ref{appassort} (Proposition \ref{eisisold}) we show that $\E_k(N)=\E_k(N)^{\ell-\old}$ and thus $\E_k(N)^{\ell-\new}=\{0\}$. As in the proof of Theorems \ref{dlcusptheorem} and \ref{tracetheorem}, any $\ell$-old form in $\M_k(N)^{U_\ell-\new}$ or $\M_k(N)^{\Tr_\ell-\new}$ arises from an eigenform $g\in\M_k(N/\ell)$ with its $T_\ell$-eigenvalue satisfying $a_\ell^2(g)=(\ell+1)^2\ell^{k-2}$. However, no eigenform in $\E_k(N/\ell)$ has such an eigenvalue \cite[Proposition 5.2.3]{diamond2005first}. Hence
     \begin{equation*}
        \E_k(N)^{\ell-\new}=\E_k(N)^{U_\ell-\new}=\E_k(N)^{\Tr_\ell-\new}=\{0\},    
     \end{equation*} 
     as required.
\end{proof}

When $k=2$ however, these notions of "newness" no longer coincide. This is illustrated in the following example.

\begin{example}
    Suppose that $N$ is not prime and let $g=E_2^{N/\ell}(z)$ be as defined in Example \ref{levelnexamples}. Note that $g\in\E_2(N/\ell)$ with $T_\ell(g)=(\ell+1)g$ \cite[Propostion 5.2.3]{diamond2005first}. Now let $h=\ell g- W_\ell(g)$, noting that $h\notin\E_2(N)^{\ell-\new}$. However, using Lemma \ref{lemmatraceprop} we have
    \begin{align*}
        \Tr_\ell(h)&=\ell\Tr_\ell(g)-\Tr_\ell(W_\ell(g))=(\ell+1)g-\ell(\ell+1)g=0,\ \text{and}\\[8pt]
        \Tr_\ell(W_\ell(h))&=\ell\Tr_\ell(W_\ell(g))-\ell^2\Tr_\ell(g)=\ell^2(\ell+1)g-\ell^2(\ell+1)g=0.
    \end{align*}
    Hence $h\in\E_2(N)^{\Tr_\ell-\new}$ and one can similarly show that $h\in\E_2(N)^{U_\ell-\new}$. 
\end{example}
\begin{question}
    Although the above example shows that $\E_2(N)^{\Tr_\ell-\new}\neq\E_2(N)^{\ell-\new}$ (whenever $N$ is not prime) is it still true that $\E_2(N)^{\Tr_\ell-\new}=\E_2(N)^{U_\ell-\new}$?
\end{question}
This problematic behaviour when $k=2$ is overlooked by Deo and Medvedovsky \cite[Proposition 6.1 and Proposition 6.3]{deo2019newforms}. However, it does not affect their subsequent results. \\
\\
For a more detailed analysis of Eisenstein newforms and their relation to the trace operator see \cite[Chapter 3]{weisinger1977some}.
\chapter{Newforms in Characteristic Zero}\label{chap4}
We now consider a generalisation of modular forms, allowing $q$-expansions to have coefficients over some commutative ring $B$. The aim of this chapter is to show that when $B$ has characteristic zero, the theory of newforms mimics that discussed in Chapters \ref{chapclassic} and \ref{chapsquarefree}.\\[8pt]
The definitions in this chapter are adapted from \cite[Sections 2,5 and 6]{deo2019newforms}. From here onwards, we will assume the reader is familiar with the basic theory of flat modules. A summary can be found in Appendix \ref{flatappend}.

\section{Modular Forms over a Commutative Ring}\label{sec41}
Let $N>1$ be a fixed positive integer. Let $\M_k(N,\Z)$ (resp. $\mcS_k(N,\Z)$) denote the set of modular forms (resp. cusp forms) of weight $k$ for $\Gamma_0(N)$ with integral Fourier coefficients. For any commutative ring $B$ we then define
\begin{align*}
    \M_k(N,B)&:=\M_k(N,\Z)\otimes_\Z B,\ \text{and}\\
    \mcS_k(N,B)&:=\mcS_k(N,\Z)\otimes_\Z B,
\end{align*}
where $\M_k(N,\Z)$, $\mcS_k(N,\Z)$ and $B$ are viewed as $\Z$-modules\footnote{There is also a geometric definition of modular forms with coefficients in $B$ that was first introduced by Katz \cite{katz1973p}. However for most of the cases that we are interested in, Katz's definition agrees with our more elementary definitions (see \cite[Theorem 12.3.7]{diamond1995modular}). Certainly, $\M_k(N,B)\subseteq\M_k(N,B)^{\text{Katz}}$, where $\M_k(N,B)^{\text{Katz}}$ is the space of Katz's geometric modular forms.}.
Using the isomorphism\\
$\Z\otimes_\Z B\cong B$, we can treat $\M_k(N,B)$ (and similarly $\mcS_k(N,B))$ as a subset of $B[[q]]$. Namely,
\begin{equation*}
    \M_k(N,B)=\text{span}_B\left\{\sum_{n=0}^\infty \tilde{a}_nq^n\in B[[q]]\::\:\sum_{n=0}^\infty a_nq^n\in\M_k(N,\Z)\right\},
\end{equation*}
where $\tilde{a}_n$ is the image of $a_n$ under the unique ring homomorphism $\Z\to B$. Note that $\M_k(N,\C)$ agrees with the classical definition of modular forms since $\M_k(N)$ has a basis in $\M_k(N,\Z)$ \cite[Corollary 12.3.12]{diamond1995modular}. The same is true for $\mcS_k(N,\C)$.\\
\\
Now, let $\M(N,B)=\sum_{k=0}^\infty\M_k(N,B)$ denote the algebra of modular forms (for $\Gamma_0(N)$ over $B$). Similarly let $\mcS(N,B)=\sum_{k=0}^\infty\mcS_k(N,B)$ be the algebra of cusp forms. These algebras are much easier to work with when $B$ is a domain of characteristic zero.
\begin{proposition}\label{charzerodirect}
    Let $B$ be a domain of characteristic zero. Then
    \begin{equation*}
        \M(N,B)=\bigoplus_{k=0}^\infty\M_k(N,B).
    \end{equation*}
    An analogous result holds for $\mcS(N,B)$.
\end{proposition}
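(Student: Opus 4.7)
The plan is to reduce to a finite-dimensional linear algebra statement and exploit that any nonzero integer is a non-zero-divisor in $B$. Proving the sum is direct amounts to showing: if finitely many forms $f_{k_1},\ldots,f_{k_n}$ of pairwise distinct weights $k_1,\ldots,k_n$ sum to zero in $B[[q]]$, then each $f_{k_i}=0$. For each $i$, I would pick a $\Z$-basis $\{g_{i,1},\ldots,g_{i,d_i}\}$ of $\M_{k_i}(N,\Z)$, which exists because $\M_{k_i}(N,\Z)$ is finitely generated (being determined by finitely many Fourier coefficients) and torsion-free over $\Z$. Since $\M_{k_i}(N,B)=\M_{k_i}(N,\Z)\otimes_\Z B$ is then free of rank $d_i$ over $B$, I may write $f_{k_i}=\sum_j b_{i,j}g_{i,j}$ with uniquely determined $b_{i,j}\in B$, and the task becomes proving that each $b_{i,j}=0$.

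The classical decomposition over $\C$ (Proposition~\ref{directoverc}), together with the fact that any $\Z$-basis of $\M_{k_i}(N,\Z)$ is also a $\C$-basis of $\M_{k_i}(N,\C)$, shows that the combined set $\{g_{i,j}\}_{i,j}$ is $\C$-linearly independent in $\C[[q]]$. Setting $D=\sum_i d_i$, finite-dimensional linear algebra then produces Fourier indices $n_1,\ldots,n_D$ for which the integer matrix $M=(a_{n_l}(g_{i,j}))_{l,(i,j)}$ has nonzero determinant. Reading off the coefficients of $q^{n_1},\ldots,q^{n_D}$ in the relation $\sum_{i,j}b_{i,j}g_{i,j}=0$ gives the linear system $M\mathbf{b}=0$ in $B^D$, and multiplying by the adjugate matrix of $M$ (whose entries are integers) yields $\det(M)\cdot b_{i,j}=0$ for every pair $(i,j)$.

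The main obstacle, and the one place where the domain-of-characteristic-zero hypothesis is essential, is the final step: concluding $b_{i,j}=0$ from $\det(M)\cdot b_{i,j}=0$. This works precisely because the nonzero integer $\det(M)$ has nonzero image in $B$, and $B$ has no zero-divisors, so multiplication by it is injective on $B$. The cusp form statement follows by exactly the same argument with $\mcS_k$ in place of $\M_k$ throughout. Notably, this last step is also what breaks down in characteristic $p$, since $\det(M)$ may reduce to zero; this anticipates why the naive direct-sum decomposition fails in that setting and motivates the algebraic reformulations of Chapter~\ref{chap5}.
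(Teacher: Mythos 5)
Your argument is correct, and its conclusion matches the paper's, but your route is more hands-on. The paper's proof is essentially two sentences: the map $\bigoplus_{k}\M_k(N,\Z)\to\M(N,\Z)$ is injective by Proposition~\ref{directoverc}, and since a characteristic zero domain is torsion-free over $\Z$, hence flat (Corollary~\ref{flatdedekind}), tensoring with $B$ preserves injectivity. Your proof unpacks that flatness argument concretely: fix $\Z$-bases of the finitely many weight spaces involved, extract a $D\times D$ integer matrix $M$ of Fourier coefficients with $\det M\neq 0$ from the classical $\C$-linear independence, and then observe that $\det(M)\cdot b_{i,j}=0$ forces $b_{i,j}=0$ because a nonzero integer is a non-zero-divisor in a characteristic zero domain. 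That final step is exactly the content of ``torsion-free implies flat over a PID'' instantiated in this situation, so the two arguments share the same mathematical core. What your unwinding buys is transparency: it exhibits, without any appeal to module theory, the precise point where the hypotheses on $B$ enter, and it correctly anticipates why the conclusion fails in characteristic $p$, where $\det M$ can reduce to zero. What the paper's abstract version buys is brevity and reusability: the same flatness-preserves-injectivity move is deployed again in Propositions~\ref{proplnewintersect} and~\ref{gorenprop} and in Theorem~\ref{dlcharzerothm}, so stating it once in module-theoretic form lets the paper invoke it cheaply each time.
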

\begin{proof}
    For $B=\C$, and thus $B=\Z$, this is proven in Appendix \ref{appassort} (Proposition \ref{directoverc}). Another way of saying this is that the map
    \begin{equation}\label{injectiveoplus}
        \bigoplus_{k=0}^\infty\M_k(N,\Z)\to\M(N,\Z)
    \end{equation}
    is injective. Now suppose that $B$ is an arbitrary domain of characteristic zero. Then $B$ is flat as a $\Z$-module (Corollary \ref{flatdedekind}). Tensoring \eqref{injectiveoplus} by $B$ therefore preserves injectivity, giving the desired result.
\end{proof}

Next we define the Atkin-Lehner and Hecke operators over $B$. Recall that for $p$ prime and $f=\sum_{n=0}^\infty a_n(f)q^n\in\M_k(N,\C)$, the Hecke operators are defined by
\begin{alignat*}{3}
    a_n(T_p f)&=a_n(U_p f)=a_{np}(f),\qquad&&\text{if $p\mid N$,}\\
    a_n(T_p f)&=a_{pn}(f)+p^{k-1}a_{n/p}(f),\qquad&&\text{if $p\nmid N$}.
\end{alignat*}
Suppose now that $f\in\M_k(N,\Z)$ and $k>0$. Then by the above definitions we also have $T_p(f)\in\M_k(N,\Z)$. As a result, the Hecke operators are defined on $\M_k(N,B)$ as follows.
\begin{definition}\label{fullheckedefn}
    Let $B$ be a commutative ring, $p$ be a prime and $k>0$. The Hecke operator $T_p$ is defined on simple tensors in $\M_k(N,B)$ by
    \begin{equation*}
        T_p(f\otimes_{\Z} x):=T_p(f)\otimes_{\Z} x,
    \end{equation*}
    where $f\in\M_k(N,\Z)$ and $x\in B$.
\end{definition}
\begin{remark} 
    If $p$ is invertible in $B$ or $p\mid N$ then the above definition naturally extends to $k=0$. In addition, suppose that $B$ is a domain of characteristic zero. Then $T_p$ is well-defined on the algebra $\M(N,B)$ by Proposition \ref{charzerodirect}.
\end{remark}
Now let $\ell$ be a prime dividing $N$ exactly once. A result due to B. Conrad \cite[Theorem A.1]{prasanna2009arithmetic} tells us that the Atkin-Lehner operator $W_\ell$ is $\Z[1/\ell]$-integral. In other words, if $f$ is in $\M_k(N,\Z[1/\ell])$ then so is $W_\ell(f)$. Thus, if $\ell$ is invertible in $B$ we can naturally define $W_\ell$ on $\M_k(N,B)$ as follows.
\begin{definition}\label{wlgeneraldefn}
    Let $B$ be a $\Z[1/\ell]$-algebra (so that $\ell$ is invertible in $B$). The Atkin-Lehner operator $W_\ell$ is defined on $\M_k(N,B)$ by
    \begin{equation*}
        W_\ell(f\otimes_{\Z[1/\ell]}x):=W_\ell(f)\otimes_{\Z[1/\ell]}x
    \end{equation*}
    where $f\in\M_k(N,\Z[1/\ell])$ and $x\in B$. If $B$ is a $\Z[1/\ell]$-domain of characteristic zero then this definition extends to the algebra $\M(N,B)$.
\end{definition}
\begin{remark}
    The above definition makes sense since
    \begin{equation*}
        \M_k(N,B)=\M_k(N,\Z)\otimes_{\Z} B=\M_k(N,\Z)\otimes_{\Z}\Z[1/\ell]\otimes_{\Z[1/\ell]}B=\M_k(N,\Z[1/\ell])\otimes_{\Z[1/\ell]} B,
    \end{equation*}
    by the associativity of the tensor product.
\end{remark}
These definitions indicate that a theory of oldforms and newforms on $\M_k(N,B)$ will significantly depend on what conditions we impose on $B$. So in this chapter, we will frequently assume that $\ell$ is invertible in $B$ and/or that $B$ is a domain of characteristic zero. Further results for when $B$ has characteristic $p$ are then explored in Chapter \ref{chap5}.

\section{Oldforms over a Commutative Ring}\label{sec42}
Let $\ell$ be a prime dividing $N$. Recall that the space of $\ell$-old forms (of weight $k$) is given by
\begin{equation*}
    \M_k(N)^{\ell-\old}=\M_k(N/\ell)+i_\ell(\M_k(N/\ell)),
\end{equation*}
or if $\ell$ divides $N$ exactly once then
\begin{equation*}
    \M_k(N)^{\ell-\old}=\M_k(N/\ell)+W_\ell(\M_k(N/\ell)).
\end{equation*}
For $\ell$-old forms over any commutative ring $B$, we let $\M_k(N,\Z)^{\ell-\old}:=\M_k(N)^{\ell-\old}\cap\Z[[q]]$ and then define
\begin{equation*}
    \M_k(N,B)^{\ell-\old}:=\M_k(N,\Z)\otimes_\Z B.
\end{equation*}
Similarly, we define $\mcS_k(N,\Z)^{\ell-\old}=\mcS_k(N)^{\ell-\old}\cap\Z[[q]]$ and $\mcS_k(N,B)^{\ell-\old}=\mcS_k(N,\Z)^{\ell-\old}\otimes_\Z~B$. Note that $\M_k(N/\ell)$ and $\mcS_k(N/\ell)$ have bases in $\Z[[q]]$ and $i_\ell$ is $\Z$-integral. Hence the above definitions imply that $\M_k(N,\C)^{\ell-\old}=\M_k(N)^{\ell-\old}$ and $\mcS_k(N,\C)^{\ell-\old}=\mcS_k(N)^{\ell-\old}$.\\
\\
From these definitions it follows that $\M_k(N/\ell,B)+i_\ell(\M_k(N/\ell,B)\subseteq\M_k(N,B)^{\ell-\old}$. On the other hand, it is not always true that $\M_k(N,B)^{\ell-\old}\subseteq\M_k(N/\ell,B)+i_\ell(\M_k(N/\ell,B))$. This is illustrated in the following example.
\begin{example}\label{badoldform}
    Let $p\geq 5$ and $E'_{p-1}(z):=-\frac{B_{p-1}}{2(p-1)}E_{p-1}(z)$ be the Eisenstein series of weight $p-1$ scaled so that $a_1=1$. In particular,
    \begin{equation}\label{normeisenstein}
        E_{p-1}'(q)=-\frac{B_{p-1}}{2(p-1)}+\sum_{n=1}^\infty\sigma_{p-2}(n)q^n\in \M_{p-1}(1,\C).
    \end{equation}
    Now let
    \begin{equation}\label{epcrit}
        f(z)=E'_{p-1}(q)-E'_{p-1}(q^{\ell})
    \end{equation}
    noting that \eqref{epcrit} is the only way to express $f$ as the sum of a form in $\M_{p-1}(1,\C)$ and a form in $i_\ell(\M_{p-1}(1,\C))$ (see Corollary \ref{linindepwl}). Since the constant terms cancel in \eqref{epcrit} we have \\
    $f\in\M_k(N,\Z)^{\ell-\old}$ and thus $\overline{f}\in\M_k(N,\F_p)^{\ell-\old}$ where $\overline{f}$ is the image of $f$ in $\F_p[[q]]$. However, $\overline{f}$ is not in $\M_k(N/\ell,\F_p)+i_\ell(\M_k(N/\ell,\F_p))$. To see this, we note that the denominator of $B_{p-1}$ in \eqref{normeisenstein} is divisible by $p$ (Proposition \ref{propbernoulli}), but $p$ is not invertible in $\F_p$.
\end{example}

If $B$ is a $\Z[1/\ell]$-algebra then this problem no longer occurs.

\begin{proposition}\label{goodloldprop}
    If $B$ is a $\Z[1/\ell]$-algebra, then
    \begin{equation*}
        \mcS_k(N,B)^{\ell-\old}=\mcS_k(N/\ell,B)\oplus W_{\ell}(\mcS_k(N/\ell,B)).
    \end{equation*}
\end{proposition}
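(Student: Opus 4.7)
The plan is to prove that the $B$-linear map
\[
\psi_B \colon \mcS_k(N/\ell, B) \oplus \mcS_k(N/\ell, B) \to \mcS_k(N, B)^{\ell-\old}, \qquad (f, g) \mapsto f + W_\ell(g),
\]
is an isomorphism; this is equivalent to the proposition. The argument establishes injectivity for every $\Z[1/\ell]$-algebra $B$, deduces surjectivity over each residue field $\F_p$ (for $p \neq \ell$) by dimension counting, assembles these into an isomorphism over $\Z[1/\ell]$ via a local-global argument on the PID $\Z[1/\ell]$, and finally base-changes to $B$.

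For injectivity, suppose $(f, g) \in \ker(\psi_B)$, so $f \in \mcS_k(N/\ell, B) \cap W_\ell(\mcS_k(N/\ell, B))$. Applying $W_\ell$ to $f = -W_\ell(g)$ and using Proposition \ref{wlscaling} for $f$ together with $W_\ell^2 = \ell^k$ gives $-g = i_\ell(f)$, and substituting back yields the functional equation $f = \ell^k i_\ell^2(f)$, i.e.\ $f(q) = \ell^k f(q^{\ell^2})$. If $f$ were nonzero, the minimal $n$ with $f_n \neq 0$ would need to satisfy $\ell^2 \mid n$ (else $f_n = 0$) and $f_{n/\ell^2} \neq 0$ (since $\ell^k$ is a unit in $B$), contradicting minimality. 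Hence $f = 0$, and then $g = 0$ follows.

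For surjectivity, the $\Z$-module $\mcS_k(N, \Z)^{\ell-\old} = \mcS_k(N)^{\ell-\old} \cap \Z[[q]]$ is a submodule of the free $\Z$-module $\mcS_k(N, \Z)$, hence itself free, of rank $\dim_\C \mcS_k(N, \C)^{\ell-\old} = 2 \dim_\C \mcS_k(N/\ell, \C)$ by Corollary \ref{linindepwl}. Tensoring with $\F_p$ for any prime $p \neq \ell$, both source and target of $\psi_{\F_p}$ become $\F_p$-vector spaces of dimension $2 \dim_\C \mcS_k(N/\ell, \C)$; since $\F_p$ is a $\Z[1/\ell]$-algebra, the injectivity step above shows $\psi_{\F_p}$ is injective between equidimensional spaces, and hence an isomorphism. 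The cokernel of $\psi_{\Z[1/\ell]}$ is therefore a finitely generated torsion module over the PID $\Z[1/\ell]$ vanishing modulo every maximal ideal $(p) \neq (\ell)$, so it vanishes and $\psi_{\Z[1/\ell]}$ is an isomorphism. Tensoring over $\Z[1/\ell]$ with $B$ completes the proof. The main obstacle I anticipate is not any single technical step, but rather avoiding the tempting red-herring of trying to invert the explicit trace-operator system from Lemma \ref{lemmatraceprop} directly over $\Z[1/\ell]$, whose determinant $\ell^2(\ell^{k-2}(\ell+1)^2 - T_\ell^2)$ need not be a unit in the integral Hecke algebra despite being invertible over $\C$ by the Weil bound; the local-global route via $\F_p$-dimensions sidesteps this difficulty entirely.
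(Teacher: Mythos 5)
Your proof is correct, and the injectivity half of your argument is essentially the same as the paper's (you re-derive the functional equation $f = \ell^k i_\ell^2(f)$ from Proposition \ref{wlscaling} and $W_\ell^2 = \ell^k$, which is exactly the mechanism behind Lemma \ref{constlemmacharzero} and Proposition \ref{constint}). Where you genuinely diverge is the surjectivity step. The paper argues by descent through the chain $\C \to \Q \to \Z[1/\ell] \to B$: having established the span statement over $\Q$, it writes an arbitrary $f \in \mcS_k(N,\Z[1/\ell])^{\ell-\old}$ as $\frac{1}{M}(g + W_\ell h)$ with $g, h \in \mcS_k(N/\ell,\Z[1/\ell])$, observes that integrality of $f$ forces $g + W_\ell h \equiv 0 \pmod{M\Z[1/\ell]}$, and then applies the intersection lemma over the quotient ring $\Z[1/\ell]/M\Z[1/\ell]$ to conclude $g$ and $W_\ell h$ are each divisible by $M$. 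Your approach replaces this explicit congruence manipulation by a rank/local-global argument: you record that source and target of $\psi_{\Z[1/\ell]}$ are free of the same rank, check that $\psi_{\F_p}$ is an isomorphism for every $p \neq \ell$ by dimension counting plus your universal injectivity, and then kill the cokernel using the structure theorem for finitely generated torsion modules over the PID $\Z[1/\ell]$. The paper's route is more elementary and hands you an explicit lift of any integral oldform; your route is cleaner in that it never touches denominators at all and makes transparent why only the prime $\ell$ needs to be inverted. Both prove the same statement, and your final base change to $B$ is the same as the paper's. Your closing remark about not inverting the $2\times 2$ trace matrix of Lemma \ref{lemmatraceprop} is apt — its determinant involves $T_\ell$-eigenvalues and is not a unit in the integral Hecke algebra, so that route would indeed be a dead end.
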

To prove this result, we first need the following lemma.
\begin{lemma}\label{constlemmacharzero}
    Let $B$ be a $\Z[1/\ell]$-algebra. If $f\in \M_k(N/\ell,B)\cap W_\ell \M_k(N/\ell,B)$, then $f$ is constant.
\end{lemma}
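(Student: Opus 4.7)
The plan is to essentially replay the proof of Proposition \ref{constint}, taking care that the key steps survive when the coefficient ring is $B$ rather than $\C$. The role of the hypothesis that $B$ is a $\Z[1/\ell]$-algebra will be to (a) guarantee that $W_\ell$ is defined on $\M_k(N/\ell, B)$ via Definition \ref{wlgeneraldefn}, and (b) allow cancellation of the scalar $\ell^k$ at a crucial step.

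First I would write $f = W_\ell(g)$ for some $g \in \M_k(N/\ell, B)$. The identities $W_\ell^2 = \ell^k$ and $W_\ell(h) = \ell^k i_\ell(h)$ for $h \in \M_k(N/\ell, B)$ (Proposition \ref{wlscaling}) are both $\Z[1/\ell]$-integral, so they tensor up from $\M_k(N/\ell, \Z[1/\ell])$ to $\M_k(N/\ell, B)$ without change. Applying $W_\ell$ to $f = W_\ell(g)$ thus gives $W_\ell(f) = \ell^k g$, while applying the second identity to $f$ itself (which lies in $\M_k(N/\ell, B)$) gives $W_\ell(f) = \ell^k i_\ell(f)$. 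Equating and dividing by $\ell^k$ (allowed because $\ell$ is invertible in $B$) yields $g = i_\ell(f)$. Substituting back, $f = W_\ell(g) = \ell^k i_\ell(g) = \ell^k i_\ell^2(f)$.

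The final step is a purely formal comparison of Fourier coefficients. Writing $f = \sum_{n=0}^\infty a_n q^n$ with $a_n \in B$, the equality $f = \ell^k i_\ell^2(f)$ reads
\begin{equation*}
    \sum_{n=0}^\infty a_n q^n = \ell^k \sum_{n=0}^\infty a_n q^{n\ell^2}.
\end{equation*}
Suppose for contradiction that $f$ is not constant, and let $n_0 > 0$ be the least index with $a_{n_0} \neq 0$ in $B$. The coefficient of $q^{n_0}$ on the right-hand side is $\ell^k a_{n_0/\ell^2}$ if $\ell^2 \mid n_0$ and $0$ otherwise; in the first case minimality of $n_0$ forces $a_{n_0/\ell^2} = 0$, and in the second case we get $a_{n_0} = 0$ directly. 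Either way we reach a contradiction, so all $a_n$ with $n > 0$ vanish and $f$ is constant.

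There is no real obstacle here beyond bookkeeping: the argument reduces to the fact that $\ell$ is invertible in $B$ (which lets us move from $\ell^k g = \ell^k i_\ell(f)$ to $g = i_\ell(f)$) and to the fact that the coefficient-comparison argument of Proposition \ref{constint} uses only the abelian-group structure of $B$, not that $B$ is a domain. In particular no integrality hypothesis on $B$ beyond being a $\Z[1/\ell]$-algebra is needed.
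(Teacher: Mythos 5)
Your proof is correct and is exactly what the paper intends by its one-line proof "Generalise the argument from Proposition \ref{constint}": you replay the chain $\ell^k g = W_\ell^2(g) = W_\ell(f) = \ell^k i_\ell(f)$, deduce $f = \ell^k i_\ell^2(f)$, and run the same minimal-index coefficient comparison, correctly observing along the way that only the invertibility of $\ell$ in $B$ (not integrality or any domain hypothesis) is required. One very small remark: the explicit division by $\ell^k$ to get $g = i_\ell(f)$ can even be avoided by applying $i_\ell$ to both sides of $\ell^k g = \ell^k i_\ell(f)$, but of course $\ell^{-1}\in B$ is needed anyway for $W_\ell$ to be defined, so nothing is lost.
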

\begin{proof}
    Generalise the argument from Proposition \ref{constint}.
\end{proof}
\begin{proof}[Proof of Proposition \ref{goodloldprop}]
    By the Lemma \ref{constlemmacharzero} we have $S_k(N/\ell,B)\cap W_\ell (S_k(N/\ell,B))=\{0\}$ since nonzero cusp forms cannot be constant. Hence it suffices to show that
    \begin{equation}\label{oldspaneq}
        \mcS_k(N,B)^{\ell-\old}=\mcS_k(N/\ell,B)+W_{\ell}(\mcS_k(N/\ell,B)).
    \end{equation}
    Now, note that \eqref{oldspaneq} is true when $B=\C$. Then, since $\mcS_k(N,\C)$ has a basis in $\mcS_k(N,\Q)$, \eqref{oldspaneq} also holds when $B=\Q$ by a dimension counting argument.\\[8pt]
    Next we consider the case when $B=\Z[1/\ell]$. Directly from the definition of oldforms over $B$ (see Section \ref{sec42}) we see that $\mcS_k(N/\ell,\Z[1/\ell])+W_\ell( \mcS_k(N/\ell,\Z[1/\ell]))\subseteq \mcS_k(N,\Z[1/\ell])^{\ell-\old}$. The reverse inclusion is not as clear.\\[8pt]
    Let $f\in \mcS_k(N,\Z[1/\ell])^{\ell-\old}$. Since $\Z[1/\ell]\subseteq\Q$ we have $\mcS_k(N,\Q)=\mcS_k(N,\Z[1/\ell])\otimes_{\Z[1/\ell]}\Q$ and thus
    \begin{equation*}
        f=\frac{1}{M}(g+W_{\ell}h),
    \end{equation*}
    where $M\in\Z$ and $g,h\in \mcS_k(N/\ell,\Z[1/\ell])$. Because $f$ has coefficients in $\Z[1/\ell]$ we must then have $g\equiv-W_\ell(h)\pmod{M\Z[1/\ell]}$\footnote{That is, $a_n(g)\equiv a_n(-W_\ell(h))$ for all $n\geq 0$.}. Now, $\Z[1/\ell]/M\Z[1/\ell]$ is also a $\Z[1/\ell]$-algebra so by Lemma \ref{constlemmacharzero}
    \begin{equation*}
        W_{\ell}(h)\equiv g\equiv 0\pmod{ M\Z[1/\ell]}.
    \end{equation*}
    Hence both $\frac{1}{M}g$ and $\frac{1}{M} W_{\ell}(h)$ have coefficients in $\Z[1/\ell]$ and thus $f$ is an element of \\
    $\mcS_k(N/\ell,\Z[1/\ell])+W_\ell( \mcS_k(N/\ell,\Z[1/\ell]))$ as required.\\[8pt]
    Finally, we note that if $B$ is any $\Z[1/\ell]$-algebra then 
    \begin{align*}
        \mcS_k(N,B)^{\ell-\old}
        &\cong \mcS_k(N/\ell,\Z[1/\ell])^{\ell-\old}\otimes_{\Z[1/\ell]}B\\[8pt]
        &\cong (\mcS_k(N/\ell,\Z[1/\ell])\oplus W_\ell( \mcS_k(N/\ell,\Z[1/\ell])))\otimes_{\Z[1/\ell]}B\\[8pt]
        &=\mcS_k(N/\ell,B)\oplus W_\ell( \mcS_k(N/\ell,B)),
    \end{align*}
    as required.
\end{proof}

\section{Newforms over a Commutative Ring}\label{sec43}
For any commutative ring $B$, we define
\begin{align*}
    \mcS_k(N,\Z)^{\ell-\new}&:=\mcS_k(N)^{\ell-\new}\cap\Z[[q]],\ \text{and}\\[8pt]
    \mcS_k(N,B)^{\ell-\new}&:=\mcS_k(N,\Z)^{\ell-\new}\otimes_{\Z}B.
\end{align*}
Also let
\begin{equation*}
    \mcS(N,B)^{\ell-\new}=\sum_{k=0}^\infty\mcS_k(N,B)^{\ell-\new}.
\end{equation*}
Then, to see that $\mcS_k(N,\C)^{\ell-\new}$ agrees with the classical definition, we prove that $\mcS_k(N)^{\ell-\new}$ has a basis in $\mcS_k(N,\Z)^{\ell-\new}$.
\begin{proposition}
    The space $\mcS_k(N)^{\ell-\new}$ has a basis in $\mcS_k(N,\Z)^{\ell-\new}$.
\end{proposition}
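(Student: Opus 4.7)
The plan is to use the characterisation $\mcS_k(N)^{\ell-\new} = \ker(\D_\ell|_{\mcS_k(N)})$ from Theorem \ref{dlcusptheorem}, together with the fact that $\D_\ell = \ell^2 U_\ell^2 - \ell^k$ has integer coefficients and preserves $q$-expansions with integer coefficients.

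First, I would observe that $U_\ell$ preserves $\Z$-integrality of Fourier coefficients, since $a_n(U_\ell f) = a_{\ell n}(f)$ is a shift in Fourier indices. Hence $\D_\ell = \ell^2 U_\ell^2 - \ell^k$ restricts to a $\Z$-linear endomorphism of $\mcS_k(N,\Z) = \mcS_k(N) \cap \Z[[q]]$. From this and Theorem \ref{dlcusptheorem} we get
\begin{equation*}
    \mcS_k(N,\Z)^{\ell-\new} = \mcS_k(N)^{\ell-\new} \cap \Z[[q]] = \ker(\D_\ell|_{\mcS_k(N)}) \cap \Z[[q]] = \ker(\D_\ell|_{\mcS_k(N,\Z)}).
\end{equation*}

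Next, I would invoke two standard facts. Since $\mcS_k(N)$ has a basis inside $\mcS_k(N,\Z)$ (by \cite[Corollary 12.3.12]{diamond1995modular}, as cited in Section \ref{sec41}), the module $\mcS_k(N,\Z)$ is free of finite rank over $\Z$. By the structure theorem for finitely generated modules over a PID, any submodule of a finitely generated free $\Z$-module is itself free of finite rank, so $\mcS_k(N,\Z)^{\ell-\new}$ admits a $\Z$-basis, say $\{f_1,\dots,f_r\}$. It remains to show that this is also a $\C$-basis of $\mcS_k(N)^{\ell-\new}$.

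For this final step I would use that $\C$ is flat (in fact free) as a $\Z$-module, so tensoring with $\C$ over $\Z$ is exact and commutes with kernels of $\Z$-linear maps between free $\Z$-modules. Applying this to $\D_\ell|_{\mcS_k(N,\Z)}$ yields
\begin{equation*}
    \mcS_k(N,\Z)^{\ell-\new} \otimes_\Z \C = \ker(\D_\ell|_{\mcS_k(N,\Z)}) \otimes_\Z \C = \ker(\D_\ell|_{\mcS_k(N)}) = \mcS_k(N)^{\ell-\new},
\end{equation*}
so the $\Z$-basis $\{f_1,\dots,f_r\}$ remains $\C$-linearly independent and spans $\mcS_k(N)^{\ell-\new}$.

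The only subtlety, which I do not expect to be a real obstacle, is the flatness/kernel commutation in the last step; this could alternatively be handled by a direct dimension count once one notes that a $\Z$-basis of a free submodule of $\mcS_k(N,\Z)$ remains $\C$-linearly independent, and that every element of $\mcS_k(N)^{\ell-\new}$ can be scaled by a nonzero integer to lie in $\Z[[q]]$ (using that $\mcS_k(N)^{\ell-\new}$ is cut out by the $\Z$-integral operator $\D_\ell$ and spanned over $\C$ by its intersection with $\mcS_k(N,\Q)$).
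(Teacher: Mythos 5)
Your proof is correct, but only under the running hypothesis of Chapter \ref{chapsquarefree} that $\ell$ divides $N$ exactly once. Theorem \ref{dlcusptheorem}, which is the starting point of your argument, is proved only for such $\ell$: when $\ell^2\mid N$, every $\ell$-new eigenform satisfies $U_\ell(f)=0$ (Corollary \ref{coruleigen}), so $\D_\ell(f)=-\ell^k f\neq 0$ and the identity $\mcS_k(N)^{\ell-\new}=\ker(\D_\ell|_{\mcS_k(N)})$ fails. The Proposition in Section \ref{sec43} is meant for an arbitrary prime $\ell$ dividing $N$, and the paper's proof instead uses the decomposition of Corollary \ref{newdecomp},
\begin{equation*}
\mcS_k(N)^{\ell-\new}=\sum_{\substack{M\\ \ell^{\alpha}\mid M\mid N}}\sum_{e\mid(N/M)}i_e(\mcS_k(M)^{\new}),
\end{equation*}
together with the fact that each $\mcS_k(M)^{\new}$ has a $\Z$-basis because Galois conjugates of newforms are again new, and $i_e$ is $\Z$-integral. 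That route imposes no restriction on $\alpha$.

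When $\ell$ does divide $N$ exactly once, your argument is complete and takes a genuinely different route: the $\Z$-integrality of the single operator $\D_\ell=\ell^2U_\ell^2-\ell^k$, the freeness of $\mcS_k(N,\Z)$ as a $\Z$-module, and the flatness of $\C$ over $\Z$ (cf.\ Propositions \ref{flatpid} and \ref{preservekernelprop}) suffice, with no descent to newforms at lower levels and no appeal to Galois stability. This is in fact the same base-change-of-kernel argument that Section \ref{sec45} runs in the other direction, and it is a nice illustration of what the algebraic $U_\ell$-characterisation buys. To extend your method to $\ell^2\mid N$ one would need a replacement operator-theoretic description of $\mcS_k(N)^{\ell-\new}$, and as the end of Section \ref{sec63} notes, the naive candidate $\ker(U_\ell)$ is strictly larger than the $\ell$-new subspace; the decomposition argument is the more robust route in general.
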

\begin{proof}
    First note that for all $M\geq 1$, $\mcS_k(M)^{\new}$ has a basis in $\Z[[q]]$. To see this, one can use the fact that Galois conjugates of newforms are new \cite[Corollary 12.4.5]{diamond1995modular} and then apply the argument in \cite[Corollary 6.5.6]{diamond2005first}.\\[8pt]
    Now, by Proposition \ref{newdecomp} we have the decomposition
    \begin{equation}\label{midpropdecomp}
        \mcS_k(N)^{\ell-\new}=\sum_{\substack{M\\ \ell^{\alpha}\mid M\mid N}}\sum_{e\mid(N/M)}i_e(\mcS_k(M)^{\new}),
    \end{equation}
    where $\alpha$ is the number of times that $\ell$ divides $N$. Since each $\mcS_k(M)^{\new}$ has a basis over $\Z$  and $i_e$ is $\Z$-integral, it thus follows that $\mcS_k(N)^{\ell-\new}$ has a basis in $\mcS_k(N,\Z)^{\ell-\new}$ as required.
\end{proof}
If $k\neq 2$, the above argument also applies to $\M_k(N)^{\ell-\new}=\mcS_k(N)^{\ell-\new}\oplus\E_k(N)^{\ell-\new}$ as defined in Section \ref{secallmod}. In particular, one can use the explicit basis of eigenforms described in \cite[Proposition 5.2.3]{diamond2005first} to produce a decomposition analogous to \eqref{midpropdecomp} for $\E_k(N)^{\ell-\new}$ and thus $\M_k(N)^{\ell-\new}$ \footnote{See also \cite[Proposition 21]{weisinger1977some}.}. In light of this, we also define 
\begin{align*}
    \M_k(N,\Z)^{\ell-\new}&:=\M_k(N)^{\ell-\new}\cap\Z[[q]],\ \text{and}\\[8pt]
    \M_k(N,B)^{\ell-\new}&:=\M_k(N,\Z)^{\ell-\new}\otimes_{\Z}B.
\end{align*}

\section{Span of Oldforms and Newforms in Characteristic Zero}\label{sec44}
Recall the classical decomposition of $\mcS_k(N,\C)$ into $\ell$-old and $\ell$-new forms:
\begin{equation}\label{classicaldecomp}
    \mcS_k(N,\C)^{\ell-\old}\oplus\mcS_k(N,\C)^{\ell-\new}=\mcS_k(N,\C).
\end{equation}
A natural question to ask is whether this decomposition holds if we replace $\C$ with an arbitrary commutative ring $B$. Unfortunately, this does not turn out to be the case in general. For instance, in characteristic $p$ there are congruences between $\ell$-old and $\ell$-new forms so that we cannot even use a direct sum as in \eqref{classicaldecomp}. However, at least when $B$ is a domain of characteristic zero, the space of $\ell$-old and $\ell$-new forms intersect trivially. 

\begin{proposition}\label{proplnewintersect}
    Let $B$ be a domain of characteristic zero. Then for any weight $k$,
    \begin{equation*}
        \mcS_k(N,B)^{\ell-\old}\cap\mcS_k(N,B)^{\ell-\new}=\{0\}.
    \end{equation*}
\end{proposition}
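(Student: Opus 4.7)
The plan is to reduce the claim to the classical complex case by means of an integral structure and a flatness argument. Since both $\mcS_k(N,B)^{\ell-\old}$ and $\mcS_k(N,B)^{\ell-\new}$ are defined by tensoring integral subspaces of $\mcS_k(N,\Z)$ with $B$, the natural setting is to first prove the intersection is trivial over $\Z$ and then transport this across the flat base change $\Z\to B$.

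First, I would observe that
\begin{equation*}
    \mcS_k(N,\Z)^{\ell-\old}\cap\mcS_k(N,\Z)^{\ell-\new}=\left(\mcS_k(N)^{\ell-\old}\cap\mcS_k(N)^{\ell-\new}\right)\cap\Z[[q]]=\{0\},
\end{equation*}
using the definitions from Section \ref{sec43} together with the classical decomposition $\mcS_k(N)=\mcS_k(N)^{\ell-\old}\oplus\mcS_k(N)^{\ell-\new}$ recalled in Section \ref{sec31}. This handles the $B=\Z$ case.

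Next, I would verify that $B$ is flat as a $\Z$-module: as $B$ is a characteristic-zero domain, the canonical map $\Z\to B$ is injective, so $B$ is torsion-free over the PID $\Z$, hence flat (Corollary \ref{flatdedekind}). The payoff is the following general fact: if $F$ is a flat $R$-module and $A_1,A_2\subseteq M$ are $R$-submodules, then, viewed inside $M\otimes_R F$,
\begin{equation*}
    (A_1\otimes_R F)\cap(A_2\otimes_R F)=(A_1\cap A_2)\otimes_R F.
\end{equation*}
This follows by tensoring the exact sequence $0\to A_1\cap A_2\to A_1\oplus A_2\to M$, whose second map is $(a_1,a_2)\mapsto a_1-a_2$, with the flat module $F$ and noting that flatness also ensures $A_i\otimes_R F$ injects into $M\otimes_R F$.

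Applying this with $R=\Z$, $F=B$, $M=\mcS_k(N,\Z)$, and $A_1,A_2$ the $\ell$-old and $\ell$-new integral subspaces gives
\begin{equation*}
    \mcS_k(N,B)^{\ell-\old}\cap\mcS_k(N,B)^{\ell-\new}=\{0\}\otimes_\Z B=\{0\},
\end{equation*}
as desired. The only point requiring care is the compatibility with definitions, namely that $\mcS_k(N,B)^{\ell-\old}$ and $\mcS_k(N,B)^{\ell-\new}$ are genuine submodules of $\mcS_k(N,B)=\mcS_k(N,\Z)\otimes_\Z B$; this is exactly what flatness of $B$ guarantees, so no extra work is needed. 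I expect the main conceptual step to be the statement that flat base change commutes with intersections of submodules, but this is a standard and short consequence of flatness, so no real obstacle arises.
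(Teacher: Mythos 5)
Your proof is correct and follows essentially the same route as the paper: both reduce to $\Z$ and then transport the conclusion to $B$ via flatness, the paper phrasing this step as tensoring the injection $\mcS_k(N,\Z)^{\ell-\new}\oplus\mcS_k(N,\Z)^{\ell-\old}\to\mcS_k(N,\Z)$ with $B$, while you package the identical tensoring argument as the general lemma that flat base change commutes with intersections of submodules.
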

\begin{proof}
    First note that the proposition holds for $B=\C$ and thus $B=\Z$. Equivalently, the map
    \begin{equation}\label{lnewinjective}
        \mcS_k(N,\Z)^{\ell-\new}\oplus\mcS_k(N,\Z)^{\ell-\old}\to\mcS_k(N,\Z)
    \end{equation}
    is injective. Now suppose that $B$ is an arbitrary domain of characteristic zero. As $B$ is flat over $\Z$ (Corollary \ref{flatdedekind}), tensoring \eqref{lnewinjective} with $B$ preserves injectivity thus giving the desired result.
\end{proof}
In addition, if $B$ is a field then the $\ell$-new and $\ell$-old forms span $\mcS_k(N,B)$.
\begin{proposition}\label{fieldspanprop}
    Let $B$ be a field of characteristic zero. Then for any weight $k$,
    \begin{equation*}
        \mcS_k(N,B)^{\ell-\old}\oplus\mcS_k(N,B)^{\ell-\new}=\mcS_k(N,B).
    \end{equation*}
\end{proposition}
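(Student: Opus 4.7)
The plan is to reduce to the case $B=\Q$ and then base change. Since injectivity of the sum map $\mcS_k(N,B)^{\ell-\old}\oplus\mcS_k(N,B)^{\ell-\new}\to\mcS_k(N,B)$ has already been handled by Proposition \ref{proplnewintersect}, it remains to prove surjectivity, equivalently to show that every element of $\mcS_k(N,B)$ is a sum of an $\ell$-old part and an $\ell$-new part.

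First I would prove the decomposition over $\Q$. The classical Atkin-Lehner theorem gives $\mcS_k(N,\C)=\mcS_k(N,\C)^{\ell-\old}\oplus\mcS_k(N,\C)^{\ell-\new}$, and by the integrality arguments in Sections \ref{sec42} and \ref{sec43} each summand admits a $\C$-basis in $\Z[[q]]$, hence contained in $\mcS_k(N,\Z)^{\ell-\old}$ and $\mcS_k(N,\Z)^{\ell-\new}$ respectively. Concatenating these bases yields a $\C$-basis of $\mcS_k(N,\C)$ lying in $\mcS_k(N,\Z)$; the same elements are $\Q$-linearly independent in $\mcS_k(N,\Q)$ and, because $\dim_\Q \mcS_k(N,\Q)=\dim_\C\mcS_k(N,\C)$, they span $\mcS_k(N,\Q)$ over $\Q$. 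This forces the decomposition
\[
\mcS_k(N,\Q)=\mcS_k(N,\Q)^{\ell-\old}\oplus\mcS_k(N,\Q)^{\ell-\new}.
\]

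To pass from $\Q$ to an arbitrary characteristic-zero field $B$, note that $B$ is then a $\Q$-algebra, and by associativity of the tensor product,
\[
\mcS_k(N,B)=\mcS_k(N,\Z)\otimes_\Z B\cong \mcS_k(N,\Q)\otimes_\Q B,
\]
with the analogous identifications for the $\ell$-old and $\ell$-new subspaces. Tensoring the $\Q$-decomposition with the $\Q$-vector space $B$ preserves the direct sum (tensor product distributes over direct sums, or equivalently $B$ is $\Q$-flat), yielding the desired equality over $B$.

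The main obstacle is not a deep one: it is the bookkeeping needed to show that the tensor-product definition of $\mcS_k(N,B)^{\ell-\old}$ starting from $\Z$ agrees with what one would obtain by tensoring $\mcS_k(N,\Q)^{\ell-\old}$ up to $B$, and similarly for the new part. Once one has checked this compatibility via $M\otimes_\Z B\cong(M\otimes_\Z\Q)\otimes_\Q B$, the rest is a dimension count over $\Q$ combined with base change; no new analytic input beyond the classical Atkin-Lehner decomposition and the existence of integral bases is required.
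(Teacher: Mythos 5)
Your proposal is correct and follows essentially the same route as the paper: both establish the decomposition over $\Q$ by combining the classical Atkin--Lehner theorem over $\C$ with the existence of integral bases, and then pass to an arbitrary characteristic-zero field $B$ by base change from $\Q$. The paper phrases the final step purely as a dimension count, while you phrase it as tensoring a direct-sum decomposition; these are two formulations of the same observation.
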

\begin{proof}
    Since $\mcS_k(N,B)^{\ell-\old}$ and $\mcS_k(N,B)^{\ell-\new}$ intersect trivially (Proposition \ref{proplnewintersect}), it suffices to show that 
    \begin{equation}\label{dimeq}
        \dim_B(\mcS_k(N,B)^{\ell-\old})+\dim_B(\mcS_k(N,B)^{\ell-\new})=\dim_B(\mcS_k(N,B)).
    \end{equation}
    First note that this holds when $B=\C$ since $\mcS_k(N,\C)^{\ell-\new}$ is the orthogonal complement of $\mcS_k(N,\C)^{\ell-\old}$ with respect to the Petersson inner product. Now, $\mcS_k(N,\C)^{\ell-\old}$, $\mcS_k(N,\C)^{\ell-\new}$ and $\mcS_k(N,\C)$ all have bases over $\Z$ and thus $\Q$. Hence \eqref{dimeq} holds for $B=\Q$ as well.\\[8pt]
    We now take $B$ to be an arbitrary field of characteristic $0$. Note that $\Q$ is a subfield of $B$. In particular, we have a unique injective ring homomorphism from $\Q$ to $B$, sending $\frac{a}{b}\in\Q$ to $\varphi(a)\varphi(b)^{-1}$ where $\varphi$ is the unique ring homomorphism from $\Z$ to $B$. As a result,
    \begin{equation*}
        \dim_B(\mcS_k(N,B))=\dim_B(\mcS_k(N,\Q)\otimes_\Q B)=\dim_\Q(\mcS_k(N,\Q))
    \end{equation*}
    and similarly for $\mcS_k(N,B)^{\ell-\old}$ and $\mcS_k(N,B)^{\ell-\new}$. Thus \eqref{dimeq} is true for all characteristic 0 fields as required.
\end{proof}

The following example demonstrates what can happen if we remove the assumption that $B$ is a field.
\begin{example}\label{oldnewcong}
    The space $\mcS_4(5)$ is spanned by a single cuspform
    \begin{equation*}
        f=q-4q^2+2q^3+8q^4-5q^5-8q^6+6q^7-23q^9+O(q^{10}).
    \end{equation*}
    Similarly, the space $\mcS_4(10)^{2-\new}$ is spanned by a single cuspform
    \begin{equation*}
        g=q+2q^2-8q^3+4q^4+5q^5-16q^6-4q^7+8q^8+37q^9+O(q^{10}).
    \end{equation*}
    Note that $f\in\mcS_4(10,\Z)^{2-\old}$ and $g\in\mcS_4(10,\Z)^{2-\new}$. Looking at the first few terms we also see that $a_n(f)\equiv a_n(g)\pmod{2}$ and by \cite[Corollary 9.20]{stein2007modular} this congruence holds for all $n\geq 0$. Hence, we also have 
    \begin{equation}\label{fminusg}
        h:=\frac{1}{2}(f-g)\in\mcS_4(10,\Z).
    \end{equation}
    But $h\notin\mcS_4(10,\Z)^{2-\old}\oplus\mcS_4(10,\Z)^{2-\new}$. To see this, note that $h=\frac{1}{2}f-\frac{1}{2}g$ is the only way to write $h$ as the sum of a forms in $\mcS_4(10,\C)^{2-\old}$ and $\mcS_4(10,\C)^{2-\new}$ by Proposition \ref{fieldspanprop}. However, $\frac{1}{2}f$ and $\frac{1}{2}g$ are not in $\mcS_4(10,\Z)$.
\end{example}
\section{$U_\ell$-new and $\Tr_\ell$-new in Characteristic Zero}\label{sec45}
We now look at generalising the notions of $U_\ell$-new and $\Tr_\ell$-new from Chapter \ref{chapsquarefree} to submodules of $\M(N,B)$, particularly when $B$ is a domain of characteristic zero. In what follows, we will always assume that $\ell$ is a fixed prime dividing $N$.

\begin{definition}
   Let $B$ be any commutative ring and $\mathcal{C}$ be a Hecke-invariant submodule of $\M_k(N,B)$. Define
    \begin{equation*}
        \mathcal{C}^{U_\ell-\new}:=\ker(\D_\ell|_{\mathcal{C}}).
    \end{equation*}
    If $B$ is a domain of characteristic zero then we can take $\mathcal{C}$ to be any Hecke-invariant submodule of $\M(N,B)=\bigoplus_{k=0}^\infty\M_k(N,B)$.
\end{definition}

\begin{theorem}\label{dlcharzerothm}
    Let $B$ be a domain of characteristic zero. We have
    \begin{equation*}
        \mcS(N,B)^{\ell-\new}=\mcS(N,B)^{U_\ell-\new}.
    \end{equation*}
\end{theorem}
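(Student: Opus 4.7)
The plan is to reduce the statement to the classical characterisation $\mcS_k(N)^{\ell-\new} = \ker(\D_\ell|_{\mcS_k(N)})$ from Theorem \ref{dlcusptheorem} and then propagate it to $B$ via flatness.

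First I would work one weight at a time. Since $B$ is a domain of characteristic zero, Proposition \ref{charzerodirect} gives $\mcS(N,B) = \bigoplus_{k \geq 0} \mcS_k(N,B)$, and the operator $\D_\ell = \ell^2 U_\ell^2 - \ell^k$ respects this grading (with the scalar $\ell^k$ read off from the weight of each homogeneous component). Hence $\ker(\D_\ell|_{\mcS(N,B)}) = \bigoplus_k \ker(\D_\ell|_{\mcS_k(N,B)})$, and by the definition of $\mcS(N,B)^{\ell-\new}$ it suffices to verify
\begin{equation*}
\mcS_k(N,B)^{\ell-\new} = \ker(\D_\ell|_{\mcS_k(N,B)})
\end{equation*}
for each fixed $k \geq 0$.

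Second, I would establish this identity over $\Z$. The operator $\D_\ell$ has integer coefficients and so restricts to $\mcS_k(N,\Z)$; since $\mcS_k(N,\Z) = \mcS_k(N,\C) \cap \Z[[q]]$ (using that $\mcS_k(N)$ has a $\Z$-basis, as noted in Section \ref{sec41}), we obtain $\ker(\D_\ell|_{\mcS_k(N,\Z)}) = \ker(\D_\ell|_{\mcS_k(N,\C)}) \cap \Z[[q]]$. By Theorem \ref{dlcusptheorem} this equals $\mcS_k(N,\C)^{\ell-\new} \cap \Z[[q]]$, which is precisely $\mcS_k(N,\Z)^{\ell-\new}$ by the definition in Section \ref{sec43}.

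Finally, I would tensor with $B$. Since $B$ is a domain of characteristic zero it is flat over $\Z$ (Corollary \ref{flatdedekind}), so applying $-\otimes_\Z B$ to the left-exact sequence $0 \to \ker(\D_\ell|_{\mcS_k(N,\Z)}) \to \mcS_k(N,\Z) \to \mcS_k(N,\Z)$, whose second arrow is $\D_\ell$, preserves exactness. This yields $\ker(\D_\ell|_{\mcS_k(N,B)}) = \ker(\D_\ell|_{\mcS_k(N,\Z)}) \otimes_\Z B$. Combined with the definition $\mcS_k(N,B)^{\ell-\new} = \mcS_k(N,\Z)^{\ell-\new} \otimes_\Z B$ and the identification from the previous step, this completes the proof. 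The technical core really lies in Theorem \ref{dlcusptheorem}; within the present argument the only delicate point is the appeal to flatness of $B$ over $\Z$, and it is precisely this step that fails in positive characteristic, explaining why Chapter \ref{chap5} requires a genuinely new treatment.
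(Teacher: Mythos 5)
Your proof is correct and follows essentially the same route as the paper: reduce to a fixed weight $k$ via Proposition \ref{charzerodirect}, identify $\mcS_k(N,\Z)^{\ell-\new}$ with $\ker(\D_\ell|_{\mcS_k(N,\Z)})$ by intersecting the complex statement of Theorem \ref{dlcusptheorem} with $\Z[[q]]$, and then tensor with $B$ using flatness (Corollary \ref{flatdedekind} plus Proposition \ref{preservekernelprop}) to pass the kernel through $-\otimes_\Z B$. Your step from $\C$ to $\Z$ is spelled out slightly more explicitly than in the paper, but it is the same argument.
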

\begin{proof}
    Since $B$ is a domain of characteristic zero, $\mcS(N,B)=\bigoplus_{k=0}^\infty\mcS_k(N,B)$. So, since $\D_\ell$ is weight preserving it suffices to prove
    \begin{equation}\label{singleweighteq}
        \mcS_k(N,B)^{\ell-\new}=\mcS_k(N,B)^{U_\ell-\new}
    \end{equation}
    in a single weight $k$. By Theorem \ref{dlcusptheorem}, \eqref{singleweighteq} holds for $B=\C$ and thus also for $B=\Z$. Then, if $B$ is any other domain of characteristic zero we have
    \begin{equation}\label{ellnewcalculations}
        \mcS_k(N,B)^{\ell-\new}=\mcS_k(N,\Z)^{\ell-\new}\otimes_\Z B=\mcS_k(N,\Z)^{U_\ell-\new}\otimes_\Z B=\ker(D_{\ell}|_{\mcS_k(N,\Z)})\otimes_\Z B.
    \end{equation}
    However, since $B$ is flat over $\Z$,
    \begin{equation*}
        \ker(\D_\ell|_{\mcS_k(N,\Z)})\otimes_\Z B=\ker(\D_\ell|_{\mcS_k(N,\Z)}\otimes_\Z 1_B)=\ker(\D_\ell|_{\mcS_k(N,B)})=\mcS_k(N,B)^{U_\ell-\new}
    \end{equation*}
    (see Proposition \ref{preservekernelprop} in the Appendix). Combining this with \eqref{ellnewcalculations} gives the desired result.
\end{proof}
We now make a similar generalisation for $\Tr_\ell$-new forms.
\begin{definition}\label{trlgeneraldef}
    Let $B$ be a $\Z[1/\ell]$-algebra and $\mathcal{C}$ be a Hecke-invariant submodule of $\M_k(N,B)$. Define
    \begin{equation*}
        \mathcal{C}^{\Tr_\ell-\new}:=\ker(\Tr_\ell|_C)\cap\ker(\Tr_\ell W_\ell|_C).
    \end{equation*}
    If $B$ is a $\Z[1/\ell]$-domain of characteristic zero then we can take $\mathcal{C}$ to be any Hecke-invariant submodule of $\M(N,B)=\bigoplus_{k=0}^\infty\M_k(N,B)$.
\end{definition}
\begin{remark}
    Here, we require that $B$ is a $\Z[1/\ell]$-algebra so that $W_\ell$ and $\Tr_\ell$ are well-defined. This follows from our general definition of $W_\ell$ (Definition \ref{wlgeneraldefn}) and the formula $\Tr_\ell(f)~=~f~+~\ell^{1-k}U_\ell W_\ell f$.
\end{remark}

\begin{theorem}\label{bigcharzerothm}
    Let $B$ be a $\Z[1/\ell]$-domain of characteristic zero. We have
    \begin{equation*}
        \mcS(N,B)^{\ell-\new}=\mcS(N,B)^{U_\ell-\new}=\mcS(N,B)^{\Tr_\ell-\new}
    \end{equation*}
    and if $k\neq 2$
    \begin{equation*}
        \M_k(N,B)^{\ell-\new}=\M_k(N,B)^{U_\ell-\new}=\M_k(N,B)^{\Tr_\ell-\new}.
    \end{equation*}
\end{theorem}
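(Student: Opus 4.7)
The first equality $\mcS(N,B)^{\ell-\new} = \mcS(N,B)^{U_\ell-\new}$ is already Theorem \ref{dlcharzerothm}, so my task is to incorporate the $\Tr_\ell$-new side. My plan is to mirror the three-step strategy used for Theorem \ref{dlcharzerothm}: reduce to a single weight, invoke the classical $\C$-identity, and then transfer to $B$ by flat base change — only this time starting from $\Z[1/\ell]$ rather than $\Z$, since $\Tr_\ell$ and $W_\ell$ require $\ell$ to be invertible.

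Working in a single weight $k$ is legitimate because $B$ is a characteristic-zero domain, so Proposition \ref{charzerodirect} gives $\mcS(N,B) = \bigoplus_k \mcS_k(N,B)$, and the operators $U_\ell$, $W_\ell$, $\Tr_\ell$ all preserve weight inside each summand. Over $\C$ the desired identity is exactly Theorem \ref{tracetheorem}. To descend to $\Z[1/\ell]$, I would use that $\Tr_\ell$ and $W_\ell$ are $\Z[1/\ell]$-integral, so their kernels on $\mcS_k(N,\Z[1/\ell])$ are precisely the intersections of their $\C$-kernels with $\mcS_k(N,\Z[1/\ell])$. Combined with the identification $\mcS_k(N,\Z[1/\ell])^{\ell-\new} = \mcS_k(N,\C)^{\ell-\new} \cap \mcS_k(N,\Z[1/\ell])$ (obtained by clearing $\ell$-power denominators in the definition $\mcS_k(N,\Z)^{\ell-\new} = \mcS_k(N,\C)^{\ell-\new} \cap \mcS_k(N,\Z)$), this yields $\mcS_k(N,\Z[1/\ell])^{\ell-\new} = \mcS_k(N,\Z[1/\ell])^{\Tr_\ell-\new}$.

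To pass from $\Z[1/\ell]$ to $B$, I would invoke flatness: a $\Z[1/\ell]$-domain of characteristic zero is flat over $\Z[1/\ell]$ by Corollary \ref{flatdedekind}, and flatness preserves kernels (Proposition \ref{preservekernelprop}) as well as intersections via the presentation $A \cap A' = \ker(M \to M/A \oplus M/A')$. Chaining these,
\begin{equation*}
\mcS_k(N,B)^{\Tr_\ell-\new} = \mcS_k(N,\Z[1/\ell])^{\Tr_\ell-\new} \otimes_{\Z[1/\ell]} B = \mcS_k(N,\Z[1/\ell])^{\ell-\new} \otimes_{\Z[1/\ell]} B = \mcS_k(N,B)^{\ell-\new}.
\end{equation*}
For the $\M_k$ statement when $k \neq 2$, the Hecke-invariant decomposition $\M_k(N,\C) = \mcS_k(N,\C) \oplus \E_k(N,\C)$ combined with Theorem \ref{eisentheorem} forces both $\M_k(N,\C)^{\ell-\new}$ and $\M_k(N,\C)^{\Tr_\ell-\new}$ to collapse onto their cusp-form parts, so the same descent and base change conclude the argument.

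The main obstacle I expect is the intersection-versus-tensor bookkeeping in the descent step: the $\Z[1/\ell]$-integrality of $\Tr_\ell$ and $W_\ell$, together with flatness of $\Z[1/\ell]$ over $\Z$ and the construction of $\mcS_k(N,\Z[1/\ell])^{\ell-\new}$ as a $\Z[1/\ell]$-scalar extension, must be combined carefully so that the $\C$-identity of Theorem \ref{tracetheorem} transfers to a genuine equality of $\Z[1/\ell]$-submodules of $\mcS_k(N,\Z[1/\ell])$ before the flat base change to $B$ can be applied.
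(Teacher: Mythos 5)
Your proposal is correct and follows essentially the same approach as the paper, which simply says to ``repeat the arguments from the proof of Theorem~\ref{dlcharzerothm} making use of Theorems~\ref{tracetheorem} and~\ref{eisentheorem}.'' Your write-up correctly identifies and handles the two small modifications that repetition requires: working over $\Z[1/\ell]$ rather than $\Z$ as the base ring (since $\Tr_\ell$ and $W_\ell$ are only $\Z[1/\ell]$-integral), and using the fact that flat base change commutes with finite intersections of submodules, since $\mathcal{C}^{\Tr_\ell-\new}$ is the intersection of two kernels rather than a single one.
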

\begin{proof}
    In Chapter \ref{chapsquarefree} we showed that all of these statements were true over $\C$. To generalise to any characteristic zero $\Z[1/\ell]$-domain $B$, we repeat the arguments from the proof of Theorem \ref{dlcharzerothm} making use of Theorems \ref{tracetheorem} and \ref{eisentheorem}. 
\end{proof}
Considering all primes dividing $N$ gives the corresponding result for \\
$\mcS(N,B)^{\new}=\bigcap_{\ell\mid N}\mcS(N,B)^{\ell-\new}$ and $\M_k(N,B)^{\new}=\bigcap_{\ell\mid N}\M_k(N,B)^{\ell-\new}$.
\begin{corollary}\label{bigcharzerocorollary}
    Let $B$ be a $\Z[1/N]$-domain of characteristic zero. If $N$ is squarefree then
    \begin{equation*}
        \mcS(N,B)^{\new}=\bigcap_{\ell\mid N}\mcS(N,B)^{U_\ell-\new}=\bigcap_{\ell\mid N}\mcS(N,B)^{\Tr_\ell-\new}
    \end{equation*}
    and if $k\neq 2$,
    \begin{equation*}
        \M_k(N,B)^{\new}=\bigcap_{\ell\mid N}\M_k(N,B)^{U_\ell-\new}=\bigcap_{\ell\mid N}\M_k(N,B)^{\Tr_\ell-\new}.
    \end{equation*}
\end{corollary}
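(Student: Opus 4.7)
The plan is to derive this corollary directly from Theorem \ref{bigcharzerothm} by intersecting the prime-by-prime identities across all primes $\ell$ dividing $N$. Since $\mcS(N,B)^{\new}$ and $\M_k(N,B)^{\new}$ are defined as the intersections $\bigcap_{\ell\mid N}\mcS(N,B)^{\ell-\new}$ and $\bigcap_{\ell\mid N}\M_k(N,B)^{\ell-\new}$ respectively, the global statements will follow once we have local equalities at each prime $\ell$.

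First I would verify that the hypotheses of Theorem \ref{bigcharzerothm} hold for every prime $\ell\mid N$. The squarefree assumption ensures that each such $\ell$ divides $N$ exactly once, which is the standing assumption of Sections \ref{sec42}--\ref{sec45} and is precisely what is needed for $W_\ell$, and consequently $\Tr_\ell$ and the two-dimensional $V_{\ell,g}$ arguments underlying the kernel descriptions, to be well-defined. Moreover, because $N$ is squarefree, $\Z[1/N]\supseteq\Z[1/\ell]$ for every prime $\ell\mid N$, so any $\Z[1/N]$-domain of characteristic zero is automatically a $\Z[1/\ell]$-domain of characteristic zero. Hence Theorem \ref{bigcharzerothm} applies at each $\ell\mid N$, yielding
\begin{equation*}
\mcS(N,B)^{\ell-\new}=\mcS(N,B)^{U_\ell-\new}=\mcS(N,B)^{\Tr_\ell-\new},
\end{equation*}
and, for $k\neq 2$,
\begin{equation*}
\M_k(N,B)^{\ell-\new}=\M_k(N,B)^{U_\ell-\new}=\M_k(N,B)^{\Tr_\ell-\new}.
\end{equation*}
Taking the intersection of each display over all primes $\ell\mid N$ and invoking the definitions of the full new spaces gives the corollary.

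The main obstacle for this corollary is essentially nothing new. All the substantive work, namely the kernel characterisations of the $\ell$-new subspaces over $\C$ together with their preservation under base change (via flatness of characteristic-zero domains over $\Z$, respectively $\Z[1/\ell]$), has already been absorbed into Theorems \ref{dlcharzerothm} and \ref{bigcharzerothm}. The one point worth flagging is that the squarefree and $\Z[1/N]$ hypotheses are genuinely both needed to ensure that $W_\ell$ and $\Tr_\ell$ make sense and act integrally for every $\ell\mid N$ simultaneously; without squarefreeness the $\ell$-new and $U_\ell$-new subspaces can fail to coincide even over $\C$, as can be seen from the behaviour of $U_\ell$ on forms with $\ell^2\mid N$.
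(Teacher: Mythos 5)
Your proposal is correct and mirrors the paper's own argument: both reduce to Theorem~\ref{bigcharzerothm} prime-by-prime, noting that squarefreeness gives $\ell\,\|\,N$ and that $\Z[1/N]\supseteq\Z[1/\ell]$ makes $B$ a $\Z[1/\ell]$-domain for each $\ell\mid N$, then intersect over all $\ell\mid N$. No substantive difference.
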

\begin{proof}
    Follows directly from Theorem \ref{bigcharzerothm}. Note that since $N$ is squarefree, each prime $\ell$ divides $N$ exactly once. Moreover, each prime $\ell\mid N$ is invertible in $B$ with $\ell^{-1}=\frac{(N/\ell)}{N}$.
\end{proof}
The theory of newforms in characteristic zero thus naturally generalises the classical theory over $\C$.

\chapter{Newforms in Characteristic \lowercase{$p$}}\label{chap5}
In this chapter, we attempt to generalise our results of the previous chapter to characteristic $p$. However, due to congruences between modular forms, the theory of newforms in characteristic $p$ is more complicated than in characteristic zero.\\[8pt]
Again we assume that $N>1$ is a fixed positive integer. Moreover, for any two modular forms $f,g\in\M_k(N,\Z)$ we will always write $f\equiv g\pmod{p}$ to mean that $a_n(f)\equiv a_n(g)$ for all $n\geq 0$.\\[8pt]
The main results of this chapter are adapted from \cite[Section 6.2]{deo2019newforms}. 

\section{Modular Forms in Characteristic $p$}\label{sec51}
In Chapter \ref{chap4} we showed that if $B$ is a domain of characteristic zero then
\begin{equation*}
    \M(N,B)=\bigoplus_{k=0}^\infty\M_k(N,B).
\end{equation*}
This is no longer true in characteristic $p$ as a result of the following proposition.
\begin{proposition}
    Let $p\geq 5$ be a prime number. The Eisenstein series $E_{p-1}$ can be scaled so that $E_{p-1}\in\M_k(N,\Z)$. Moreover, we can choose this scaling to be such that
    \begin{equation*}
        E_{p-1}(q)\equiv 1\pmod{p}.
    \end{equation*}
    If $p=2$ or $p=3$ then $E_4$ and $E_6$ can similarly be scaled so that
    \begin{equation*}
        E_4(q)\equiv E_6(q)\equiv 1\pmod{p}.
    \end{equation*}
\end{proposition}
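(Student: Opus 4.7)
The plan is to treat $p \geq 5$ with the von Staudt--Clausen theorem and dispatch the small primes $p \in \{2, 3\}$ by direct inspection of $E_4$ and $E_6$.

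For $p \geq 5$, I would start by invoking von Staudt--Clausen, which says that for any positive even integer $n$ the rational $B_n + \sum_{q \text{ prime}, (q-1) \mid n} 1/q$ lies in $\Z$. Taking $n = p-1$, the prime $p$ itself appears in the sum (since $(p-1) \mid (p-1)$), so $p$ occurs in the denominator of $B_{p-1}$ exactly once; equivalently $pB_{p-1}$ is a $p$-adic unit and $1/B_{p-1} \in p\Z_{(p)}^\times$. Feeding this into $E_{p-1}(q) = 1 - \frac{2(p-1)}{B_{p-1}} \sum_{n \geq 1} \sigma_{p-2}(n) q^n$ shows that every non-constant Fourier coefficient lies in $p\Z_{(p)}$.

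The next step is to upgrade $p$-adic integrality to genuine integrality. I would let $D$ be the least positive integer for which $D \cdot E_{p-1} \in \Z[[q]]$. All primes dividing $D$ are different from $p$, so $\gcd(D, p) = 1$, and $D \cdot E_{p-1}$ lies in $\M_{p-1}(1, \Z)$ with constant term $D$ and all other coefficients in $p\Z$. Choosing $c \in \Z$ with $cD \equiv 1 \pmod p$ and passing to $cD \cdot E_{p-1}$ then yields a form in $\M_{p-1}(1, \Z) \subseteq \M_{p-1}(N, \Z)$ congruent to $1$ modulo $p$. For the exceptional primes $p \in \{2, 3\}$ no such procedure is needed: the classical closed forms $E_4 = 1 + 240 \sum \sigma_3(n) q^n$ and $E_6 = 1 - 504 \sum \sigma_5(n) q^n$ already have integer coefficients, and since $6 \mid 240$ and $6 \mid 504$ they are congruent to $1$ modulo both $2$ and $3$ with no further rescaling.

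The main obstacle is really the $p$-adic valuation computation via von Staudt--Clausen; once that is in hand, the subsequent scaling is a bookkeeping argument that works precisely because the auxiliary factors $D$ and $c$ can both be chosen coprime to $p$ and therefore cannot disturb the mod-$p$ congruence established in the first step.
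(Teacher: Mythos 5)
Your proof is correct, and it takes a genuinely (if modestly) different route to the key arithmetic fact about $B_{p-1}$. You invoke the von Staudt--Clausen theorem to conclude that $p$ divides the denominator of $B_{p-1}$ exactly once, whereas the paper's proof proves and then cites its own Proposition \ref{propbernoulli} (which shows $pB_{p-1}\equiv -1\pmod p$ via a generating-function manipulation) to extract the same divisibility. Both routes deliver the required statement that $\nu_p(B_{p-1})=-1$; von Staudt--Clausen is the more classical off-the-shelf tool and immediately gives the slightly stronger information that $p$ divides the denominator \emph{exactly} once, while the paper's self-contained Proposition \ref{propbernoulli} keeps the thesis free of an outside citation and additionally pins down the residue $pB_{p-1}\equiv -1$, which you don't need here. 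The remaining bookkeeping is essentially the same in both arguments: the paper writes $B_{p-1}=a/b$ with $\gcd(a,b)=1$, multiplies by $a$, and then rescales by $m$ with $ma\equiv 1\pmod p$; you clear the denominator $D$ of $2(p-1)/B_{p-1}$ and then rescale by $c$ with $cD\equiv 1\pmod p$ --- in both cases the crucial observation is that the auxiliary integer ($a$ or $D$) is coprime to $p$ because $p$ sits only in the denominator of $B_{p-1}$. Your handling of $p=2,3$ by directly inspecting $E_4=1+240\sum\sigma_3(n)q^n$ and $E_6=1-504\sum\sigma_5(n)q^n$ and noting $6\mid 240$, $6\mid 504$ is cleaner than the paper's appeal to $B_4=-1/30$ and $B_6=1/42$, though it is the same computation unwound one step further.
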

\begin{proof}
    Recall that $E_{p-1}$ is given by
    \begin{equation*}
        E_{p-1}(q)=1-\frac{2(p-1)}{B_{p-1}}\sum_{n=1}^\infty\sigma_{p-2}(n)q^n,
    \end{equation*}
    where $B_{p-1}$ is the $(p-1)^{\text{th}}$ Bernoulli number. Now, write $B_{p-1}=\frac{a}{b}$ where $a$ and $b$ are coprime integers. As a result
    \begin{equation*}
        aE_{p-1}(q)=a-2(p-1)b\sum_{n=1}^\infty\sigma_{p-2}(n)q^n\in\M_k(N,\Z).
    \end{equation*}
    However, $b\equiv 0\pmod{p}$ by Proposition \ref{propbernoulli} and thus
    \begin{equation*}
        aE_{p-1}(q)\equiv a\pmod{p}.
    \end{equation*}
    Next we note that $\gcd(a,p)=1$ since $\gcd(a,b)=1$ and $b\equiv 0\pmod{p}$. Therefore there exists $m\in\{1,\dots,p-1\}$ such that 
    \begin{equation*}
        maE_{p-1}(q)\equiv 1\pmod{p},
    \end{equation*}
    which is the desired result. Finally, if $p=2$ or $p=3$ then we can perform a similar argument for $E_4$ and $E_6$ noting that $B_4=-\frac{1}{30}$ and $B_6=\frac{1}{42}$.
\end{proof}
\begin{remark}
    Here, we are slightly abusing notation by writing $E_{p-1}(q)\equiv 1\pmod{p}$ since technically an integer multiple of $E_{p-1}$ (but not necessarily $E_{p-1}$) satisfies this congruence. However, from here onwards we will keep this notation and always assume that the Eisenstein series are scaled as such.
\end{remark}
    
\begin{corollary}
    Let $p$ be a prime number. If $p\geq 5$ then $\M_k(N,\F_p)\subseteq\M_{k+(p-1)}(N,\F_p)$. On the other hand, if $p=2$ or $p=3$ then $\M_k(N,\F_p)\subseteq\M_{k+4}(N,\F_p)$ and $\M_k(N,\F_p)\subseteq~\M_{k+6}(N,\F_p)$. In particular, the sum $\sum_{k=0}^\infty\M_k(N,\F_p)$ is never direct.
\end{corollary}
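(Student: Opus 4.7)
The plan is to use the Eisenstein congruences established in the preceding proposition to explicitly embed $\M_k(N,\F_p)$ into a higher-weight space, and then exhibit a concrete nonzero element of an intersection in order to defeat directness.

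First I would handle the case $p \geq 5$. Given $\overline{f} \in \M_k(N,\F_p)$, I would lift it to some $f \in \M_k(N,\Z)$ whose reduction modulo $p$ equals $\overline{f}$ (this is possible because $\M_k(N,\Z)$ has a $\Z$-basis that descends to a spanning set after tensoring with $\F_p$, as used throughout Chapter \ref{chap4}). The product $f \cdot E_{p-1}$ lies in $\M_{k+(p-1)}(N,\Z)$ since the algebra of modular forms is closed under multiplication with weights adding. Reducing modulo $p$ and using $E_{p-1}(q) \equiv 1 \pmod{p}$ from the previous proposition, the $q$-expansion of $f \cdot E_{p-1}$ becomes $\overline{f}$, exhibiting $\overline{f}$ as an element of $\M_{k+(p-1)}(N,\F_p)$. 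The cases $p=2,3$ are identical, using $E_4 \equiv 1 \pmod p$ or $E_6 \equiv 1 \pmod p$ in place of $E_{p-1}$.

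For the non-directness statement, I would take the constant function $1 \in \M_0(N,\F_p)$, which is nonzero. By the inclusion just established, $1$ also lies in $\M_{p-1}(N,\F_p)$ (respectively in $\M_4(N,\F_p)$ or $\M_6(N,\F_p)$ when $p \in \{2,3\}$). Concretely, $1 \in \M_0(N,\F_p)$ and $E_{p-1} \in \M_{p-1}(N,\F_p)$ are two nonzero elements in distinct weight spaces which, when viewed inside $\F_p[[q]]$, satisfy $1 - E_{p-1} \equiv 0$. Hence the map
\begin{equation*}
    \bigoplus_{k=0}^\infty \M_k(N,\F_p) \to \sum_{k=0}^\infty \M_k(N,\F_p) \subseteq \F_p[[q]]
\end{equation*}
has nontrivial kernel, so the sum is not direct.

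I do not expect any serious obstacle here: the only mildly delicate point is lifting an element of $\M_k(N,\F_p)$ back to $\M_k(N,\Z)$ before multiplying by an Eisenstein series, and this is immediate from the definition $\M_k(N,\F_p) = \M_k(N,\Z) \otimes_\Z \F_p$ together with the fact that $\M_k(N,\Z)$ is a finitely generated free $\Z$-module. Everything else is a direct application of the congruence $E_{p-1} \equiv 1 \pmod p$ (or its $p=2,3$ analogues).
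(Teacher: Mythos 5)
Your proof is correct and follows essentially the same approach as the paper: multiply by a suitably normalised Eisenstein series ($E_{p-1}$ for $p \geq 5$, $E_4$ or $E_6$ for $p \in \{2,3\}$) to shift weight and invoke the congruence $E_{p-1} \equiv 1 \pmod p$ from the preceding proposition. Your treatment is a bit more careful than the paper's (you spell out the lifting to $\M_k(N,\Z)$ and explicitly exhibit $\overline{1} - \overline{E}_{p-1}$ as a nonzero element in the kernel witnessing non-directness, where the paper leaves the latter implicit), but the underlying argument is identical.
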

\begin{proof}
    If $p\geq 5$ then for any $f\in\M_k(N)$ we have $f\equiv E_{p-1}f\pmod{p}$. Since $E_{p-1}f$ is in $\M_{k+(p-1)}(N)$ we attain the desired result. For $p=2$ or $p=3$ we can argue similarly using $E_4$ and $E_6$.
\end{proof}
These are essentially the only congruences we will have to consider in characteristic $p$.
\begin{proposition}\label{gorenprop}
    Let $p\geq 5$ be a prime number and $B$ be a domain of characteristic $p$. The kernel of the map
    \begin{equation*}
        \bigoplus_{k=0}^\infty\M_k(N,B)\to B[[q]]
    \end{equation*}
    is generated by $\overline{E}_{p-1}-\overline{1}$. Here, $\overline{E}_{p-1}$ and $\overline{1}$ are the images of $E_{p-1}\in\M_{p-1}(N,\Z)$ and $1\in \M_0(N,\Z)$ in $B[[q]]$.
\end{proposition}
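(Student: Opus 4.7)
The forward inclusion is immediate from the previous proposition: both $\overline{E}_{p-1} \in \M_{p-1}(N,B)$ and $\overline{1} \in \M_0(N,B)$ have $q$-expansion equal to the constant series $1 \in B[[q]]$, so their difference, viewed as an element of the direct sum $\bigoplus_k \M_k(N,B)$, maps to zero in $B[[q]]$, and hence the principal ideal it generates is contained in the kernel.

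For the reverse inclusion, take any $F = \sum_k f_k$ in the kernel, with $f_k \in \M_k(N,B)$ almost all zero, and write $I$ for the ideal generated by $\overline{E}_{p-1}-\overline{1}$. Modulo $I$, multiplication by $\overline{E}_{p-1}$ acts as the identity, so for any $j\geq 0$ the element $f_k$ is congruent mod $I$ to $\overline{E}_{p-1}^{\,j}\,f_k \in \M_{k+j(p-1)}(N,B)$, and both representatives have the same $q$-expansion. Partitioning $F$ by the residue of the weight modulo $p-1$, I write $F = \sum_{a=0}^{p-2} F_a$ with $F_a = \sum_{k \equiv a \pmod{p-1}} f_k$, and within each $F_a$ I multiply every summand by an appropriate power of $\overline{E}_{p-1}$ so as to push it up to a single common weight $K_a \equiv a \pmod{p-1}$. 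This produces $g_a \in \M_{K_a}(N,B)$ with $F_a \equiv g_a \pmod I$ and still $\sum_a g_a = 0$ in $B[[q]]$.

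The problem is thus reduced to the following auxiliary claim: whenever $g_a \in \M_{K_a}(N,B)$ for $a=0,\dots,p-2$ with the $K_a$ in pairwise distinct residues modulo $p-1$ and $\sum_a g_a = 0$ in $B[[q]]$, each $g_a$ vanishes. This is the main obstacle, and it is the mod-$p$ statement that the filtration of a nonzero modular form is well defined modulo $p-1$. I would first reduce from a general characteristic-$p$ domain $B$ to $B = \F_p$ by expanding each $g_a$ against an $\F_p$-basis of $B$ and arguing coefficientwise; for $B = \F_p$ the result is due to Serre \cite{serre1973formes} in level $1$, and extends to level $N$ using that $\overline{E}_{p-1}$ represents the Hasse invariant on the mod-$p$ reduction of the relevant modular curve, as in Katz's geometric theory \cite{katz1973p}. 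Granting this, each $g_a = 0$, so each $F_a$ lies in $I$, and therefore $F \in I$, completing the proof.
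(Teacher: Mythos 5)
Your proof is correct, and it takes a genuinely different route from the paper's. The paper's argument is two sentences: it quotes Goren's Theorem~5.4 in Chapter~4 of \cite{goren2002lectures}, which directly gives the kernel description for $B = \F_p$, and then extends to an arbitrary characteristic-$p$ domain by observing $B$ is flat over $\F_p$ and flat base change preserves kernels (Proposition~\ref{preservekernelprop}). You instead unwind what the statement for $\F_p$ actually amounts to: after the reduction by powers of $\overline{E}_{p-1}$, the whole thing comes down to the linear-independence-of-$q$-expansions claim across distinct weight residues modulo $p-1$, which is exactly the content of the Serre/Swinnerton-Dyer/Katz theory that Goren's theorem packages. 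Your forward inclusion and the partition-by-residue-class argument are both sound, and it is worth noting that you implicitly use the fact that the $q$-expansion map is injective in each fixed weight so that vanishing of $q$-expansions implies $g_a = 0$ in $\M_{K_a}(N,B)$; this is true because $\M_{K_a}(N,\Z)$ is defined as the saturation $\M_{K_a}(N) \cap \Z[[q]]$, but it deserves a mention. Your extension from $\F_p$ to general $B$ by choosing an $\F_p$-basis of $B$ and arguing coefficientwise is logically the same as the paper's flatness argument (a vector space over $\F_p$ is free, hence flat), just made concrete. The trade-off is that your version is more self-contained in explaining the reduction, at the cost of a somewhat vaguer citation for the core level-$N$, mod-$p$ input (you invoke Serre for level~1 and gesture to Katz's Hasse-invariant interpretation for level~$N$, whereas the paper has the precise reference in Goren). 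Also note neither argument actually uses that $B$ is a domain; being an $\F_p$-algebra suffices here.
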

\begin{remark}
    Although $E_{p-1}\equiv 1\pmod{p}$ we view $\overline{E}_{p-1}$ and $\overline{1}$ as distinct modular forms in $\bigoplus_{k=0}^\infty\M_k(N,B)$ since they arise from forms in $\M(N,\Z)$ with different weights.
\end{remark}
\begin{proof}[Proof of Proposition 5.3]
    For $B=\F_p$ this is shown in \cite[Theorem 5.4 in Chapter 4]{goren2002lectures}. Now suppose that $B$ is an arbitrary domain of characteristic $p$. Then $B$ is a vector space over $\F_p$. Namely, this means that $B$ is a free $\F_p$-module and thus flat over $\F_p$. Since flat modules preserve kernels (Proposition \ref{preservekernelprop}) we obtain the desired result.
\end{proof}
In light of this observation, we define
\begin{equation*}
    \M(N,B)^i:=\sum_{k\:\equiv\:i\!\!\!\pmod{p-1}}\M_k(N,B),
\end{equation*}
so that when $B$ is a domain of characteristic $p$
\begin{equation}\label{eqmodpdecomp}
    \M(N,B)=\bigoplus_{i\in \Z/(p-1)\Z}\M(N,B)^i.
\end{equation}
In particular, when $p\geq 5$ the decomposition \eqref{eqmodpdecomp} follows from Proposition \ref{gorenprop}. On the other hand, if $p=2$ or $p=3$ then we have $\M(N,B)=\M(N,B)^0$ since there are no nonzero forms of odd weight in $\M(N,B)$.

\begin{proposition}\label{singleweightprop}
    Let $B$ be a domain of characteristic $p$ and $i\in\Z/(p-1)\Z$. Then each element of $\M(N,B)^i$ appears in a single weight.
\end{proposition}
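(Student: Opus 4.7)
The plan is to show that every $f \in \M(N,B)^i$ lies in $\M_K(N,B)$ for a single weight $K$, by using the Eisenstein series from Proposition~5.2 to lift forms of lower weights to a common top weight without altering their $q$-expansions. By definition of $\M(N,B)^i$, I can write $f$ as a finite sum $f = f_{k_1} + \cdots + f_{k_n}$ in $B[[q]]$, with each $f_{k_j} \in \M_{k_j}(N,B)$ and every $k_j \equiv i \pmod{p-1}$.

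For $p \geq 5$, the series $E_{p-1} \in \M_{p-1}(N,\Z)$ satisfies $E_{p-1} \equiv 1 \pmod p$, so its reduction in $\M_{p-1}(N,B)$ has $q$-expansion $1$ in $B[[q]]$. Setting $K := \max_j k_j$ and $m_j := (K-k_j)/(p-1)$ (a nonnegative integer since $K \equiv k_j \pmod{p-1}$), the form $\overline{E}_{p-1}^{\,m_j} f_{k_j}$ lies in $\M_K(N,B)$ and has the same $q$-expansion as $f_{k_j}$. Summing over $j$ then gives $f \in \M_K(N,B)$. This is essentially the content of Proposition~5.3 in disguise.

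The cases $p \in \{2,3\}$ need separate treatment, since there $p - 1 \leq 2$ and $E_{p-1}$ does not supply a weight-$(p-1)$ modular form on $\Gamma_0(N)$. Instead I would use the normalised $E_4$ and $E_6$, which by Proposition~5.2 satisfy $E_4 \equiv E_6 \equiv 1 \pmod p$. Because $-I \in \Gamma_0(N)$, every nonzero form on $\Gamma_0(N)$ has even weight, so each $k_j$ is even; choosing any even $K$ with $K - k_j \geq 4$ for all $j$, the difference $K - k_j$ is an even integer $\geq 4$, and a short numerical check shows that every such integer can be written as $4 a_j + 6 b_j$ with $a_j, b_j \in \Z_{\geq 0}$. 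Then $\overline{E}_4^{\,a_j} \overline{E}_6^{\,b_j} f_{k_j} \in \M_K(N,B)$ has the same $q$-expansion as $f_{k_j}$, and summing produces $f \in \M_K(N,B)$.

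The main obstacle is the small-prime case $p \in \{2, 3\}$: without a single weight-$(p-1)$ Eisenstein congruent to $1$, we must combine $E_4$ and $E_6$ and verify the numerical-semigroup fact that $\{4,6\}$ generates all even integers $\geq 4$. Once this is in hand, the argument proceeds in direct parallel with the case $p \geq 5$.
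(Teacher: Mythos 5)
Your proof is correct and follows essentially the same route as the paper: decompose $f$ into weight-homogeneous pieces, then multiply by powers of $\overline{E}_{p-1}$ (or $\overline{E}_4$ and $\overline{E}_6$ when $p\in\{2,3\}$) to bring everything to a common top weight. The only difference is that you make explicit the numerical-semigroup fact that every even integer $\geq 4$ is of the form $4a+6b$, which the paper leaves implicit in its phrase ``suitable powers of $\overline{E}_4$ and $\overline{E}_6$''.
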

\begin{proof}
    For any $f\in\M(N,B)^i$ we have $f=f_1+\dots+f_n$ where each $f_j$ is nonzero and of fixed weight $k_j$. If $p\geq 5$ then we can multiply each $f_j$ by a suitable power of $\overline{E}_{p-1}$ so that $f$ appears in a single weight. Now assume that $p=2$ or $p=3$. Since there are no nonzero forms in $\M(N,B)$ of odd weight, each of the $f_j$ differ in weight by a multiple of 2. So, we can similarly multiply each $f_j$ by suitable powers of $\overline{E}_4$ and $\overline{E}_6$ so that $f$ appears in a single weight.
\end{proof}
\begin{remark}
    The above proposition also holds if we restrict to $\mcS(N,B)^i:=\sum_{k\:\equiv\: i\pmod{p-1}}\mcS_k(N,B)$.
\end{remark}
\section{Atkin-Lehner, Hecke, $\D_\ell$ and $\Tr_\ell$ Operators in Characteristic $p$}
Let $\ell$ be a prime dividing $N$ exactly once. In Chapter \ref{chap4} we showed that if $B$ is a $\Z[1/\ell]$-algebra then the scaled Atkin-Lehner operator $W_\ell=\ell^{k/2}w_\ell$ is well-defined on $\M_k(N,B)$. In addition, $\M_k(N/\ell,B)\cap W_{\ell}(\M_k(N/\ell,B))=B$ (Lemma \ref{constlemmacharzero}). If $B$ is also a domain (of any characteristic) then these results extend to the algebra $\M(N,B)=\sum_{k=0}^\infty\M_k(N,B)$.

\newpage

\begin{proposition}\label{propauto}
    Let $B$ be a $\Z[1/\ell]$-domain. Then:
    \begin{enumerate}[label=(\alph*)]
        \item The operator $W_\ell$ is a well-defined automorphism on the algebra $\M(N,B)$.
        \item We have $\M(N/\ell,B)\cap W_\ell(M(N/\ell,B))=B$.
    \end{enumerate}
\end{proposition}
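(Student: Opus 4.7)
The plan has two parts, corresponding to the two statements.

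For (a), I would start by setting up $W_\ell$ on the algebra: given $f \in \M(N,B)$, write $f = \sum_j f_j$ with $f_j \in \M_{k_j}(N,B)$ and set $W_\ell(f) := \sum_j W_\ell(f_j)$, where each $W_\ell(f_j)$ is the weight-$k_j$ operator from Definition \ref{wlgeneraldefn}. Multiplicativity on single-weight factors, $W_\ell(fg) = W_\ell(f)W_\ell(g)$ for $f \in \M_{k_1}(N,B)$ and $g \in \M_{k_2}(N,B)$, follows from the slash-operator identity $(fg)|_{k_1+k_2}\gamma = (f|_{k_1}\gamma)(g|_{k_2}\gamma)$ combined with the scaling convention $W_\ell = \ell^{k/2} w_\ell$. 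The substantive point is independence of the chosen decomposition. In characteristic zero this is immediate from Proposition \ref{charzerodirect}. In characteristic $p$, Proposition \ref{gorenprop} identifies the kernel of $\bigoplus_k \M_k(N,B) \to B[[q]]$ as the ideal generated by $\overline{E}_{p-1} - \overline 1$ (or by $\overline{E}_4 - \overline 1$ and $\overline{E}_6 - \overline 1$ for $p \in \{2,3\}$), so single-weight multiplicativity reduces independence to the single congruence $W_\ell(E_{p-1}) \equiv 1 \pmod p$ in $B[[q]]$. Applying Proposition \ref{wlscaling} to $E_{p-1} \in \M_{p-1}(1,B) \subseteq \M_{p-1}(N/\ell,B)$ gives $W_\ell(E_{p-1})(q) = \ell^{p-1} E_{p-1}(q^\ell) \equiv \ell^{p-1} \equiv 1 \pmod p$, the last step being Fermat's little theorem, applicable because $\ell \neq p$ (as $\ell \in B^\times$). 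The analogous check for $E_4$ and $E_6$ in characteristics $2$ and $3$ needs only $\ell^4, \ell^6 \equiv 1 \pmod p$, which are trivial. Invertibility then follows from $W_\ell^2 = \ell^k$ in weight $k$ and $\ell \in B^\times$, yielding the algebra automorphism.

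For (b), let $f \in \M(N/\ell,B) \cap W_\ell(\M(N/\ell,B))$. The plan is to reduce to a single weight and invoke Lemma \ref{constlemmacharzero}, whose $q$-coefficient comparison proof remains valid for any $\Z[1/\ell]$-domain $B$ regardless of characteristic. In characteristic zero, Proposition \ref{charzerodirect} lets me decompose $f$ into weight components, each of which lies in $\M_k(N/\ell,B) \cap W_\ell(\M_k(N/\ell,B))$ because $W_\ell$ is weight-preserving; Lemma \ref{constlemmacharzero} forces each to be a constant $q$-series, hence zero in weight $k>0$, and $f \in B$. In characteristic $p$, decompose $f = \sum_i f_i$ via $\M(N/\ell,B) = \bigoplus_i \M(N/\ell,B)^i$ and write $f = W_\ell(g)$ with $g = \sum_i g_i$; weight-preservation forces $f_i = W_\ell(g_i)$ for each $i$. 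By Proposition \ref{singleweightprop}, both $f_i$ and $g_i$ admit single-weight representatives, and after multiplying by suitable powers of $E_{p-1}$ I lift both to a common weight $k_i$ where Lemma \ref{constlemmacharzero} applies. Each $f_i$ is then a constant $q$-series, and direct-sum uniqueness forces $f_i = 0$ for $i \neq 0$, so $f = f_0 \in B$.

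The main obstacle I expect is in the characteristic-$p$ part of (b): aligning single-weight representatives of $f_i$ and $g_i$ so that $W_\ell(g_i) = f_i$ holds as an identity of weight-$k_i$ forms, not merely as a congruence of $q$-series. The verification in part (a) that $W_\ell(E_{p-1}) \equiv 1 \pmod p$ is precisely the ingredient that makes synchronized multiplication by powers of $E_{p-1}$ on both sides consistent.
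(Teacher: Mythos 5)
Your proof is correct and follows essentially the same route as the paper's: reduce to fixed weight via the $\M(N,B)^i$ decomposition (or, in your part (a), the equivalent formulation via the kernel of $\bigoplus_k \M_k(N,B)\to B[[q]]$ being the ideal $(\overline{E}_{p-1}-\overline 1)$), and in both parts the decisive computation is $W_\ell(E_{p-1}) = \ell^{p-1}E_{p-1}(q^\ell) \equiv 1 \pmod p$ by Fermat's little theorem together with multiplicativity of $W_\ell$. The obstacle you flag at the end is the same point the paper handles by establishing part (a) before part (b), and your resolution matches theirs.
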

\begin{proof}\
    \begin{enumerate}[label=(\alph*)]
        \item In characteristic zero $\M(N,B)=\bigoplus_{k=0}^\infty\M_k(N,B)$ so we can simply define $W_\ell$ on $\M(N,B)$ by extending its definition on each $\M_k(N,B)$ by linearity. Note that $W_\ell$ is an automorphism since it has an inverse $W_\ell^{-1}=\ell^{-k}W_\ell$.\\[8pt]
        Now suppose that $B$ has characteristic $p$. For any $f\in\M(N,B)$ we have a unique decomposition  $f=\sum_{i}f_i$ where $f_i\in\M(N,B)^i$ as in \eqref{eqmodpdecomp}. Hence it suffices to show that $W_\ell$ is well-defined on each $\M(N,B)^i$. As discussed in Section \ref{sec51}, each form in $\M(N,B)^i$ appears in a single weight. We need to show that $W_\ell$ does not depend on this weight.\\[8pt]
        So, let $f_i,f_i'\in\M(N,\Z)^i$ be forms with the same mod $B$ reduction\footnote{That is, $\varphi(a_n(f_i))=\varphi(a_n(f_i'))$ where $\varphi$ is the unique ring homomorphism $\varphi\colon\Z\to B$.} and weights $k$ and $k'$ respectively. Without loss of generality, assume that $k'\geq k$ so that $k'-k=m(p-1)$ for some $m\geq 0$. If $p\geq 5$ then $E_{p-1}^mf_i$ has the same weight as $f_i'$ and also has the same $q$-expansion mod $B$. Since $W_\ell$ is well-defined on $\M_{k'}(N,B)$ (i.e. in fixed weight) we then have
        \begin{align*}
            W_\ell(f_i')\equiv W_\ell(E_{p-1}^mf_i)\equiv W_\ell(E_{p-1})^mW_\ell(f_i)\equiv(\ell^{p-1}E_{p-1}(\ell z))^mW_\ell(f_i)\equiv W_{\ell}(f_i),
        \end{align*}
        where the congruences are taken mod $B$. Here $\ell^{(p-1)}\equiv 1$ by Fermat's little theorem\footnote{Since $\ell$ is invertible in $B$ we must have $\gcd(\ell,p)=1$.}. If $p=2$ or $p=3$ then we can perform an analogous argument using powers of $E_4$ and $E_6$. Therefore, $W_\ell$ is well-defined on each $\M(N,B)^i$ and thus $\M(N,B)$ as required.
        
        \item Again, if $B$ has characteristic zero then the result follows since $\M(N,B)=\bigoplus_{k=0}^\infty\M_k(N,B)$. Now suppose that $B$ has characteristic $p$. Let $f$ and $g$ be forms in $\M(N,B)$ such that $f=W_\ell(g)$. We write $f=\sum_i f_i$, $g=\sum_i g_i$ where $f_i,g_i\in\M(N,B)^i$. By multiplying by suitable powers of Eisenstein series we may also assume that each $f_i$ and $g_i$ have the same fixed weight $k_i$. Now, since $f=W_\ell(g)$, we have $\sum_if_i=\sum_iW_{\ell}(g_i)$. However, $W_\ell$ is weight-preserving so linear independence implies that $f_i=W_\ell(g_i)$. As a result, each $f_i$ is constant (Lemma \ref{constlemmacharzero}) so that $f=\sum_if_i\in B$ as required.\qedhere 
    \end{enumerate}
\end{proof}
In light of these results, we define for any $\Z[1/\ell]$-domain $B$
\begin{equation*}
    \mcS(N,B)^{\ell-\old}=\mcS(N/\ell,B)\oplus W_\ell(\mcS(N/\ell,B)).
\end{equation*}
This agrees with the standard definition of $\ell$-old forms in fixed weight (cf. Proposition \ref{goodloldprop}).\\
\\
We also emphasise the importance of using the scaled Atkin-Lehner operator $W_\ell$. In particular, the following example shows that the operator $w_{\ell}=\ell^{-k/2}W_\ell$ is \textbf{not} always well-defined on $\M(N,B)$.
\begin{example}
    Let $B=\F_5$ so that $\overline{1}\in\M_0(N,B)$ and $\overline{E}_4\in\M_4(N,B)$ have the same $q$-expansion. We then have $w_3(1)=1$ and $w_3(E_4)=3^2E_4(\gamma_\ell\cdot z)\equiv -1\pmod{5}$, so that $w_3(\overline{1})\neq w_3(\overline{E}_4)$.
\end{example}

The main idea here is that $\ell^k\equiv\ell^{k'}\pmod{p}$ if $k\equiv k'\pmod{p-1}$ but we might have $\ell^{k/2}\not\equiv\ell^{k'/2}$. Taking this into consideration, we define the following operator.

\begin{definition}
    Let $B$ be a $\Z[1/\ell]$-algebra. The \emph{weight-separating operator}\\
    $S_\ell\colon\M_k(N,B)\to\M_k(N,B)$ is given by $S_\ell(f)=\ell^k f$ with inverse $S_\ell^{-1}(f)=\ell^{-k}f$. If $B$ is also a domain then we can define $S_\ell$ on $\M(N,B)$ by extending this definition by linearity.
\end{definition}

Now, let $B$ be a $\Z[1/\ell]$-domain and $f\in\M_k(N,B)$. In terms of the weight-separating operator, we have
\begin{equation*}
    \D_\ell(f)=\ell^2U_\ell^2 f-S_\ell f,\ \text{and}\ \Tr_\ell(f)=f+\ell S_\ell^{-1}U_\ell W_\ell f.
\end{equation*}
As a result, these operators can be extended to $\M(N,B)$ and we can consider notions of $U_\ell$-new and $\Tr_\ell$-new for Hecke invariant submodules of $\M(N,B)$. Similarly, the operator $T_\ell$ is well-defined on $\M(N/\ell,B)$, where it is given by
\begin{equation*}
    T_\ell=U_\ell+\ell S_\ell W_\ell f.
\end{equation*}
 
\section{Newforms for $\Z[1/\ell]$-domains in Characteristic $p$}
Again let $\ell$ be a prime dividing $N$ exactly once. We now consider how the notions of $\ell$-new, $U_\ell$-new and $\Tr_\ell$-new forms carry over into characteristic $p$ (cf. Section \ref{sec45}). For the rest of this section, we will always assume that $B$ is a $\Z[1/\ell]$-domain of characteristic $p$ unless otherwise stated.\\
\\
We begin by giving an example showing that in general, all three notions of newness disagree on $\M_k(N,B)$. This is in contrast to the case of characteristic zero in which all three notions coincide provided $k\neq 2$ (Theorem \ref{bigcharzerothm}).
\begin{example}\label{excharpmodforms}
    Let $p\geq 3$ and $f=\overline{1}\in\M_0(N,\F_p)$. Note that $f\notin\M_0(N,\F_p)^{\ell-\new}$ since $f$ is the mod $p$ reduction of $1\in\M_0(1)\subseteq\M_0(N)^{\ell-\old}$. However,
    \begin{align*}
        \D_\ell(f)&=\ell^2U_\ell^2(1)-\ell^0=\ell^2-1\ \text{and,}\\
        \Tr_\ell(f)&=1+\ell^{1-0}U_\ell W_{\ell}(f)=1+\ell.
    \end{align*}
    Hence if $\ell\equiv -1\pmod{p}$ then $f\in\M(N,\F_p)^{U_\ell-\new}$ and $f\in\M(N,\F_p)^{\Tr_\ell-\new}$. On the other hand, if $\ell\equiv 1\pmod{p}$ then $f\in\M(N,\F_p)^{U_\ell-\new}$ but $f\notin\M(N,\F_p)^{\Tr_\ell-\new}$. This example also generalises to higher weights by using the mod $p$ reduction of Eisenstein series. 
\end{example}

By restricting to cusp forms, we can overcome much of this problematic behaviour. In this direction, we prove a result (adapted from \cite[Proposition 6.4]{deo2019newforms}) regarding the $\ell$-old forms that appear in $\mcS(N,B)^{U_\ell-\new}$ and $\mcS(N,B)^{\Tr_\ell-\new}$.

\begin{proposition}\label{proploldcaplnew}
    Let $B$ be a $\Z[1/\ell]$-algebra and $f,g\in \mcS_k(N/\ell,B)$ for some fixed weight $k$. The following are equivalent
    \begin{enumerate}[label=(\arabic*)]
        \item $f+W_\ell(g)\in\ker(\D_\ell)$.
        \item $f+W_\ell(g)\in\ker(\Tr_\ell)\cap\ker(\Tr_\ell W_\ell)$.
        \item $\ell T_{\ell}f=-(\ell+1)S_\ell g$ and $\ell T_\ell g=-(\ell+1)f$.
    \end{enumerate}
    In other words,
    \begin{align*}
        (\mcS_k(N,B)^{\ell-\old})^{U_\ell-\new}&=(\mcS_k(N,B)^{\ell-\old})^{\Tr_\ell-\new}\\
        &=\{f+W_\ell(g)\::\:\ell T_{\ell}f=-(\ell+1)S_\ell g\ \text{and}\ \ell T_\ell g=-(\ell+1)f\}
    \end{align*}
    If $B$ is also a domain then the above results extend to the algebra $\mcS(N,B)$.
\end{proposition}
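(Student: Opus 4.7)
The plan is to prove both equivalences by direct computation, exploiting the direct sum decomposition $\mcS_k(N,B)^{\ell-\old} = \mcS_k(N/\ell,B) \oplus W_\ell(\mcS_k(N/\ell,B))$ (a genuine direct sum even in characteristic $p$, because Proposition~\ref{propauto}(b) gives intersection equal to $B$, and $B$ contains no nonzero cusp form). Setting $h := f + W_\ell(g)$, I apply each operator in turn and collect the result in this decomposition; the vanishing of each summand then translates into an identity between $f$ and $g$.

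For $(2)\Longleftrightarrow(3)$, everything is essentially immediate from Lemma~\ref{lemmatraceprop}. Using $\Tr_\ell(f)=(\ell+1)f$ and $\Tr_\ell(W_\ell g)=\ell T_\ell g$ gives
\[
\Tr_\ell(h) = (\ell+1)f + \ell T_\ell g \;\in\; \mcS_k(N/\ell,B),
\]
while $W_\ell^2 = S_\ell$ yields $W_\ell h = S_\ell g + W_\ell f$, whence
\[
\Tr_\ell(W_\ell h) = (\ell+1) S_\ell g + \ell T_\ell f \;\in\; \mcS_k(N/\ell,B).
\]
Setting both expressions to zero gives the two equations of (3) verbatim.

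For $(1)\Longleftrightarrow(3)$, the two key identities on $\mcS_k(N/\ell,B)$ are $U_\ell = T_\ell - \ell^{-1} W_\ell$ (this is the formula $T_\ell = U_\ell + \ell^{-1} W_\ell$ from the proof of Lemma~\ref{charpolynomial}, valid since $\ell$ is a unit in $B$) and $U_\ell W_\ell\varphi = S_\ell\varphi$ for $\varphi \in \mcS_k(N/\ell,B)$ (since $W_\ell = S_\ell\, i_\ell$ on $\mcS_k(N/\ell,B)$ and $U_\ell i_\ell = \mathrm{id}$). Applying $U_\ell$ to $h$ gives $U_\ell(h) = (T_\ell f + S_\ell g) + W_\ell(-\ell^{-1} f)$, and iterating (using that $T_\ell$ and $S_\ell$ commute) one computes an explicit expression for $\ell^2 U_\ell^2(h)$ in the decomposition. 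Subtracting $S_\ell h = S_\ell f + W_\ell(S_\ell g)$ yields a decomposition of $\D_\ell(h)$ whose $W_\ell$-component is $-\ell T_\ell f - (\ell+1) S_\ell g$ and whose $\mcS_k(N/\ell,B)$-component is $\ell^2 T_\ell^2 f + \ell^2 S_\ell T_\ell g - (\ell+1) S_\ell f$. The vanishing of the $W_\ell$-component is exactly the first equation of (3); substituting this identity into the remaining component and dividing by the unit $S_\ell$ reduces it to the second equation of (3). The reverse direction is the same calculation run backwards.

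Finally, the extension from fixed weight $k$ to the algebra $\mcS(N,B)$ proceeds componentwise: when $B$ is a domain of characteristic $p$, \eqref{eqmodpdecomp} gives $\mcS(N,B) = \bigoplus_{i \in \Z/(p-1)\Z} \mcS(N,B)^i$, each summand is preserved by $T_\ell$, $U_\ell$, $W_\ell$, $S_\ell$, $\Tr_\ell$, and $\D_\ell$ (all well-defined on the algebra by Proposition~\ref{propauto} together with Fermat's little theorem for $S_\ell$), and within each summand every form appears in a single weight by (the cusp-form analogue of) Proposition~\ref{singleweightprop}, so the fixed-weight argument applies. The main obstacle I anticipate is pure bookkeeping: tracking which summand each term sits in during the iterated $U_\ell$ computation (the factor $-\ell^{-1}$ in $U_\ell = T_\ell - \ell^{-1} W_\ell$ swaps the two summands), and verifying that the algebraic substitution collapsing the $\mcS_k(N/\ell,B)$-component of $\D_\ell(h)$ to the second equation of (3) really does go through cleanly.
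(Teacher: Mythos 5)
Your proposal follows essentially the same path as the paper: write $h = f + W_\ell g$, read off the two component equations from the direct sum decomposition of $\mcS_k(N,B)^{\ell-\old}$, compute $\D_\ell(h)$ via the identities $\ell U_\ell = \ell T_\ell - W_\ell$ and $U_\ell W_\ell = S_\ell$ on $\mcS_k(N/\ell,B)$, and dispatch the trace equivalence using Lemma~\ref{lemmatraceprop}. The explicit component expressions you obtain for $\D_\ell(h)$ (namely $-\ell T_\ell f - (\ell+1)S_\ell g$ in the $W_\ell$-slot and $\ell^2 T_\ell^2 f + \ell^2 T_\ell S_\ell g - (\ell+1)S_\ell f$ in the $\mcS_k(N/\ell,B)$-slot) agree with the paper's, and the substitution collapsing the second to the remaining equation of (3) does go through, as you anticipated.

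One citation slip worth fixing: you justify the direct sum $\mcS_k(N,B)^{\ell-\old} = \mcS_k(N/\ell,B) \oplus W_\ell(\mcS_k(N/\ell,B))$ for a general $\Z[1/\ell]$-algebra $B$ by appealing to Proposition~\ref{propauto}(b), but that proposition is stated and proved only for $\Z[1/\ell]$-\emph{domains}. The proposition you are proving assumes only that $B$ is a $\Z[1/\ell]$-algebra, and the non-domain case (e.g.\ $B = \Z/p^{b+1}\Z$) is in fact used downstream in the proof of Theorem~\ref{mainmodptheorem}. The correct reference is Lemma~\ref{constlemmacharzero} (equivalently Proposition~\ref{goodloldprop}), which gives the trivial intersection on cusp forms over any $\Z[1/\ell]$-algebra. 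With that citation swapped in, your argument matches the paper's.
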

\begin{proof}
    First we show $(1)\Leftrightarrow(3)$. Recall that on $\M(N/\ell,B)$ we have $W_\ell=S_\ell i_\ell$, $U_\ell W_\ell=S_\ell$ and $\ell U_\ell=\ell T_\ell-W_\ell$. So, 
    \begin{align*}
        \D_{\ell}(f)&=\ell^2U_\ell^2f-S_\ell f\\
        &=\ell U_\ell(\ell T_\ell f-W_\ell f)-S_\ell f\\
        &=\ell T_{\ell}(\ell T_\ell f)-W_\ell(\ell T_\ell f)-\ell U_\ell W_\ell f-S_\ell f\\
        &=\ell^2T_{\ell}^2f-(\ell+1)S_\ell f-W_\ell\ell T_\ell f.
    \end{align*}
    Whereas,
    \begin{align*}
        \D_{\ell}(W_{\ell}g)&=\ell^2U_\ell^2W_\ell g-S_\ell W_\ell g\\
        &=\ell^2U_\ell S_\ell g-W_\ell S_\ell g\\
        &=\ell^2T_\ell S_\ell g-\ell W_\ell S_\ell g-W_\ell S_\ell g\\
        &=\ell^2T_\ell S_\ell g-(\ell+1)W_\ell S_\ell g.
    \end{align*}
    Hence, if $f+W_\ell(g)\in\ker(\D_\ell)$ then
    \begin{align*}
        0&=\D_\ell(f+W_\ell g)\\
        &=(\ell^2T_\ell^2f-(\ell+1)S_\ell f+\ell^2T_\ell S_\ell g)-W_{\ell}(\ell T_\ell f+(\ell+1)S_\ell g).
    \end{align*}
    But by Proposition \ref{goodloldprop} we have $\mcS_k(N/\ell,B)\cap W_\ell \mcS_k(N/\ell,B)=\{0\}$. Thus $f+W_\ell(g)\in~\ker(\D_\ell)$ if and only if
    \begin{equation*}
        \ell^2T_\ell^2f-(\ell+1)S_\ell f+\ell^2T_\ell S_\ell g=0\text{ and }\ell T_\ell f+(\ell+1)S_\ell g=0.
    \end{equation*}
    The second equation tells us that $\ell T_\ell f=-(\ell+1)S_\ell g$. Substituting this into the first equation and simplifying then gives
    \begin{equation*}
        \ell T_\ell g=-(\ell+1)f,
    \end{equation*}
    as required.\\
    \\
    For (2)$\Leftrightarrow$(3) we recall that $\Tr_\ell(f)=(\ell+1)f$ and $\Tr_\ell(W_\ell g)=\ell T_\ell(g)$ (see Lemma \ref{lemmatraceprop}). So,
    \begin{equation*}
        \Tr_{\ell}(f+W_\ell g)=0\Leftrightarrow\ (\ell+1)f+\ell T_\ell g=0\Leftrightarrow\ \ell T_{\ell}g=-(\ell+1)f. 
    \end{equation*}
    On the other hand,
    \begin{align*}
        &0=\Tr_\ell W_\ell(f+W_\ell g)=\Tr_\ell(W_\ell f+S_\ell g)\\
        \Leftrightarrow\ &\ell T_\ell f=-(\ell+1)S_\ell g, 
    \end{align*}
    as required. If $B$ is a domain then we can repeat the above argument for $f,g\in\mcS(N/\ell,B)$ by using Proposition \ref{propauto}.
\end{proof}
\begin{remark}
    In fixed weight $k$, this result can be expressed more symmetrically in terms of $w_\ell=\ell^{-k/2}W_\ell$. In particular, if $\lambda_k=-(\ell+1)\ell^{\frac{k-2}{2}}$ then 
    \begin{align*}
        (\mcS_k(N,B)^{\ell-\old})^{U_\ell-\new}&=(\mcS_k(N,B)^{\ell-\old})^{\Tr_\ell-\new}\\
        &=\{f+w_\ell(g)\::\:T_{\ell}f=\lambda_kg\ \text{and}\ T_\ell g=\lambda_kf\}.
    \end{align*}
    The constant $\lambda_k$ also appears in the work of Ribet \cite{ribet1983congruence} and Diamond \cite{diamond1991congruence} on congruences between $\ell$-old and $\ell$-new forms. The connection between these results is explored further in \cite[Section 7]{deo2019newforms}.
\end{remark}

We now move onto the main theorem, comparing the notions of $\ell$-new, $U_\ell$-new and $\Tr_\ell$-new cusp forms in characteristic $p$.

\begin{theorem}\label{mainmodptheorem}
    Let $B$ be a $\Z[1/\ell]$-domain of characteristic $p$. We then have
    \begin{equation*}
        \mcS(N,B)^{\ell-\new}\subseteq\mcS(N,B)^{U_\ell-\new}=\mcS(N,B)^{\Tr_\ell-\new}.
    \end{equation*}
\end{theorem}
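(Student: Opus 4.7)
The plan is to establish the theorem in three parts: (a) $\mcS(N,B)^{\ell-\new} \subseteq \mcS(N,B)^{U_\ell-\new}$; (b) $\mcS(N,B)^{\Tr_\ell-\new} \subseteq \mcS(N,B)^{U_\ell-\new}$; and (c) the reverse inclusion $\mcS(N,B)^{U_\ell-\new} \subseteq \mcS(N,B)^{\Tr_\ell-\new}$. Part (a) follows from Theorem \ref{bigcharzerothm} applied with $\Q$: any $f \in \mcS(N,\Z)^{\ell-\new}$ has $\D_\ell(f) = 0$ in $\mcS(N,\Q)$, hence in $\mcS(N,\Z)$ by torsion-freeness of $\mcS(N,\Z)\hookrightarrow\mcS(N,\Q)$, and tensoring with $B$ preserves this vanishing. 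Part (b) is an immediate consequence of the operator identity
\begin{equation*}
    \D_\ell \;=\; \ell\, U_\ell \circ \Tr_\ell W_\ell \;-\; S_\ell \circ \Tr_\ell
\end{equation*}
on $\mcS(N,B)$, which I would verify by direct substitution from $\Tr_\ell(f) = f + \ell S_\ell^{-1} U_\ell W_\ell f$ and $\Tr_\ell W_\ell(f) = W_\ell f + \ell U_\ell f$.

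For part (c), I would first note that if $\Tr_\ell W_\ell(f) = 0$ is known, then rearranging the identity above as $S_\ell \Tr_\ell = \ell U_\ell \circ \Tr_\ell W_\ell - \D_\ell$ shows $S_\ell \Tr_\ell(f) = 0$ whenever $\D_\ell(f) = 0$, and the invertibility of $S_\ell$ forces $\Tr_\ell(f) = 0$. Hence the entire inclusion reduces to proving $(W_\ell + \ell U_\ell)(f) = 0$ whenever $\D_\ell(f) = 0$, and by the decomposition $\mcS(N,B) = \bigoplus_i \mcS(N,B)^i$ combined with Proposition \ref{singleweightprop} it suffices to do this in a fixed weight $k$. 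I would argue integrally: the module $\mcS_k(N, \Z[1/\ell])$ is finitely generated and free over the Dedekind domain $\Z[1/\ell]$, and torsion-freeness lifts the kernel equality from $\mcS_k(N, \Q)$ (Theorem \ref{bigcharzerothm}) to $\mcS_k(N, \Z[1/\ell])$. Since the image $\D_\ell(\mcS_k(N, \Z[1/\ell]))$ is projective over $\Z[1/\ell]$, the short exact sequence with its kernel splits, and one obtains a $\Z[1/\ell]$-linear factorisation $W_\ell + \ell U_\ell = \bar P \circ \D_\ell$ that one then base-changes to $B$.

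The main obstacle is this base-change step. Since $B$ is not flat over $\Z[1/\ell]$ when $\mathrm{char}(B) = p$, the inclusion $\D_\ell(\mcS_k(N, \Z[1/\ell])) \hookrightarrow \mcS_k(N, \Z[1/\ell])$ can fail to remain injective after tensoring with $B$; equivalently, the cokernel $\mcS_k(N, \Z[1/\ell])/\D_\ell(\mcS_k(N, \Z[1/\ell]))$ may carry $p$-torsion producing nontrivial $\mathrm{Tor}^1$ terms, so the condition $\D_\ell(f) = 0$ in $\mcS_k(N,B)$ need not imply that $\D_\ell(f) = 0$ in $\D_\ell(\mcS_k(N,\Z[1/\ell])) \otimes B$, where the integral identity naturally lives. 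One must therefore either verify that this $p$-torsion vanishes in the case at hand, or exploit the specific Hecke-algebra structure to transfer the characteristic-zero identity to characteristic $p$ despite the obstructing $\mathrm{Tor}$. This is the subtle integrality issue underlying the hypothesis that $B$ be a $\Z[1/\ell]$-domain, and it is what prevents a purely formal reduction to Theorem \ref{bigcharzerothm}.
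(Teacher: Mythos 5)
Your parts (a) and (b) are correct and, in fact, cleaner in one respect than the paper's argument. The operator identity
\begin{equation*}
    \D_\ell \;=\; \ell\, U_\ell \circ (\Tr_\ell W_\ell)\;-\;S_\ell\circ\Tr_\ell,
\end{equation*}
which you verify from $\Tr_\ell = 1 + \ell S_\ell^{-1}U_\ell W_\ell$ and $\Tr_\ell W_\ell = W_\ell + \ell U_\ell$, immediately gives $\ker(\Tr_\ell)\cap\ker(\Tr_\ell W_\ell)\subseteq\ker(\D_\ell)$ over any $\Z[1/\ell]$-algebra. The paper obtains this containment by saying ``reverse all the steps'' of a lifting argument, so your observation is a genuine simplification of that direction. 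Your reduction of (c) to the single statement $\D_\ell(f)=0\Rightarrow\Tr_\ell W_\ell(f)=0$ is also a useful clarification.

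However, for part (c) you have correctly diagnosed but not closed the gap, and the gap is real. The splitting-plus-base-change strategy fails exactly as you anticipate: to extend $\bar P$ from $\D_\ell(\mcS_k(N,\Z[1/\ell]))$ to a map defined on all of $\mcS_k(N,\Z[1/\ell])$ you need the cokernel of $\D_\ell$ to be projective (equivalently torsion-free) over $\Z[1/\ell]$, and the $p$-torsion of that cokernel is precisely the interesting data here --- it records the congruences between $\ell$-old and $\ell$-new forms. Killing it would trivialise the theorem. So no purely module-theoretic factorisation argument can succeed, and ``either verify the torsion vanishes or use the Hecke-algebra structure'' leaves the actual content of the proof unaddressed. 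The paper resolves (c) differently: reduce to $B=\F_p$ by flatness over $\F_p$, lift $f\in\ker(\D_\ell^{\F_p})$ to $\tilde{f}\in\mcS_k(N,\Z_p)$, decompose over $\Q_p$ as $\tilde{f}=p^{-b}(\tilde{f}^{\old}+\tilde{f}^{\new})$ with integral old and new parts, and then observe that $\D_\ell^{\Z_p}\tilde{f}^{\new}=0$ forces $\D_\ell^{\Z_p}\tilde{f}^{\old}\equiv 0\pmod{p^{b+1}}$. The key input that substitutes for the missing flatness is Proposition \ref{proploldcaplnew}: on the $\ell$-old subspace the conditions $\D_\ell=0$ and $\Tr_\ell=\Tr_\ell W_\ell=0$ are characterised by the \emph{same} explicit pair of linear relations involving $T_\ell$ and $S_\ell$, and this equivalence holds over every $\Z[1/\ell]$-algebra, including $\Z/p^{b+1}\Z$. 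That allows one to transfer the vanishing from $\D_\ell$ to $\Tr_\ell$ and $\Tr_\ell W_\ell$ modulo $p^{b+1}$ and then descend to $\F_p$. You would need to incorporate an analogue of Proposition \ref{proploldcaplnew} (or an equivalent congruence between the two operators on $\ell$-old forms) to complete (c).
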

\begin{proof}
    Since $B$ is a domain of characteristic $p$ we have $\mcS(N,B)=\bigoplus_{i\in 2\Z/(p-1)\Z}\mcS(N,B)^i$ and each element of $\mcS(N,B)^i$ appears in a single weight. As a result, it will suffice to prove the theorem for some fixed weight $k$. In what follows, we will write $X^B$ to indicate that an operator $X$ is acting on $\mcS_k(N,B)$.\\[8pt]
    First we show that $\mcS_k(N,B)^{\ell-\new}\subseteq\mcS_k(N,B)^{U_\ell-\new}$. So, let $f\otimes b$ be a simple tensor in $\mcS_k(N,B)^{\ell-\new}=\mcS_k(N,\Z)^{\ell-\new}\otimes_\Z B$. We then have
    \begin{equation*}
        \D_\ell^B(f\otimes b)=\D_\ell^\Z(f)\otimes b=0
    \end{equation*}
    noting that $f\in\mcS_k(N,\Z)^{\ell-\new}=\ker(\D_\ell^{\Z})$. Hence $f\otimes b\in\ker(\D_\ell^B)=\mcS_k(N,B)^{U_\ell-\new}$ so that in general, $\mcS_k(N,B)^{\ell-\new}\subseteq\mcS_k(N,B)^{U_\ell-\new}$ as required.\\[8pt]
    We now show that $\mcS_k(N,B)^{U_\ell-\new}=\mcS_k(N,B)^{\Tr_\ell-\new}$ by mimicking the argument in \cite[Theorem 1]{deo2019newforms}. In what follows, we assume the reader is familiar with the basic properties of $p$-adic numbers. A summary can be found in Appendix \ref{padicappend}.\\[8pt]
    First note that it suffices to prove $\mcS_k(N,\F_p)^{U_\ell-\new}=\mcS_k(N,\F_p)^{\Tr_\ell-\new}$. To see why this is sufficient, note that $B$ is an $\F_p$-vector space and thus flat over $\F_p$. Therefore, tensoring with $B$ will preserve the kernels of $\D_\ell$, $\Tr_\ell$ and $\Tr_\ell W_\ell$ (Proposition \ref{preservekernelprop}).\\[8pt]
    Now let $f\in\mcS_k(N,\F_p)^{U_\ell-\new}$. Noting that $\F_p=\Z_p/p\Z_p$, we can lift $f$ to some $\tilde{f}\in~\mcS_k(N,\Z_p)$ so that $f$ is the mod-$p$ reduction of $\tilde{f}$. We take such an $\tilde{f}$ and view it as a modular form over $\Q_p$. Since $\Q_p$ is a field of characteristic zero, we then have $\tilde{f}=\tilde{f}_0^{\old}+\tilde{f}_0^{\new}$ where $\tilde{f}_0^{\old}\in\mcS_k(N,\Q_p)^{\ell-\old}$ and $\tilde{f}_0^{\new}\in \mcS_k(N,\Q_p)^{\ell-\new}$ (see Proposition \ref{fieldspanprop}). Let $b\in\Z_{\geq 0}$ be sufficiently large so that $p^b\tilde{f}_0^{\old}$ and  $p^b\tilde{f}_0^{\new}$ are in $\mcS_k(N,\Z_p)$. That is, 
    \begin{equation*}
        \tilde{f}=p^{-b}(\tilde{f}^{\old}+\tilde{f}^{\new}),
    \end{equation*}
    where $\tilde{f}^{\old}\in\mcS_k(N,\Z_p)^{\ell-\old}$ and $\tilde{f}^{\new}\in\mcS_k(N,\Z_p)^{\ell-\new}$.\\[8pt]
    Since $f\in\ker(\D_{\ell}^{\mathbb{F}_p})$ it follows that $\D_{\ell}^{\mathbb{Z}_p}(\tilde{f})$ is in $p\Z_p[[q]]$. Next we note that $\D_\ell^{\Z_p}(\tilde{f}^{\new})=0$ since $\mcS_k(N,\Z_p)^{\ell-\new}=\mcS_k(N,\Z_p)^{U_\ell-\new}$ (Theorem \ref{dlcharzerothm}). As a result, $\D_{\ell}^{\Z_p}(\tilde{f})=p^{-b}\D_{\ell}^{\Z_p}(\tilde{f}^{\old})$. So, letting $f^{\old}$ be the reduction of $\tilde{f}^{\old}$ mod $p^{b+1}$ we have $f^{\old}\in\ker(\D_{\ell}^{\Z/p^{b+1}\Z})$. By Proposition \ref{proploldcaplnew} we then have $f^{\old}\in\ker(\Tr_{\ell})^{\Z/p^{b+1}\Z}\cap\ker(\Tr_{\ell}W_{\ell})^{\Z/p^{b+1}\Z}$. Lifting back up to characteristic zero we see that both $\Tr_{\ell}^{\Z_p}(\tilde{f}^{\old})$ and $(\Tr_\ell W_\ell)^{\Z_p}(\tilde{f}^{\old})$ are in $p^{b+1}\Z_p[[q]]$.\\[8pt]
    Now, $(\Tr_{\ell})^{\Z_p}(\tilde{f}^{\new})=0$ and hence both $\Tr_{\ell}^{\Z_p}(\tilde{f})$ and $(\Tr_{\ell}W_\ell)^{\Z_p}(\tilde{f})$ are in $p\Z_p[[q]]$. Therefore, $\Tr_{\ell}^{\F_p}(f)=0$ and $(\Tr_\ell W_\ell)^{\F_p}(f)=0$. That is, $f\in \ker(\Tr_{\ell})^{\F_p}\cap\ker(\Tr_{\ell}W)^{\F_p}$. Thus $\ker(\D_{\ell}^{\F_p})\subseteq\ker(\Tr_{\ell})^{\F_p}\cap\ker(\Tr_\ell W_\ell)^{\F_p}$. Reversing all of the steps above gives the reverse containment.
\end{proof}
\begin{remarks}\
\begin{enumerate}[label=(\roman*)]
    \item In \cite[Section 6.1]{deo2019newforms}, the space of $\ell$-new forms $\mcS_k(N,B)^{\ell-\new}:=\mcS_k(N,\Z)\otimes_{\Z}B$ is only defined when $B$ has characteristic zero. However, here we also considered the case where $B$ has characteristic $p$ for completeness.
    \item The relation $\mcS(N,B)^{\ell-\new}\subseteq\mcS(N,B)^{U_\ell-\new}$ holds when $B$ is any domain of characteristic $p$ (even if $\ell$ isn't invertible).
\end{enumerate}
\end{remarks}
\begin{question}
    Is the inclusion $\mcS(N,B)^{\ell-\new}\subseteq\mcS(N,B)^{U_\ell-\new}$ strict?
\end{question}
The author of this thesis admits to not knowing a comprehensive answer to this question. However, the following example shows a case where $\mcS_k(N,B)^{\ell-\new}\subsetneq\mcS_k(N,B)^{U_\ell-\new}$ in fixed weight $k$.
\begin{example}\label{charpcuspformex}
    The space $\mcS_2(11)$ is spanned by a single eigenform
    \begin{equation*}
        f=q-2q^2-q^3+2q^4+q^5+2q^6-2q^7-2q^9+O(q^{10}).
    \end{equation*}
    Similarly, the space $\mcS_2(33)^{3-\new}$ is spanned by a single eigenform
    \begin{equation*}
        g=q+q^2-q^3-q^4-2q^5-q^6+4q^7-3q^8+q^9+O(q^{10}).
    \end{equation*}
    Note that $f\in\mcS_2(33,\Z)^{3-\old}$ and $g\in\mcS_2(33,\Z)^{3-\new}$. Define $h:=f-w_3(f)=f-3i_3(f)$ so that
    \begin{equation*}
        h=q-2q^2-4q^3+2q^4+q^5+8q^6-2q^7+q^9+O(q^{10})\in\mcS_2(33,\Z)^{3-\old}.
    \end{equation*}
    Reducing mod 5 we then obtain
    \begin{align*}
        \overline{g}&=q+q^2+4q^3+4q^4+3q^5+4q^6+4q^7+2q^8+q^9+O(q^{10})\in\mcS_2(33,\F_5)^{3-\new},\ \text{and}\\[8pt] 
        \overline{h}&=q+3q^2+q^3+2q^4+q^5+3q^6+3q^7+q^9+O(q^{10})\in\mcS_2(33,\F_5)^{3-\old}.
    \end{align*}
    By the remark following Proposition \ref{proploldcaplnew} we see that $\overline{h}\in\mcS_2(33,\F_5)^{U_3-\new}$. However, $\overline{h}~\notin~\mcS_2(33,\F_5)^{3-\new}=\text{Span}_{\F_5}\{\overline{g}\}$.
\end{example} 

\section{Further Discussion}
Example \ref{charpcuspformex} from the previous section indicates that the notion of $\ell$-new may differ from the algebraic notions of $U_\ell$-new and $\Tr_\ell$-new in characteristic $p$. Because of this discrepancy, we now summarise the differences between each of these notions and their relative usefulness.\\
\\
First we consider the original motivation for defining newforms over $\C$. In particular, the theory of oldforms and newforms allows us to classify cusp forms in $\mcS_k(N)$ for some weight $k$ and level $N$. This classification is useful due to the desirable properties of newforms that we explored in Section \ref{propsec}. However, in characteristic $p$ this classification breaks down. Namely, it is often the case that
\begin{equation*}
    \mcS_k(N,B)^{\ell-\old}\cap\mcS_k(N,B)^{\ell-\new}\neq\{0\}
\end{equation*}
when $B$ has prime characteristic. See Example \ref{oldnewcong} for an instance of this behaviour in characteristic 2 or \cite[Example 1]{deo2019newforms} for an example in characteristic 7.\\
\\
An alternative way to classify oldforms and newforms over $\C$ is by their Hecke eigenvalues (Lemma \ref{distincteigen}). However, the Hecke operators are not always diagonalisable in characteristic $p$ making such a classification impossible (see \cite{jochnowitz1982congruences}). It is thus evident that $\ell$-old and $\ell$-new forms in characteristic $p$ cannot be used to decompose the space $\mcS_k(N,B)$. Despite this problem, we may still hope to retain algebraic properties of newforms in characteristic $p$.\\
\\
This is where the notions of $U_\ell$-new and $\Tr_\ell$-new are useful. These notions are explicitly defined in terms of the Atkin-Lehner and Hecke operators and thus consist of forms that are well-behaved with respect to these operators. For instance, a $U_\ell$-new form will always have a $U_\ell^2$ eigenvalue of $\ell^{k-2}$. Even in characteristic 0, defining newforms in this way is arguably more robust and elegant than using the Petersson inner product. Moreover, the intersection between $\ell$-old and $U_\ell$-new (or $\Tr_\ell$-new) forms can be described quite simply (Proposition \ref{proploldcaplnew}).\\
\\
An area of future research would be to study applications of $U_\ell$-new and $\Tr_\ell$-new forms to other aspects of modular form theory. Importantly, Deo and Medvedovsky \cite[Section 8]{deo2019newforms} show that these notions of ``newness" appear when looking at Monsky's Hecke-stable filtration \cite{monsky2015hecke}, \cite{monsky2016hecke}.\\
\\
Now, although the notions of $U_\ell$-new and $\Tr_\ell$-new agree on cusp forms (Theorem \ref{mainmodptheorem}), these notions do not agree on the space of all modular forms (Example \ref{excharpmodforms}). Moreover, we can only define $\Tr_\ell$-new forms over commutative rings where $\ell$ is invertible. So although similar, it is unclear which algebraic notion - $U_\ell$-new or $\Tr_\ell$-new, provides a "better" definition of newness. In Chapter \ref{chap6} we shall generalise these notions even further so that their different properties become more apparent.

\chapter{Further Generalisations}\label{chap6}
In the last chapter we explored Deo and Medvedovsky's notions of $U_\ell$-new and $\Tr_\ell$-new forms in characteristic $p$. These notions were only defined for modular forms with respect to $\Gamma_0(N)$ but there are certainly other types of modular forms that we can consider. In this chapter we introduce the notion of modular forms with character and suggest definitions for $U_\ell$-new and $\Tr_\ell$-new forms in this setting. Moreover, we consider ways of removing the requirement that $N$ is squarefree (i.e. that each prime factor $\ell$ of $N$ divides $N$ exactly once).\\[8pt]
Note that most of the proofs in this chapter will be quite short and instead describe how to generalise proofs from previous chapters. We will also frequently make use of \emph{Dirichlet characters}. For information about these objects, see Appendix \ref{appdirichlet}.\\[8pt]
As per usual we assume that $N$ is a fixed positive integer.

\section{Modular Forms with Character}
We begin by defining modular forms with character over $\C$ and generalise to other commutative rings later.
\begin{definition}
    Let $k\geq 0$ and $\chi$ be a Dirichlet character mod $N$. A \emph{modular form with character} $\chi$ (of weight $k$ and level $N$) is an element of the vector space
    \begin{equation*}
        \M_k^\chi(N)=\left\{f\in\M_k(\Gamma_1(N))\::\:f|_k\begin{smatrix}a&b\\c&d\end{smatrix}=\chi(d)f\ \text{for all}\ \begin{smatrix}a&b\\c&d\end{smatrix}\in\Gamma_0(N)\right\}.
    \end{equation*}
    The space of cusp forms with character $\chi$, denoted $\mcS_k^{\chi}(N)$, is defined analogously.
\end{definition}
In particular, note that $\M_k^{\chi_0}(N)=\M_k(N)$, where $\chi_0$ is the trivial Dirichlet character mod $N$. We also have the following decomposition of $\M_k(\Gamma_1(N))$ \cite[Proposition 9.2]{stein2007modular}:
\begin{equation}\label{characterdecompeq}
    \M_k(\Gamma_1(N))=\bigoplus_{\chi}\M_k^{\chi}(N),
\end{equation}
where the direct sum is over all Dirichlet characters mod $N$. Therefore, instead of directly studying the space $\M_k(\Gamma_1(N))$, we can instead study $\M_k^{\chi}(N)$ for each Dirichlet character $\chi$ mod $N$.\\
\\
We can also define $\mcS_k^{\chi}(N)^{\new}$ in a similar way to how we defined $\mcS_k(N)^{\new}$ in Chapter \ref{chapclassic}. So, suppose that $\chi$ is a Dirichlet character mod $N$ induced by a Dirichlet character $\chi'$ mod $M$. For each $e\mid(N/M)$ we have an embedding map $i_e\colon\mcS_k^{\chi'}(M)\hookrightarrow\mcS_k^{\chi}(N)$ that sends $f\in\mcS_k^{\chi}(M)$ to the function $z\mapsto f(ez)$. We then use the images of these embedding maps (at each level from which $\chi$ is induced) to obtain $\mcS_k^{\chi}(N)^{\old}$, and define $\mcS_k^{\chi}(N)^{\new}$ to be the orthogonal complement of $\mcS_k^{\chi}(N)^{\old}$ with respect to the Petersson inner product.\\
\\
Similarly, for any prime $\ell$ dividing $N$, we define $\mcS_k^{\chi}(N)^{\ell-\old}$ and $\mcS_k^{\chi}(N)^{\ell-\new}$ as was done for $\mcS_k(N)$ in Section \ref{sec31}. Note that if $\chi$ is \textbf{not} induced by a character mod $N/\ell$ then there are no forms arising from level $N/\ell$ and thus $\mcS_k^{\chi}(N)^{\ell-\new}=\mcS_k^{\chi}(N)$. For this reason, we will frequently restrict to the case where $\chi$ is induced by a Dirichlet character mod $N/\ell$.\\
\\
The properties of newforms for $\mcS_k^{\chi}(N)$ naturally generalise those for $\mcS_k(N)$ given by Atkin and Lehner. In particular, Li \cite{li1975newforms} provides analogues of the results from sections \ref{propsec} and \ref{sec31} for forms with character.\\
\\
These definitions also generalise to modular forms for $\Gamma_1(N)$, and by \eqref{characterdecompeq} we have
\begin{equation*}
    \M_k(\Gamma_1(N))^{\ell-\new}=\bigoplus_\chi\M_k^{\chi}(N)^{\ell-\new}.
\end{equation*}
In fact, newforms for $\Gamma_1(N^2)$ can be used to describe newforms for the principal congruence subgroup $\Gamma(N)$. This is achieved by considering a group $\Gamma(N)^*$ that is conjugate to $\Gamma(N)$ and contains $\Gamma_1(N^2)$. See \cite[pp 20-21]{weisinger1977some} for more details.\\
\\
In Chapter 4 we discussed why $\M_k(N)$, $\mcS_k(N)$, $\mcS_k(N)^{\ell-\old}$ and $\mcS_k(N)^{\ell-\new}$ had bases in $\Z[[q]]$. These arguments also hold for modular forms for $\Gamma_1(N)$. Namely, the results we used from Diamond and Im's paper \cite[Corollary 12.3.2 and Corollary 12.4.5]{diamond1995modular} are stated for both $\Gamma_0(N)$ and $\Gamma_1(N)$. However, it is not always true that $\M_k^{\chi}(N)$ has a basis in $\Z[[q]]$. Instead, we have the following weaker result which will allow us to define $\M_k^{\chi}(N,B)$ for a restricted class of commutative rings $B$.

\begin{proposition}\label{chiintegrality}
    Let $\chi$ be a Dirichlet character mod $N$. The spaces $\M_k^{\chi}(N)$, $\mcS_k^{\chi}(N)$, $\mcS_k^{\chi}(N)^{\ell-\old}$ and $\mcS_k^{\chi}(N)^{\ell-\new}$ have bases consisting of forms with Fourier coefficients in $\Z[\chi]$. Here, $\Z[\chi]$ denotes the ring generated by the image of $\chi\colon\Z\to\C$.
\end{proposition}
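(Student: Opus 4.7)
The plan is to obtain $\Z[\chi]$-bases for the four $\chi$-component spaces by projecting integral bases of the corresponding $\Gamma_1(N)$-spaces onto the $\chi$-isotypic component via a character-averaging projector. The starting observation, noted just before the statement, is that the Diamond--Im results \cite[Corollary 12.3.12 and Corollary 12.4.5]{diamond1995modular} apply equally to $\Gamma_1(N)$ and its subspaces, so each of $\M_k(\Gamma_1(N))$, $\mcS_k(\Gamma_1(N))$, $\mcS_k(\Gamma_1(N))^{\ell-\old}$ and $\mcS_k(\Gamma_1(N))^{\ell-\new}$ admits a $\C$-basis lying in $\Z[[q]]$; I would fix such a basis for each.

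For each $d \in (\Z/N\Z)^*$, let $\langle d \rangle$ denote the diamond operator on $\M_k(\Gamma_1(N))$, defined by $\langle d \rangle f = f|_k \sigma_d$ for any $\sigma_d \in \Gamma_0(N)$ with lower-right entry congruent to $d$ modulo $N$. Introduce the projector
\begin{equation*}
    \pi_\chi \,:=\, \frac{1}{\varphi(N)} \sum_{d \in (\Z/N\Z)^*} \overline{\chi}(d)\,\langle d \rangle,
\end{equation*}
whose image is precisely $\M_k^\chi(N)$ by orthogonality of characters together with the decomposition \eqref{characterdecompeq}. Since each $\overline{\chi}(d)$ lies in $\Z[\overline{\chi}] = \Z[\chi]$, the scaled operator $\varphi(N)\,\pi_\chi$ carries forms with integer Fourier coefficients to forms with Fourier coefficients in $\Z[\chi]$. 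Applying $\varphi(N)\,\pi_\chi$ term-by-term to the chosen $\Z$-basis of each of the four spaces above then produces a spanning set of the corresponding $\chi$-component whose elements all have Fourier coefficients in $\Z[\chi]$; extracting a maximal $\C$-linearly independent subset gives the required basis.

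The technical content lies in verifying that $\pi_\chi$ actually preserves each of the four subspaces, i.e.\ that every $\langle d \rangle$ does. Integrality-preservation, together with stability of $\M_k(\Gamma_1(N),\Z)$ and $\mcS_k(\Gamma_1(N),\Z)$, is standard because $\langle d \rangle$ belongs to the integral Hecke algebra acting on $\M_k(\Gamma_1(N))$. Stability of the $\ell$-old subspace follows from the fact that each $\langle d \rangle$ commutes with the level-raising embedding $i_e \colon \M_k(\Gamma_1(N/\ell)) \to \M_k(\Gamma_1(N))$, once $d$ is interpreted on both levels via reduction between $(\Z/N\Z)^*$ and $(\Z/(N/\ell)\Z)^*$. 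Stability of the $\ell$-new subspace is then automatic, since $\langle d \rangle$ is unitary for the Petersson inner product and therefore preserves the orthogonal complement of the $\ell$-old subspace. This verification is the main obstacle; with it in hand, the construction of the previous paragraph works uniformly for all four spaces and produces the required $\Z[\chi]$-bases simultaneously.
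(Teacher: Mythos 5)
Your proposal matches the paper's own proof: the paper uses the same character-averaging projector $\pi_\chi = \frac{1}{\varphi(N)}\sum_a \overline{\chi}(a)\langle a\rangle$ applied to an integral basis of the $\Gamma_1(N)$-space, verifies it is a projection via the orthogonality relations and the decomposition \eqref{characterdecompeq}, and invokes $\Z$-integrality of $\langle a\rangle$ to conclude. You are in fact slightly more careful than the paper at one point: the paper applies $\pi_\chi$ itself to an integral basis and asserts the resulting forms have coefficients in $\Z[\chi]$, silently ignoring the $1/\varphi(N)$ normalization; your decision to scale by $\varphi(N)$ cleanly sidesteps that. Your explicit verification that the diamond operators preserve the $\ell$-old and $\ell$-new subspaces (via commutation with $i_e$ and unitarity for the Petersson product, respectively) is also sound and is left implicit in the paper, which simply asserts the proofs for the other three spaces are identical.
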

To prove this result, we will need to introduce a new operator.
\begin{definition}
    Let $a\in(\Z/N\Z)^{\cross}$ and $\gamma=\begin{smatrix}*&*\\ * &\overline{a}\end{smatrix}$ be a matrix in $\Gamma_0(N)$ such that $\overline{a}\equiv a\pmod{N}$. The \emph{diamond operator} $\langle a\rangle$ is the map that sends $f\in\M_k(\Gamma_1(N))$ to $f|_k\gamma$.
\end{definition}
See \cite[Section 5.2]{diamond2005first} as for why this operator is well-defined, along with some other basic results. Importantly, we note that if $f\in\M_k^\chi(N)$ then $\langle a\rangle f=\chi(a)f$.
\begin{proof}[Proof of Proposition \ref{chiintegrality}]
    We only prove the result for $\M_k^{\chi}(N)$ but the proofs for $\mcS_k^{\chi}(N)$ and $\mcS_k^{\chi}(N)^{\ell-\new}$ are identical.\\[8pt]
    Consider the map
    \begin{align*}
        \pi_{\chi}\colon\M_k(\Gamma_1(N))&\to\M_k^{\chi}(N),\quad\pi_\chi(f)=\frac{1}{\varphi(N)}\sum_{a\in(\Z/N\Z)^{\cross}}\overline{\chi}(a)\langle a\rangle f,
    \end{align*}
    where $\varphi(N)=|(\Z/N\Z)^{\cross}|$. We show that $\pi_{\chi}$ is a well-defined projection. So, let $f\in~\M_k(\Gamma_1(N))$ and write $f=\sum_{\psi}f_{\psi}$ where the sum is over all Dirichlet characters $\psi$ mod $N$ and $f_{\psi}\in~\M_k^{\psi}(N)$ (refer to the decomposition in \eqref{characterdecompeq}). Then,
    \begin{align*}
        \pi_\chi(f)&=\frac{1}{\varphi(N)}\sum_{a\in(\Z/N\Z)^{\cross}}\left(\overline{\chi}(a)\langle a\rangle\sum_{\psi}f_\psi\right)\\[8pt]
        &=\frac{1}{\varphi(N)}\sum_{a\in(\Z/N\Z)^{\cross}}\left(\overline{\chi}(a)\sum_{\psi}\psi(a)f_\psi\right)\\[8pt]
        &=\frac{1}{\varphi(N)}\varphi(N)f_{\chi}\qquad(*)\\[8pt]
        &=f_{\chi},
    \end{align*}
    where $(*)$ follows from the orthogonality relations for Dirichlet characters (Proposition \ref{proporthogonality}).\\[8pt]
    Now, let $\mathcal{B}$ be a basis for $\M_k(\Gamma_1(N))$ such that every $f\in\mathcal{B}$ has Fourier coefficients in $\Z$. Since the diamond operator is $\Z$-integral (\cite[Proposition 12.3.11]{diamond1995modular}) it follows that $\pi_{\chi}(\mathcal{B})$ is a spanning set for $\M_k^{\chi}(N)$ consisting of forms with Fourier coefficients in $\Z[\chi]$. Taking a linearly independent subset of $\pi_{\chi}(\mathcal{B})$ then gives the desired result.
\end{proof}
In light of this result we now let $\M_k^{\chi}(N,\Z[\chi])$ be the set of all modular forms in $\M_k^{\chi}(N)$ whose Fourier coefficients are in $\Z[\chi]$. For any $\Z[\chi]$-algebra $B$ we then define
\begin{equation*}
    \M_k^\chi(N,B):=\M_k^{\chi}(N,\Z[\chi])\otimes_{\Z[\chi]}B,
\end{equation*}
and analogously for $\mcS_k^{\chi}(N,B)$, $\mcS_k^{\chi}(N,B)^{\ell-\old}$ and $\mcS_k^{\chi}(N,B)^{\ell-\new}$.\\
\\
Finally we consider the structure of $\M^{\chi}(N,B)=\sum_{k=0}^\infty\M_k^{\chi}(N,B)$. Note that $\M^{\chi}(N,B)$ isn't an algebra if $\chi$ is nontrivial. In particular, if $f$ and $g$ are modular forms with character $\chi$ then $fg$ is a modular form with character $\chi^2$. Regardless, $\M^{\chi}(N,B)$ is still an interesting module to study.

\begin{proposition}\label{charzerodecompcharacter}
    Let $\chi$ be a Dirichlet character mod $N$ and $B$ be a $\Z[\chi]$-domain of characteristic zero. Then $\M^{\chi}(N,B)=\bigoplus_{k=0}^\infty\M_k^\chi(N,B)$.
\end{proposition}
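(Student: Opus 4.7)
The plan is to mirror the proof of Proposition \ref{charzerodirect}, which handled the analogous statement for $\M(N,B)$. The first step is to establish the decomposition over $\Z[\chi]$ (equivalently, over $\C$). Since $\M_k^\chi(N)\subseteq\M_k(\Gamma_1(N))$, it suffices to check that the natural map $\bigoplus_k \M_k(\Gamma_1(N),\C)\to\M(\Gamma_1(N),\C)$ is injective, which is the direct analogue of Proposition \ref{directoverc} for the congruence subgroup $\Gamma_1(N)$. The proof carries over without change, since the only fact used is that a nonzero form cannot satisfy the modularity condition simultaneously for two distinct weights $k\neq k'$: this would force $j(\gamma,z)^{k-k'}=1$ for all $\gamma\in\Gamma_1(N)$ and all $z$ in a nonempty open subset of $\H$, which is impossible as $\Gamma_1(N)$ contains matrices with $c\neq 0$. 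Restricting this injection to the $\chi$-isotypic subspaces gives an injection $\bigoplus_k \M_k^\chi(N,\Z[\chi])\hookrightarrow\M^\chi(N,\Z[\chi])$.

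For an arbitrary $\Z[\chi]$-domain $B$ of characteristic zero, I would then bootstrap via flatness of $B$ over $\Z[\chi]$ and tensor the above injection by $B$. Writing $n$ for the order of $\chi$, the ring $\Z[\chi]=\Z[\zeta_n]$ is the ring of integers of the cyclotomic field $\Q(\zeta_n)$ and is therefore a Dedekind domain. By Corollary \ref{flatdedekind}, flatness of $B$ over $\Z[\chi]$ is equivalent to torsion-freeness of $B$ as a $\Z[\chi]$-module.

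The main technical step is verifying that $B$ is torsion-free over $\Z[\chi]$. The structure map $\varphi\colon\Z[\chi]\to B$ is a ring homomorphism into a domain, so its kernel is a prime ideal $\mathfrak{p}\subseteq\Z[\chi]$. Since $B$ has characteristic zero, the restriction $\varphi|_{\Z}\colon\Z\to B$ is injective, so $\mathfrak{p}\cap\Z=(0)$. But $\Z[\chi]$ has Krull dimension one, so the only prime disjoint from $\Z\setminus\{0\}$ is the zero ideal; hence $\varphi$ is injective and $B$ is a torsion-free $\Z[\chi]$-module, therefore flat. Tensoring the injection $\bigoplus_k \M_k^\chi(N,\Z[\chi])\hookrightarrow\M^\chi(N,\Z[\chi])$ by $B$ over $\Z[\chi]$ preserves injectivity and yields the desired direct sum decomposition. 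I expect no obstacle beyond this routine flatness check; the key conceptual input is simply that the direct-sum phenomenon is visible already over $\Z[\chi]$ as a consequence of the uniqueness of weight, and everything else propagates through the tensor product.
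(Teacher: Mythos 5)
Your proof is correct and follows essentially the same route as the paper's: establish the direct sum over $\C$ by the Vandermonde-style argument of Proposition~\ref{directoverc} (the paper's remark after that proposition notes the matrices $\gamma_m$ constructed there already lie in $\Gamma_1(N)$, so the argument applies verbatim to forms with character), identify $\Z[\chi]=\Z[\zeta_n]$, and bootstrap to an arbitrary $\Z[\chi]$-domain of characteristic zero via flatness (Corollary~\ref{flatdedekind}). Your only deviations are cosmetic: you pass through $\M_k(\Gamma_1(N))$ and restrict to the $\chi$-isotypic piece rather than working with $\M_k^\chi(N)$ directly, and you spell out the torsion-freeness check (injectivity of $\Z[\chi]\hookrightarrow B$ via lying-over/maximality of nonzero primes in a Dedekind domain) that the paper absorbs into Corollary~\ref{flatdedekind} without comment.
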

\begin{proof}
    First note that the statement holds for $B=\C$ (see the remark after Proposition \ref{directoverc} in Appendix $B$). Now, suppose that $B$ is a general $\Z[\chi]$-domain of characteristic zero. Since each element in the image of $\chi$ is a root of unity we have $\Z[\chi]=\Z[\zeta_n]$, where $\zeta_n$ is a primitive $n^{th}$ root of unity for some $n\in\Z_{>0}$. So, by Corollary \ref{flatdedekind} $B$ is flat over $\Z[\chi]$. We can then repeat the argument used for $\M(N,B)$ in Chapter 4 (Proposition \ref{charzerodirect}).
\end{proof}
\begin{proposition}\label{modpdecompwithchar}
    Let $B$ be a $\Z[\chi]$-domain of characteristic $p$. Then, 
    \begin{equation}\label{gamma1modpdecomp}
         \M^{\chi}(N,B)=\bigoplus_{i\in \Z/(p-1)\Z}\M^{\chi}(N,B)^i,
    \end{equation}
    where 
    \begin{equation*}
        \M^{\chi}(N,B)^i:=\sum_{k\:\equiv\:i\!\!\!\pmod{p-1}}\M_k^{\chi}(N,B).
    \end{equation*}
\end{proposition}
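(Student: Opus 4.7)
My plan is to reduce the statement to the analogous direct sum decomposition for $\M(\Gamma_1(N),B)$, which itself is a straightforward extension of \eqref{eqmodpdecomp}. The key ingredient will be a $\Gamma_1(N)$-version of Proposition \ref{gorenprop}: for $p\geq 5$, the kernel of
\[
\bigoplus_{k=0}^\infty\M_k(\Gamma_1(N),B)\longrightarrow B[[q]]
\]
is the ideal generated by $\overline{E}_{p-1}-\overline{1}$, with analogous statements involving $\overline{E}_4,\overline{E}_6$ for $p\in\{2,3\}$. For $B=\F_p$ this follows from Goren's argument applied to $X_1(N)$ in place of $X_0(N)$, using the same identification of $\overline{E}_{p-1}$ with the Hasse invariant on the ordinary locus and the $q$-expansion principle. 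Extension to an arbitrary characteristic-$p$ domain then follows from flatness of $B$ over $\F_p$ and Proposition \ref{preservekernelprop}, exactly as in the proof of Proposition \ref{gorenprop}. Since the generator has components of weight $0$ and $p-1$, both congruent to $0\pmod{p-1}$, this ideal is homogeneous with respect to the $\Z/(p-1)\Z$-grading by weight, and the argument of \eqref{eqmodpdecomp} yields
\[
\M(\Gamma_1(N),B)=\bigoplus_{i\in\Z/(p-1)\Z}\M(\Gamma_1(N),B)^i
\]
as a direct sum inside $B[[q]]$.

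To deduce the claim for forms with character, I would use the natural inclusion $\M_k^\chi(N,\Z[\chi])\hookrightarrow\M_k(\Gamma_1(N),\Z)\otimes_\Z\Z[\chi]$, valid because $\M_k(\Gamma_1(N),\Z)$ contains a $\C$-basis of $\M_k(\Gamma_1(N),\C)$. Tensoring with $B$ over $\Z[\chi]$ yields a natural map $\M_k^\chi(N,B)\to\M_k(\Gamma_1(N),B)$ compatible with the $q$-expansion, so the image of $\M_k^\chi(N,B)$ in $B[[q]]$ lies inside $\M(\Gamma_1(N),B)^i$ for every $k\equiv i\pmod{p-1}$. Summing over $k$ in a fixed residue class gives $\M^\chi(N,B)^i\subseteq\M(\Gamma_1(N),B)^i$, and the directness of $\bigoplus_i\M(\Gamma_1(N),B)^i$ in $B[[q]]$ restricts to give directness of $\bigoplus_i\M^\chi(N,B)^i$, which is the required conclusion.

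The main obstacle is verifying the $\Gamma_1(N)$-version of Proposition \ref{gorenprop}. Goren's original treatment in \cite{goren2002lectures} is formulated for $X_0(N)$, and the required extension to $X_1(N)$ consists of replacing $\Gamma_0$-level structures with $\Gamma_1$-level structures in the underlying moduli problem; the remaining ingredients (identification of $E_{p-1}$ with the Hasse invariant on the ordinary locus, the $q$-expansion principle) carry over essentially unchanged. A small point of care arises when $p\mid N$, where $X_1(N)$ has bad reduction at $p$, but this can be handled by working on the ordinary locus and extending via $q$-expansions at the cusps. Notably, this approach sidesteps the projection operator $\pi_\chi$ entirely, so no difficulty arises from the possibility that $p\mid\varphi(N)$, which would otherwise force one to lift back to characteristic zero.
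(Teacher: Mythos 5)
Your proof is correct and takes essentially the same route as the paper: reduce to the decomposition of $\M(\Gamma_1(N),B)$ and cite Goren's theorem, then restrict to the $\chi$-component. The one genuine discrepancy is your premise that Goren's result is ``formulated for $X_0(N)$'' and needs to be extended to $X_1(N)$: the paper asserts the opposite, that Goren's Theorem 5.4 is already stated for $\Gamma_1(N)$-level structures (which is why the paper's earlier Proposition \ref{gorenprop} for $\Gamma_0(N)$ is the \emph{restriction} of Goren's theorem, not the theorem itself), so your ``main obstacle'' paragraph is unnecessary work. Where you add real value is in making explicit the passage $\M_k^\chi(N,B)\hookrightarrow\M_k(\Gamma_1(N),B)$ via the $q$-expansion principle, and in noting that this inclusion argument avoids the projector $\pi_\chi$ altogether, sidestepping any issue when $p\mid\varphi(N)$ — a point the paper leaves tacit. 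Conversely, the paper flags a subtlety you only wave at: for $p=3$ the case distinction that proved \eqref{eqmodpdecomp} for trivial character (``no nonzero odd-weight forms for $\Gamma_0(N)$, hence $\M(N,B)=\M(N,B)^0$'') no longer applies, since $\Gamma_1(N)$ does carry odd-weight forms; one must instead invoke the parity lemma (stated just after this proposition) to conclude that a fixed $\chi$ contributes to only one residue class mod $2$, rather than lumping this into the ``$E_4,E_6$'' remark. Making that step explicit would close the remaining gap.
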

\begin{proof}
    As with the case of $\M(N,B)$ in Chapter \ref{chap5}, the proposition follows from a result of Goren \cite[Theorem 5.4 in Chapter 4]{goren2002lectures}. In \cite{goren2002lectures} this result is stated for modular forms with respect to $\Gamma_1(N)$ and thus also modular forms with character. However, unlike the case for $\M(N,B)$ we may now have modular forms of odd weight.
\end{proof}
It also turns out that each element of $\M^{\chi}(N,B)^i$ appears in a single weight. To see why this is true we need to define the notion of \emph{even} and \emph{odd} Dirichlet characters.
\begin{definition}
    Let $\chi$ be a Dirichlet character mod $N$. If $\chi(-1)=1$ we say that $\chi$ is \emph{even} and if $\chi(-1)=-1$ we say that $\chi$ is \emph{odd}.
\end{definition}
\begin{remark}
    Since $(\chi(-1))^2=\chi((-1)^2)=\chi(1)=1$ we always have $\chi(-1)=\pm 1$ as implied by the above definition.
\end{remark}
\begin{lemma}
    Let $\chi$ be a Dirichlet character mod $N$ and $f\in\M_k^{\chi}(N)$. If $k$ and $\chi$ have different parity then $f=0$.
\end{lemma}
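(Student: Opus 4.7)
The plan is to apply the transformation property of $f$ to the specific matrix $\gamma = \begin{smatrix}-1&0\\0&-1\end{smatrix}$, which lies in $\Gamma_0(N)$ since its lower-left entry is $0$. This is the same trick used in Example \ref{exmodforms} to show that there are no nonzero modular forms of odd weight for $\SL_2(\Z)$, but here we have to take the character $\chi$ into account.

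First, I would compute $f|_k\gamma$ in two different ways. On one hand, since $f \in \M_k^{\chi}(N)$ and the bottom-right entry of $\gamma$ is $-1$, the definition of a modular form with character gives
\begin{equation*}
    (f|_k\gamma)(z) = \chi(-1) f(z).
\end{equation*}
On the other hand, applying the slash operator directly with $j(\gamma,z) = -1$ and $\gamma \cdot z = z$ yields
\begin{equation*}
    (f|_k\gamma)(z) = (-1)^{-k} f(z) = (-1)^k f(z).
\end{equation*}
Equating the two expressions gives $\bigl((-1)^k - \chi(-1)\bigr) f(z) = 0$ for all $z \in \mathbb{H}$.

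The parity hypothesis is precisely what ensures that the scalar $(-1)^k - \chi(-1)$ is nonzero: if $k$ is even then $(-1)^k = 1$ while $\chi(-1) = -1$ (since $\chi$ is odd), and if $k$ is odd then $(-1)^k = -1$ while $\chi(-1) = 1$ (since $\chi$ is even). In either case, we may divide through by this nonzero scalar to conclude $f = 0$.

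There is no real obstacle here; the proof is a direct one-matrix calculation analogous to the level $1$ odd-weight argument, merely tracking the extra factor of $\chi(-1)$. The only thing to be mildly careful about is verifying $\gamma \in \Gamma_0(N)$ (immediate) and interpreting $(-1)^{-k} = (-1)^k$ correctly for integer $k$.
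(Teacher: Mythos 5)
Your proof is correct and is essentially the same as the paper's: both apply $f|_k\gamma$ with $\gamma=\begin{smatrix}-1&0\\0&-1\end{smatrix}\in\Gamma_0(N)$, compute it once via the slash operator to get $(-1)^kf$ and once via the character condition to get $\chi(-1)f$, and conclude $f=0$ when the two scalars disagree. The only cosmetic difference is that you handle both parity cases uniformly via $(-1)^k-\chi(-1)\neq 0$, whereas the paper proves one case and declares the other identical.
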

\begin{proof}
    We only prove the lemma for $k$ even and $\chi$ odd. The argument for $k$ odd and $\chi$ is even is identical.\\[8pt]
    So, suppose that $k$ is even and $\chi(-1)=-1$. By the definition of the slash operator we have
    \begin{equation*}
        f|_k\begin{smatrix}-1&0\\0&-1\end{smatrix}=(-1)^kf=f.
    \end{equation*}
    However, since $f\in\M_k^{\chi}(N)$, we have
    \begin{equation*}
        f|_k\begin{smatrix}-1&0\\0&-1\end{smatrix}=\chi(-1)f=-f.
    \end{equation*}
    Hence $f=-f$ so that $f=0$, as required.
\end{proof}
\begin{proposition}
    Let $B$ be a domain of characteristic $p$ and $i\in\Z/(p-1)\Z$\,. Then each element of $\M^{\chi}(N,B)^i$ appears in a single weight. 
\end{proposition}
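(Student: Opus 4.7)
The plan is to mimic the proof of Proposition \ref{singleweightprop} for the character case, with the parity lemma just proved playing a crucial role. Given $f\in\M^\chi(N,B)^i$, I would decompose $f=f_1+\cdots+f_n$ where each $f_j\in\M_{k_j}^\chi(N,B)$ is nonzero and $k_j\equiv i\pmod{p-1}$. The goal is to find a single weight $k$ together with forms $g_j\in\M_k^\chi(N,B)$ having the same $q$-expansion as $f_j$, so that $f=\sum_j g_j$ lies in a single $\M_k^\chi(N,B)$.

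First I would invoke the parity lemma to conclude that every $k_j$ has the same parity as $\chi$, so pairwise differences $k_i-k_j$ are always even. The key multiplicative tool is then the trivial-character Eisenstein series: $E_{p-1}$ for $p\geq 5$, or $E_4$ and $E_6$ for $p\in\{2,3\}$. These live in $\M(1,\Z)$, carry trivial character, and reduce mod $p$ to $1$. Hence if $f_j\in\M_{k_j}^\chi(N,B)$ then $f_j\cdot\overline{E}_{p-1}^{m}\in\M_{k_j+m(p-1)}^\chi(N,B)$ has the same $q$-expansion as $f_j$ (and similarly for $\overline{E}_4$ and $\overline{E}_6$), since the product of a form with character $\chi$ and a form with trivial character still has character $\chi$.

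For $p\geq 5$, since $p-1$ is even and all the $k_j$ are congruent mod $p-1$, I would pick non-negative integers $m_j$ so that $k:=k_j+m_j(p-1)$ is constant in $j$; then $f=\sum_j f_j\overline{E}_{p-1}^{m_j}\in\M_k^\chi(N,B)$. For $p\in\{2,3\}$, pairwise differences of weights are even, and since $\gcd(4,6)=2$ every sufficiently large even integer is expressible as $4a+6b$ with $a,b\geq 0$; choosing a large enough common target weight allows one to pick non-negative $a_j,b_j$ with $k_j+4a_j+6b_j$ constant in $j$, yielding the same conclusion.

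The main novelty compared to Proposition \ref{singleweightprop} is the parity lemma, which is precisely what prevents the argument from breaking when $p=2$: the condition $p-1=1$ alone does not force weights in $\M^\chi(N,B)^i$ to share a common parity, and without that the Eisenstein-series shift would fail. Beyond this one subtle point, the rest of the proof is bookkeeping identical to the trivial-character case.
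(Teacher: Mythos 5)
Your proposal is correct and follows essentially the same route as the paper: decompose $f$ into weight components, use the parity lemma to guarantee the weight differences are even (the essential point being $p=2$, where the $\Z/(p-1)\Z$-grading gives no parity constraint), and then shift all components to a common weight by multiplying by mod-$p$ reductions of the trivial-character Eisenstein series $E_{p-1}$ (for $p\geq 5$) or $E_4,E_6$ (for $p\in\{2,3\}$). Your added remark on the numerical semigroup $\langle 4,6\rangle$ just makes explicit what the paper leaves implicit.
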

\begin{proof}
    We argue as in the case for trivial character in Chapter 5 (Proposition \ref{singleweightprop}). So, let $f\in\M^{\chi}(N,B)^i$ and write $f=f_1+\dots+f_n$ where each $f_j$ is nonzero and of fixed weight $k_j$. If $p\geq 5$ then we can multiply each $f_j$ by a suitable power of $\overline{E}_{p-1}$ so that $f$ appears in a single weight. Now assume that $p=2$ or $p=3$. By the previous lemma, any two of the $f_j$'s differ in weight by a multiple of 2. So, we can similarly multiply each $f_j$ by suitable powers of $\overline{E}_4$ and $\overline{E}_6$ so that $f$ appears in a single weight.
\end{proof}

\section{Atkin-Lehner and Hecke operators on $\M_k^{\chi}(N,B)$}\label{sec62}
We now generalise the Atkin-Lehner and Hecke operators to allow us to extend the notions of $U_\ell$-new and $\Tr_\ell$-new to modular forms with character. From here onwards, we always assume $N>1$ and that $\chi$ is a Dirichlet character mod $N$.\\
\\
On $\M_k^{\chi}(N)$, the Atkin-Lehner operator is the same as our original definition (Definition \ref{atkinlehnerdef}). That is, if $\ell$ is a prime dividing $N$ exactly once and $a,b\in\Z$ are such that $\ell b-a(N/\ell)=1$, then $w_\ell(f)=f|_k\begin{smatrix}\ell&a\\N&\ell b\end{smatrix}$ is well-defined for any $f\in\M_k^{\chi}(N)$.
\begin{proposition}
    Let $\ell$ be a prime dividing $N$ exactly once and assume that $\chi$ is induced by a Dirichlet character mod $N/\ell$. Then for any $f\in\M_k^{\chi}(N)$ we also have $w_\ell(f)\in\M_k^{\chi}(N)$.
\end{proposition}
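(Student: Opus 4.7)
The plan is to reduce the statement to a conjugation-and-slash computation. Writing $\gamma_\ell = \begin{smatrix}\ell&a\\N&\ell b\end{smatrix}$ with $\ell b - a(N/\ell) = 1$, the Atkin-Lehner operator is $w_\ell(f) = f|_k\gamma_\ell$. Given $\gamma = \begin{smatrix}a'&b'\\c'&d'\end{smatrix} \in \Gamma_0(N)$, I would use associativity of the slash operator to write
\begin{equation*}
    w_\ell(f)|_k\gamma \;=\; f|_k(\gamma_\ell\gamma) \;=\; f|_k\bigl((\gamma_\ell\gamma\gamma_\ell^{-1})\gamma_\ell\bigr) \;=\; \bigl(f|_k(\gamma_\ell\gamma\gamma_\ell^{-1})\bigr)\big|_k\gamma_\ell,
\end{equation*}
reducing everything to understanding the conjugate $\delta := \gamma_\ell\gamma\gamma_\ell^{-1}$.

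The first main step is to show that $\delta \in \Gamma_0(N)$. A direct multiplication using $\gamma_\ell^{-1} = \tfrac{1}{\ell}\begin{smatrix}\ell b&-a\\-N&\ell\end{smatrix}$ gives four entries each of the form $(\text{integer})/\ell$; using $\ell \mid N$ and $N \mid c'$ one checks each numerator is divisible by $\ell$, so $\delta$ has integer entries. Its bottom-left entry picks up a factor of $N$ from the $N\mid c'$ condition and from the $N$ in $\gamma_\ell$, so $N$ divides it; the determinant is $\det(\gamma) = 1$, so $\delta \in \SL_2(\Z) \cap$ (lower-left divisible by $N$) $= \Gamma_0(N)$.

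The second and crucial step is to identify the bottom-right entry of $\delta$ modulo $N/\ell$. Expanding gives
\begin{equation*}
    \delta_{22} \;=\; -(N/\ell)aa' - abc' + Nb' + \ell b d'.
\end{equation*}
Modulo $N/\ell$, the first and third terms vanish, and since $N/\ell \mid c'$ the second term vanishes; using $\ell b \equiv 1 \pmod{N/\ell}$ (from $\ell b - a(N/\ell) = 1$) the last term reduces to $d'$. Hence $\delta_{22} \equiv d' \pmod{N/\ell}$.

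Because $\chi$ is induced from a Dirichlet character $\chi'$ modulo $N/\ell$, we have $\chi(\delta_{22}) = \chi'(\delta_{22} \bmod N/\ell) = \chi'(d' \bmod N/\ell) = \chi(d')$. Applying the modularity condition for $f \in \M_k^{\chi}(N)$ to $\delta$ yields $f|_k\delta = \chi(d')\,f$, and feeding this back gives
\begin{equation*}
    w_\ell(f)|_k\gamma \;=\; (\chi(d')\,f)|_k\gamma_\ell \;=\; \chi(d')\,w_\ell(f),
\end{equation*}
which is the required transformation law. Specializing to $\gamma \in \Gamma_1(N)$ (where $\chi(d') = 1$) shows $w_\ell(f) \in \M_k(\Gamma_1(N))$; holomorphy on $\H$ and at the cusps is preserved because $\gamma_\ell \in \GL_2(\Q)^+$ acts on $\H$ and permutes cusps of $\Gamma_1(N)$. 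The main obstacle is simply the bookkeeping in the conjugation computation, specifically tracking which terms vanish modulo $N/\ell$ rather than modulo $N$ — this is precisely the place where the hypothesis that $\chi$ descends to level $N/\ell$ is essential.
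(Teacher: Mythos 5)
Your proof is correct and takes essentially the same route as the paper: conjugate an arbitrary $\gamma\in\Gamma_0(N)$ by $\gamma_\ell$, verify the conjugate lies in $\Gamma_0(N)$, compute its $(2,2)$-entry modulo $N/\ell$, and invoke the fact that $\chi$ factors through $N/\ell$ to conclude the transformation law is preserved. (You spell out the integrality and the $\Gamma_0(N)$-membership of the conjugate, and the holomorphy at the cusps, in somewhat more detail than the paper does, but the argument is the same; as a side note, the paper's displayed $(2,2)$-entry contains a minor typo — $\ell Nx$ should read $Nx$ — which does not affect the reduction mod $N/\ell$.)
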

\begin{proof}
    Let $a$ and $b$ be integers satisfying $\ell b-a(N/\ell)=1$ and let $\gamma_\ell=\begin{smatrix}\ell&a\\N&\ell b\end{smatrix}$. Then, let $\delta=\begin{smatrix}w&x\\ yN&z\end{smatrix}$ be an arbitrary matrix in $\Gamma_0(N)$. Define
    \begin{align*}
        \delta':=\gamma_\ell\delta\gamma_\ell^{-1}&=\begin{pmatrix}\ell&a\\N&\ell b\end{pmatrix}\begin{pmatrix}w&x\\yN&z\end{pmatrix}\frac{1}{\ell}\begin{pmatrix}\ell b&-a\\-N&\ell\end{pmatrix}\\[8pt]
        &=\begin{pmatrix}\ell bw+bayN-Nx-az(N/\ell)& -aw+a^2y(N/\ell)+\ell x+az\\ Nwb+\ell b^2yN-(N/\ell)Nx-Nbz& -a(N/\ell)w-abyN+\ell Nx+\ell bz\end{pmatrix}\in\Gamma_0(N).
    \end{align*}
    Since $\ell b=1+a(N/\ell)$ we have that the lower right entry of $\delta'$ is congruent to $z$ mod $N/\ell$. Therefore, for any $f\in\M_k^{\chi}(N)$ we have
    \begin{equation*}
        (w_\ell f)|_k\delta=f|_k\gamma\delta=f|_k\delta'\gamma=\chi(z)f|_k\gamma,
    \end{equation*}
    so that $w_\ell(f)\in\M_k^{\chi}(N)$ as required.
\end{proof}
Recall that when $\chi$ is trivial, the Atkin-Lehner operator is $\Z[1/\ell]$-integral. For general $\chi$ with conductor $d$, we instead have that $w_\ell$ is $\Z[1/N,\zeta_m]$-integral where $m=
\gcd(d,\ell)$ (see \cite{loeffler2020integrality})\footnote{Based on the case for trivial character, one would expect that $w_\ell$ is actually $\Z[1/\ell,\zeta_m]$-integral. However, the author of this thesis is unaware of any proof for such a result.}. Now, suppose that $\chi$ is induced by a Dirichlet character mod $N/\ell$ as above. Then by Proposition \ref{propinduced}, we have that $m=1$ and thus $w_\ell$ is $\Z[1/N]$-integral. So, if $B$ is a $\Z[1/N,\chi]$-algebra then $w_\ell$ extends to a linear operator on $\M_k^{\chi}(N,B)$. If $B$ is also a domain then the scaled Atkin-Lehner operator $W_\ell=\ell^{k/2}w_\ell$ is well-defined on $\M^{\chi}(N,B)=~\sum_{k=0}^\infty\M_k^{\chi}(N,B)$ (cf. Proposition \ref{propauto}).\\
\\
We now define the Hecke operators on $\M_k^{\chi}(N)$.
\begin{definition}
    Let $p$ be a prime number. The Hecke operators $U_p$ and $T_p$ are defined on $\M_k^{\chi}(N)$ by their effect on the $q$-expansion coefficients:
    \begin{alignat*}{3}
        a_n(T_p f)&=a_n(U_pf)=a_{np}(f),\qquad&&\text{if $p\mid N$,}\\
        a_n(T_p f)&=a_{pn}(f)+\chi(r)r^{k-1}a_{n/p}(f),\qquad&&\text{if $p\nmid N$.}
    \end{alignat*} 
\end{definition}
The reason for defining the Hecke operators as such is given in \cite[Chapter 5.2]{diamond2005first}. Importantly, for any prime $p$ and $f\in\M_k^{\chi}(N)$ we have $T_p(f)\in\M_k^{\chi}(N)$. Also note that if $ k>0$ or $p\mid N$ then $T_p$ is $\Z[\chi]$-integral and this definition extends to $\M_k^{\chi}(N,B)$ for any $\Z[\chi]$-algebra $B$. Otherwise, if $k=0$ and $p\nmid N$ then we also require that $p$ is invertible in $B$.\\
\\
Other properties of the Atkin-Lehner and Hecke operators also naturally generalise from the case of trivial character. In particular, if $f\in\M_k^{\chi}(N)$ and $\chi$ is induced by a character $\chi'$ mod $N/\ell$, then
\begin{enumerate}[label=(\roman*)]
    \item $T_p U_\ell(f)=U_\ell T_p(f)$,
    \item $T_pw_\ell(f)=w_\ell T_p(f)$,
    \item $w_\ell^2(f)=\overline{\chi}'(\ell)f$.
\end{enumerate}
Moreover, if $f\in\M_k^{\chi}(N/\ell)\subseteq\M_k^{\chi}(N)$, then
\begin{enumerate}[label=(\roman*')]
    \item $W_\ell(f)=\ell^k i_\ell(f)$,
    \item $U_\ell W_\ell(f)=\ell^k f$,
    \item $T_\ell(f)=U_\ell(f)+\chi(\ell)\ell^{-1}W_\ell(f)$.
\end{enumerate}
Here, we are assuming that $p$ and $\ell$ are primes such that $p\nmid N$ and $\ell$ divides $N$ exactly once. The proof of these facts are essentially identical to the case for trivial character.

\section{Generalising $U_\ell$-new}\label{sec63}
Assume $\ell$ is a prime dividing $N$ exactly once. Recall that for trivial character, $U_\ell$-new forms were defined by considering the $U_\ell$-eigenvalues of $\ell$-new forms. In particular, if $f\in\M_k(N)^{\ell-\new}$ then
\begin{equation*}
    U_\ell^2(f)=\ell^{k-2}f.
\end{equation*}
Now, suppose that $\chi$ is induced by a Dirichlet character $\chi'$ mod $N/\ell$. If $f\in\M_k^{\chi}(N)^{\ell-\new}$ then by \cite[Theorem 3]{li1975newforms}
\begin{equation*}
    U_\ell^2(f)=\chi'(\ell)\ell^{k-2}f.
\end{equation*}
In light of this result, we now generalise our definition of the $\D_\ell$ operator.
\begin{definition}
    Let $B$ be a $\Z[\chi]$-algebra. If $\chi$ is induced by a Dirichlet character $\chi'$ mod $N/\ell$ then the operator $\D_l^{\chi}\colon\M_k^{\chi}(N,B)\to\M_k^{\chi}(N,B)$ is given by 
    \begin{equation*}
        \D_\ell^{\chi}=\ell^2U_\ell^2-\chi'(\ell)\ell^k.
    \end{equation*}
\end{definition}
If $B$ is also a domain then $\D_\ell^{\chi}$ extends to an operator on $\M^{\chi}(N,B)$.
\begin{definition}
    Let $B$ be a $\Z[\chi]$-algebra. If $\chi$ is induced by a Dirichlet character mod $N/\ell$ then for any Hecke-invariant submodule $\mathcal{C}$ of $\M_k^{\chi}(N,B)$ we define
    \begin{equation*}
        \mathcal{C}^{U_\ell^{\chi}-\new}:=\ker(\D_\ell^{\chi}|_{\mathcal{C}}).
    \end{equation*}
    If $B$ is a $\Z[\chi]$-domain then we can take $\mathcal{C}$ to be any Hecke-invariant submodule of\\
    $\M^{\chi}(N,B)=\sum_{k=0}^\infty\M_k^{\chi}(N,B)$.
\end{definition}
As with the case for trivial character in Chapter \ref{chap4}, this definition of $U_\ell^{\chi}$-new agrees with the standard notion of $\ell$-new in characteristic zero.
\begin{theorem}\label{mainulcharacterthm}
    Let $B$ be a $\Z[\chi]$-domain of characteristic zero. If $\chi$ is induced by a Dirichlet character $\chi'$ mod $N/\ell$ then
    \begin{equation*}
        \mcS^\chi(N,B)^{\ell-\new}=\mcS^{\chi}(N,B)^{U_\ell^{\chi}-\new}.
    \end{equation*}
\end{theorem}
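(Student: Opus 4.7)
The plan is to mirror the proof strategy of Theorems \ref{dlcusptheorem} and \ref{dlcharzerothm}: first establish the equality over $\C$ in a fixed weight $k$, descend to $\Z[\chi]$, and finally extend to an arbitrary $\Z[\chi]$-domain $B$ of characteristic zero via flatness.

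For $B = \C$, I would handle the easy inclusion $\mcS_k^\chi(N,\C)^{\ell-\new} \subseteq \ker(\D_\ell^\chi)$ by invoking Li's generalisation of Atkin--Lehner \cite{li1975newforms}, which gives $U_\ell^2(f) = \chi'(\ell)\ell^{k-2} f$ on $\ell$-new eigenforms; since $\mcS_k^\chi(N,\C)^{\ell-\new}$ admits an eigenbasis, this settles that direction. For the reverse inclusion I would adapt Lemma \ref{charpolynomial}: the $\ell$-old space decomposes as a sum of two-dimensional Hecke-stable subspaces $V_{\ell,g} = \mathrm{span}\{g, W_\ell(g)\}$ indexed by eigenforms $g \in \mcS_k^\chi(N/\ell)$. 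Using the identities $T_\ell = U_\ell + \chi'(\ell)\ell^{-1}W_\ell$ and $U_\ell W_\ell = \ell^k$ recorded in Section \ref{sec62}, the matrix of $U_\ell$ on the ordered basis $\{g, W_\ell(g)\}$ is
\begin{equation*}
    \begin{pmatrix} a_\ell(g) & \ell^k \\ -\chi'(\ell)\ell^{-1} & 0 \end{pmatrix},
\end{equation*}
so that $U_\ell^2$ has characteristic polynomial $X^2 - (a_\ell(g)^2 - 2\chi'(\ell)\ell^{k-1})X + \chi'(\ell)^2\ell^{2k-2}$. If $\chi'(\ell)\ell^{k-2}$ were a root, Vieta's formulas together with $|\chi'(\ell)| = 1$ would force $|a_\ell(g)|^2 = \ell^{k-2}(\ell+1)^2$, contradicting the Weil bound $|a_\ell(g)| \leq 2\ell^{(k-1)/2}$ exactly as in Lemma \ref{weillemma}.

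To transfer to $\Z[\chi]$ I would use Proposition \ref{chiintegrality}, which furnishes $\Z[\chi]$-bases for both $\mcS_k^\chi(N)^{\ell-\new}$ and $\mcS_k^\chi(N)$; combined with the $\Z[\chi]$-integrality of $\D_\ell^\chi$, the equality over $\C$ descends to $\Z[\chi]$. For a general $\Z[\chi]$-domain $B$ of characteristic zero, $\Z[\chi] = \Z[\zeta_n]$ is a Dedekind domain and $B$ is torsion-free over it, hence flat (Corollary \ref{flatdedekind}). Since flatness preserves kernels (Proposition \ref{preservekernelprop}), tensoring the $\Z[\chi]$-equality with $B$ yields the result in fixed weight, and Proposition \ref{charzerodecompcharacter} then lets me sum over weights to obtain the algebra-level statement.

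The main obstacle is technical rather than conceptual: the Weil bound as cited in Lemma \ref{weillemma} is stated only for primitive forms, whereas $g$ above ranges over all eigenforms in $\mcS_k^\chi(N/\ell)$. As in the trivial-character case, I would circumvent this by invoking the character analogue of Lemma \ref{oldisnew} (due to Li \cite{li1975newforms}), which expresses any such $g$ as having the same $T_\ell$-eigenvalue as a primitive form with character at some lower level, where Deligne's bound applies directly.
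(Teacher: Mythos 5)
Your proposal follows the paper's proof essentially verbatim: reduce to fixed weight via Proposition \ref{charzerodecompcharacter}, descend to $B=\Z[\chi]$ and then $\C$ by flatness, and over $\C$ replay the Lemma \ref{charpolynomial}/Theorem \ref{dlcusptheorem} argument with the Hecke relation $T_\ell = U_\ell + \chi'(\ell)\ell^{-1}W_\ell$ producing the characteristic polynomial $X^2-(a_\ell^2(g)-2\chi'(\ell)\ell^{k-1})X+\chi'(\ell)^2\ell^{2k-2}$, which is exactly what the paper records, with the Weil bound (via Li's analogue of Lemma \ref{oldisnew} reducing to primitive forms) ruling out the eigenvalue $\chi'(\ell)\ell^{k-2}$ on the $\ell$-old space. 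The only difference is that you spell out the steps the paper condenses into "repeat the argument whilst keeping track of character."
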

\begin{proof}
    Since $\M^{\chi}(N,B)=\bigoplus_{k=0}^\infty\M_k^\chi(N,B)$ and $\D_\ell^{\chi}$ is weight-preserving, it suffices to prove the theorem in fixed weight $k$. Moreover, since $B$ is flat over $\Z[\chi]$ it suffices to prove the theorem for $B=\Z[\chi]$ or more simply, $B=\C$.\\[8pt]
    One can then repeat the argument from the proof of Theorem \ref{dlcusptheorem} and its proceeding lemmas whilst keeping track of character. In particular, note that the Weil bound \cite[Theorem 8.2]{deligne1974conjecture} holds for modular forms with character. Moreover, for any eigenform $g\in\mcS_k^{\chi}(N/\ell)$, the characteristic polynomial of $U_\ell^2$ on $\text{span}\{g,i_\ell g\}$ is given by
    \begin{equation*}
        P_{\ell,g}(X)=X^2-(a_\ell^2(g)-2\ell^{k-1}\chi'(\ell))X+\ell^{2k-2}(\chi'(\ell))^2.\qedhere
    \end{equation*}
\end{proof}

Unfortunately, there does not appear to be a natural way to extend our definition of $U_\ell^{\chi}$-new to the case when $\ell$ divides $N$ more than once. To see this, we will return to modular forms for $\Gamma_0(N)$ (or equivalently, modular forms with trivial character).\\
\\
Let $\ell$ be a prime such that $\ell^2\mid N$. By Corollary \ref{coruleigen} we have $U_\ell(f)=0$ for all eigenforms $f\in\mcS_k(N)^{\ell-\new}$. In light of this, one may wish to define $\mcS_k(N)^{U_\ell-\new}$ as the kernel of $U_\ell$ (restricted to $\mcS_k(N)$). Certainly $\mcS_k(N)^{\ell-\new}\subseteq\ker(U_\ell)$ but the reverse inclusion may not hold.
\begin{example}
    The space $\mcS_6(4)^{\new}$ is 1-dimensional and spanned by
    \begin{equation*}
        f=q-12q^3+54q^5-88q^7-99q^9+O(q^{11}).
    \end{equation*}
    Since $f\in\mcS_6(4)^{\new}$, it follows that $f\in\ker(U_2)$ (Theorem \ref{hecketheorem}). However, we also have $f\in\mcS_6(8)^{2-\old}$ and thus $f\notin\mcS_6(8)^{2-\new}$. In other words,  $\mcS_6(8)^{2-\new}\neq\ker(U_2)$.
\end{example}

\section{Generalising $\Tr_\ell$-new}\label{sec64}
In order to generalise our notion of $\Tr_\ell$-new forms, we need to define a trace operator on $\M_k^{\chi}(N)$.

\begin{definition}
    Let $\ell$ be a prime dividing $N$ and suppose that $\chi$ is induced by a Dirichlet character $\chi'$ mod $N/\ell$. The trace operator on $\M_k^{\chi}(N)$ is given by:
    \begin{align*}
        \Tr_\ell^{\chi}\colon\M_k^{\chi}(N)&\to\M_k^{\chi'}(N/\ell)\\[8pt]
        f&\mapsto\sum_{\gamma\in\Gamma_0(N)\backslash\Gamma_0(N/\ell)}\overline{\chi'}(\gamma)f|_k\gamma,
    \end{align*}
    where $\overline{\chi'}(\begin{smatrix}a&b\\c&d\end{smatrix}):=\overline{\chi'}(d)$.
\end{definition}
To see why this operator is well-defined, one can refer to \cite[Corollary 10.2.11]{cohen2017modular} which generalises the argument in the proof of Proposition \ref{trwelldefined}.\\
\\
For the rest of this section we assume that $\ell$ is a prime dividing $N$ exactly once. In this case, $\Tr_\ell^{\chi}(f)$ is again explicitly given by
\begin{equation}\label{traceeqchar}
    \Tr_\ell^{\chi}(f)=f+\ell^{1-k}U_\ell W_\ell f,
\end{equation}
which is well-defined on $\M^{\chi}(N,B)$ for any $\Z[1/N,\chi]$-domain $B$. Moreover, if $\chi$ is induced by a character $\chi'$ mod $N/\ell$ then for any $f\in\M_k^{\chi'}(N/\ell)$ we have
\begin{enumerate}[label=(\roman*)]
    \item $\Tr_\ell^{\chi}(f)=(\ell+1)f$, and
    \item $\Tr_\ell^{\chi}(W_\ell f)=\overline{\chi}'(\ell)\ell T_\ell (f)$
\end{enumerate}
(cf. Lemma \ref{lemmatraceprop}). 

\begin{definition}\label{trlnewwithcharacter}
    Let $B$ be a $\Z[1/N,\chi]$-algebra. If $\chi$ is induced by a Dirichlet character $\chi'$ mod $N/\ell$ then for any Hecke-invariant submodule $\mathcal{C}$ of $\M_k^{\chi}(N,B)$ we define
    \begin{equation*}
        \mathcal{C}^{\Tr_\ell^{\chi}-\new}=\ker(\Tr_\ell^{\chi}|_{\mathcal{C}})\cap\ker(\Tr_\ell^{\chi} W_\ell|_{\mathcal{C}}).
    \end{equation*}
    If $B$ is a $\Z[1/N,\chi]$-domain then we can take $\mathcal{C}$ to be any Hecke-invariant submodule of $\M^{\chi}(N,B)=\sum_{k=0}^\infty \M_k^{\chi}(N,B)$.
\end{definition}

Again, we verify that this definition agrees with the standard notion of $\ell$-new in characteristic zero.

\begin{theorem}\label{bigtrlcharacterthm}
    Let $B$ be a $\Z[1/N,\chi]$-domain of characteristic zero. If $\chi$ is induced by a Dirichlet character $\chi'$ mod $N/\ell$ then
    \begin{equation*}
        \mcS^{\chi}(N,B)^{\ell-\new}=\mcS^{\chi}(N,B)^{\Tr_\ell^{\chi}-\new}.
    \end{equation*}
\end{theorem}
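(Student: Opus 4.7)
The plan is to mimic the proof of Theorem \ref{mainulcharacterthm} (the analogous statement for $U_\ell^\chi$-new) as closely as possible, adapting the argument for $\mathrm{Tr}_\ell^\chi$ from Theorem \ref{tracetheorem} to track the character. First I would reduce to the case of fixed weight $k$: Proposition \ref{charzerodecompcharacter} gives $\mcS^\chi(N,B)=\bigoplus_{k\geq 0}\mcS_k^\chi(N,B)$, and both $\mathrm{Tr}_\ell^\chi$ and $W_\ell$ are weight-preserving, so the kernel decomposes weight by weight, and similarly $\mcS^\chi(N,B)^{\ell\text{-new}}$ decomposes weight by weight by construction. Next I would reduce from an arbitrary $\Z[1/N,\chi]$-domain $B$ of characteristic zero to $B=\C$: since $\Z[1/N,\chi]=\Z[1/N,\zeta_n]$ is a Dedekind domain, any characteristic-zero domain $B$ over it is flat (Corollary \ref{flatdedekind}), and flatness preserves kernels and intersections (Proposition \ref{preservekernelprop}), while $\mcS_k^\chi(N,B)^{\ell\text{-new}}$ is defined by tensoring from $\Z[\chi]$ anyway.

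So the real content is to establish the equality over $\C$ in fixed weight $k$. The inclusion $\mcS_k^\chi(N,\C)^{\ell\text{-new}}\subseteq\mcS_k^\chi(N,\C)^{\mathrm{Tr}_\ell^\chi\text{-new}}$ follows directly from the formula $\mathrm{Tr}_\ell^\chi(f)=f+\ell^{1-k}U_\ell W_\ell f$ from \eqref{traceeqchar}, together with Li's analogue of Corollary \ref{coruleigen}: any eigenform $f\in\mcS_k^\chi(N)^{\ell\text{-new}}$ satisfies $W_\ell f=\varepsilon f$ and $U_\ell f=-\chi'(\ell)\ell^{k/2-1}\varepsilon^{-1} f$ for a suitable scalar $\varepsilon$ (with $\varepsilon\bar\varepsilon=\overline{\chi'}(\ell)\ell^k$), which together make $\mathrm{Tr}_\ell^\chi(f)$ and $\mathrm{Tr}_\ell^\chi W_\ell(f)$ vanish.

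For the reverse inclusion, I would argue that no $\ell$-old eigenform lies in the common kernel. By Proposition \ref{goodloldprop} (with character, which follows by the same proof), the $\ell$-old space decomposes into $U_\ell$-invariant two-dimensional blocks $V_{\ell,g}=\mathrm{span}\{g,W_\ell g\}$ for eigenforms $g\in\mcS_k^{\chi'}(N/\ell)$, so it suffices to show that $\mathrm{Tr}_\ell^\chi$ and $\mathrm{Tr}_\ell^\chi W_\ell$ have trivial common kernel on each such block. Using the two relations $\mathrm{Tr}_\ell^\chi(h)=(\ell+1)h$ for $h\in\mcS_k^{\chi'}(N/\ell)$ and $\mathrm{Tr}_\ell^\chi(W_\ell h)=\overline{\chi'}(\ell)\ell T_\ell(h)$, together with $W_\ell^2=\overline{\chi'}(\ell)\ell^k$ on $\mcS_k^\chi(N)$, I would write down the matrices of $\mathrm{Tr}_\ell^\chi$ and $\mathrm{Tr}_\ell^\chi W_\ell$ on the ordered basis $\{g,W_\ell g\}$ as
\begin{equation*}
\mathrm{Tr}_\ell^\chi=\begin{pmatrix}\ell+1 & \overline{\chi'}(\ell)\ell\,a_\ell(g)\\ 0 & 0\end{pmatrix},\qquad \mathrm{Tr}_\ell^\chi W_\ell=\begin{pmatrix}\overline{\chi'}(\ell)\ell\,a_\ell(g) & \overline{\chi'}(\ell)(\ell+1)\ell^k\\ 0 & 0\end{pmatrix}.
\end{equation*}
A short determinant computation shows that these have a nontrivial common kernel precisely when $a_\ell(g)^2=\chi'(\ell)(\ell+1)^2\ell^{k-2}$, hence when $|a_\ell(g)|=(\ell+1)\ell^{(k-2)/2}$. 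This contradicts the Weil bound $|a_\ell(g)|\leq 2\ell^{(k-1)/2}$ (which holds for eigenforms with character, via Lemma \ref{oldisnew} adapted to characters), exactly as in Lemma \ref{weillemma}.

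The main subtlety, rather than the main obstacle, is bookkeeping with the character: making sure the formula $W_\ell^2=\overline{\chi'}(\ell)\ell^k$ (rather than $\ell^k$) propagates correctly through all identities, and verifying that the character-twisted analogues of Lemma \ref{lemmatraceprop}, Proposition \ref{goodloldprop} and the Weil bound really do go through. None of these require new ideas, but the sign/scalar factors of $\chi'(\ell)$ must be tracked carefully so that the final Weil-bound contradiction lands on $|a_\ell(g)|=(\ell+1)\ell^{(k-2)/2}$, which has the same magnitude as in the trivial-character case and hence is still excluded.
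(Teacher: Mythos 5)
Your overall reduction (to fixed weight, then to $B=\C$ via flatness of $B$ over the Dedekind domain $\Z[1/N,\chi]=\Z[1/N,\zeta_n]$) matches the paper exactly, and your $\supseteq$ direction via the $2\times 2$ matrices of $\Tr_\ell^{\chi}$ and $\Tr_\ell^{\chi}W_\ell$ on the blocks $V_{\ell,g}=\mathrm{span}\{g,W_\ell g\}$, ending in the Weil-bound contradiction $|a_\ell(g)|=(\ell+1)\ell^{(k-2)/2}$, is exactly the ``repeat the proof of Theorem~\ref{tracetheorem} whilst keeping track of character'' that the paper gestures at, with the determinant computation correctly yielding $a_\ell(g)^2=\chi'(\ell)(\ell+1)^2\ell^{k-2}$.

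Where you diverge from the paper is in the $\subseteq$ direction. The paper does not invoke explicit $U_\ell,W_\ell$ eigenvalues for $\ell$-new eigenforms. Instead it observes that $T_p$ (for $p\nmid N$) commutes with $\Tr_\ell^{\chi}$ and $\Tr_\ell^{\chi}W_\ell$, so if $\Tr_\ell^{\chi}(f)\neq 0$ for an $\ell$-new eigenform $f$, then $\Tr_\ell^{\chi}(f)\in\mcS_k^{\chi'}(N/\ell)$ would be an eigenform with the same Hecke eigenvalues away from $\ell$ as the newform $f$, contradicting Li's strong multiplicity one, \cite[Theorem~5]{li1975newforms}. Your route, going through the character analogue of Corollary~\ref{coruleigen} (explicit eigenvalue relations for $U_\ell$ and $W_\ell$), is also valid and is in fact the method the thesis uses for the trivial-character Theorem~\ref{tracetheorem}; the paper's multiplicity-one argument has the advantage of avoiding the delicate normalization of the $W_\ell$-eigenvalue entirely.

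One bookkeeping slip in your version: an $\ell$-new eigenform $f$ satisfies $W_\ell^2 f=\overline{\chi'}(\ell)\ell^k f$, so if $W_\ell f=\varepsilon f$ then $\varepsilon^2=\overline{\chi'}(\ell)\ell^k$, not $\varepsilon\overline{\varepsilon}=\overline{\chi'}(\ell)\ell^k$ (these agree only when $\varepsilon$ is real, i.e.\ when $\chi'(\ell)=\pm 1$). Correspondingly the $U_\ell$ eigenvalue is $a_\ell=-\ell^{k-1}\varepsilon^{-1}=-\chi'(\ell)\ell^{-1}\varepsilon$, not $-\chi'(\ell)\ell^{k/2-1}\varepsilon^{-1}$. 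These constants need fixing, but they do not affect the logic: with the correct relations both $\Tr_\ell^{\chi}(f)$ and $\Tr_\ell^{\chi}W_\ell(f)$ indeed vanish, and the $\supseteq$ direction is untouched.
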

\begin{proof}
    Since $\M^{\chi}(N,B)=\bigoplus_{k=0}^\infty\M_k^{\chi}(N,B)$ and $\Tr_\ell^{\chi}$ is weight-preserving, it suffices to prove the theorem in fixed weight $k$. As in the proof of Proposition \ref{charzerodecompcharacter}, we note that $\Z[\chi]=~\Z[\zeta_n]$ for some positive integer $n$. It then follows that $\Z[1/N,\chi]=\Z[1/N,\zeta_n]$. So, $B$ is flat over $\Z[1/N,\chi]$ by Corollary \ref{flatdedekind}. As a result, it suffices to prove the theorem for $B=\Z[1/N,\chi]$ and thus $B=\C$.\\[8pt]
    First we show that $\mcS_k^{\chi}(N,\C)^{\ell-\new}\subseteq\mcS_k^{\chi}(N,\C)^{\Tr_\ell^{\chi}-\new}$. So, let $f$ be an eigenform in $\mcS_k^{\chi}(N,\C)^{\ell-\new}$. For any $p\nmid N$ we have that $T_p$ commutes with $U_\ell$ and $W_\ell$. Hence, $T_p$ also commutes with $\Tr_\ell^{\chi}$ and $\Tr_\ell^{\chi} W_\ell$. Suppose for a contradiction that $\Tr_\ell^{\chi}(f)\neq 0$ or $\Tr_\ell^{\chi} W_\ell(f)\neq~0$. Then $\Tr_\ell^{\chi}(f)$ (or $\Tr_\ell^{\chi}W_\ell(f)$) is an eigenform in $\mcS_k^{\chi'}(N/\ell)$ with the same Hecke eigenvalues away from $\ell$. This is impossible by \cite[Theorem 5]{li1975newforms}, so
    $f\in\ker(\Tr_\ell^{\chi})\cap\ker(\Tr_\ell^{\chi} W_\ell)$ as required. To show that $\mcS_k^{\chi}(N,\C)^{\Tr_\ell^{\chi}-\new}\subseteq\mcS_k^{\chi}(N,\C)^{\ell-\new}$ one can repeat the argument in the proof of Theorem \ref{tracetheorem} whilst keeping track of character.
\end{proof}
Combined with Theorem \ref{mainulcharacterthm}, the above result tells us that the notions of $U_\ell^{\chi}$-new and $\Tr_\ell^{\chi}$-new agree in characteristic zero. The same is true in characteristic $p$.
\begin{theorem}\label{maincharcharpthm}
    Let $B$ be a $\Z[1/N,\chi]$-domain of characteristic $p$. If $\chi$ is induced by a Dirichlet character $\chi'$ mod $N/\ell$ then 
    \begin{equation*}
        \mcS^{\chi}(N,B)^{U_\ell^{\chi}-\new}=\mcS^{\chi}(N,B)^{\Tr_\ell^{\chi}-\new}.
    \end{equation*}
\end{theorem}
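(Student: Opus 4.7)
The plan is to adapt the proof of Theorem \ref{mainmodptheorem} to the character setting, carefully tracking $\chi'$ at each step. First, by Proposition \ref{modpdecompwithchar} we have $\mcS^{\chi}(N,B)=\bigoplus_{i\in\Z/(p-1)\Z}\mcS^{\chi}(N,B)^i$ and each element of $\mcS^{\chi}(N,B)^i$ appears in a single weight; moreover, after writing $\D_\ell^\chi$ and $\Tr_\ell^\chi$ in terms of the weight-separating operator $S_\ell$, Fermat's little theorem ensures that these operators descend to well-defined maps on each $\mcS^{\chi}(N,B)^i$ independently of the chosen weight representative. Thus it suffices to prove the equality in a fixed weight $k$. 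Next, the structure map $\Z[1/N,\chi]\to B$ factors through a residue field $\kappa=\Z[1/N,\chi]/\mathfrak{p}$ for some prime $\mathfrak{p}$ above $p$, and $B$ is a $\kappa$-vector space, hence flat over $\kappa$; by Proposition \ref{preservekernelprop} it therefore suffices to treat $B=\kappa$.

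The central computational step is the character analogue of Proposition \ref{proploldcaplnew}: for any $\Z[1/\ell,\chi]$-algebra $B$ and $f,g\in\mcS_k^{\chi'}(N/\ell,B)$, the two conditions $f+W_\ell g\in\ker(\D_\ell^\chi)$ and $f+W_\ell g\in\ker(\Tr_\ell^\chi)\cap\ker(\Tr_\ell^\chi W_\ell)$ are each equivalent to a single pair of identities relating $T_\ell f$, $T_\ell g$, $f$, $g$ in which a factor of $\chi'(\ell)$ (or $\overline{\chi}'(\ell)$) replaces the scalars $1$ appearing in the trivial-character case. I would verify this by a direct expansion using the relations of Section \ref{sec62} --- namely $W_\ell^2=\overline{\chi}'(\ell)S_\ell$, $U_\ell W_\ell=S_\ell$ and $T_\ell=U_\ell+\chi'(\ell)\ell^{-1}W_\ell$ on $\mcS_k^{\chi'}(N/\ell)$, $\Tr_\ell^\chi(f)=(\ell+1)f$ and $\Tr_\ell^\chi(W_\ell g)=\overline{\chi}'(\ell)\ell T_\ell g$ --- together with Proposition \ref{propauto} to separate the $\mcS_k^{\chi'}(N/\ell,B)$ and $W_\ell\mcS_k^{\chi'}(N/\ell,B)$ components of the resulting expressions.

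With this lemma in hand, the remainder of the proof is a $p$-adic lifting argument parallel to Theorem \ref{mainmodptheorem}. Let $\mathcal{O}$ be the completion of $\Z[1/N,\chi]$ at $\mathfrak{p}$, with uniformizer $\varpi$, residue field $\kappa$, and fraction field $K$ (a finite extension of $\Q_p$ containing $\chi$). Given $f\in\mcS_k^\chi(N,\kappa)^{U_\ell^\chi-\new}$, lift to $\tilde f\in\mcS_k^\chi(N,\mathcal{O})$ and decompose $\tilde f=\varpi^{-b}(\tilde f^{\old}+\tilde f^{\new})$ over $K$, using the character version of Proposition \ref{fieldspanprop} (which follows by the same dimension-counting argument, since $\mcS_k^\chi(N,\C)^{\ell-\old}$ and $\mcS_k^\chi(N,\C)^{\ell-\new}$ are Petersson-orthogonal). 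By Theorem \ref{mainulcharacterthm} applied over $K$, $\D_\ell^\chi(\tilde f^{\new})=0$, so $\D_\ell^\chi(\tilde f^{\old})\in\varpi^{b+1}\mathcal{O}[[q]]$; the character analogue of Proposition \ref{proploldcaplnew} applied over $\mathcal{O}/\varpi^{b+1}$ then forces $\tilde f^{\old}$ modulo $\varpi^{b+1}$ to lie in $\ker(\Tr_\ell^\chi)\cap\ker(\Tr_\ell^\chi W_\ell)$. Combined with $\Tr_\ell^\chi(\tilde f^{\new})=\Tr_\ell^\chi W_\ell(\tilde f^{\new})=0$ (Theorem \ref{bigtrlcharacterthm}) and reduction modulo $\varpi$, this yields $f\in\mcS_k^\chi(N,\kappa)^{\Tr_\ell^\chi-\new}$; the reverse containment is obtained by reversing the argument. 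The principal obstacle is verifying the character analogue of Proposition \ref{proploldcaplnew}: the strategy is conceptually identical to the trivial-character case, but the placement of the $\chi'(\ell)$ and $\overline{\chi}'(\ell)$ factors must be matched exactly in both systems of equations for the equivalence to hold.
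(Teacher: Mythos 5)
Your proof takes essentially the same approach as the paper, which also first establishes the character analogue of Proposition \ref{proploldcaplnew} (with the extra $\chi'(\ell)$ factor appearing in one of the two identities, so that the $\ell$-old forms satisfying both new-conditions are $\{f+W_\ell g : \ell T_\ell f = -(\ell+1)S_\ell g,\ \ell T_\ell g = -(\ell+1)\chi'(\ell)f\}$) and then runs the $p$-adic lifting argument of Theorem \ref{mainmodptheorem} over the ring extensions $\F_p[\chi]\subset\Z_p[\chi]\subset\Q_p[\chi]$, which is exactly what you set up as $\kappa\subset\mathcal{O}\subset K$. One minor slip: you state the character analogue of Proposition \ref{proploldcaplnew} over arbitrary $\Z[1/\ell,\chi]$-algebras, but $W_\ell$ and $\Tr_\ell^\chi$ are only known to be defined over $\Z[1/N,\chi]$-algebras when $\chi$ is nontrivial (cf.\ Definition \ref{trlnewwithcharacter} and the integrality discussion in Section \ref{sec62}); this does not affect your argument since you apply it only to quotients of $\mathcal{O}$, which are $\Z[1/N,\chi]$-algebras.
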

\begin{proof}
    First we note that a slight modification of the proof of Proposition \ref{proploldcaplnew} gives
    \begin{align*}
        (\mcS_k^{\chi}(N,B)^{\ell-\old})^{U_\ell^{\chi}-\new}&=(\mcS_k^{\chi}(N,B)^{\ell-\old})^{\Tr_\ell^{\chi}-\new}\\
        &=\{f+W_\ell(g)\::\:\ell T_{\ell}f=-(\ell+1)S_\ell g\ \text{and}\ \ell T_\ell g=-(\ell+1)\chi'(\ell)f\}.
    \end{align*}
    Here, $k$ is some fixed weight, $S_\ell=\ell^k$, and we can take $B$ to be any $\Z[1/N,\chi]$-algebra\footnote{Note that if $f+W_\ell(g)\in (\mcS_k^{\chi}(N,B)^{\ell-\old})^{U_\ell^{\chi}-\new}=(\mcS_k^{\chi}(N,B)^{\ell-\old})^{\Tr_\ell^{\chi}-\new}$, then\\
    $T_\ell^2(f)=T_\ell^2(g)=\chi'(\ell)\ell^{k-2}(\ell+1)^2$. These $T_\ell^2$ eigenvalues are precisely those that appear in Diamond's paper on congruences between $\ell$-old and $\ell$-new forms \cite{diamond1991congruence}.}.\\[8pt] 
    To finish the proof, one can use the same argument as in Theorem \ref{mainmodptheorem}. The main difference here is that we have to work with the ring extensions $\F_p[\chi]$, $\Z_p[\chi]$ and $\Q_p[\chi]$.  
\end{proof}

These results indicate that the notions of $U_\ell^{\chi}$-new and $\Tr_\ell^{\chi}$-new are very natural generalisations of $U_\ell$-new and $\Tr_\ell$-new. \\
\\
Now, at first glance, the notion of $U_\ell^{\chi}$-new seems to be more useful than $\Tr_\ell^{\chi}$-new. In particular, the definition of $U_\ell^{\chi}$-new forms is arguably simpler and is defined over a larger class of commutative rings. However, as we saw in section \ref{sec63}, there does not seem to be a way to extend the idea of $U_\ell^{\chi}$-new forms to the case where $\ell$ divides $N$ more than once. On the other hand, it is actually possible to modify the definition of $\Tr_\ell^{\chi}$-new to such situations. This is the focus of the next section.

\section{Beyond Squarefree Levels}
In the previous section we defined a notion of $\Tr_\ell^{\chi}$-new for each Dirichlet character $\chi$ mod $N$. However, since this definition relied on the Atkin-Lehner operator, we needed to assume that $\ell$ divided $N$ exactly once. If one were to keep this assumption for all primes dividing $N$, then we are left with the requirement that $N$ is squarefree.\\
\\
To overcome this restriction, we suggest a modified definition of $\Tr_\ell^{\chi}$-new that no longer relies on the Atkin-Lehner operator. In particular, we take inspiration from \cite[Theorem 4]{li1975newforms} by making use of the "Fricke" operator.
\begin{definition}
    Let $\gamma_N=\begin{smatrix}0&-1\\N&0\end{smatrix}\in\GL_2(\Q)^{+}$. The \emph{Fricke operator} $h_N$ is the map that sends $f\in \M_k^{\chi}(N)$ to $f|_k\gamma_N$. We also write $H_N$ for the scaling of the Fricke operator given by $H_N=N^{k/2}h_N$.
\end{definition}
We now list some basic properties of the Fricke operator. 
\begin{enumerate}[label=(\roman*)]
    \item If $f\in\M_k^{\chi}(N)$ then $h_N(f)$ maps $\M_k^{\chi}(N)$ to $\M_k^{\overline{\chi}}(N)$.
    \item $h_N^2=(-1)^k$.
    \item If $f$ is a level 1 modular form, then $H_N(f(z))=N^kf(Nz)$.
    \item If $d$ is the conductor of $\chi$ then $h_N$ and $H_N$ extend to linear operators on $\M_k^{\chi}(N,B)$ for any $\Z[1/N,\zeta_d,\chi]$-algebra $B$. If $B$ is also a domain then $H_N$ is well-defined on $\M^{\chi}(N,B)$.
\end{enumerate}
The proof of the final property is discussed in \cite{loeffler2020integrality} where the Fricke operator is denoted $w_{N,k}$. The other properties are analogous to those of the Atkin-Lehner operator and can be proven in a similar fashion.\\
\\
Now, if $\ell$ divides $N$ exactly once, the trace operator is given explicity as in \eqref{traceeqchar}. However, if $\ell$ divides $N$ more than once (i.e. $\ell^2\mid N$), then for any $f\in\M_k^{\chi}(N)$,
\begin{equation*}
    \Tr_{\ell}^{\chi}(f)=\chi(-1)\ell N^{-k} H_{N/\ell}U_\ell H_Nf.
\end{equation*}
This formula (although with slightly different notation and scaling) is derived in Weisinger's thesis \cite[Proposition 16]{weisinger1977some}. So, if $\chi$ is a Dirichlet character with conductor $d$, then $\Tr_\ell^{\chi}$ also extends to $\M^{\chi}(N,B)$ for any $\Z[1/N,\zeta_d,\chi]$-domain $B$.\\
\\
We now define another notion of newness. From here onwards we will no longer assume that $\ell$ divides $N$ exactly once. That is, we allow $\ell$ to be \textbf{any} prime dividing $N$. 
\begin{definition}
    Let $d$ denote the conductor of $\chi$ and $B$ be a $\Z[1/N,\zeta_d,\chi]$-algebra. If $\chi$ is induced by a Dirichlet character $\chi'$ mod $N/\ell$ then for any Hecke-invariant submodule $\mathcal{C}$ of $\M_k^{\chi}(N,B)$ we define
    \begin{equation*}
        \mathcal{C}^{\Tr_\ell^{\chi}-\new'}=\ker(\Tr_\ell^{\chi}|_{\mathcal{C}})\cap\ker(\Tr_\ell^{\chi} H_N|_{\mathcal{C}}).
    \end{equation*}
    If $B$ is a $\Z[1/N,\zeta_d,\chi]$-domain then we can take $\mathcal{C}$ to be any Hecke-invariant submodule of $\M^{\chi}(N,B)=\sum_{k=0}^\infty \M_k^{\chi}(N,B)$.
\end{definition}
\begin{theorem}
    Let $d$ denote the conductor of $\chi$ and let $B$ be a $\Z[1/N,\zeta_d,\chi]$-domain of characteristic zero. If $\chi$ is induced by a Dirichlet character mod $N/\ell$ then
    \begin{equation*}
        \mcS^{\chi}(N,B)^{\ell-\new}=\mcS^{\chi}(N,B)^{\Tr_\ell^{\chi}-\new'},
    \end{equation*}
    and if $\ell$ divides $N$ exactly once,
    \begin{equation*}
        \mcS^{\chi}(N,B)^{\Tr_\ell^{\chi}-\new'}=\mcS^{\chi}(N,B)^{\Tr_\ell^{\chi}-\new}=\mcS^{\chi}(N,B)^{U_\ell-\new}.
    \end{equation*}
\end{theorem}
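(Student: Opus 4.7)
The plan is to reduce both assertions to the case $B=\C$ in a fixed weight $k$, and then leverage the results already established in this chapter together with the character-aware multiplicity-one theorem of Li. Since $\Tr_\ell^\chi$ and $H_N$ are weight-preserving, Proposition \ref{charzerodecompcharacter} lets me work weight by weight. The coefficient ring $\Z[1/N,\zeta_d,\chi]$ equals $\Z[1/N,\zeta_n]$ for some $n$ and is the $S$-integer ring of a cyclotomic field, hence a Dedekind domain; thus any $\Z[1/N,\zeta_d,\chi]$-domain $B$ of characteristic zero is flat over it by Corollary \ref{flatdedekind}. Proposition \ref{preservekernelprop} then propagates the kernel-theoretic definition of $\Tr_\ell^\chi-\new'$ through the base change, and the same argument handles $\mcS^\chi(N,B)^{\ell-\new}$, so it suffices to work over $\C$.

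For the inclusion $\mcS_k^\chi(N,\C)^{\ell-\new}\subseteq\mcS_k^\chi(N,\C)^{\Tr_\ell^\chi-\new'}$, I would copy the strategy used in Theorem \ref{bigtrlcharacterthm}. For any prime $p\nmid N$, the operator $T_p$ commutes with both $U_\ell$ and $H_N$, hence also with $\Tr_\ell^\chi$ and $\Tr_\ell^\chi H_N$. If $f$ is an eigenform in $\mcS_k^\chi(N,\C)^{\ell-\new}$ and either $\Tr_\ell^\chi(f)$ or $\Tr_\ell^\chi H_N(f)$ is nonzero, its image would sit in $\mcS_k^{\chi'}(N/\ell)$ as an eigenform agreeing with $f$ at every $T_p$ with $p\nmid N$, contradicting the character analogue of Theorem \ref{infinitetheorem} supplied by \cite[Theorem 5]{li1975newforms}.

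The main obstacle is the reverse inclusion, namely showing $\mcS_k^\chi(N,\C)^{\ell-\old}\cap\mcS_k^\chi(N,\C)^{\Tr_\ell^\chi-\new'}=\{0\}$. Two new complications arise compared to Theorem \ref{tracetheorem}: the Fricke operator $H_N$ sends $\chi$-forms to $\bar\chi$-forms rather than preserving $\mcS_k^\chi(N,\C)$, and in the general (non-squarefree) case the trace is given by the formula $\Tr_\ell^\chi=\chi(-1)\ell N^{-k}H_{N/\ell}U_\ell H_N$. My plan is, for each Hecke eigenform $g\in\mcS_k^{\chi'}(N/\ell)$, to decompose the $\ell$-old contribution determined by $g$ into a finite-dimensional subspace stable under $U_\ell$, compute the matrices of $\Tr_\ell^\chi$ and $\Tr_\ell^\chi H_N$ on this subspace in terms of the $T_\ell$-eigenvalue $a_\ell(g)$ and $\chi'(\ell)$, and show that a nontrivial common kernel forces $a_\ell(g)^2=(\ell+1)^2\ell^{k-2}\chi'(\ell)$, contradicting the Weil bound (Lemma \ref{weillemma}) adapted to character. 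The bookkeeping involving $\chi'(\ell)$ and the interplay between $H_N$ and $H_{N/\ell}$ is where I expect most of the technical effort to sit.

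The final chain of equalities in the second assertion, valid when $\ell\|N$, requires no additional work. Combining the first equality with Theorem \ref{bigtrlcharacterthm} yields $\mcS^\chi(N,B)^{\Tr_\ell^\chi-\new'}=\mcS^\chi(N,B)^{\ell-\new}=\mcS^\chi(N,B)^{\Tr_\ell^\chi-\new}$, and a further application of Theorem \ref{mainulcharacterthm}, whose hypotheses are satisfied because $\chi$ is induced from a character mod $N/\ell$, identifies this common space with $\mcS^\chi(N,B)^{U_\ell-\new}$.
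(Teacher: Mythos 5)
Your reduction to a single weight $k$ and $B=\C$ via flatness and weight-preservation is exactly the paper's first move, and your treatment of the second chain of equalities (combining Theorems \ref{mainulcharacterthm} and \ref{bigtrlcharacterthm}) also coincides with the paper. At $B=\C$, however, you diverge: the paper dispatches the statement in one line by citing \cite[Theorem 4]{li1975newforms} (equivalently Lang, Theorem 2.2 of Chapter 8 of \cite{lang1976introduction}), where the result over $\C$ is already proved in full generality, whereas you propose to re-derive it from scratch by adapting the matrix computation of Theorem \ref{tracetheorem}.

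That re-derivation is where the gap lies. You correctly flag the reverse inclusion --- showing $\mcS_k^\chi(N,\C)^{\ell-\old}\cap\mcS_k^\chi(N,\C)^{\Tr_\ell^\chi-\new'}=\{0\}$ --- as the hard step, but your sketch of it reflects only the squarefree mechanism. The entire purpose of the $\Tr_\ell^\chi$-$\new'$ notion is to cover the case $\ell^2\mid N$, and there your plan does not go through as described: an eigenform $g\in\mcS_k^{\chi'}(N/\ell)$ then has $\ell\mid(N/\ell)$, so the Hecke operator at $\ell$ is $U_\ell$ rather than $T_\ell$, there is no Atkin--Lehner operator $W_\ell$ at level $N/\ell$, the $\ell$-old contribution of $g$ is no longer a two-dimensional $\{g, W_\ell g\}$-block, and the Weil bound on $T_\ell$-eigenvalues is not the relevant constraint. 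Consequently the proposed contradiction $a_\ell(g)^2=(\ell+1)^2\ell^{k-2}\chi'(\ell)$ is not even the right equation to aim for in that regime. You would also need to pin down precisely which character space $\Tr_\ell^\chi H_N(f)$ lives in, since $H_N$ maps $\chi$-forms to $\overline{\chi}$-forms. Carrying all of this out faithfully would, in effect, reprove \cite[Theorem 4]{li1975newforms}; the paper's citation route is genuinely simpler, and your proposal as written leaves the decisive step unestablished rather than merely ``technical.''
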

\begin{proof}
    Arguing as in the proof of Theorem \ref{bigtrlcharacterthm}, it suffices to prove the theorem in fixed weight $k$ and for $B=\C$. However, when $B=\C$, the theorem is well-known and a proof can be found in \cite[Theorem 4]{li1975newforms} or \cite[Theorem 2.2 of Chapter 8]{lang1976introduction}.\\[8pt]
    If $\ell$ divides $N$ exactly once, then the second statement in the theorem follows from Theorem \ref{mainulcharacterthm} and Theorem \ref{bigtrlcharacterthm}.
\end{proof}
\begin{remark}
    Weisinger \cite[Proposition 19]{weisinger1977some} gives a similar result for Eisenstein series in $\M_k^{\chi}(N,\C)$.
\end{remark}
We can also hope that our notions of newness agree when $B$ has characteristic $p$.
\begin{question}
    If $B$ is $\Z[1/N,\zeta_d,\chi]$-domain of characteristic $p$ do we also have
    \begin{equation*}
        \mcS^{\chi}(N,B)^{\Tr_\ell^{\chi}-\new'}\stackrel{\textbf{?}}{=}\mcS^{\chi}(N,B)^{\Tr_\ell^{\chi}-\new}=\mcS^{\chi}(N,B)^{U_\ell-\new}
    \end{equation*}
    whenever $\ell$ divides $N$ exactly once? Considering the proof of Theorem \ref{maincharcharpthm}, it would suffice to show that
    \begin{equation*}
        (\mcS_k^{\chi}(N,B)^{\ell-\old})^{\Tr_\ell^{\chi}-\new'}\\
        \stackrel{\textbf{?}}{=}\{f+W_\ell(g)\::\:\ell T_{\ell}f=-(\ell+1)S_\ell g\ \text{and}\ \ell T_\ell g=-(\ell+1)\chi'(\ell)f\}.
    \end{equation*}
\end{question}
A positive answer to this question would give further support for using the notion of $\Tr_\ell^{\chi}$-new$'$.\\
\\
As a final remark, we note that besides the Fricke operator, there are certainly other operators used in literature that can generalise or replace the Atkin-Lehner operator. For instance, there is Li's $V_q^M$ operator \cite[pp 287-288]{li1975newforms} or Weisinger's partial $W$-operators \cite[page 30]{weisinger1977some}. Each of these operators can be used to define notions of newness. Thus, a direction for further research would be to the study these other notions of newness and see how they compare to our use of the Fricke operator here.

\begin{appendices}
    \addtocontents{toc}{\protect\renewcommand{\protect\cftchappresnum}{\appendixname\space}}
    \addtocontents{toc}{\protect\renewcommand{\protect\cftchapnumwidth}{6em}}
    
    \chapter{Commutative Algebra}\label{appcommalg}

\section{Flat Modules}\label{flatappend}
Flat modules are a widely studied class of modules that are well-behaved with respect to the tensor product. They are defined as follows.

\begin{definition}
    Let $A$ be a commutative ring. An $A$-module $M$ is \textit{flat} if it satisfies any of the following equivalent conditions:
    \begin{enumerate}[label=(\arabic*)]
        \item If $f\colon N\rightarrow N'$ is an injective morphism of $A$-modules then $f\otimes 1\colon N\otimes M\to N'\otimes M$ is also injective.
        \item If $0\to N'\to N\to N''\to 0$ is a short exact sequence of $A$-modules then the sequence $0\to N'\otimes M\to N\otimes M\to N''\otimes M\to 0$ is also exact.
        \item If $\mathscr{S}$ is any exact sequence of $A$-modules then $\mathscr{S}\otimes M$ is also exact. 
    \end{enumerate}
\end{definition}
The equivalence of (1) and (2) follows from the right-exactness of the tensor product \cite[Proposition 2.18]{atiyahintroduction}. Then, (2) and (3) are equivalent since any exact sequence can be constructed from short exact sequences.

\begin{example}\
\begin{itemize} 
    \item A commutative ring $A$ is flat as a module over itself since $M\otimes A\cong M$ for any $A$-module $M$. More generally, we have that any free $A$-module is flat.
    \item The $\Z$-module $\Z/2\Z$ is \textbf{not} flat. To see this, consider the injective map
    \begin{equation*}
        \Z\xrightarrow{\times 2}\Z,
    \end{equation*}
    that sends an integer $x$ to $2x$. Tensoring with $\Z/2\Z$ we then obtain
    \begin{equation*}
        \Z\otimes\Z/2\Z\xrightarrow{\times 2\otimes 1}\Z\otimes\Z/2\Z
    \end{equation*}
    which maps everything to zero and is therefore not injective.
\end{itemize}
\end{example}
If $A$ is a Dedekind domain\footnote{A Dedekind domain is an integrally closed, Noetherian domain such that every nonzero prime ideal is maximal. Examples include principal ideal domains and rings of integers of algebraic number fields. For more information, see \cite[Chapter 9]{atiyahintroduction}.}, then a useful description of flat $A$-modules is well-known.
\begin{proposition}\label{flatpid}
    Let $A$ be a Dedekind domain. An $A$-module $M$ is flat if and only if $M$ is torsion-free.
\end{proposition}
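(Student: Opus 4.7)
The plan is to prove the two directions separately, handling the easy direction first and reducing the hard direction to a local question about discrete valuation rings.

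For the forward direction (flat implies torsion-free), I would work over an arbitrary domain $A$. Given any nonzero $a \in A$, the map $A \xrightarrow{\cdot a} A$ is injective because $A$ has no zero-divisors. Tensoring with the flat module $M$ preserves injectivity, yielding an injective map $M \xrightarrow{\cdot a} M$. Hence $am = 0$ forces $m = 0$, so $M$ is torsion-free. Note that this direction does not use the Dedekind hypothesis.

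For the reverse direction (torsion-free implies flat), I would use that flatness is a local property: $M$ is flat over $A$ if and only if $M_{\mathfrak{p}}$ is flat over $A_{\mathfrak{p}}$ for every prime ideal $\mathfrak{p}$. Since localization preserves torsion-freeness, this reduces the problem to showing that torsion-free modules over each localization $A_{\mathfrak{p}}$ are flat. For $\mathfrak{p} = (0)$, $A_{(0)}$ is the fraction field of $A$, over which every module is free and therefore flat. For $\mathfrak{p} \neq (0)$, the defining property of a Dedekind domain gives that $A_{\mathfrak{p}}$ is a discrete valuation ring, so the task reduces to showing that torsion-free modules over a DVR are flat.

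The core remaining step is this DVR statement, and I would handle it via the structure theorem together with a colimit argument. The structure theorem for finitely generated modules over a principal ideal domain says that any finitely generated torsion-free module is free, hence flat. An arbitrary torsion-free module $M$ is the filtered colimit of its finitely generated submodules, each of which is torsion-free and thus free by the structure theorem. Since filtered colimits of flat modules are flat, $M$ itself is flat. The main obstacle is that this direction chains together several nontrivial classical results (local criterion for flatness, the DVR characterization of Dedekind localizations, the structure theorem over a PID, and the stability of flatness under filtered colimits); in the appendix these would most cleanly be cited rather than reproved in detail.
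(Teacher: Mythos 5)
Your proof is correct. The paper itself does not present an argument here: it simply cites Fuchs--Salce's treatment of the more general statement for Prüfer domains and leaves it at that. Your write-up replaces that one-line citation with an actual reduction: the easy direction over any domain via injectivity of multiplication by a nonzero element, and the hard direction by localizing (flatness is local, torsion-freeness is preserved under localization), noting that the localizations of a Dedekind domain at nonzero primes are DVRs and at $(0)$ a field, and then invoking the structure theorem over a PID plus the fact that filtered colimits preserve flatness. This is the standard textbook route and it is sound; the small tradeoff is that where the paper leans on a single external reference, your argument leans on four classical ingredients (local criterion for flatness, DVR characterization of Dedekind localizations, the PID structure theorem, and stability of flatness under filtered colimits), each of which, as you note, would itself be cited in an appendix rather than reproved. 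Either style is acceptable; yours is more transparent about what machinery is actually being used, while the paper's is terser and covers the broader Prüfer case for free.
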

\begin{proof}
    See \cite[VI Theorem 9.10]{fuchs2001modules} for a more general proof for Prüfer domains (which include Dedekind domains).
\end{proof}
For our purposes, we are interested in the Dedekind domains $\Z$, $\Z[1/m]$, $\Z[\zeta_n]$ and $\Z[1/m,\zeta_n]$ where $m$ and $n$ are positive integers and $\zeta_n$ is a primitive $n^{\text{th}}$ root of unity.

\begin{corollary}\label{flatdedekind}
    Let $A=\Z$, $\Z[1/m]$, $\Z[\zeta_m]$ or $\Z[1/m,\zeta_n]$. If $B$ is an $A$-domain of characteristic zero, then $B$ is flat (as an $A$-module).
\end{corollary}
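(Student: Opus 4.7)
The plan is to invoke Proposition \ref{flatpid} and reduce the corollary to verifying that $B$ is torsion-free as an $A$-module.

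First I would check that each ring in the list is indeed a Dedekind domain, which is required before Proposition \ref{flatpid} can be applied. $\Z$ is a PID, $\Z[\zeta_n]$ is the ring of integers of the cyclotomic field $\Q(\zeta_n)$, and $\Z[1/m]$ and $\Z[1/m,\zeta_n]$ are localizations of $\Z$ and $\Z[\zeta_n]$ respectively; localizations of Dedekind domains at multiplicative sets remain Dedekind, so the hypothesis of Proposition \ref{flatpid} is met in all four cases.

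Let $\varphi \colon A \to B$ denote the structure map. Because $B$ is itself an integral domain, the $A$-annihilator of any nonzero $b \in B$ is precisely $\ker \varphi$, so torsion-freeness of $B$ as an $A$-module is equivalent to injectivity of $\varphi$. The characteristic-zero hypothesis immediately gives $\ker \varphi \cap \Z = 0$: the composition $\Z \hookrightarrow A \xrightarrow{\varphi} B$ is the unique ring homomorphism $\Z \to B$, and this is injective since $\mathrm{char}(B) = 0$. From here I would handle the four cases separately. For $A = \Z$ there is nothing more to do. For $A = \Z[1/m]$, every element has the form $a/m^k$ with $a \in \Z$, and $\varphi(m)$ must be a unit in $B$ (its inverse is $\varphi(1/m)$), so $\varphi(a/m^k) = 0$ forces $\varphi(a) = 0$ and hence $a = 0$. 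For $A = \Z[\zeta_n]$, $\ker \varphi$ is a prime ideal in the ring of integers of a number field, and a standard fact from algebraic number theory (every nonzero ideal contains its norm, or equivalently, lying-over) says that any nonzero such prime meets $\Z$ nontrivially; combined with $\ker \varphi \cap \Z = 0$ this forces $\ker \varphi = 0$. The case $A = \Z[1/m,\zeta_n]$ combines the two previous arguments, treating elements as $\alpha/m^k$ with $\alpha \in \Z[\zeta_n]$.

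The main obstacle, insofar as there is one, is only the invocation of the number-theoretic fact that nonzero primes of $\Z[\zeta_n]$ meet $\Z$ nontrivially; this should be cited rather than reproved. Everything else is essentially bookkeeping on top of Proposition \ref{flatpid}, so the corollary should follow in short order.
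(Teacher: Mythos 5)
Your argument is correct and takes essentially the same route as the paper: invoke Proposition \ref{flatpid} and reduce to checking that $B$ is torsion-free over $A$. The paper asserts torsion-freeness without justification, whereas you supply the verification (injectivity of the structure map via the characteristic-zero hypothesis plus lying-over for $\Z[\zeta_n]$ and the unit status of $m$ in the localized cases), which fills in exactly the detail the paper leaves implicit.
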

\begin{proof}
    Apply the previous proposition noting that any such $B$ is torsion-free as an $A$-module.
\end{proof}

Finally, we show how flat modules "preserve" kernels.
\begin{proposition}\label{preservekernelprop}
    Let $A$ be a commutative ring and $f\colon N\rightarrow N'$ be an $A$-module homomorphism. If $M$ is a flat $A$-module then $\ker(f\otimes 1_M)\cong\ker(f)\otimes M$.
\end{proposition}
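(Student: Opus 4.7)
The plan is to realise $\ker(f)$ as the leftmost term in a suitable exact sequence and then exploit the fact that flatness preserves exactness of arbitrary sequences (condition (3) in the definition of flatness).

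First I would observe that the sequence
\begin{equation*}
    0 \longrightarrow \ker(f) \xrightarrow{\;\iota\;} N \xrightarrow{\;f\;} N'
\end{equation*}
is exact, where $\iota$ denotes the inclusion. Exactness at $\ker(f)$ is the (tautological) injectivity of $\iota$, and exactness at $N$ is precisely the defining property of $\ker(f)$, namely $\operatorname{im}(\iota) = \ker(f)$.

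Next I would tensor this sequence with $M$. Since $M$ is flat, condition (3) of the definition of flatness guarantees that the resulting sequence
\begin{equation*}
    0 \longrightarrow \ker(f) \otimes M \xrightarrow{\;\iota \otimes 1_M\;} N \otimes M \xrightarrow{\;f \otimes 1_M\;} N' \otimes M
\end{equation*}
is again exact. Exactness at $\ker(f)\otimes M$ says that $\iota \otimes 1_M$ is injective, so $\ker(f)\otimes M$ embeds into $N \otimes M$ via this map. Exactness at $N \otimes M$ says that the image of $\iota \otimes 1_M$ coincides with $\ker(f \otimes 1_M)$. Combining these two statements, $\iota \otimes 1_M$ corestricts to an isomorphism
\begin{equation*}
    \ker(f) \otimes M \xrightarrow{\;\sim\;} \ker(f \otimes 1_M),
\end{equation*}
which is the desired conclusion.

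There is no real obstacle here; the only subtle point is that the definition of flatness as phrased in the appendix is stated for \emph{short} exact sequences (condition (2)), whereas the sequence we tensor is not short. The immediate resolution is to invoke condition (3), which is equivalent, or alternatively to split the long sequence into the two short exact sequences $0 \to \ker(f) \to N \to \operatorname{im}(f) \to 0$ and $0 \to \operatorname{im}(f) \to N' \to N'/\operatorname{im}(f) \to 0$, tensor each by $M$, and glue the results; the resulting injection $\operatorname{im}(f)\otimes M \hookrightarrow N'\otimes M$ identifies $\ker(f\otimes 1_M)$ with $\ker(f)\otimes M$ exactly as before.
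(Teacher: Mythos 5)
Your proof is correct and takes essentially the same approach as the paper: tensor an exact sequence containing $\ker(f)$ with the flat module $M$ and read off the conclusion from exactness of the result. The only difference is cosmetic — the paper pads the sequence on the right with $N'/\operatorname{im}(f)$ to make it a four-term exact sequence, whereas you stop at $N'$; both forms are handled by the same appeal to flatness.
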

\begin{proof}
    Consider the exact sequence
    \begin{equation*}
        0\rightarrow\ker(f)\rightarrow N\xrightarrow{f}N'\rightarrow N'/\im(f)\rightarrow 0.
    \end{equation*}
    Tensoring with $M$ then gives the exact sequence 
    \begin{equation*}
        0\rightarrow \ker(f)\otimes M\rightarrow N\otimes M\xrightarrow{f\otimes 1_M}N'\otimes M\rightarrow N'/\im(f)\otimes M\rightarrow 0.
    \end{equation*}
    The exactness at $N\otimes M$ and injectivity of $\ker(f)\otimes M\rightarrow N\otimes M$ implies that\\
    $\ker(f\otimes 1_M)\cong\ker(f)\otimes M$ as required.
\end{proof}

\section{The $p$-adic Numbers}\label{padicappend}
The $p$-adic numbers are an increasingly important tool in modern mathematics that provide a link between the worlds of characteristic 0 and characteristic $p$. We begin by defining the $p$-adic integers; a subset of the $p$-adic numbers.
\begin{definition}
    Let $p$ be a prime number. A \textit{$p$-adic integer} $x$ is a formal power series of the form $x=\sum_{n=0}^\infty a_np^n$ where $a_n\in\{0,1,\dots,p-1\}$. The set of $p$-adic integers is denoted $\Z_p$. 
\end{definition}
Alternatively, we could more precisely define $\Z_p:=\Z[[X]]/(X-p)$. Using the ring structure of $\Z[[X]]$, the $p$-adic integers form an integral domain of characteristic $0$. Unlike the usual integers $\Z$, we are able to invert any element $x\in\Z_p$ provided $p$ does not divide $x$ (i.e. $a_0\neq 0$). For instance, if $p=3$ then 
\begin{equation*}
    \frac{1}{2}=2+2p+2p^2+2p^3+\cdots.
\end{equation*}
To see this we note that $2(2+2(3)+2(3)^2+2(3)^3+\cdots)=1+0p+0p^2+\cdots=1$.\\
\\
If we now allow $p$ to be invertible, we obtain the $p$-adic numbers.
\begin{definition}
    Let $p$ be a prime number. A \textit{$p$-adic number} $x$ is a formal Laurent series of the form $x=\sum_{n=-m}^\infty a_np^n$ for some $m\in\Z$. The set of $p$-adic numbers is denoted $\Q_p$.
\end{definition}
Since we can now divide by $p$, the $p$-adic numbers $\Q_p$ form a field. For our purposes, studying modular forms over $\Z_p$ or $\Q_p$ can provide insight into the theory of modular forms over $\F_p=\Z_p/p\Z_p$.

\section{Dirichlet Characters}\label{appdirichlet}
Dirichlet characters are arithmetic functions frequently studied in analytic number theory. They are used when classifying modular forms for $\Gamma_1(N)$.

\begin{definition}
    Let $N$ be a positive integer and $\chi\colon(\Z/N\Z)^{\cross}\to\C^{\cross}$ be a group homomorphism. We can extend $\chi$ to a function on all of $\Z$ by setting
    \begin{equation*}
        \chi(a)=\begin{cases}
            \chi(a\ \text{mod}\ N),&\text{if}\ \gcd(a,N)=1\\
            0,&\text{if}\ \gcd(a,N)>1.
        \end{cases}
    \end{equation*}
    Such a function $\chi\colon\Z\to\C$ is called a \emph{Dirichlet character} mod $N$.
\end{definition}
For any Dirichlet character $\chi$ we have the following properties:
\begin{itemize}
    \item $\chi(1)=1$ since $\chi\colon(\Z/N\Z)^{\cross}\to\C^{\cross}$ is a group homomorphism.
    \item $|\chi(a)|=1$ if $\gcd(a,N)=1$. To see this we let $\varphi(N)=|(\Z/N\Z)^{\cross}|$ and then note that $a^{\varphi(N)}\equiv 1$ by Euler's theorem. Hence,
    \begin{equation*}
        \chi(a)^{\varphi(N)}=\chi(a^{\varphi(N)})=\chi(1)=1,
    \end{equation*}
    so that $\chi(a)$ is a $\varphi(N)^{\text{th}}$ root of unity and $|\chi(a)|=1$ as required.
    \item There are exactly $\varphi(N)$ Dirichlet characters mod $N$. A simple proof of this result can be found in \cite[Proposition 17.5]{sutherland2015number}.
\end{itemize}

\begin{example}\label{dirichletexample}\
    \begin{enumerate}[label=(\arabic*)]
        \item For any positive integer $N$ we have the Dirichlet character
        \begin{equation*}
            \chi_0(a)=\begin{cases}
                1,&\text{if}\ \gcd(a,N)=1\\
                0,&\text{if}\ \gcd(a,N)>1.
            \end{cases}
        \end{equation*}
    called the \emph{trivial Dirichlet character} mod $N$.
    \item
    There are two Dirichlet characters mod 4. The first is the trivial character from the previous example. The second is given by
    \begin{equation*}
        \chi(a)=\begin{cases}
            0,&\text{if}\ a\equiv 0,2 \pmod{4}\\
            1,&\text{if}\ a\equiv 1 \pmod{4}\\
            -1,&\text{if}\ a\equiv 3 \pmod{4}\\
        \end{cases}
    \end{equation*}
    \item Let $M$ and $N$ be positive integers such that $M$ divides $N$. If $\chi$ is a Dirichlet character mod $M$ then we can define a Dirichlet character $\chi'$ mod $N$ by setting $\chi'(a)=\chi(a)$ whenever $\gcd(a,N)=1$. In this situation, we say that $\chi'$ is \emph{induced} by $\chi$. The smallest value of $M$ such that $\chi$ is induced by a character mod $M$ is called the \emph{conductor} of $\chi$.
    \end{enumerate}
\end{example}
We now prove a couple of important results regarding Dirichlet characters that are used in Chapter \ref{chap6} of this thesis.

\begin{proposition}\label{propinduced}
    Let $\chi$ be a Dirichlet character mod $N$ that is induced by a Dirichlet character mod $M$. Then, the conductor of $\chi$ divides $M$.
\end{proposition}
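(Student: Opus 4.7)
The plan is to set $d$ equal to the conductor of $\chi$ and let $g = \gcd(d,M)$. Since $g \mid d$, proving $d \mid M$ is equivalent to proving $g = d$, and by the minimality of the conductor it suffices to show that $\chi$ is induced by some Dirichlet character $\chi_g$ mod $g$. Note that both $d$ and $M$ divide $N$ (since $\chi$ is induced from characters at these moduli, it must vanish on any $a$ sharing a factor with $d$ or $M$), so $g \mid N$ as well.

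To construct $\chi_g$, for each $x \in (\Z/g\Z)^\times$ I would select a lift $\tilde x \in \Z$ with $\tilde x \equiv x \pmod{g}$ and $\gcd(\tilde x, N) = 1$. Such a lift exists by the Chinese Remainder Theorem: list the primes $p_1,\dots,p_k$ dividing $N$ but not $g$, and solve simultaneously $\tilde x \equiv x \pmod g$ together with $\tilde x \equiv 1 \pmod{p_i}$ for each $i$ (these conditions are compatible because $\gcd(g, p_i) = 1$). Then set $\chi_g(x) := \chi(\tilde x)$.

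The crux of the argument, and the main obstacle, is showing that $\chi_g$ is well defined. Suppose $\tilde x_1, \tilde x_2$ are both coprime to $N$ with $\tilde x_1 \equiv \tilde x_2 \pmod{g}$. Since $g = \gcd(d,M)$, generalised CRT produces an integer $c$ with $c \equiv \tilde x_1 \pmod{d}$ and $c \equiv \tilde x_2 \pmod{M}$; this $c$ is automatically coprime to $\mathrm{lcm}(d,M)$. Applying the same lifting trick as above, I would adjust $c$ by a multiple of $\mathrm{lcm}(d,M)$ to obtain $c'$ with $c' \equiv c \pmod{\mathrm{lcm}(d,M)}$ and $\gcd(c', N) = 1$. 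Since $\chi$ is induced from a character mod $d$, $\chi(c') = \chi(\tilde x_1)$; since $\chi$ is also induced from a character mod $M$, $\chi(c') = \chi(\tilde x_2)$. Hence $\chi(\tilde x_1) = \chi(\tilde x_2)$, establishing well-definedness.

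Finally, multiplicativity of $\chi_g$ is immediate because if $\tilde x_1, \tilde x_2$ are valid lifts of $x_1, x_2$ then $\tilde x_1 \tilde x_2$ is a valid lift of $x_1 x_2$, and $\chi$ is multiplicative. The induced character identity $\chi(a) = \chi_g(a \bmod g)$ for $\gcd(a,N)=1$ also follows directly, since such an $a$ is itself a lift of $a \bmod g$ coprime to $N$. Thus $\chi$ is induced by $\chi_g$, so by minimality of the conductor $d \le g$, and together with $g \mid d$ this gives $d = g \mid M$, as required.
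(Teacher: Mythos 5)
Your proof is correct and reaches the conclusion via the same overall strategy as the paper: show that $\chi$ is induced by a Dirichlet character mod $g = \gcd(d,M)$, then invoke minimality of the conductor to force $d = g$, hence $d \mid M$. Where you differ is in the level of detail at the crucial step. The paper cites a lemma from Sutherland characterising ``induced at modulus $N'$'' as ``constant on residue classes of $(\Z/N\Z)^\times$ that agree mod $N'$,'' and then simply asserts (without elaboration) that constancy mod $d$ and constancy mod $M$ together yield constancy mod $\gcd(d,M)$. You, by contrast, prove exactly this fact by hand: given two lifts coprime to $N$ that agree mod $g$, you use the Chinese Remainder Theorem to build an integer $c$ simultaneously congruent to one lift mod $d$ and the other mod $M$, observe $c$ is coprime to $\mathrm{lcm}(d,M)$, use CRT again to adjust $c$ to $c'$ coprime to $N$, and then read off $\chi(\tilde x_1) = \chi(c') = \chi(\tilde x_2)$ from the two inductions. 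Your version is therefore more self-contained and elementary — it does not rely on the external lemma and makes visible the CRT argument that the paper compresses into a single unjustified sentence. The price is that your proof is considerably longer, and the two CRT adjustments (one to build $c$, one to build $c'$) have to be carried out with care to avoid circularity; you handled both correctly.

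One minor note: your parenthetical justification that ``$d$ and $M$ divide $N$ since $\chi$ must vanish on any $a$ sharing a factor with $d$ or $M$'' is phrased a bit oddly — in the paper's framework (Appendix A.3, Example B.0.3), $M \mid N$ is part of the definition of ``induced,'' and $d \mid N$ because the conductor is by definition a modulus from which $\chi$ is induced. The conclusion is right, but the reasoning should appeal to the definition rather than to where $\chi$ vanishes.
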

\begin{proof}
    We use an equivalent definition of induced characters shown in \cite[Lemma 17.20]{sutherland2015number}. Namely, for any $N'\mid N$, we have that $\chi$ is induced by a character mod $N'$ if and only if $\chi$ is constant on all residue classes in $(\Z/N\Z)^{\cross}$ that are congruent mod $N'$.\\[8pt]
    Let $d$ denote the conductor of $\chi$. Since $\chi$ is induced by Dirichlet characters mod $d$ and mod $M$, we have that $\chi$ is constant on all residue classes in $(\Z/N\Z)^{\cross}$ that are congruent mod $\gcd(d,M)$. That is, $\chi$ is induced by a character mod $\gcd(d,M)$. However, since $d$ is the conductor of $\chi$ we must have $d\leq\gcd(M,d)$. This is only possible if $d=\gcd(M,d)$ or equivalently $d\mid M$.  
\end{proof}

\begin{proposition}\label{proporthogonality}
    Let $\chi$ and $\psi$ be two Dirichlet characters mod $N$. Then
    \begin{equation*}
        \sum_{a\in(\Z/N\Z)^{\cross}}\overline{\chi}(a)\psi(a)=
        \begin{cases}
            \varphi(n),&\text{if}\ \chi=\psi\\
            0,&\text{otherwise}
        \end{cases}
    \end{equation*}
\end{proposition}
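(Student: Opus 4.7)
The plan is to split into the two cases and use a standard symmetry argument for the nontrivial case. First I would handle the case $\chi=\psi$: since $|\chi(a)|=1$ for every $a\in(\Z/N\Z)^{\cross}$, we have $\overline{\chi}(a)\psi(a) = \overline{\chi}(a)\chi(a) = |\chi(a)|^2 = 1$, so summing over the $\varphi(N)$ elements of $(\Z/N\Z)^{\cross}$ yields $\varphi(N)$.

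For the case $\chi\neq\psi$, I would introduce the auxiliary character $\eta := \overline{\chi}\psi$, defined by $\eta(a) = \overline{\chi}(a)\psi(a)$. Since the pointwise product of Dirichlet characters mod $N$ is again a Dirichlet character mod $N$ (it is a group homomorphism $(\Z/N\Z)^{\cross}\to\C^{\cross}$ extended by zero), $\eta$ is a Dirichlet character mod $N$, and it is nontrivial because $\chi\neq\psi$. So the claim reduces to showing that $\sum_{a\in(\Z/N\Z)^{\cross}}\eta(a)=0$ for any nontrivial Dirichlet character $\eta$ mod $N$.

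To prove this, I would pick any $b\in(\Z/N\Z)^{\cross}$ with $\eta(b)\neq 1$, which exists by nontriviality. Denoting the sum by $S$, the substitution $a \mapsto ba$ is a bijection on $(\Z/N\Z)^{\cross}$, so
\begin{equation*}
    \eta(b)\,S = \sum_{a\in(\Z/N\Z)^{\cross}}\eta(b)\eta(a) = \sum_{a\in(\Z/N\Z)^{\cross}}\eta(ba) = S.
\end{equation*}
Hence $(\eta(b)-1)S=0$ in $\C$, and since $\eta(b)-1\neq 0$ we conclude $S=0$.

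There is no real obstacle here; the only mild care needed is to verify that $\overline{\chi}\psi$ really is a Dirichlet character and that the reindexing is valid (which just uses that multiplication by a unit permutes $(\Z/N\Z)^{\cross}$). Everything else is a short computation.
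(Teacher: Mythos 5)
Your proof is correct and follows essentially the same strategy as the paper's: reduce to the statement that a nontrivial Dirichlet character sums to zero over $(\Z/N\Z)^{\cross}$, and prove that via the bijection $a \mapsto ba$ for some $b$ with nonunit character value. (The paper proves that single-character lemma first and then substitutes $\overline{\chi}\psi$, while you reduce to the lemma and prove it inline; incidentally, the paper's phrase ``$\chi(b)\neq 0$'' is a typo for ``$\chi(b)\neq 1$'', which you stated correctly.)
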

\begin{proof}
    First we show that
    \begin{equation}\label{orthoeq}
        \sum_{a\in(\Z/N\Z)^{\cross}}\chi(a)=
        \begin{cases}
            \varphi(n),&\text{if}\ \chi=\chi_0\\
            0,&\text{otherwise}
        \end{cases}
    \end{equation}
    If $\chi=\chi_0$ then \eqref{orthoeq} follows since $\chi_0(a)=1$ for all $a\in(\Z/N\Z)^{\cross}$. Now suppose that $\chi\neq\chi_0$. Then there exists $b\in(\Z/N\Z)^{\cross}$ such that $\chi(b)\neq 0$. Since $(\Z/N\Z)^{\cross}$ is a group, $ba$ runs over all the elements of $(\Z/N\Z)^{\cross}$ as $a$ does. So, setting $S_{\chi}=\sum_{a\in(\Z/N\Z)^{\cross}}\chi(a)$ we have
    \begin{equation*}
            \chi(b)S_{\chi}=\chi(b)\sum_{a\in(\Z/N\Z)^{\cross}}\chi(a)=\sum_{a\in(\Z/N\Z)^{\cross}}\chi(ba)=S_{\chi}.
    \end{equation*}
    Hence $S_{\chi}=0$ as required.\\[8pt]
    Replacing $\chi$ with $\overline{\chi}\psi$ in \eqref{orthoeq} gives
    \begin{equation*}
        \sum_{a\in(\Z/N\Z)^{\cross}}\overline{\chi}(a)\psi(a)=
        \begin{cases}
            \varphi(n),&\text{if}\ \overline{\chi}\psi=\chi_0,\\
            0,&\text{otherwise.}
        \end{cases}
    \end{equation*}
    Now, for any $a\in(\Z/N\Z)^{\cross}$ we have $|\chi(a)|=1$ and thus $\overline{\chi}(a)=\chi^{-1}(a)$. Hence $\overline{\chi}{\psi}=\chi_0$ is equivalent to $\chi=\psi$.
\end{proof}

    \chapter{Assorted Proofs}\label{appassort}

\begin{proposition}\label{ellolddecomp}
    Let $N>1$ and $k\geq 0$. Then,
    \begin{equation*}
        \M_k(N)^{\old}=\sum_{\substack{\ell\ \text{prime}\\\ell\mid N}}\M_k(N)^{\ell-\old}.
    \end{equation*}
\end{proposition}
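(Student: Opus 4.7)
The plan is to prove the two inclusions separately, with the reverse inclusion being essentially definitional and the forward inclusion reducing to a bookkeeping argument on divisibilities.

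For $\supseteq$, I would observe that for each prime $\ell \mid N$, the level $N/\ell$ is a proper divisor of $N$ and both $e=1$ and $e=\ell$ divide $N/(N/\ell) = \ell$. Hence $\M_k(N/\ell) = i_1(\M_k(N/\ell))$ and $i_\ell(\M_k(N/\ell))$ each appear as summands in the definition of $\M_k(N)^{\old}$, so $\M_k(N)^{\ell-\old} \subseteq \M_k(N)^{\old}$ for every prime $\ell \mid N$.

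For $\subseteq$, it suffices to show that each generator $i_e(\M_k(M))$, with $M$ a proper divisor of $N$ and $e \mid (N/M)$, is contained in $\M_k(N)^{\ell-\old}$ for a suitable prime $\ell \mid N$. Since $M$ divides $N$ properly I can choose a prime $\ell$ dividing $N/M$, so that $M \mid (N/\ell)$. I would then split into two cases. If $\ell \nmid e$, a quick comparison of $p$-adic valuations shows $e \mid (N/\ell)/M$, so $i_e(f) \in \M_k(N/\ell) \subseteq \M_k(N)^{\ell-\old}$ for every $f \in \M_k(M)$. If instead $\ell \mid e$, I write $e = \ell e'$; then $e' \mid (N/\ell)/M$ (again by valuations), so $i_{e'}(f) \in \M_k(N/\ell)$, and using the identity $i_e = i_\ell \circ i_{e'}$ on $q$-expansions I conclude $i_e(f) \in i_\ell(\M_k(N/\ell)) \subseteq \M_k(N)^{\ell-\old}$.

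There is no serious obstacle here; the only point that needs care is the valuation computation showing that if $\ell$ is a prime dividing $N/M$ and $e \mid N/M$, then either $e \mid (N/M)/\ell$ (when $\ell \nmid e$) or $e/\ell \mid (N/M)/\ell$ (when $\ell \mid e$). Both follow immediately from the identity $v_p((N/M)/\ell) = v_p(N/M) - \delta_{p,\ell}$, valid for every prime $p$. Combining the two inclusions gives the claimed equality.
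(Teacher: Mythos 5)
Your proof is correct and follows essentially the same strategy as the paper's: the reverse inclusion is immediate from the definition of $\M_k(N)^{\old}$, and for the forward inclusion you show each generator $i_e(\M_k(M))$ lands in $\M_k(N)^{\ell-\old}$ for a suitable prime $\ell$, by passing through level $N/\ell$ and (when needed) factoring $i_e = i_\ell \circ i_{e/\ell}$. The only cosmetic difference is that you fix a prime $\ell \mid N/M$ up front and split on whether $\ell \mid e$, whereas the paper splits on $e=1$ versus $e>1$ and chooses $\ell$ accordingly; the underlying valuation bookkeeping and the composition of embeddings are the same.
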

\begin{proof}
    We note that one inclusion is immediate, namely
    \begin{equation*}
         \sum_{\substack{\ell\ \text{prime}\\\ell\mid N}}\M_k(N)^{\ell-\old}\subseteq\M_k(N)^{\old}.
    \end{equation*}
    For the other inclusion, we show that for each $M\mid N$ (with $M\neq N$) and $e\mid(N/M)$ there exists a prime $\ell$ such that $\ell\mid N$ and $i_e(\M_k(M))\subseteq \M_k(N)^{l-\old}$. So, let $M$ be a proper divisor of $N$ and $e\mid(N/M)$. We consider two cases.\\[8pt]
    \underline{Case 1:} $e=1$\\
    In this case, take $\ell$ to be a prime such that $M\mid(N/\ell)$. We can do this since $M$ is a proper divisor of $N$. Now, 
    \begin{equation*}
        i_1(\M_k(M))=\M_k(M)\subseteq\M_k(N/\ell)\subseteq\M_k(N/\ell)+i_\ell(\M_k(N/\ell))=\M_k(N)^{\ell-\old},
    \end{equation*}
    as required.\\[8pt]
    \underline{Case 2:} $e>1$\\
    In this case, we choose $\ell$ to be such that $\ell\mid e$. Since $e\mid(N/M)$ this means that in particular, $\ell\mid N$ and $M\mid(N/\ell)$. Now, let $g$ be an arbitrary element of $i_e(\M_k(M))$. In particular, $g(z):=f(ez)$ for some form $f\in\M_k(M)$. We wish to show that $g\in\M_k(N)^{\ell-\old}$.\\
    \\
    So, let $h(z):=f(\frac{e}{\ell}z)$. Note that $h\in\M_k(N/\ell)$ as it is the image of $f$ under the map $i_{e/\ell}:\M_k(M)\to\M_k(N/\ell)$. We then have $g=i_\ell(h)\in\M_k(N)^{\ell-\old}$ as required.
\end{proof}

\begin{proposition}\label{directoverc}
    For any nonnegative integer $N$, the algebra $\M(N)=\sum_{k=0}^\infty\M_k(N)$ is a direct sum, i.e. $\M(N)=\bigoplus_{k=0}^\infty \M_k(N)$.
\end{proposition}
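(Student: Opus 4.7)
The goal is to show that the natural map $\bigoplus_{k=0}^\infty \M_k(N) \to \M(N)$ is injective; equivalently, if $f_{k_1} + f_{k_2} + \cdots + f_{k_n}$ is the zero function on $\H$, where $f_{k_i} \in \M_{k_i}(N)$ and the weights $k_1 < k_2 < \cdots < k_n$ are distinct, then every $f_{k_i}$ vanishes identically.

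The key idea is to exploit the modularity condition to convert a single relation into an infinite family of relations, which can then be viewed as polynomial identities. Concretely, suppose $\sum_{i=1}^n f_{k_i}(z) = 0$ for all $z \in \H$. For any $\gamma = \begin{smatrix}a&b\\c&d\end{smatrix} \in \Gamma_0(N)$, replacing $z$ by $\gamma \cdot z$ and using the modularity condition $f_{k_i}(\gamma \cdot z) = (cz+d)^{k_i} f_{k_i}(z)$ gives
\begin{equation*}
    \sum_{i=1}^n (cz+d)^{k_i} f_{k_i}(z) = 0 \qquad \text{for all } z \in \H,\ \gamma \in \Gamma_0(N).
\end{equation*}

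Now fix a point $z \in \H$ and consider the polynomial $p_z(w) := \sum_{i=1}^n f_{k_i}(z) w^{k_i} \in \C[w]$. The above identity says that $p_z(cz+d) = 0$ for every $\gamma \in \Gamma_0(N)$. The next step is to observe that, as $\gamma$ ranges over $\Gamma_0(N)$, the quantity $cz+d$ takes infinitely many distinct complex values. This is easy to see: the matrices $\gamma_m = \begin{smatrix}1&0\\mN&1\end{smatrix}$ lie in $\Gamma_0(N)$ for every $m \in \Z$, and they yield $cz+d = mNz + 1$, which gives infinitely many distinct values since $z \in \H$ has nonzero imaginary part.

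Hence the nonzero polynomial $p_z$ has infinitely many roots, which forces $p_z \equiv 0$; but the $k_i$ are distinct exponents, so each coefficient $f_{k_i}(z)$ must vanish. Since $z \in \H$ was arbitrary, each $f_{k_i}$ is identically zero, completing the proof. I do not anticipate any genuine obstacle here; the only minor subtlety to be careful about is choosing a family of matrices that is evidently contained in $\Gamma_0(N)$, which is handled cleanly by the lower-triangular family above.
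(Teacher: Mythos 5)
Your proof is correct and follows essentially the same strategy as the paper's: apply the modularity condition with a one-parameter family of matrices in $\Gamma_0(N)$ whose automorphy factor is $mNz+1$, and conclude that the $f_{k_i}(z)$ vanish via a polynomial/linear-algebra identity. The paper packages the final step as a Vandermonde determinant using $n+1$ values of $m$, whereas you invoke ``a polynomial with infinitely many roots is zero''; these are interchangeable, and your lower-triangular choice of $\gamma_m$ is a minor cosmetic simplification.
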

\begin{proof}
    The following proof is adapted from \cite[Lemma 2.1.1]{miyake2006modular}. Let $n\in\Z_{\geq 0}$ and $\{f_0,\dots,f_n\}$ be a set of functions with $f_k\in\M_k(N)$. Now suppose that $\sum_{k=0}^nf_k=0$. We want to show that this implies $f_k=0$ for all $k\in\{0,\dots,n\}$. First we construct a sequence of matrices $\{\gamma_m\}_{m=0}^\infty$ in $\Gamma_0(N)$ given by 
    \begin{equation*}
        \gamma_m=\begin{pmatrix}mN+1&1\\mN&1\end{pmatrix}\in\Gamma_0(N),
    \end{equation*}
    so that $j(\gamma_m,z)=mNz+1$. This means that provided $z\neq 0$ we have $j(\gamma_m,z)\neq j(\gamma_{m'},z)$ for $m'\neq m$. If $z=0$ we could alternatively define $\gamma_m=\begin{smatrix}1&1\\ mN&mN+1\end{smatrix}$.
    As each $f_k$ is in $\M_k(N)$ we then have for any $m\in\Z_{\geq 0}$ and $z\in\mathbb{H}$
    \begin{equation}\label{b1identity}
        \sum_{k=0}^n j(\gamma_m,z)^kf_k(z)=\sum_{k=0}^n f_k(\gamma_m z)=0.
    \end{equation}
    Varying $m$ from $0$ to $n$ in \eqref{b1identity} leads to a system of linear equations that can be written in matrix form as
    \begin{equation}\label{vandermondeeq}
        \begin{pmatrix}1&j(\gamma_0,z)&j(\gamma_0,z)^2&\dots&j(\gamma_0,z)^n\\1&j(\gamma_1,z)&j(\gamma_1,z)^2&\dots&j(\gamma_1,z)^n\\
        \vdots&\vdots&\vdots&\ddots&\vdots\\1&j(\gamma_n,z)&j(\gamma_n,z)^2&\dots&j(\gamma_n,z)^n
        \end{pmatrix}\begin{pmatrix}f_0(z)\\f_1(z)\\\vdots\\f_n(z)\end{pmatrix}=\begin{pmatrix}0\\0\\\vdots\\0\end{pmatrix}
    \end{equation}
    If we let $M$ denote the $(n+1)\times (n+1)$ matrix in $\eqref{vandermondeeq}$ then
    \begin{equation*}
        \det(M)=\prod_{1\leq m'\leq m\leq n}(j(\gamma_{m'},z)-j(\gamma_m,z))\neq 0,
    \end{equation*}
    noting that $M$ is a Vandermonde matrix. Since this holds for all $z\in\mathbb{H}$ we have $f_k=0$ for all $k\in\{0,\dots,n\}$ as required.
\end{proof}
\begin{remark}
    More generally we have that $\M^{\chi}(N)=\bigoplus_{k=0}^\infty\M_k(N)$ for any Dirichlet character $\chi$ mod $N$. The proof of this fact is almost identical to the one above. In particular, note that the lower right entry of each $\gamma_m$ is congruent to 1 mod $N$ so that if $f_k\in\M_k^{\chi}(N)$ then $f_k(\gamma_m z)=\chi(1)j(\gamma_m,z)^kf_k(z)=j(\gamma_m,z)^kf_k(z)$.
\end{remark}

\begin{proposition}\label{trwelldefined}
    Let $M,N$ be integers such that $M|N$ and $R=\{R_i\}_{i=1}^n$ be a set of right coset representatives for $\Gamma_0(N)\backslash\Gamma_0(M)$. Let $f\in\M_k(N)$ and $F=\sum_{i=1}^nf|_kR_i$. Then $F\in\M_k(M)$.
\end{proposition}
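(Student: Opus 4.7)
The plan is to verify the three defining conditions of $\M_k(M)$ in Definition \ref{defmodformscong} for $F$, with the key step being modular invariance under $\Gamma_0(M)$. The underlying principle is that right multiplication by an element of $\Gamma_0(M)$ permutes the right cosets $\Gamma_0(N)\backslash\Gamma_0(M)$, so summing over coset representatives kills the $\Gamma_0(M)$-asymmetry.

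First I would check well-definedness of $F$: if $\{R_i'\}$ is another set of right coset representatives, then $R_i' = \gamma_i R_{\sigma(i)}$ for some $\gamma_i \in \Gamma_0(N)$ and some permutation $\sigma$, and since $f \in \M_k(N)$ satisfies $f|_k \gamma_i = f$, we get $f|_k R_i' = f|_k R_{\sigma(i)}$, so the sum is unchanged. Holomorphy on $\mathbb{H}$ is immediate because each $f|_k R_i$ is holomorphic (as $R_i \in \SL_2(\Z) \subset \GL_2(\Q)^+$ sends $\mathbb{H}$ to itself) and $F$ is a finite sum of holomorphic functions.

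Next, for the modularity condition, let $\delta \in \Gamma_0(M)$. The map $R_i \mapsto R_i \delta$ permutes the right cosets of $\Gamma_0(N)$ in $\Gamma_0(M)$, so $\{R_i \delta\}_{i=1}^n$ is another set of right coset representatives. By the well-definedness observation and the cocycle property $f|_k R_i |_k \delta = f|_k (R_i \delta)$, I get
\begin{equation*}
    F|_k \delta = \sum_{i=1}^n f|_k (R_i \delta) = \sum_{i=1}^n f|_k R_i = F.
\end{equation*}

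Finally, for holomorphy at the cusps: for any $\alpha \in \SL_2(\Z)$, $F|_k \alpha = \sum_i f|_k (R_i \alpha)$, and since each $R_i \alpha \in \SL_2(\Z)$ (as $R_i \in \Gamma_0(M) \subseteq \SL_2(\Z)$), the assumption that $f \in \M_k(N)$ implies each $f|_k(R_i\alpha)$ is holomorphic at infinity. A finite sum of such functions is again holomorphic at infinity. I do not anticipate any serious obstacle — the main subtlety is simply the bookkeeping of the permutation argument in the modularity step, which is a standard technique when one averages a function over a finite quotient of groups.
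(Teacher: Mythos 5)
Your proof is correct and uses the same central idea as the paper: right multiplication by $\delta\in\Gamma_0(M)$ permutes the right cosets $\Gamma_0(N)\backslash\Gamma_0(M)$, and $\Gamma_0(N)$-invariance of $f$ lets you reindex the sum. The paper only writes out the modularity step explicitly; your additional checks (well-definedness, holomorphy on $\H$, holomorphy at the cusps) are routine but make the argument more complete.
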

\begin{proof}
    Let $\delta\in\Gamma_0(M)$. Note that right multiplication by $\delta$ permutes the coset representatives of $\Gamma_0(N)\backslash\Gamma_0(M)$. That is, $R_i\delta=\gamma_iR_{\sigma(i)}$ where $\sigma:\{1,\dots,n\}\to\{1,\dots,n\}$ is a bijection. We then have
    \begin{equation*}
        F|\delta=\sum_{i=1}^nf|_kR_i|_k\delta\\[8pt]=\sum_{i=1}^nf|_kR_i\delta\\[8pt]=\sum_{i=1}^nf|_k\gamma_iR_{\sigma(i)}=\sum_{i=1}^nf|_kR_{\sigma(i)}=F,
    \end{equation*}
    so that $F\in\M_k(M)$ as required.
\end{proof}
\begin{remark}
    The above proposition also holds if replace $\Gamma_0(M)$ and $\Gamma_0(N)$ with arbitrary congruence subgroups $\Gamma$ and $\Gamma'$ satisfying $\Gamma\subseteq\Gamma'$.
\end{remark}

\begin{proposition}\label{eisisold}
    Let $k>2$ and $N>1$. We then have
    \begin{equation*}
        \E_k(N)=\E_k(N)^{\ell-\old}=\E_k(N/\ell)\oplus W_\ell(\E_k(N/\ell))
    \end{equation*}
    whenever $\ell$ is a prime dividing $N$ exactly once.
\end{proposition}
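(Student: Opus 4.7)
The plan is to invoke the Diamond--Shurman basis description of the Eisenstein space $\E_k(N)$ with trivial nebentypus (see \cite[Theorem 4.6.2 and Proposition 5.2.3]{diamond2005first}) and check each basis element individually. For $k>2$, such a basis consists of series $E_k^{\psi,\varphi,t}(z) := E_k^{\psi,\varphi}(tz)$, where $\psi,\varphi$ are primitive Dirichlet characters of conductors $u,v$, the product $\psi\varphi$ lifts to the trivial character mod $N$, and $tuv \mid N$. A short argument using primitivity forces $u = v$ and $\varphi = \psi^{-1}$, so the parametrization reduces to pairs $(\psi, t)$ with $\psi$ primitive of some conductor $u$ and $tu^2 \mid N$.

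First I would dispose of the easy containment $\E_k(N/\ell) + W_\ell(\E_k(N/\ell)) \subseteq \E_k(N)$, which is immediate from $N/\ell \mid N$ together with Proposition \ref{wlscaling} (using that $i_\ell$ preserves the Eisenstein subspace, since it commutes with the Hecke operators). For the reverse inclusion I would take an arbitrary basis element $E_k^{\psi,\psi^{-1},t}$ with $tu^2 \mid N$. Since $\ell \| N$ and $u^2 \mid N$, we must have $\ell \nmid u$. The argument then splits on whether $\ell \mid t$: if $\ell \nmid t$, then $tu^2 \mid N/\ell$ and the form already lies in $\E_k(N/\ell)$; if $\ell \mid t$, write $t = \ell t'$, so $t'u^2 \mid N/\ell$, and
\begin{equation*}
    E_k^{\psi,\psi^{-1},t}(z) = E_k^{\psi,\psi^{-1},t'}(\ell z) = (i_\ell E_k^{\psi,\psi^{-1},t'})(z) = \ell^{-k}(W_\ell E_k^{\psi,\psi^{-1},t'})(z),
\end{equation*}
which lies in $W_\ell(\E_k(N/\ell))$. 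These two cases exhaust the basis.

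The direct sum $\E_k(N/\ell) \oplus W_\ell(\E_k(N/\ell))$ follows at once from Corollary \ref{linindepwl}, which applies since $k > 2 > 0$. The main subtle point will be justifying the reduction that ``$\psi\varphi$ trivial mod $N$ forces $u = v$ and $\varphi = \psi^{-1}$'': if $\psi\varphi$ viewed mod $\mathrm{lcm}(u,v)$ is the trivial character, then on units this forces $\varphi = \psi^{-1}$ mod $\mathrm{lcm}(u,v)$, and since $\psi^{-1}$ is primitive of conductor $u$ while $\varphi$ is primitive of conductor $v$, we must have $u = v$. This bookkeeping with conductors, combined with the simple divisibility case analysis above, is the only nontrivial piece of the argument.
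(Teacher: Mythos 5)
Your proof is correct, but it takes a genuinely different route from the paper's. The paper's proof is a dimension count: using that $W_\ell$ is an isomorphism (so the two summands have equal dimension) together with Corollary \ref{linindepwl} for directness, it reduces to showing $\dim_\C\E_k(N)=2\dim_\C\E_k(N/\ell)$, which it verifies by splitting Stein's explicit formula $\dim_\C\E_k(N)=\sum_{d\mid N}\phi(\gcd(d,N/d))$ over divisors $d$ with $\ell\nmid d$ and $d=\ell e$. Your proof instead works with the Diamond--Shurman basis $\{E_k^{\psi,\psi^{-1},t}\}$ of the trivial-nebentypus Eisenstein space and sorts each basis vector into $\E_k(N/\ell)$ or $W_\ell(\E_k(N/\ell))$ according to whether $\ell\mid t$, using $\ell\nmid u$ (forced by $u^2\mid N$ and $\ell\|N$). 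Both arguments are sound; notably, the paper's own remark after the proposition explicitly flags the explicit-basis approach you chose as an alternative. Your version has the advantage of producing an explicit matching of basis elements (and makes transparent why the odd-$k$ case is vacuous via the parity condition $(\psi\psi^{-1})(-1)=(-1)^k$), while the paper's dimension count is shorter and avoids the conductor bookkeeping that you correctly identify as the delicate point. One small observation: your parenthetical justification that $i_\ell$ preserves the Eisenstein subspace ``since it commutes with the Hecke operators'' is a slightly indirect appeal; since you already have the explicit basis in hand, it is cleaner to note $i_\ell(E_k^{\psi,\varphi,t})=E_k^{\psi,\varphi,\ell t}$, which is manifestly Eisenstein.
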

\begin{proof}
    We proceed by a dimension counting argument. First note that $W_\ell$ is a $\C$-vector space isomorphism (with inverse $W_\ell^{-1}=\ell^{-k}W_\ell$) and thus $\dim_{\C}(\E_k(N/\ell))=\dim_{\C}(W_{\ell}(\E_k(N/\ell)))$. It therefore suffices to prove that $\dim_\C(\E_k(N))=2\dim_\C(\E_k(N/\ell))$.\\[8pt]
    Fortunately, there is an explicit formula for the dimension of $\E_k(N)$ \cite[Proposition 6.1]{stein2007modular}:
    \begin{equation*}
        \dim_\C(\E_k(N))=\sum_{d\mid N}\phi\left(\gcd\left(d,\frac{N}{d}\right)\right),
    \end{equation*}
    where $\phi$ is the Euler totient function. Hence,
    \begin{align*}
        \dim(\E_k(N))&=\sum_{d\mid N}\phi\left(\gcd\left(d,\frac{N}{d}\right)\right)\\[8pt]
        &=\sum_{d\mid(N/\ell)}\phi\left(\gcd\left(d,\frac{N}{d}\right)\right)+\sum_{\substack{d=\ell e\\ e\mid (N/\ell)}}\phi\left(\gcd\left(d,\frac{N}{d}\right)\right)\qquad\text{(Since $\ell\nmid N/\ell$.)}\\[8pt]
        &=\sum_{d\mid(N/\ell)}\phi\left(\gcd\left(d,\frac{(N/\ell)}{d}\right)\right)+\sum_{e\mid(N/\ell)}\phi\left(\gcd\left(e,\frac{(N/\ell)}{e}\right)\right)\\[8pt]
        &=2\dim(\E_k(N/\ell)),
    \end{align*}
    as required.
\end{proof}
\begin{remark}
    The above proposition can also be proven using explicit bases for $\E_k(N)$ and $\E_k(N/\ell)$ as described in \cite[Section 4.5]{diamond2005first}.
\end{remark}

\begin{proposition}\label{propbernoulli}
    Let $p$ be a prime number. Then $pB_{p-1}\equiv -1$ (mod $p$), where $B_{p-1}$ is the $(p-1)^{\text{th}}$ Bernoulli number. As a result, the denominator of $B_{p-1}$ is divisible by $p$.
\end{proposition}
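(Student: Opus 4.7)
The plan is to derive the congruence from the defining recurrence for Bernoulli numbers, namely
\[
\sum_{k=0}^{n}\binom{n+1}{k}B_k \;=\; 0 \qquad (n\geq 1),\qquad B_0 = 1.
\]
Setting $n = p-1$ isolates the top term and gives
\[
p\,B_{p-1} \;=\; -\sum_{k=0}^{p-2}\binom{p}{k}B_k.
\]
For $1\leq k \leq p-1$ the binomial $\binom{p}{k}$ is divisible by $p$, so after clearing denominators and reducing modulo $p$, every term on the right-hand side should vanish except the $k=0$ term, yielding $p B_{p-1} \equiv -B_0 = -1 \pmod{p}$ in $\mathbb{Z}_{(p)}$.

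The main obstacle to making this reduction rigorous is that, a priori, some of the intermediate $B_k$ (with $1\leq k\leq p-2$) might have $p$ in their denominators, in which case $\binom{p}{k}B_k$ need not be an integer, let alone divisible by $p$. The plan is therefore to prove as a preliminary lemma that $B_k$ is $p$-integral for $0\leq k\leq p-2$, i.e., its denominator is coprime to $p$. This can be done cleanly by strong induction on $k$ using the same recurrence: assuming $B_0,\dots,B_{k-1}$ are $p$-integral, the identity $(k+1)B_k = -\sum_{j=0}^{k-1}\binom{k+1}{j}B_j$ together with $k+1\leq p-1$ (so that $k+1$ is a unit in $\mathbb{Z}_{(p)}$) forces $B_k$ to be $p$-integral as well. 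Alternatively, one can simply invoke the von Staudt--Clausen theorem, which identifies the denominator of $B_{2n}$ as $\prod_{(q-1)\mid 2n}q$; since $(p-1)\nmid k$ for $k\leq p-2$, the prime $p$ does not appear.

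Once $p$-integrality is established, the reduction is immediate: writing $D$ for a common denominator of $B_0,\dots,B_{p-2}$ with $\gcd(D,p)=1$, the integer identity $D p B_{p-1} = -\sum_{k=0}^{p-2}\binom{p}{k}(DB_k)$ reduces mod $p$ to $D p B_{p-1} \equiv -D \pmod{p}$, and cancelling the unit $D$ gives the desired congruence. Finally, the divisibility of the denominator of $B_{p-1}$ by $p$ follows by contradiction from the congruence itself: if $B_{p-1}$ were $p$-integral, then $pB_{p-1}$ would lie in $p\mathbb{Z}_{(p)}$ and hence be $\equiv 0\pmod p$, contradicting $pB_{p-1}\equiv -1\pmod p$. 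The only technical point to watch is interpreting ``mod $p$'' consistently as congruence in the local ring $\mathbb{Z}_{(p)}$, but this is unavoidable given that the statement concerns a non-integer rational number.
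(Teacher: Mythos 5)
Your proof is correct, but it follows a genuinely different route than the paper's. The paper multiplies the generating function $\sum_{k\ge 0} B_k t^k/k!=t/(e^t-1)$ by $(e^{pt}-1)/t$ to produce the identity
\begin{equation*}
pB_n = -\sum_{k=0}^{n-1}\binom{n}{k}pB_k\frac{p^{n-k}}{n-k+1} + \sum_{r=1}^{p-1}r^n,
\end{equation*}
uses this to prove $p$-integrality of $pB_n$ for all $n$ by induction (relying on $p^{n-k}>n-k+1$, which fails for $p=2$ and forces a separate check of that case), and then at $n=p-1$ kills the sum by the same $p$-divisibility and evaluates $\sum_{r=1}^{p-1}r^{p-1}\equiv p-1\equiv -1\pmod p$ via Fermat's little theorem. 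You instead start directly from the elementary recurrence $\sum_{k=0}^{n}\binom{n+1}{k}B_k=0$, isolate $pB_{p-1}=-\sum_{k=0}^{p-2}\binom{p}{k}B_k$, and observe that $p\mid\binom{p}{k}$ for $1\le k\le p-2$ while the $k=0$ term contributes $-1$; the one real lemma you need is $p$-integrality of $B_0,\dots,B_{p-2}$, which you supply by a clean induction using that $k+1\le p-1$ is a unit in $\Z_{(p)}$. Your argument is shorter and more elementary: it avoids the generating-function manipulation, the power-sum identity, and Fermat's little theorem, and it needs no special casing at $p=2$ since the $p$-integrality lemma only concerns $B_0$ there. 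What the paper's route buys is the stronger intermediate statement that $pB_n$ is $p$-integral for every $n$ (not just $n=p-1$), which is in the spirit of von Staudt--Clausen but unused in the application. Your concluding contrapositive argument for the divisibility of the denominator by $p$ is also correct.
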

\begin{proof}
    Since $B_1=-\frac{1}{2}$ the statement holds for $p=2$ so we will assume $p\geq 3$. Now, recall that the Bernoulli numbers are defined implicitly by
    \begin{equation*}
        \sum_{k=0}^\infty B_k\frac{t^k}{k!}=\frac{t}{e^t-1}.
    \end{equation*}
    Multiplying both sides by $(e^{pt}-1)/t$ gives
    \begin{equation}\label{bernoullisum}
        \frac{e^{pt}-1}{t}\sum_{k=0}^\infty B_k\frac{t^k}{k!}=\frac{e^{pt}-1}{t}\frac{t}{e^t-1}=\sum_{r=0}^{p-1}e^{rt}.
    \end{equation}
    Expanding $e^{rt}$ as a Taylor series in $t$, we then have that the coefficient of $t^n/n!$ on the right-hand side of \eqref{bernoullisum} is
    \begin{equation}\label{rhsbernoulli}
        \sum_{r=1}^{p-1}r^n.
    \end{equation}
    On the other hand, the left-hand side of \eqref{bernoullisum} becomes
    \begin{equation*}
        \frac{e^{pt}-1}{t}\sum_{k=0}^{\infty}B_k\frac{t^k}{k!}=\sum_{k=0}^{\infty}\frac{p^{k+1}t^{k}}{(k+1)!}\sum_{k=0}^\infty\frac{B_k}{k!}t^k=\sum_{n=0}^\infty\sum_{k=0}^n\frac{B_k p^{n-k+1}}{k!(n-k+1)!}t^n
    \end{equation*}
    so that the coefficient of $t_n/n!$ is
    \begin{equation*}
        \sum_{k=0}^n\frac{n!}{k!(n-k+1)!}B_kp^{n-k+1}=pB_n+\sum_{k=0}^{n-1}\binom{n}{k}pB_k\frac{p^{n-k}}{n-k+1}.
    \end{equation*}
    Equating with \eqref{rhsbernoulli} then gives
    \begin{equation*}
        pB_n=-\sum_{k=0}^{n-1}\binom{n}{k}pB_k\frac{p^{n-k}}{n-k+1}+\sum_{r=1}^{p-1}r^n.
    \end{equation*}
    Since $p\geq 3$ we always have $p^{n-k}>n-k+1$ so that $p^{n-k}/(n-k+1)$ is $p$-integral \footnote{A rational number $x=\frac{a}{b}$ is said to be \emph{$p$-integral} if $\nu_p(a)\geq\nu_p(b)$. Here, $\nu_p(a)$ and $\nu_p(b)$ denote the number of times that $p$ divides $a$ and $b$ respectively.} for each $k\in\{0,\dots,n-1\}$. Then, noting that $B_0=1$ it follows by induction that $pB_n$ is $p$-integral for all $n\geq 0$. Namely, this holds for our desired case $n=p-1$ in which
    \begin{equation}\label{bernoulliatp}
        pB_{p-1}=-\sum_{k=0}^{p-2}\binom{p-1}{k}pB_k\frac{p^{p-1-k}}{p-k}+\sum_{r=1}^{p-1}r^{p-1}.
    \end{equation}
    Again noting that $p^{p-1-k}>p-k$ we must have
    \begin{equation*}
        \frac{p^{p-1-k}}{p-k}\equiv 0\pmod{p}.
    \end{equation*}
    So, reducing \eqref{bernoulliatp} mod $p$ gives
    \begin{equation*}
        pB_{p-1}\equiv\sum_{r=1}^{p-1}r^{p-1}\equiv\sum_{r=1}^{p-1}1\equiv -1\pmod{p},
    \end{equation*}
    as required.
\end{proof}
\begin{remark}
    The above proof was adapted from \cite[Chapter X Theorem 2.1]{lang1976introduction}
\end{remark}

    \clearpage
\end{appendices}

\chapter*{Notation Index}
\addcontentsline{toc}{chapter}{Notation Index}

\newcommand{\ite}[1]{\item[#1 \hfill]}

\begin{list}{}
   {\setlength{\labelwidth}{3cm} 
    \setlength{\labelsep}{0cm}
    \setlength{\leftmargin}{3.5cm}
    \setlength{\rightmargin}{0cm}
    \setlength{\itemsep}{0cm} 
    \setlength{\parsep}{0.05cm} 
    \setlength{\itemindent}{0cm}
    \setlength{\listparindent}{0cm}}
\ite{$\H$}    {complex upper half plane},
\ite{$\Z_p$}    {the $p$-adic integers},
\ite{$\Q_p$}        {the $p$-adic numbers},
\ite{$j(\gamma,z)$}     {factor of automorphy},
\ite{$|_k\gamma$}  {slash operator of weight $k$; $f|_k\gamma=\det(\gamma)^{k/2}j(\gamma,z)^{-k} f(\gamma\cdot z)$},
\ite{$E_k(z)$}      {weight $k$ Eisenstein series normalised so that $a_0=1$},
\ite{$\Gamma$}      {congruence subgroup},
\ite{$\chi$}   {a Dirichlet character mod $N$},
\ite{$T_m$}     {the $m^{\text{th}}$ Hecke operator},
\ite{$U_m$}     {the linear operator that maps $\sum_{n\geq 0}a_nq^n$ to $\sum_{n\geq 0}a_{mn}q^n$},
\ite{$w_\ell$}      {the Atkin-Lehner operator at $\ell$},
\ite{$W_\ell$}      {the scaled Atkin-Lehner operator with $W_\ell=\ell^{k/2}w_\ell$},
\ite{$i_e$}     {the embedding map that sends $f(z)$ to $f(ez)$},
\ite{$\D_\ell$}     {the operator $\D_\ell=\ell^2U_\ell^2-\ell^k$},
\ite{$\D_\ell^{\chi}$} {the operator $\D_\ell^{\chi}=\ell^2U_\ell^2-\chi'(\ell)\ell^k$ when $\chi$ is induced by $\chi'$ mod $N/\ell$},
\ite{$\Tr_\ell$}    {the trace operator from level $N$ to level $N/\ell$},
\ite{$\langle a\rangle$} {the diamond operator evaluated at $a$},
\ite{$h_N$}     {the level $N$ Fricke involution},
\ite{$H_N$}    {the scaled Fricke involution $H_N=N^{k/2}h_N$},
\ite{$\M_k(\Gamma)$}    {space of weight $k$ modular forms with respect to $\Gamma$},
\ite{$\M(\Gamma)$}    {algebra of modular forms with respect to $\Gamma$},
\ite{$\mcS_k(\Gamma)$}    {space of weight $k$ cusp forms with respect to $\Gamma$},
\ite{$\mcS(\Gamma)$}    {algebra of cusp forms with respect to $\Gamma$},
\ite{$\M_k(N)$}    {space of weight $k$ modular forms for $\Gamma_0(N)$},
\ite{$\mcS_k(N)$}    {space of weight $k$ cusp forms for $\Gamma_0(N)$},
\ite{$\M_k(N,B)$}    {space of weight $k$ modular forms for $\Gamma_0(N)$ with coefficients in $B$},
\ite{$\mcS_k(N,B)$}    {space of weight $k$ cusp forms for $\Gamma_0(N)$ with coefficients in $B$},
\ite{$\M_k^{\chi}(N)$}  {space of weight $k$ level $N$ modular forms with character $\chi$},
\ite{$\mcS_k(N)^{\new}$}    {space of new forms for $\mcS_k(N)$},
\ite{$\mcS_k(N)^{\old}$}    {space of old forms for $\mcS_k(N)$},
\ite{$\mcS_k(N)^{\ell-\new}$}    {space of $\ell$-new forms for $\mcS_k(N)$},
\ite{$\mcS_k(N)^{\ell-\old}$}    {space of $\ell$-old forms for $\mcS_k(N)$},
\ite{$\E_k(N)$}     {the Eisenstein subspace of $\M_k(N)$},

\ite{$\mcS_k(N)^{U_\ell-\new}$}      {elements of $\mcS_k(N)$ in the kernel of $\D_\ell$},
\ite{$\mcS_k(N)^{\Tr_\ell-\new}$}        {elements of $\mcS_k(N)$ in the kernel of $\Tr_\ell$ and $\Tr_\ell W_\ell$},
\ite{$\mcS_k^{\chi}(N)^{U_\ell^{\chi}-\new}$}      {elements of $\mcS_k^{\chi}(N)$ in the kernel of $\D_\ell^{\chi}$},
\ite{$\mcS_k^{\chi}(N)^{\Tr_\ell^{\chi}-\new}$}  {elements of $\mcS_k^{\chi}(N)$ in the kernel of $\Tr_\ell$ and $\Tr_\ell W_\ell$},
\ite{$\mcS_k^{\chi}(N)^{\Tr_\ell^{\chi}-\new'}$}  {elements of $\mcS_k^{\chi}(N)$ in the kernel of $\Tr_\ell$ and $\Tr_\ell H_N$}.
\end{list}

\clearpage


\addcontentsline{toc}{chapter}{References}  
\begin{singlespace}  
	\setlength\bibitemsep{\baselineskip}  
	\printbibliography[title={References}]
\end{singlespace}

\end{document}